\documentclass[11pt]{article}

\usepackage[hidelinks]{hyperref}
\hypersetup{
  colorlinks   = true, 
  urlcolor     = blue, 
  linkcolor    = blue, 
  citecolor   = red 
}

\usepackage[english]{babel}
\usepackage{amsmath,amsthm,amssymb}
\usepackage{graphicx}
\usepackage{bbm} 

\newcommand{\sy}{\boldsymbol{\Psi}}
\newcommand{\py}{\boldsymbol{\Phi}}
\newcommand{\T}{\mathbb{T}^3}

\usepackage[mathscr]{eucal}
\usepackage{xcolor}
\usepackage[page,toc]{appendix}
\usepackage[margin=1in]{geometry} 
\setlength{\headheight}{15pt}
\usepackage{fancyhdr,enumerate,mathtools,mathrsfs,bm,graphicx}
\usepackage[numbers, super]{natbib}
\usepackage{url}

\usepackage{multicol}
\newcommand{\N}{\mathbbm{N}}									

\newcommand{\R}{\mathbb{R}}

\newcommand{\vertiii}[1]{{\left\vert\kern-0.25ex\left\vert\kern-0.25ex\left\vert #1 
    \right\vert\kern-0.25ex\right\vert\kern-0.25ex\right\vert}}

\newcommand{\overbar}[1]{\mkern 1.5mu\overline{\mkern-1.5mu#1\mkern-1.5mu}\mkern 1.5mu}
\newcommand{\inner}[2]{\left\langle #1, #2 \right\rangle}
\DeclarePairedDelimiter\abs{\lvert}{\rvert}					
\DeclarePairedDelimiter\norm{\lVert}{\rVert}				

\newtheorem{theorem}{Theorem}[subsection]
\newtheorem{corollary}{Corollary}[theorem]

\newtheorem{lemma}[theorem]{Lemma}
\newtheorem{proposition}[theorem]{Proposition}
\newtheorem{remark}{Remark}
\newtheorem{definition}[theorem]{Definition}

\newtheorem{assumption}[theorem]{Assumption}

\pagestyle{fancy}
\fancyhead{}
\fancyhead[R]{\thepage}
\fancyfoot{}
\cfoot{}

\begin{document}
	\title{Stochastic Calculus in Infinite Dimensions and SPDEs}
	\author{Daniel Goodair}
	\maketitle
	\setcitestyle{numbers}
	
\tableofcontents
\thispagestyle{empty}
\newpage

\setcounter{page}{1}
\addcontentsline{toc}{section}{Introduction}
\section*{Introduction}

Stochastic Differential Equations have rich applications in physics and finance, for example in Langevin Equations modelling the movement of a particle in space [\cite{coffey2012langevin}] or the Black-Scholes options pricing model for the dynamics of the price of a stock [\cite{black1973pricing}]. These are applications of classical It\^{o} Calculus, where we construct the integral of a process taking values in Euclidean space. Whilst this theory is adequate in such applications, mathematical models for physical phenomena far exceed those for the position of a particle. These notes ponder stochastic equations modelling a function of both space and time, such as the velocity or temperature of a particle. It is, therefore, necessary to define the stochastic integral \begin{equation} \label{stochint} \int_0^
t \sy_s dW_s \end{equation} for $\sy:\Omega \times [0,\infty) \times \R^n \rightarrow \R^d$ and $W$ a standard real valued Brownian Motion. We regard $\sy$ not as a pointwise defined function but rather an element of a function space, which is our motivating context for stochastic integration of Hilbert Space valued processes. This is the sense in which we mean `Stochastic Calculus in Infinite Dimensions', or at least partly, as the field of stochastic modelling has also expanded into infinite dimensional \textit{driving processes}. Our notes assume only a rudimentary understanding of functional analysis and real valued stochastic calculus, with which we explore the following three main areas of this text. These areas are ordered in increasing complexity and novelty. 

\begin{itemize}
    \item In Section \ref{section 1} we present a `classical' construction of the It\^{o} stochastic integral, for processes evolving in a Hilbert Space. This occurs first for a one dimensional driving Brownian Motion, before generalisations to other one dimensional martingales and further to \textit{Cylindrical Brownian Motion}. Our construction is direct and designed to be familiar to a reader who has undertaken the real valued study so well explicated in the likes of [\cite{karatzas1991brownian}, \cite{oksendal2013stochastic}]. In defining our infinite dimensional Brownian Motion, that is the Cylindrical Brownian Motion, we cover the fundamentals of martingale theory in Hilbert Spaces broadly by finite dimensional projections along with the real valued theory. The hope is again that this approach is accessible to a reader with background in the real valued theory. The Cylindrical Brownian Motion is the only infinite dimensional driving process that we integrate with respect to; whilst we present a background on general $Q-$Cylindrical Processes which could be viable integrators, limiting ourselves to Cylindrical Brownian Motion enables the integral to be established as a straightforwards limit of the integrals against finite dimensional Brownian Motions. In particular we avoid the operator theoretic technicalities necessary in the general case, noting that our construction is sufficient for the framework and applications that follow.

    \item Section \ref{section 2} details a framework for the study of stochastic partial differential equations (SPDEs), which are evolution equations involving integration of the form introduced in the previous section. Through this framework we define notions of solutions for an abstract SPDE, motivated in particular by the recent attention given to transport type noise (where the stochastic integral is dependent on the gradient of the solution). This attention comes largely from the seminal work [\cite{holm2015variational}], in which Holm establishes a new class of stochastic equations driven by \textit{Stratonovich} transport type noise which serve as fluid dynamics models by adding uncertainty in the transport of fluid parcels to reflect the unresolved scales. A rigorous mathematical understanding of these equations appears perilous for three key reasons. The first is the typical nonlinearity of fluid equations, rendering the more established linear theory insufficient. The second is the gradient dependency in the noise, taking us beyond the most general `variational frameworks' seen in the literature as these are posed for a noise operator which is bounded on some Hilbert Space. The third is the Stratonovich integration, which we are likely to only understand as a corrected It\^{o} integral, yet this conversion is highly nontrivial for a noise which is not bounded on a Hilbert Space.\\

    More precisely, we present a framework which shares its spirit with the variational approach to SPDEs pioneered by Pardoux in the 1970s and now best represented in the more recent books [\cite{pardoux2021stochastic}, \cite{prevot2007concise}]. This classical framework considers an evolution equation with respect to a Gelfand Triple, say $V \xhookrightarrow{} H \xhookrightarrow{} V^*$, where solutions have paths which are square integrable in $V$, continuous in $H$ and satisfying an identity in $V^*$. Recalling our motivation of fluid equations, the prototypical example in this framework is the Navier-Stokes Equation. Whilst analytically weak solutions fit this framework seamlessly, analytically strong solutions fit to the spaces $W^{2,2} \xhookrightarrow{} W^{1,2} \xhookrightarrow{} L^2$ which prompts our shift to a triplet of embedded Hilbert Spaces without any necessary duality structure. Furthermore to include a Stratonovich transport type noise we in fact introduce a fourth Hilbert Space, necessary as the It\^{o}-Stratonovich correction costs a derivative for transport type noise. We can then properly define weak, strong, local and maximal solutions of nonlinear PDEs with Stratonovich transport noise, amongst other more familiar stochastic perturbations. We believe that presenting the technical details, in such generality, of these notions here facilitates the rigorous and free analysis of the equations in future works.\\
    
    Some details of the proof of an energy equality in this setting are also presented. This is well understood in the typical variational framework, for which we again refer to [\cite{prevot2007concise}], but we take care in addressing subtle changes. The first is the loss of the duality structure, though for this result we do assume a bilinear form relation which behaves similarly. The second is that we conduct the proof for local solutions, necessary for our motivating class of equations, so it is important for us to explicitly address how the localisation affects the proof. Similarly the final key change is that we do not assume any integrability over the probability space of our processes, demanding again another source of localisation which we find worthy of detailing in these notes. The section rounds out with a simple statement of the general It\^{o} Formula, referring to [\cite{da2014stochastic}] for a proof, and a demonstration that the infinite dimensional noise can be reduced to something one dimensional if it is constant multiplicative in each direction.

\item Section \ref{section 3} concludes these notes and contains more advanced and novel techniques in the existence theory for nonlinear SPDEs. The beauty of the classical variational approach comes from the existence results, which certainly cannot be matched as elegantly in a framework built for 3D Navier-Stokes Equations and related stochastic fluid models. Instead we focus on techniques that can be used in this direction, centred around the \textit{Galerkin Method} in which finite dimensional approximations of the SPDE are considered and some properties are used to deduce their limit. Immediately then an existence result for the finite dimensional equations is required, more precisely for where the Hilbert Space in which the equation evolves is finite dimensional but the driving Brownian Motion is still infinite dimensional. We assume standard Lipschitz and linear growth conditions, and to the best of our knowledge this result is not present in the literature. There are two predominant ways to deduce the existence of a limit of the finite dimensional approximations, which we detail now.\\

The first is through \textit{tightness}, which is the stochastic route to relative compactness arguments used in PDE theory. The idea is that from tightness we can deduce relative compactness of the laws of the processes over some suitable function space, at which point Skorohod's Representation Theorem enables the deduction of a limiting process almost surely on a new probability space. Criteria to deduce tightness in relevant function spaces are thus of great significance, and our criteria comes largely from the works of [\cite{aldous1978stopping}, \cite{jakubowski1986skorokhod}, \cite{rockner2022well}]. The second is through a Cauchy type argument in the relevant spaces, difficult to execute in the case of local solutions but recently this has been overcome to great effect due to Glatt-Holtz and Ziane in [\cite{glatt2012local}] and extended by the author [\cite{goodair2023navier}]. We defer a greater discussion of this highly technical result to Section \ref{section 3}.\\

In addition, we do give a complete set of assumptions necessary to deduce the existence of maximal solutions in this framework. The result is taken from the author's work [\cite{goodair2023existence}] so no proof is given here, but the methods discussed throughout these notes are used in the result. The purpose of including this is not for the reader to understand why it is true from this work, but rather to provide an idea of what kind of impositions one must make to deduce the existence of solutions to the highly non-trivial SPDEs motivating these notes. Of course a reference to the full proof is given for the interested reader. Our exposition wraps up with applications of the framework and results to the Navier-Stokes Equation under Stochastic Advection by Lie Transport, which is the aforementioned perturbation scheme developed in [\cite{holm2015variational}]. We hope that this conveys how the framework developed here is sufficient to properly understand such challenging SPDEs, and demonstrates concrete use of the techniques surveyed.

\end{itemize}



\section{Stochastic Calculus in Infinite Dimensions} \label{section 1}


\subsection{Elementary Notation}

Throughout these notes we work with a fixed filtered probability space $(\Omega, \mathcal{F}, (\mathcal{F}_t), \mathbbm{P})$, which is complete with respect to $\mathcal{F}_0$. We always consider Banach Spaces as measure spaces equipped with the Borel $\sigma$-algebra, and shall use $\lambda$ to represent the Lebesgue Measure. All of our Hilbert Spaces are assumed to be separable.

\begin{definition} \label{definition of spaces}
Let $(\mathcal{X},\mu)$ denote a general measure space, $(\mathcal{Y},\norm{\cdot}_{\mathcal{Y}})$ and $(\mathcal{Z},\norm{\cdot}_{\mathcal{Z}})$ be Banach Spaces, and $(\mathcal{U},\inner{\cdot}{\cdot}_{\mathcal{U}})$, $(\mathcal{H},\inner{\cdot}{\cdot}_{\mathcal{H}})$ be general Hilbert spaces. 
\begin{itemize}
    \item $L^p(\mathcal{X};\mathcal{Y})$ is the usual class of measurable $p-$integrable functions from $\mathcal{X}$ into $\mathcal{Y}$, $1 \leq p < \infty$, which is a Banach space with norm $$\norm{\phi}_{L^p(\mathcal{X};\mathcal{Y})}^p := \int_{\mathcal{X}}\norm{\phi(x)}^p_{\mathcal{Y}}\mu(dx).$$ The space $L^2(\mathcal{X}; \mathcal{Y})$ is a Hilbert Space when $\mathcal{Y}$ itself is Hilbert, with the standard inner product $$\inner{\phi}{\psi}_{L^2(\mathcal{X}; \mathcal{Y})} = \int_{\mathcal{X}}\inner{\phi(x)}{\psi(x)}_\mathcal{Y} \mu(dx).$$ 
    
\item $L^{\infty}(\mathcal{X};\mathcal{Y})$ is the usual class of measurable functions from $\mathcal{X}$ into $\mathcal{Y}$ which are essentially bounded, which is a Banach Space when equipped with the norm $$ \norm{\phi}_{L^{\infty}(\mathcal{X};\mathcal{Y})} := \inf\{C \geq 0: \norm{\phi(x)}_Y \leq C \textnormal{ for $\mu$-$a.e.$ }  x \in \mathcal{X}\}.$$
    
    
     \item $C(\mathcal{X};\mathcal{Y})$ is the space of continuous functions from $\mathcal{X}$ into $\mathcal{Y}$.

     \item $C_w(\mathcal{X};\mathcal{Y})$ is the space of `weakly continuous' functions from $\mathcal{X}$ into $\mathcal{Y}$, by which we mean continuous with respect to the given topology on $\mathcal{X}$ and the weak topology on $\mathcal{Y}$.

    \item $\mathscr{L}(\mathcal{Y};\mathcal{Z})$ is the space of bounded linear operators from $\mathcal{Y}$ to $\mathcal{Z}$. This is a Banach Space when equipped with the norm $$\norm{F}_{\mathscr{L}(\mathcal{Y};\mathcal{Z})} = \sup_{\norm{y}_{\mathcal{Y}}=1}\norm{Fy}_{\mathcal{Z}}.$$ It is the dual space $\mathcal{Y}^*$ when $\mathcal{Z}=\R$, with operator norm $\norm{\cdot}_{\mathcal{Y}^*}.$
    
     \item $\mathscr{L}^2(\mathcal{U};\mathcal{H})$ is the space of Hilbert-Schmidt operators from $\mathcal{U}$ to $\mathcal{H}$, defined as the elements $F \in \mathscr{L}(\mathcal{U};\mathcal{H})$ such that for some basis $(e_i)$ of $\mathcal{U}$, $$\sum_{i=1}^\infty \norm{Fe_i}_{\mathcal{H}}^2 < \infty.$$ This is a Hilbert space with inner product $$\inner{F}{G}_{\mathscr{L}^2(\mathcal{U};\mathcal{H})} = \sum_{i=1}^\infty \inner{Fe_i}{Ge_i}_{\mathcal{H}}$$ which is independent of the choice of basis (see e.g. [\cite{prevot2007concise}] Remark B.0.4). 

      \item For any $T>0$, $\mathscr{S}_T$ is the subspace of $C\left([0,T];[0,T]\right)$ of strictly increasing functions.

     \item For any $T>0$, $\mathcal{D}\left([0,T];\mathcal{Y}\right)$ is the space of c\'{a}dl\'{a}g functions from $[0,T]$ into $\mathcal{Y}$. It is a complete separable metric space when equipped with the metric $$d(\phi,\psi) := \inf_{\eta \in \mathscr{S}_T}\left[\sup_{t \in [0,T]}\left\vert \eta(t)- t\right\vert \vee \sup_{t \in [0,T]}\left\Vert \phi(t)-\psi(\eta(t)) \right\Vert_{\mathcal{Y}} \right]$$ which induces the so called Skorohod Topology (see [\cite{billingsley2013convergence}] pp124 for details).

    \item The total variation of a function $\phi:[0,T] \rightarrow \mathcal{Y}$, $V_{\mathcal{Y}}^T(\phi)$, is defined as
    $$V_{\mathcal{Y}}^T(\phi) := \sup_{I}\sum_{i=0}^{k-1}\norm{\phi(t_{i+1}) - \phi(t_i)}_{\mathcal{Y}} $$
    for the supremum taken over all partitions $I= \left\{0=t_0 < t_1 < \dots < t_{k}=T\right\}$. A function $\psi:[0, \infty) \rightarrow \mathcal{Y}$ is said to be of bounded-variation if $V_{\mathcal{Y}}^T(\psi) < \infty$ for all $T \geq 0$.

\end{itemize}

\end{definition}
	
	For the reader's convenience we also collect notation that is introduced in the paper, referencing where it is defined as relevant.
	
	\begin{itemize}
	\item $\lambda$ denotes the Lebesgue measure; 
	    \item $\mathbbm{1}_A$ denotes the indicator function of the set $A$;
	    \item $\mathcal{I}^{\mathcal{H}}_T$ is defined in \ref{viableintegrands};
	    \item $\mathcal{I}^{\mathcal{H}}$ is defined in \ref{I};
	    \item $\mathcal{M}^2, \mathcal{M}^2_c$ are defined in \ref{m2c};
	    \item $\mathcal{M}^2_c(\mathcal{H})$ is defined in \ref{def of m2ch};
	    \item $\bar{\mathcal{M}}^2_c, \bar{\mathcal{M}}^2_c(\mathcal{H})$ are the corresponding semi-martingale spaces;
	    \item $\mathcal{I}^T_M, \mathcal{I}^\mathcal{H}_M$ are defined in \ref{martingaleviableintegrands};
	    \item $\mathcal{I}^T_{\widetilde{M}}, \mathcal{I}^\mathcal{H}_{\widetilde{M}}$ are defined in \ref{localmartingaleintegrator};
	    \item $[ \cdot ]$ is defined in \ref{def of quad};
	    \item $[\cdot, \cdot]$ is defined in \ref{definition for cross variation inf dim};
	    \item $\mathcal{I}^\mathcal{H}_T(\mathcal{W})$ is defined in \ref{IHtw};
	    \item $\mathcal{I}^\mathcal{H}(\mathcal{W})$ is defined in \ref{ihw};
	    \item $\overbar{\mathcal{I}}^{\mathcal{H}}_T(\mathcal{W})$ is defined in \ref{overbar1};
	    \item $\overbar{\mathcal{I}}^{\mathcal{H}}(\mathcal{W})$ is defined in \ref{stochasticallyintegrableprocesses}.
	  
	\end{itemize}

	\subsection{A Classical Construction for Hilbert Space Valued Processes} \label{classicalconstruction}
	
	As alluded to, the construction precisely mirrors the standard one dimensional It\^{o} integral; as such we start from simple processes. $\mathcal{H}$ will denote a general Hilbert space, with norm and inner product $\norm{\cdot}_{\mathcal{H}}$ and $\inner{\cdot}{\cdot}_{\mathcal{H}}$ respectively. $W$ represents a standard one dimensional Brownian Motion with respect to the fixed filtered probability space.
	
	\begin{definition} \label{simpleprocess} Let $0 = t_0 < \dots < t_i < t_{i+1} < \dots $ be a time index such that $(t_i)$ approaches infinity. A simple $\mathcal{H}$ valued process is one that for $a.e. \ \omega$ takes the form $$\sy_t(\omega) = a_0(\omega)\mathbbm{1}_{\{0\}}(t)+\sum_{i=0}^{\infty} a_i(\omega)\mathbbm{1}_{(t_i,t_{i+1}]}(t)$$ where each $a_i \in L^2\big(\Omega;\mathcal{H}\big)$ and is $\mathcal{F}_{t_i}-$measurable, with respect to the Borel sigma algebra on $\mathcal{H}$. The limit is taken in $\mathcal{H}$.
	
	\end{definition}
	
	\begin{definition} \label{simpleprocessintegral}
	The It\^{o} integral of a simple $\mathcal{H}$ valued process $\sy$, with respect to Brownian motion, is defined as $$\int_0^t\sy_sdW_s := \sum_{i=0}^{\infty} a_i\big(W_{t_{i+1}\wedge t}-W_{t_i \wedge t}\big).$$

	\end{definition}
	
	Note that in reality the above is a finite sum, so there is no danger in how we take the limit. Indeed it can alternatively be expressed as
	
	\begin{equation} \label{simpleintegralsecond}
	    \sum_{i=0}^{k-1}a_i\big(W_{t_{i+1}}-W_{t_i}\big) + a_k\big(W_t-W_{t_k}\big)
	\end{equation}
 where $k$ is such that $t \in (t_k,t_{k+1}].$ Unsurprisingly now we define the integral for a more general class of integrands, using approximations by simple processes. 
	
	\begin{definition} \label{viableintegrands}
We use $\mathcal{I}_T^{\mathcal{H}}$ to denote the class of $\mathcal{H}$ valued processes $\sy$ which are progressively measurable\footnote{Here and throughout, progressive measurability is defined with respect to the fixed filtration $(\mathcal{F}_t)$ in the filtered probability space.} and satisfy the square integrability condition \begin{equation} \label{integrable} \mathbbm{E}\left(\int_0^T\norm{\sy_s}_{\mathcal{H}}^2ds\right) < \infty. \end{equation}
	In other words, $\sy \in L^2\big(\Omega\times [0,T]; \mathcal{H}\big)$ where the domain space $\Omega\times [0,T]$ is a measure space equipped with the product measure $\mathbbm{P} \times \lambda$.\footnote{The progressive measurability of $\sy$ ensures that it is measurable over this product space, and Tonelli's Theorem justifies exchanging the order of integration.}
	
	\end{definition}

	We have made the definition for \textit{progressively measurable} processes, not \textit{previsible} processes as will commonly be seen in the literature. Progressive measurability is a weaker condition than previsibility, but thankfully most reasonably behaved processes (adapted and left continuous for example) are both progressively measurable and previsible. We make the definition here for the more general class of integrands \textit{in the cases where the integrator is continuous}. Other authors may opt for previsible processes as these become necessary in retaining nice properties (e.g. martingality) when defining the stochastic integral with respect to discontinuous integrators, or even in making the definition itself.

	\begin{definition} \label{I}
	The class of processes $\sy$ such that $\sy \in \mathcal{I}_T^{\mathcal{H}}$ for all $T>0$ is denoted by $\mathcal{I}^{\mathcal{H}}$. 
	\end{definition}
	
	$\mathcal{I}^{\mathcal{H}}$ represents our class of integrands for all times, though there will be nothing wrong with defining the integral for times $t < T$ in the class $\mathcal{I}_T^{\mathcal{H}}$. The construction comes as a limit of simple integrals, for which we need the following proposition which holds no differently to [\cite{oksendal2013stochastic}] Lemma 3.1.5 for example.
	
	\begin{proposition} \label{simpleapproximation}
	For any $\sy \in \mathcal{I}_T^{\mathcal{H}}$, there exists a sequence of simple processes $(\sy^n)$ which converge to $\sy$ in $L^2\big(\Omega\times [0,T]; \mathcal{H}\big)$. 
	\end{proposition}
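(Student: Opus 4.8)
The plan is to transfer the classical three-stage density argument for the scalar It\^o integral essentially verbatim, replacing $\abs{\cdot}$ by $\norm{\cdot}_{\mathcal{H}}$ throughout and exploiting that, since $\mathcal{H}$ is Hilbert, the target space $L^2(\Omega\times[0,T];\mathcal{H})$ is itself a Hilbert space in which all of the dominated-convergence estimates read identically to the one-dimensional case. The reduction proceeds by approximating a general $\sy\in\mathcal{I}_T^{\mathcal{H}}$ first by bounded processes, then bounded processes by processes that are continuous in time and adapted, and finally continuous processes by simple processes, combining the three stages by a diagonal selection at the end.

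For the \emph{truncation} stage, set $\sy^{(N)}_t = \sy_t\mathbbm{1}_{\{\norm{\sy_t}_{\mathcal{H}}\leq N\}}$. Since $t\mapsto\norm{\sy_t}_{\mathcal{H}}$ is progressively measurable, each $\sy^{(N)}$ is progressively measurable and bounded, hence lies in $\mathcal{I}_T^{\mathcal{H}}$; and because $\norm{\sy^{(N)}_t-\sy_t}_{\mathcal{H}}^2\leq\norm{\sy_t}_{\mathcal{H}}^2$, which is integrable on $\Omega\times[0,T]$ by \eqref{integrable}, dominated convergence gives $\sy^{(N)}\to\sy$ in $L^2(\Omega\times[0,T];\mathcal{H})$. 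It therefore suffices to approximate a process bounded by some constant $N$.

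For the \emph{smoothing} stage, given bounded progressively measurable $\sy$, define the Bochner time-averages $$\py^{(n)}_t := n\int_{(t-\frac{1}{n})\vee 0}^{t}\sy_s\,ds.$$ Progressive measurability makes $\sy$ restricted to $[0,t]\times\Omega$ jointly $\mathcal{B}([0,t])\otimes\mathcal{F}_t$-measurable, so $\py^{(n)}_t$ is $\mathcal{F}_t$-measurable; it is also continuous in $t$ and bounded by $N$, hence $\py^{(n)}\in\mathcal{I}_T^{\mathcal{H}}$ and is a.s. continuous and adapted. For a.e. $\omega$ the path $\sy_\cdot(\omega)$ is Bochner integrable on $[0,T]$, so the Lebesgue differentiation theorem for Bochner integrals yields $\py^{(n)}_t(\omega)\to\sy_t(\omega)$ for a.e. $(t,\omega)$; boundedness and dominated convergence then upgrade this to convergence in $L^2(\Omega\times[0,T];\mathcal{H})$. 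It remains to approximate a bounded, a.s. continuous, adapted process.

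For the \emph{discretization} stage, given such a $\py$, pick partitions $0=t^m_0<t^m_1<\cdots$ of mesh tending to $0$ and set $$\py^m_t := \sum_{i}\py_{t^m_i}\mathbbm{1}_{(t^m_i,t^m_{i+1}]}(t),$$ which is a simple process since each $\py_{t^m_i}$ is $\mathcal{F}_{t^m_i}$-measurable and, being bounded, lies in $L^2(\Omega;\mathcal{H})$. Path continuity gives $\py^m_t\to\py_t$ pointwise, and boundedness plus dominated convergence gives $\py^m\to\py$ in $L^2(\Omega\times[0,T];\mathcal{H})$. Chaining the three approximations and extracting a diagonal subsequence produces the required sequence of simple processes converging to $\sy$. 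The one genuinely non-scalar point, and hence the step I would treat most carefully, is the Lebesgue differentiation theorem invoked in the smoothing stage: it remains valid here because a Bochner-measurable integrand is a.s. essentially separably valued, so the scalar Lebesgue-point argument can be run simultaneously over a countable dense subset of the essential range and then intersected.
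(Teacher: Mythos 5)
Your proof is correct. The paper itself does not supply a proof of this proposition: it is stated as given, with the surrounding text deferring to the classical one-dimensional construction of [\cite{Kara},\cite{Ox}], so there is nothing to compare line by line. Your three-stage reduction (truncation, one-sided time-averaging, discretization along partitions of vanishing mesh, chained by a diagonal argument) is exactly the standard scalar argument, and you transfer it to the $\mathcal{H}$-valued setting correctly. The two points that genuinely require care in this transfer are both handled: adaptedness of the averaged process is preserved precisely because you average over $[(t-\tfrac{1}{n})\vee 0,\,t]$ rather than a two-sided window, and the Lebesgue differentiation step survives because strong (Bochner) measurability forces the paths to be essentially separably valued, so the scalar Lebesgue-point argument can be run over a countable dense subset of the essential range and intersected --- which is the point you rightly single out. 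Only two cosmetic mismatches with Definition \ref{simpleprocess} remain, and both are trivially repaired: the paper's simple processes carry a time index tending to infinity and an $a_0\mathbbm{1}_{\{0\}}$ term, so extend your partitions of $[0,T]$ beyond $T$ with zero coefficients and take $a_0=\py_0$ (or $0$); since $\{0\}$ is $\lambda$-null this does not affect convergence in $L^2\big(\Omega\times[0,T];\mathcal{H}\big)$.
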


	\begin{definition} \label{stochintdefined}
	We define the It\^{o} stochastic integral for processes $\sy \in \mathcal{I}^{\mathcal{H}}$ by \begin{equation}\label{definestochint} \int_0^t \sy_s dW_s := \lim_{n \rightarrow \infty} \int_0^t\sy_s^ndW_s,\end{equation} where $(\sy^n)$ is the sequence of simple processes postulated in Proposition \ref{simpleapproximation} which approach $\sy$ in $L^2\big(\Omega\times [0,t]; \mathcal{H}\big),$ and the limit is taken in $L^2\big(\Omega;\mathcal{H}\big).$

	\end{definition}
	
	The fact that this is the natural topology in which to take the limit of simple stochastic integrals falls from the It\^{o} Isometry for simple processes, which further justifies that the construction is independent of the choice of simple approximation.
	
	\begin{proposition} \label{simpleisometryprop}
	    For a simple process $\sy^n$ and any time $t>0$, \begin{equation} \label{simpleisometry} \mathbbm{E}\Bigg(\Big\vert\Big\vert\int_0^t\sy^n_sdW_s\Big\vert\Big\vert_{\mathcal{H}}^2\Bigg) = \mathbbm{E}\Big(\int_0^t\norm{\sy_s^n}_{\mathcal{H}}^2ds\Big).\end{equation}
	\end{proposition}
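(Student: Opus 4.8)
The plan is to expand the squared Hilbert norm of the integral into a double sum via the inner product and then exploit the martingale structure of the Brownian increments to annihilate all cross terms. Writing $\sy^n$ in the form of Definition \ref{simpleprocess} with coefficients $a_i$, I would use the finite-sum expression (\ref{simpleintegralsecond}) and set $\Delta_i := W_{t_{i+1}\wedge t} - W_{t_i \wedge t}$, so that $\int_0^t\sy^n_sdW_s = \sum_i a_i \Delta_i$ is a genuinely finite sum. Bilinearity of $\inner{\cdot}{\cdot}_{\mathcal{H}}$ then gives
$$\mathbbm{E}\Big\|\sum_i a_i \Delta_i\Big\|_{\mathcal{H}}^2 = \sum_{i,j} \mathbbm{E}\big[\inner{a_i}{a_j}_{\mathcal{H}} \Delta_i \Delta_j\big],$$
where each summand is a real-valued expectation, finite because $a_i \in L^2(\Omega;\mathcal{H})$ and the increments have all moments.

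First I would dispatch the off-diagonal terms $i \neq j$, say $i < j$. The random variables $a_i, a_j, \Delta_i$ are all $\mathcal{F}_{t_j}$-measurable ($a_i$ is $\mathcal{F}_{t_i} \subseteq \mathcal{F}_{t_j}$-measurable, $a_j$ is $\mathcal{F}_{t_j}$-measurable, and $\Delta_i$ depends only on $W$ up to time $t_{i+1}\wedge t \leq t_j$), whereas $\Delta_j$ is independent of $\mathcal{F}_{t_j}$ with $\mathbbm{E}[\Delta_j \mid \mathcal{F}_{t_j}] = 0$. Conditioning on $\mathcal{F}_{t_j}$ and pulling out the measurable factors yields $\mathbbm{E}[\inner{a_i}{a_j}_{\mathcal{H}}\Delta_i\Delta_j] = \mathbbm{E}[\inner{a_i}{a_j}_{\mathcal{H}}\Delta_i \, \mathbbm{E}(\Delta_j \mid \mathcal{F}_{t_j})] = 0$. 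Next come the diagonal terms $i = j$: here $\norm{a_i}_{\mathcal{H}}^2$ is $\mathcal{F}_{t_i}$-measurable while $\Delta_i$ is independent of $\mathcal{F}_{t_i}$ with $\mathbbm{E}[\Delta_i^2 \mid \mathcal{F}_{t_i}] = t_{i+1}\wedge t - t_i \wedge t$ by the variance of Brownian increments, so $\mathbbm{E}[\norm{a_i}_{\mathcal{H}}^2 \Delta_i^2] = \mathbbm{E}[\norm{a_i}_{\mathcal{H}}^2(t_{i+1}\wedge t - t_i\wedge t)]$.

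To finish, I would sum over $i$, interchange the finite sum with the expectation, and identify the result with the right-hand side. Since $\norm{\sy^n_s}_{\mathcal{H}}^2 = \sum_i \norm{a_i}_{\mathcal{H}}^2 \mathbbm{1}_{(t_i,t_{i+1}]}(s)$ for Lebesgue-almost every $s$ (the value at $s=0$ being irrelevant to the integral), integrating over $[0,t]$ produces exactly $\sum_i \norm{a_i}_{\mathcal{H}}^2(t_{i+1}\wedge t - t_i\wedge t)$, whence
$$\mathbbm{E}\Big\|\int_0^t\sy^n_sdW_s\Big\|_{\mathcal{H}}^2 = \mathbbm{E}\sum_i \norm{a_i}_{\mathcal{H}}^2(t_{i+1}\wedge t - t_i\wedge t) = \mathbbm{E}\Big(\int_0^t\norm{\sy^n_s}_{\mathcal{H}}^2 ds\Big).$$

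The main obstacle is essentially bookkeeping in the Hilbert-valued setting: one must verify that taking the $\mathcal{H}$-inner product reduces every term to scalar random variables, so that the usual tower-property and independence manipulations apply, and that membership of each $a_i$ in $L^2(\Omega;\mathcal{H})$ supplies the integrability needed to justify both the conditioning steps and the interchange of expectation with the finite sum. No genuinely infinite-dimensional difficulty arises, since the sum is finite and the identity $\mathbbm{E}\norm{\cdot}_{\mathcal{H}}^2 = \sum \mathbbm{E}\inner{\cdot}{\cdot}_{\mathcal{H}}$ is just bilinearity; the computation is therefore a faithful transcription of the scalar It\^{o} isometry.
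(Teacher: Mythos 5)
Your proposal is correct and follows essentially the same route as the paper's proof: expand the squared norm into a double sum of inner products, kill the cross terms using the independence of the Brownian increment $\Delta_j$ from the $\mathcal{F}_{t_j}$-measurable factor, evaluate the diagonal terms via the variance of the increments, and reassemble the sum as the integral of $\norm{\sy^n_s}_{\mathcal{H}}^2$. Your use of conditioning and the tower property in place of the paper's direct independence factorization is a purely cosmetic difference.
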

	
	\begin{proof}
	 Let's suppose that $\sy^n$ takes the form \begin{equation} \label{standardsimple} \sy^n_t(\omega) = a^n_0(\omega)\mathbbm{1}_{\{0\}}(t) + \sum_{i=0}^{\infty} a_i^n(\omega)\mathbbm{1}_{(t^n_i,t^n_{i+1}]}(t)\end{equation} as outlined in Definition \ref{simpleprocess}. Then applying Definition \ref{simpleprocessintegral}, we deconstruct the LHS of (\ref{simpleisometry}):
	
	    \begin{align*}
	        \mathbbm{E}\Bigg(\Big\vert\Big\vert\int_0^t\sy^n_sdW_s\Big\vert\Big\vert_{\mathcal{H}}^2\Bigg) &= \mathbbm{E}\Bigg(\Big\vert\Big\vert\sum_{i=0}^{\infty} a^n_i\big(W_{t^n_{i+1}\wedge t}-W_{t^n_i \wedge t}\big)\Big\vert\Big\vert_{\mathcal{H}}^2\Bigg)\\
	        &= \mathbbm{E}\Bigg(\Big\langle\sum_{i=0}^{\infty} a^n_i\big(W_{t^n_{i+1}\wedge t}-W_{t^n_i \wedge t}\big), \sum_{j=0}^{\infty} a^n_j\big(W_{t^n_{j+1}\wedge t}-W_{t^n_j \wedge t}\big) \Big\rangle_{\mathcal{H}}\Bigg)\\
	        &= \sum_{i=0}^\infty \sum_{j=0}^\infty \mathbbm{E}\big(\inner{a^n_i}{a^n_j}_{\mathcal{H}}(W_{t^n_{i+1}\wedge t}-W_{t^n_i \wedge t})(W_{t^n_{j+1}\wedge t}-W_{t^n_j \wedge t})\big)
	    \end{align*}
recalling once more that the infinite sum is actually a finite sum (\ref{simpleintegralsecond}) so there is no difficulty in extracting it from the inner product and expectation. For $i \neq j$, and without loss of generality $i < j$, the random inner product is $\mathcal{F}_{t_j}-$measurable as the continuity of the inner product preserves measurability, and therefore $\inner{a^n_i}{a^n_j}_{\mathcal{H}}(W_{t^n_{i+1}\wedge t}-W_{t^n_i \wedge t})$ and $(W_{t^n_{j+1}\wedge t}-W_{t^n_j \wedge t})$ are independent from the independent increments of Brownian Motion. The terms thus vanish and we are left with $$\sum_{i=0}^\infty \mathbbm{E}\bigg(\norm{a^n_i}^2_{\mathcal{H}}(W_{t^n_{i+1}\wedge t}-W_{t^n_i \wedge t})^2\bigg)$$ 
	    to which we note independence again and assert that this is just $$\sum_{i=0}^\infty \mathbbm{E}\big(\norm{a^n_i}^2_{\mathcal{H}}\big)\big(t^n_{i+1}\wedge t-t^n_i \wedge t\big)$$
	    which is precisely the integral $$\int_0^t\sum_{i=0}^\infty \mathbbm{E}\big(\norm{a^n_i}_{\mathcal{H}}^2\big)\mathbbm{1}_{(t^n_i,t^n_{i+1}]}(s)ds.$$ We can write $$ \left\Vert a^n_i\right\Vert^2_{\mathcal{H}}\mathbbm{1}_{(t^n_i,t^n_{i+1}]}(s) = \left\Vert a^n_i\mathbbm{1}_{(t^n_i,t^n_{i+1}]}(s)\right\Vert^2_{\mathcal{H}},$$ the infinite sum of which is a single non-zero term, equal to $$ \left\Vert a^n_0(\omega)\mathbbm{1}_{\{0\}}(s)+\sum_{i=0}^{\infty} a^n_i(\omega)\mathbbm{1}_{(t^n_i,t^n_{i+1}]}(s)\right\Vert_{\mathcal{H}}^2$$ at every $s$ except for zero which is a set of Lebesgue measure zero in $[0,t]$. Again the infinite sum being only a single non-zero term justifies its exchange with expectation, and the above is of course $\norm{\sy^n_s}_{\mathcal{H}}^2$, justifying the result.

	    
	\end{proof}
	
	So, why is this useful in terms of the limit in (\ref{definestochint})? First and foremost it ensures that the limit is uniquely defined; given that $L^2\big(\Omega;\mathcal{H}\big)$ is complete we need only show that the sequence of stochastic integrals is Cauchy in this space. The It\^{o} Isometry tells us that $\big(\int_0^t\sy^n_sdW_s\big)$ is Cauchy in $L^2\big(\Omega;\mathcal{H}\big)$ if and only if $(\sy^n)$ is Cauchy in $L^2\big(\Omega\times [0,t]; \mathcal{H}\big)$, which is of course true as by definition the $(\sy^n)$ are convergent (to $\sy$) in this space. Furthermore the Isometry extends to the general integral defined in Definition \ref{stochintdefined}, as a trivial corollary of the discussion here.
	
	\begin{corollary} \label{realItoIsom}
	The It\^{o} Isometry (\ref{simpleisometry}) holds for all processes $\sy \in \mathcal{I}^{\mathcal{H}}.$
	\end{corollary}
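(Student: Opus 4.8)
The plan is to promote the simple-process isometry of Proposition \ref{simpleisometryprop} to all of $\mathcal{I}^{\mathcal{H}}$ by a short limiting argument, exploiting that both sides of (\ref{simpleisometry}) are exactly squared $L^2$ norms — the left-hand side being $\norm{\int_0^t \sy_s dW_s}^2_{L^2(\Omega;\mathcal{H})}$ and the right-hand side being $\norm{\sy}^2_{L^2(\Omega\times[0,t];\mathcal{H})}$ — and that convergence in $L^2$ transports norms to norms.

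First I would fix $\sy \in \mathcal{I}^{\mathcal{H}}$ and a time $t>0$, and invoke Proposition \ref{simpleapproximation} to obtain simple processes $(\sy^n)$ with $\sy^n \to \sy$ in $L^2(\Omega\times[0,t];\mathcal{H})$. By Definition \ref{stochintdefined} the integral $\int_0^t \sy_s dW_s$ is the $L^2(\Omega;\mathcal{H})$ limit of the simple integrals $\int_0^t \sy^n_s dW_s$. For each $n$, Proposition \ref{simpleisometryprop} supplies the identity
$$\mathbbm{E}\Bigg(\Big\vert\Big\vert\int_0^t\sy^n_sdW_s\Big\vert\Big\vert_{\mathcal{H}}^2\Bigg) = \mathbbm{E}\Big(\int_0^t\norm{\sy_s^n}_{\mathcal{H}}^2ds\Big),$$
which in the notation just introduced reads $\norm{\int_0^t \sy^n_s dW_s}^2_{L^2(\Omega;\mathcal{H})} = \norm{\sy^n}^2_{L^2(\Omega\times[0,t];\mathcal{H})}$.

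Then I would pass to the limit in this identity. Since the norm map on any normed space is continuous (via the reverse triangle inequality), convergence of the integrals in $L^2(\Omega;\mathcal{H})$ forces $\norm{\int_0^t \sy^n_s dW_s}_{L^2(\Omega;\mathcal{H})} \to \norm{\int_0^t \sy_s dW_s}_{L^2(\Omega;\mathcal{H})}$, and likewise convergence of $(\sy^n)$ forces $\norm{\sy^n}_{L^2(\Omega\times[0,t];\mathcal{H})} \to \norm{\sy}_{L^2(\Omega\times[0,t];\mathcal{H})}$. Letting $n \to \infty$ on both sides of the simple identity yields
$$\norm{\int_0^t \sy_s dW_s}^2_{L^2(\Omega;\mathcal{H})} = \norm{\sy}^2_{L^2(\Omega\times[0,t];\mathcal{H})},$$
which is precisely (\ref{simpleisometry}) for $\sy$.

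As the author anticipates, there is no genuine obstacle here; the substantive content is entirely front-loaded into the simple-process isometry and the completeness argument already sketched after Proposition \ref{simpleisometryprop}. The only point requiring a moment's care is to confirm that the limit defining the integral in Definition \ref{stochintdefined} is taken in the very topology, $L^2(\Omega;\mathcal{H})$, in which the left-hand norm is measured — which it is — so that continuity of the norm applies and the value is independent of the approximating sequence, well-definedness having been secured by the earlier Cauchy argument.
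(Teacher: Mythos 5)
Your proof is correct and is precisely the argument the paper intends when it calls the result "a trivial corollary of the discussion here": the isometry identity for the simple approximants from Proposition \ref{simpleisometryprop} passes to the limit because the stochastic integrals converge in $L^2(\Omega;\mathcal{H})$ by Definition \ref{stochintdefined}, the approximants converge in $L^2(\Omega\times[0,t];\mathcal{H})$ by Proposition \ref{simpleapproximation}, and norms are continuous in each topology. No gaps; your only addition is to spell out explicitly the norm-continuity step the paper leaves implicit.
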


	
	 Without direct appeal to the formal construction, we may also understand the integral (\ref{stochint}) as a random element of the dual space $\mathcal{H}^*$ and identify the functional with its counterpart in $\mathcal{H}$ in the usual sense. 
	 
	 \begin{theorem} \label{dualityrep}
	 The It\^{o} stochastic integral defined in Definition \ref{stochintdefined} is the unique element of $\mathcal{H}$ satisfying the duality relation \begin{equation} \label{stochintduality} \Big\langle \int_0^t \sy_s dW_s, \phi\Big\rangle_{\mathcal{H}} =  \int_0^t\inner{\sy_s}{\phi}_{\mathcal{H}}dW_s \end{equation} for all $\phi \in \mathcal{H}$. The above are random inner products, defined by $$\inner{\sy_s}{\phi}_{\mathcal{H}}(\omega):= \inner{\sy_s(\omega)}{\phi}_{\mathcal{H}}$$ and similarly for the LHS. Therefore by the Riesz-Representation Theorem, it is consistent to define the It\^{o} stochastic integral as an $\mathcal{H}^*$ valued random variable via the mapping $$\phi \mapsto \bigg(\int_0^t\inner{\sy_s}{\phi}_{\mathcal{H}}dW_s\bigg)(\omega).$$
	 \end{theorem}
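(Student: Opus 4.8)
The plan is to prove the duality \eqref{stochintduality} first for simple processes by a direct computation, then to transfer it to an arbitrary integrand $\sy \in \mathcal{I}^{\mathcal{H}}$ by the same approximation-and-limit scheme used to define the integral, and finally to extract uniqueness from the structure of $\mathcal{H}$. The organising observation throughout is that, for a fixed $\phi \in \mathcal{H}$, the real-valued process $s \mapsto \inner{\sy_s}{\phi}_{\mathcal{H}}$ is itself an admissible integrand for the one-dimensional theory. First I would record that it is progressively measurable, since $x \mapsto \inner{x}{\phi}_{\mathcal{H}}$ is continuous from $\mathcal{H}$ to $\R$ and $\sy$ is progressively measurable, and that it is square integrable, since Cauchy--Schwarz yields $\mathbbm{E}\int_0^t \abs{\inner{\sy_s}{\phi}_{\mathcal{H}}}^2\, ds \le \norm{\phi}_{\mathcal{H}}^2\,\mathbbm{E}\int_0^t \norm{\sy_s}_{\mathcal{H}}^2\, ds < \infty$. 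This guarantees that the right-hand side of \eqref{stochintduality} is well defined as a real It\^{o} integral.

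For a simple process $\sy$ as in \ref{simpleprocess} with coefficients $a_i$, I would apply Definition \ref{simpleprocessintegral} together with the linearity and continuity of the inner product to write $\inner{\int_0^t \sy_s dW_s}{\phi}_{\mathcal{H}} = \sum_{i} \inner{a_i}{\phi}_{\mathcal{H}}\big(W_{t_{i+1}\wedge t}-W_{t_i\wedge t}\big)$. Since $s \mapsto \inner{\sy_s}{\phi}_{\mathcal{H}}$ is exactly the real simple process with $\mathcal{F}_{t_i}$-measurable coefficients $\inner{a_i}{\phi}_{\mathcal{H}}$, this sum is precisely its one-dimensional It\^{o} integral, so \eqref{stochintduality} holds for simple $\sy$.

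Next I would pass to the limit. Given $\sy \in \mathcal{I}_T^{\mathcal{H}}$, take simple processes $\sy^n \to \sy$ in $L^2(\Omega\times[0,t];\mathcal{H})$ as in \ref{simpleapproximation}. On the left, $\int_0^t \sy^n_s dW_s \to \int_0^t \sy_s dW_s$ in $L^2(\Omega;\mathcal{H})$ by Definition \ref{stochintdefined}, and because $X \mapsto \inner{X}{\phi}_{\mathcal{H}}$ is a bounded linear map $L^2(\Omega;\mathcal{H}) \to L^2(\Omega;\R)$, the inner products converge in $L^2(\Omega;\R)$. On the right, the Cauchy--Schwarz bound above gives $\inner{\sy^n_\cdot}{\phi}_{\mathcal{H}} \to \inner{\sy_\cdot}{\phi}_{\mathcal{H}}$ in $L^2(\Omega\times[0,t];\R)$, so the one-dimensional It\^{o} isometry forces $\int_0^t\inner{\sy^n_s}{\phi}_{\mathcal{H}} dW_s \to \int_0^t\inner{\sy_s}{\phi}_{\mathcal{H}} dW_s$ in $L^2(\Omega;\R)$. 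The two sides agree for every $n$ by the previous step, hence so do their limits, which establishes \eqref{stochintduality} in general.

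For uniqueness, suppose $Y \in L^2(\Omega;\mathcal{H})$ also satisfies the relation and set $Z = Y - \int_0^t \sy_s dW_s$, so that $\inner{Z}{\phi}_{\mathcal{H}} = 0$ almost surely for each fixed $\phi$. Letting $\phi$ run over a countable orthonormal basis $(e_k)$ of $\mathcal{H}$, the identities $\inner{Z}{e_k}_{\mathcal{H}}=0$ hold simultaneously off a single null set, and on its complement completeness of the basis forces $Z=0$; thus $Y = \int_0^t \sy_s dW_s$ almost surely, and the Riesz identification then yields the stated $\mathcal{H}^*$-valued reading. I expect the only genuinely delicate point to be this final step: \eqref{stochintduality} is a \emph{separate} almost-sure statement for each $\phi$, and one cannot legitimately substitute the random element $Z(\omega)$ in place of the fixed $\phi$, so the reduction to a countable family (and with it the implicit separability of $\mathcal{H}$) is exactly what licenses the pointwise conclusion. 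Everything else is a routine density argument once the It\^{o} isometry of \ref{realItoIsom} and its one-dimensional counterpart are invoked.
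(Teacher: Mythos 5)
Your proof follows essentially the same route as the paper's: verify that $\inner{\sy_\cdot}{\phi}_{\mathcal{H}}$ is an admissible one-dimensional integrand via continuity of the inner product and Cauchy--Schwarz, establish the duality for simple processes by direct computation, and pass to the limit using the $L^2(\Omega;\mathcal{H})\to L^2(\Omega;\R)$ boundedness of $X\mapsto\inner{X}{\phi}_{\mathcal{H}}$ together with the convergence of the approximating integrands in $L^2(\Omega\times[0,t];\R)$. The one place you go beyond the paper is the uniqueness claim: the paper's proof stops at the duality relation, whereas you correctly observe that \eqref{stochintduality} is a separate almost-sure statement for each fixed $\phi$, so one cannot test against the random element $Z(\omega)$ itself, and you close this gap by running $\phi$ over a countable orthonormal basis (implicitly using separability of $\mathcal{H}$) to get a single null set off which $Z=0$; this is a genuine and worthwhile addition rather than a deviation.
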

	 
	 \begin{proof}
	 Given that we have defined (\ref{stochint}) as a limit of simple processes, it will come as no surprise that we must use this approach to prove the relation (\ref{stochintduality}). We will demonstrate that this holds for simple processes $\sy^n$, and later that it is preserved in the $L^2\big(\Omega\times [0,t];\mathcal{H}\big)$ limit. Firstly though we ought to verify that the RHS of (\ref{stochintduality}) makes sense, that is to say $\inner{\sy}{\phi}_{\mathcal{H}}$ is a valid (one dimensional) integrand. Thus we must show the standard progressive measurability and square integrability conditions: for the former, note that the progressive measurability of $\sy$ is preserved under composition with the continuous mapping $\inner{\cdot}{\phi}_{\mathcal{H}}.$ The latter is straightforwards, as \begin{equation} \label{straightforwards} \mathbbm{E}\Big(\int_0^T\inner{\sy_s}{\phi}_{\mathcal{H}}^2ds\Big) \leq \mathbbm{E}\Big(\int_0^T\norm{\sy_s}_{\mathcal{H}}^2\norm{\phi}_{\mathcal{H}}^2ds\Big) = \norm{\phi}_{\mathcal{H}}^2 \mathbbm{E}\Big(\int_0^T\norm{\sy_s}_{\mathcal{H}}^2ds\Big)\end{equation} which is finite by (\ref{integrable}). Let's suppose that $\sy^n$ takes the form (\ref{standardsimple}). Then applying Definition \ref{simpleprocessintegral}, we deconstruct the LHS of (\ref{stochintduality}):
	 \begin{align*}
	     \Big\langle \int_0^t \sy^n_s dW_s, \phi\Big\rangle_{\mathcal{H}} &= \Big\langle\sum_{i=0}^{\infty} a^n_i\big(W_{t^n_{i+1}\wedge t}-W_{t^n_i \wedge t}\big), \phi\Big\rangle_{\mathcal{H}}\\
	     &= \sum_{i=0}^{\infty}\inner{a^n_i\big(W_{t^n_{i+1}\wedge t}-W_{t^n_i \wedge t}\big)}{\phi}_{\mathcal{H}}\\
	     &= \sum_{i=0}^{\infty}\inner{a^n_i}{\phi}_{\mathcal{H}}\big(W_{t^n_{i+1}\wedge t}-W_{t^n_i \wedge t}\big)
	 \end{align*}
	 and proceed similarly for the RHS, observing that the integrand
	 \begin{align*}
	     \inner{\sy_s}{\phi}_{\mathcal{H}} &= \Big\langle a^n_0\big(\mathbbm{1}_{\{0\}}(s)\big) + \sum_{i=0}^{\infty} a^n_i\big(\mathbbm{1}_{(t^n_i,t^n_{i+1}]}(s)\big), \phi\Big\rangle_{\mathcal{H}}\\
	     &= \inner{a^n_0\big(\mathbbm{1}_{\{0\}}(s)\big)}{\phi}_{\mathcal{H}} + \sum_{i=0}^{\infty}\inner{a^n_i\big(\mathbbm{1}_{(t^n_i,t^n_{i+1}]}(s)\big)}{\phi}_{\mathcal{H}}\\
	     &=  \inner{a^n_0}{\phi}_{\mathcal{H}}\big(\mathbbm{1}_{\{0\}}(s)\big) + \sum_{i=0}^{\infty}\inner{a^n_i}{\phi}_{\mathcal{H}}\big(\mathbbm{1}_{(t^n_i,t^n_{i+1}]}(s)\big)
	 \end{align*}
	 is again simple (this is completely analogous to showing that $\inner{\sy}{\phi}_{\mathcal{H}}$ was a valid integrand). Applying Definition \ref{simpleprocessintegral} to the above proves the result for simple processes, so all that remains to show is preservation in the limit. We have of course \begin{align*}
	     \Big\langle \int_0^t \sy_s dW_s, \phi\Big\rangle_{\mathcal{H}} &= \Big\langle \lim_{n \rightarrow \infty} \int_0^t \sy^n_s dW_s, \phi\Big\rangle_{\mathcal{H}}
	     \end{align*}
	     and a reminder that this limit is taken in $L^2\big(\Omega;\mathcal{H}\big).$ We would like to take the limit outside of the inner product, in some appropriate topology, and use the result for simple functions: the steps would be
	     \begin{align*}
	     \Big\langle \lim_{n \rightarrow \infty} \int_0^t \sy^n_s dW_s, \phi\Big\rangle_{\mathcal{H}} &= \lim_{n \rightarrow \infty}\Big\langle \int_0^t \sy^n_s dW_s, \phi\Big\rangle_{\mathcal{H}}\\
	     &= \lim_{n \rightarrow \infty}  \int_0^t\inner{\sy^n_s}{\phi}_{\mathcal{H}}dW_s
	 \end{align*}
so it should be clear that the topology we want to take this limit in is that of $L^2\big(\Omega;\R \big)$, as the last line would be precisely the RHS of (\ref{stochintduality}) by definition if we can show that the simple real valued process $\inner{\sy^n}{\phi}_{\mathcal{H}}$ converges to $\inner{\sy}{\phi}_{\mathcal{H}}$ in $L^2\big(\Omega\times [0,t]; \R\big).$ Thankfully it is straightforwards to justify taking this limit outside of the inner product: if $(f_n)$ converges to $f$ in $L^2\big(\Omega;\mathcal{H}\big)$ then $$\mathbbm{E}\Big(\inner{f_n}{\phi}_{\mathcal{H}}-\inner{f}{\phi}_{\mathcal{H}}\Big)^2 = \mathbbm{E}\Big(\inner{f_n-f}{\phi}_{\mathcal{H}}\Big)^2 \leq \mathbbm{E}\Big(\norm{f_n-f}_{\mathcal{H}}^2\norm{\phi}_{\mathcal{H}}^2\Big) = \norm{\phi}_{\mathcal{H}}^2 \mathbbm{E}\Big(\norm{f_n-f}_{\mathcal{H}}^2\Big) \longrightarrow 0$$
	 so $(\inner{f_n}{\phi}_{\mathcal{H}})$ converges to $\inner{f}{\phi}_{\mathcal{H}}$ in $L^2\big(\Omega;\R \big)$, as required to justify the interchange. To show the convergence of $\inner{\sy^n}{\phi}_{\mathcal{H}}$ to $\inner{\sy}{\phi}_{\mathcal{H}}$ in $L^2\big(\Omega\times [0,t]; \R\big)$ we apply the same trick:
	 \begin{align*}
	     \norm{\inner{\sy}{\phi}_{\mathcal{H}}-\inner{\sy^n}{\phi}_{\mathcal{H}}}_{L^2\big(\Omega\times [0,t]; \R\big)} &= \norm{\inner{\sy-\sy^n}{\phi}_{\mathcal{H}}}_{L^2\big(\Omega\times [0,t]; \R\big)}\\
	     &= \mathbbm{E}\Big(\int_0^t\inner{\sy_s-\sy^n_s}{\phi}_{\mathcal{H}}^2ds\Big)\\
	     &\leq \mathbbm{E}\Big(\int_0^t\norm{\sy_s-\sy^n_s}_{\mathcal{H}}^2\norm{\phi}_{\mathcal{H}}^2ds\Big)\\
	     &= \norm{\phi}_{\mathcal{H}}^2 \mathbbm{E}\Big(\int_0^t\norm{\sy_s-\sy^n_s}_{\mathcal{H}}^2ds\Big)\\
	     &=  \norm{\phi}_{\mathcal{H}}^2 \norm{\sy-\sy^n}_{L^2\big(\Omega\times [0,t]; \mathcal{H}\big)}\\
	     &\longrightarrow 0
	 \end{align*}
	 
	 where convergence to $0$ is by definition of the approximating sequence $\sy^n$.

	 \end{proof}
	 
	 We provide two applications of this result below, both of which will be fundamental to our SPDE framework.
	 
	 \begin{proposition} \label{multidimItoIsom}
	    The It\^{o} Isometry holds for a multi dimensional driving Brownian motion, in the sense that if $(\sy^i)_{i=1}^n$ are a collection of processes in $\mathcal{I}^\mathcal{H}$, and $(W^i)_{i=1}^n$ are independent Brownian Motions, then $$\mathbbm{E}\Bigg(\Big\vert\Big\vert \sum_{i=1}^n\int_0^t\sy^i_sdW^i
	    _s\Big\vert\Big\vert_{\mathcal{H}}^2\Bigg) = \sum_{i=1}^n\mathbbm{E}\Big(\int_0^t\norm{\sy_s^i}_{\mathcal{H}}^2ds\Big)$$ 
	\end{proposition}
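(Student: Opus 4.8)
The plan is to expand the squared norm into a double sum of inner products and separate the diagonal from the off-diagonal contributions. By the bilinearity of $\inner{\cdot}{\cdot}_{\mathcal{H}}$ and linearity of expectation,
$$\mathbbm{E}\Bigg(\Big\vert\Big\vert \sum_{i=1}^n\int_0^t\sy^i_sdW^i_s\Big\vert\Big\vert_{\mathcal{H}}^2\Bigg) = \sum_{i=1}^n\sum_{j=1}^n \mathbbm{E}\bigg(\Big\langle \int_0^t\sy^i_sdW^i_s, \int_0^t\sy^j_sdW^j_s\Big\rangle_{\mathcal{H}}\bigg).$$
The diagonal terms $i=j$ are handled immediately by the It\^o Isometry of \ref{realItoIsom}, yielding exactly the claimed right-hand side $\sum_{i=1}^n\mathbbm{E}\big(\int_0^t\norm{\sy_s^i}_{\mathcal{H}}^2ds\big)$. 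Everything therefore reduces to showing that each off-diagonal term, with $i\neq j$, vanishes.

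For the off-diagonal terms I would first establish the claim for simple processes and then pass to the limit. Given $i \neq j$, take simple approximants $\sy^{i,m}, \sy^{j,m}$ and, by refining, place them on a common partition $(t_k)$, so that, writing $\Delta_k W := W_{t_{k+1}\wedge t} - W_{t_k\wedge t}$, the cross term expands as $\sum_{k,l}\mathbbm{E}\big(\inner{a^{i,m}_k}{a^{j,m}_l}_{\mathcal{H}}\, \Delta_k W^i\, \Delta_l W^j\big)$. For $k \neq l$, say $k<l$, the increment $\Delta_l W^j$ is independent of $\mathcal{F}_{t_l}$ and mean zero while the remaining factors are $\mathcal{F}_{t_l}$-measurable, so the tower property kills the term; the case $k>l$ is symmetric. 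The genuinely new case is $k=l$: here $\inner{a^{i,m}_k}{a^{j,m}_k}_{\mathcal{H}}$ is $\mathcal{F}_{t_k}$-measurable and the product $\Delta_k W^i\,\Delta_k W^j$ is independent of $\mathcal{F}_{t_k}$, whence the expectation factorizes, and crucially $\mathbbm{E}(\Delta_k W^i\,\Delta_k W^j) = \mathbbm{E}(\Delta_k W^i)\,\mathbbm{E}(\Delta_k W^j) = 0$ by the mutual independence of $W^i$ and $W^j$. Thus every cross term vanishes for simple processes.

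To pass to the limit I would exploit that $\mathbbm{E}\inner{\cdot}{\cdot}_{\mathcal{H}}$ is precisely the inner product of the Hilbert space $L^2(\Omega;\mathcal{H})$, hence jointly continuous. Since the It\^o Isometry of \ref{realItoIsom} guarantees $\int_0^t\sy^{i,m}_s dW^i_s \to \int_0^t\sy^i_s dW^i_s$ in $L^2(\Omega;\mathcal{H})$ as $m\to\infty$ (and likewise for $j$), the off-diagonal term for the general integrands is the limit of the off-diagonal terms of the simple approximants, each of which is zero; therefore it too is zero. Combining this with the diagonal computation gives the stated identity.

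I expect the main obstacle to be the $k=l$ contribution in the simple-process step: unlike the single-noise isometry of \ref{simpleisometryprop}, where distinct-increment independence within one Brownian motion disposes of all off-diagonal terms, here the diagonal-in-time term survives that argument and must instead be eliminated using the independence of the two distinct driving motions $W^i$ and $W^j$. Some care is also needed to ensure the two simple approximations share a common time partition before the inner product is expanded.
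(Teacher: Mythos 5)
Your proof is correct, but it takes a genuinely different route from the paper. The paper's proof first projects onto an orthonormal basis $(e_k)$ of $\mathcal{H}$ via Parseval and the duality representation \ref{dualityrep}, reducing everything to the scalar integrals $\int_0^t\inner{\sy^i_s}{e_k}_{\mathcal{H}}dW^i_s$; the diagonal terms are then handled by the scalar It\^{o} Isometry and the cross terms are dismissed by asserting that independence of the Brownian motions gives independence of the corresponding stochastic integrals, together with the zero-mean property, with Tonelli's theorem used repeatedly to shuffle the infinite basis sums. You instead stay in $\mathcal{H}$ throughout: you expand the squared norm as a finite double sum of $\mathcal{H}$-valued inner products, dispose of the diagonal terms with the already-established Hilbert-valued isometry \ref{realItoIsom}, and kill the off-diagonal terms from first principles — simple approximants on a common refined partition, the tower property for distinct time indices, and independence of the increments $\Delta_kW^i$, $\Delta_kW^j$ for the equal-time case — before passing to the limit using continuity of the $L^2(\Omega;\mathcal{H})$ inner product. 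What your route buys is worth noting: the paper's claim that the integrals $\int_0^t\sy^i_sdW^i_s$ and $\int_0^t\sy^j_sdW^j_s$ are themselves independent does not actually follow from independence of $W^i$ and $W^j$ alone, since the integrands are adapted to the common filtration and may be strongly correlated (e.g.\ $\sy^i$ may depend on $W^j$); what is true, and what your argument proves directly, is that the expectation of their inner product vanishes. Your proof is thus more self-contained and repairs this glib step, at the cost of redoing a simple-process computation; the paper's proof is shorter and showcases how \ref{dualityrep} lets one import the scalar theory wholesale, which is the methodological point the surrounding text is making.
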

	
	\begin{proof}
	    We look to simplify the left hand side of the required equality, swiftly applying Parseval's identity for a basis $(e_k)$ of $\mathcal{H}$:
	    
	    \begin{align*}
	        \mathbbm{E}\Bigg(\Big\vert\Big\vert \sum_{i=1}^n\int_0^t\sy^i_sdW^i
	    _s\Big\vert\Big\vert_{\mathcal{H}}^2\Bigg) &= \mathbbm{E}\sum_{k=1}^\infty \Big\langle \sum_{i=1}^n\int_0^t\sy^i_sdW^i_s,e_k\Big\rangle^2_\mathcal{H}\\ &= \mathbbm{E}\sum_{k=1}^\infty \Big( \sum_{i=1}^n \int_0^t\inner{\sy^i_s}{e_k}_{\mathcal{H}}dW^i_s\Big)^2
	    \end{align*}
having used linearity of the inner product to pull out the sum and Theorem \ref{dualityrep}. We can now regard the infinite sum as an integral with respect to the counting measure and apply Tonelli's Theorem, also expanding the square to obtain
	 
	 $$\sum_{k=1}^\infty \mathbbm{E} \Big( \sum_{i=1}^n \int_0^t\inner{\sy^i_s}{e_k}_{\mathcal{H}}dW^i_s\Big)^2 =  \sum_{k=1}^\infty \mathbbm{E} \sum_{i=1}^n\sum_{j=1}^n\Big( 
	 \int_0^t\inner{\sy^i_s}{e_k}_{\mathcal{H}}dW^i_s\Big)\Big( \int_0^t\inner{\sy^j_s}{e_k}_{\mathcal{H}}dW^j_s\Big).$$
  For the cross terms $i \neq j$ we make use of the independence of the Brownian Motions and hence the respective stochastic integrals, as well as the standard property that the It\^{o} integral has zero expectation to nullify these terms. Our expression reduces to $$\sum_{k=1}^\infty \sum_{i=1}^n \mathbbm{E} \Big( \int_0^t\inner{\sy^i_s}{e_k}_{\mathcal{H}}dW^i_s\Big)^2$$ to which we can apply the It\^{o} Isometry (Corollary \ref{realItoIsom} for the Hilbert Space $\R$, which is of course the standard Isometry) giving us $$\sum_{k=1}^\infty \sum_{i=1}^n \mathbbm{E} \int_0^t\inner{\sy^i_s}{e_k}^2_{\mathcal{H}}ds$$ from which we apply Tonelli twice more to take the infinite sum all the way through,
	 
	 $$\sum_{i=1}^n \mathbbm{E} \int_0^t\sum_{k=1}^\infty \inner{\sy^i_s}{e_k}^2_{\mathcal{H}}ds.$$ A final application of Parseval's identity gives the result. 
	 
	 \end{proof}
Whilst we chose to prove Proposition \ref{simpleisometryprop} and subsequently Corollary \ref{realItoIsom} from first principles in the Hilbert Space setting, the method of proof here touches upon a fundamental aspect of this theory: with a good understanding of the standard $\R$ valued setting, we can apply Theorem \ref{dualityrep} to straightforwardly deduce key properties here. Indeed if we accepted the It\^{o} Isometry in $\R$, we could have just proven Corollary \ref{realItoIsom} in the simple vein of Proposition \ref{multidimItoIsom}. We take this approach in extending the result of Theorem \ref{dualityrep}.
	 
	 \begin{theorem} \label{operator through integral 1D}
	     Suppose that $\mathcal{H}_1, \mathcal{H}_2$ are Hilbert spaces such that $\sy \in \mathcal{I}^{\mathcal{H}_1}$ and $T \in \mathscr{L}(\mathcal{H}_1;\mathcal{H}_2)$. Then the process $T\sy$ defined by $$T\sy_s(\omega) = T\big(\sy_s(\omega)\big)$$ belongs to $\mathcal{I}^{\mathcal{H}_2}$ and is such that \begin{equation} \label{Tthrough} T\Big(\int_0^t\sy_sdW_s\Big) = \int_0^tT\sy_sdW_s.\end{equation} 
	 \end{theorem}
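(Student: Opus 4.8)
The plan is to dispatch the two claims separately: first the membership $T\sy \in \mathcal{I}^{\mathcal{H}_2}$ as a warm-up, and then the commutation relation (\ref{Tthrough}) through the duality representation of Theorem \ref{dualityrep}, in the spirit of the remark following Proposition \ref{multidimItoIsom} that scalar-valued facts can be lifted via \ref{dualityrep}.

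For membership I would verify the two requirements of Definition \ref{viableintegrands}. Progressive measurability of $T\sy$ follows exactly as in the proof of Theorem \ref{dualityrep}: since $T$ is bounded and hence continuous, progressive measurability of $\sy$ is preserved under composition with $T$. The square integrability condition is immediate from the operator norm bound $\norm{T\sy_s}_{\mathcal{H}_2} \leq \norm{T}_{\mathscr{L}(\mathcal{H}_1;\mathcal{H}_2)}\norm{\sy_s}_{\mathcal{H}_1}$, which gives $\mathbbm{E}\big(\int_0^T \norm{T\sy_s}_{\mathcal{H}_2}^2 ds\big) \leq \norm{T}_{\mathscr{L}(\mathcal{H}_1;\mathcal{H}_2)}^2 \, \mathbbm{E}\big(\int_0^T \norm{\sy_s}_{\mathcal{H}_1}^2 ds\big) < \infty$ by (\ref{integrable}). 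Hence $T\sy \in \mathcal{I}^{\mathcal{H}_2}$ and its stochastic integral is well defined.

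For the identity, rather than returning to simple processes I would exploit the fact that Theorem \ref{dualityrep} characterises each integral as the \emph{unique} element of the relevant Hilbert space satisfying a scalar duality relation. Introduce the adjoint $T^* \in \mathscr{L}(\mathcal{H}_2;\mathcal{H}_1)$, which exists by standard Hilbert space theory and satisfies $\inner{Tx}{y}_{\mathcal{H}_2} = \inner{x}{T^*y}_{\mathcal{H}_1}$. Fix $\phi \in \mathcal{H}_2$. The aim is to show that $T\big(\int_0^t \sy_s dW_s\big)$ satisfies the relation that characterises $\int_0^t T\sy_s dW_s$, namely that its inner product against $\phi$ equals $\int_0^t \inner{T\sy_s}{\phi}_{\mathcal{H}_2} dW_s$. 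Starting from the left, the adjoint identity gives $\inner{T\int_0^t \sy_s dW_s}{\phi}_{\mathcal{H}_2} = \inner{\int_0^t \sy_s dW_s}{T^*\phi}_{\mathcal{H}_1}$; applying Theorem \ref{dualityrep} to $\sy \in \mathcal{I}^{\mathcal{H}_1}$ with test vector $T^*\phi$ turns this into the scalar integral $\int_0^t \inner{\sy_s}{T^*\phi}_{\mathcal{H}_1} dW_s$; and the pointwise adjoint identity $\inner{\sy_s}{T^*\phi}_{\mathcal{H}_1} = \inner{T\sy_s}{\phi}_{\mathcal{H}_2}$ converts the integrand back. Thus $T\big(\int_0^t \sy_s dW_s\big)$ obeys the defining duality relation of $\int_0^t T\sy_s dW_s$ for every $\phi$, and the uniqueness asserted in Theorem \ref{dualityrep} forces equality, which is (\ref{Tthrough}).

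The work here is genuinely light, so the only point demanding care — what I would flag as the main obstacle — is the legitimacy of replacing the integrand $\inner{T\sy_s}{\phi}_{\mathcal{H}_2}$ by $\inner{\sy_s}{T^*\phi}_{\mathcal{H}_1}$ inside the one-dimensional stochastic integral; this rests on the two integrands agreeing $\mathbbm{P}\times\lambda$-almost everywhere and hence defining the same element of $L^2(\Omega\times[0,t];\R)$, which is precisely the pointwise adjoint relation together with the square integrability established above. As an alternative I would note the more hands-on route: verify (\ref{Tthrough}) directly for a simple process, where $T$ commutes with the finite defining sum by linearity, and then pass to the limit using that $T\sy^n \to T\sy$ in $L^2(\Omega\times[0,t];\mathcal{H}_2)$ and that continuity of $T$ lets the operator pass through the $L^2(\Omega;\mathcal{H}_1)$-limit defining the integral. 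There the delicate step is confirming that $T\sy^n$ is itself a valid approximating sequence so that $\int_0^t T\sy^n_s dW_s \to \int_0^t T\sy_s dW_s$; the duality argument sidesteps this bookkeeping entirely, which is why I would favour it.
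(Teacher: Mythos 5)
Your proposal is correct and follows essentially the same route as the paper's own proof: membership in $\mathcal{I}^{\mathcal{H}_2}$ via continuity and boundedness of $T$, followed by the adjoint identity and two applications of Theorem \ref{dualityrep} (with test vectors $T^*\phi$ and $\phi$) to conclude by arbitrariness of $\phi \in \mathcal{H}_2$. The only cosmetic difference is that you phrase the final step as invoking the uniqueness clause of \ref{dualityrep}, whereas the paper simply notes the equality holds for all $\phi$; these are the same argument.
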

	 
	 \begin{proof}
	     We shall prove first that $T\sy \in \mathcal{I}^{\mathcal{H}_2}$. The progressive measurability is preserved under the continuity of $T$, and for $C$ the (square of the) boundedness constant associated to $T$ we have that at any time $t$, $$\mathbbm{E}\Big(\int_0^t\norm{T\sy_s}_{\mathcal{H}_2}^2ds\Big) \leq C\mathbbm{E}\Big(\int_0^t\norm{\sy_s}_{\mathcal{H}_1}^2ds\Big) < \infty$$ as $\sy \in \mathcal{I}^{\mathcal{H}_1}$, showing that $T\sy \in \mathcal{I}^{\mathcal{H}_2}$. To commute $T$ with the integral we shall use the characterisation from Theorem \ref{dualityrep}, having now established that the right hand side of (\ref{Tthrough}) is well defined in $\mathcal{H}_2$. We introduce $T^* \in \mathscr{L}\left(\mathcal{H}_2;\mathcal{H}_1\right)$ as the adjoint of $T$, and observe that for any $\phi \in \mathcal{H}_2$,
	     \begin{align*}
	         \Big\langle T \Big(\int_0^t \sy_s dW_s\Big), \phi\Big\rangle_{\mathcal{H}_2} &= \Big\langle \int_0^t \sy_s dW_s, T^*\phi\Big\rangle_{\mathcal{H}_1}\\
	         &= \int_0^t \inner{\sy_s}{T^*\phi}_{\mathcal{H}_1}dW_s\\
	         &= \int_0^t \inner{T\sy_s}{\phi}_{\mathcal{H}_2}dW_s\\
	         &= \Big\langle \int_0^t T\sy_s dW_s, \phi\Big\rangle_{\mathcal{H}_2}
	     \end{align*}
applying Theorem \ref{dualityrep} twice. As this equality holds for arbitrary $\phi \in \mathcal{H}_2$ then we have proven (\ref{Tthrough}), which is of course an identity $\mathbbm{P}-a.e.$ in $\mathcal{H}_2$.  

	 \end{proof}

	 

	 \subsection{Martingale and Local Martingale Integrators} \label{localchapter}
	 
	 As expected, we can extend the definition to integrators beyond Brownian motion, in the same manner as the standard It\^{o} integral. We begin the extension to continuous square integrable martingales, and then to continuous local martingales.  
	
	\begin{definition} \label{m2c}
	We shall denote the class of real valued martingales $M$ such that $M_t \in L^2\big(\Omega; \R\big)$ for every $t \geq 0$ by $\mathcal{M}^2$. The subclass of such martingales with $\mathbbm{P}-a.s.$ continuous paths will be represented by $\mathcal{M}^2_c$. 
	\end{definition}
	
	\begin{definition} \label{martingaleviableintegrands}
	For any $M \in \mathcal{M}^2_c$, define $\mathcal{I}_M^T$ to be the class of $\mathcal{H}$ valued processes $\sy$ which are progressively measurable on $[0,T] \times \Omega$ and satisfy the square integrability condition $$\mathbbm{E}\Big(\int_0^T\norm{\sy_s}_{\mathcal{H}}^2d[M]_s\Big) < \infty$$ where $\left[M\right]_{\cdot}$ is the quadratic variation of $M_{\cdot}$. We similarly define $\mathcal{I}^{\mathcal{H}}_M$ to be the class of processes $\sy$ such that $\sy\in \mathcal{I}_M^T$ for all $T>0$.
	\end{definition}

We do not put the space $\mathcal{H}$ explicitly into the time restricted notation for simplicity; once constructed, we will rarely need this notation, and when needed the space will be mentioned separately. Constructing the integral$$\int_0^t\sy_sdM_s$$ for $\sy\in \mathcal{I}^{\mathcal{H}}_M$ now falls from what we have already done for (\ref{stochint}). We use simple processes $\sy^n$ as in Definition \ref{simpleprocess} to approximate $\sy$, in the sense that $$\lim_{n \rightarrow 0}\mathbbm{E}\Big(\int_0^T\norm{\sy_s-\sy_s^n}_{\mathcal{H}}^2d[M]_s\Big)=0.$$ Simply replacing $W$ by $M$ in Definitions \ref{simpleprocessintegral} and \ref{stochintdefined} completes the construction, though we do not give the details here. Let's now move on to the more delicate matter of integration with respect to a local martingale. This begins again with notation for our set of integrands.

\begin{definition} \label{localmartingaleintegrator}

For a continuous local martingale $\widetilde{M}$, define $\mathcal{I}^T_{\widetilde{M}}$ to be the class of progressively measurable processes $\sy$ such that \begin{equation} \label{localmartingaleintegrability}\int_0^T\norm{\sy_s}_{\mathcal{H}}^2d[\widetilde{M}]_s < \infty \quad a.e.\end{equation} Also define $\mathcal{I}^{\mathcal{H}}_{\widetilde{M}}$ to be those $\sy$ in $\mathcal{I}^T_{\widetilde{M}}$ for every $T$.

\end{definition}

Suppose that $\widetilde{M}$ is localised by the stopping times $(T_n).$ Without loss of generality this sequence of stopping times can be chosen such that the stopped processes $\widetilde{M}^{T_n}$ defined by $$\widetilde{M}^{T_n}_t:=\widetilde{M}_{t \wedge T_n}$$ are bounded; if $(T_n')$ are localising stopping times, then we can simply set $$T_n = T_n' \wedge \inf\{0 \leq t < \infty : \abs{\widetilde{M}_t} \geq n\}$$ so that for each $n$, $\widetilde{M}^{T_n}$ is a bounded continuous martingale and hence in $\mathcal{M}_c^2$. Note of course that the new stopping times $(T_n)$ are still non-decreasing and approach infinity $\mathbbm{P}-a.s.$ by the pathwise continuity of $\widetilde{M}$. Continuing in this theme, for a process $\sy \in \mathcal{I}^{\mathcal{H}}_{\widetilde{M}}$ let's define some more non-decreasing random times $(R_n)$ by \begin{equation} \label{R_n}
    R_n:= n \wedge \inf\{0 \leq t < \infty: \int_0^t\norm{\sy_s}_{\mathcal{H}}^2d[\widetilde{M}]_s \geq n\}\end{equation} taking the convention that the infimum of the empty set is infinite. The $(R_n)$ are stopping times as they are simply first hitting times of the continuous and adapted random variable $$\int_0^t\norm{\sy_s}_{\mathcal{H}}^2d[\widetilde{M}]_s.$$ Again these times tend to infinity $\mathbbm{P}-a.s.$ by condition (\ref{localmartingaleintegrability}). Now define $\tau_n$ by $$\tau_n = R_n \wedge T_n$$ and the truncated processes $\sy^n$ as $$\sy^n_t:= \sy_t\mathbbm{1}_{t \leq \tau_n}.$$ We use the fact that for $m \leq n$, and $t \leq \tau_m$, we have $$\sy_t\mathbbm{1}_{t \leq \tau_n}=\sy_t\mathbbm{1}_{t \leq \tau_m}$$ and also that $$\widetilde{M}^{\tau_n}_t = \widetilde{M}^{\tau_m}_t$$ so we can make the consistent definition that \begin{equation} \label{localintegratordefinition}\Big(\int_0^t\sy_sd\widetilde{M}_s\Big)(\omega) := \Big(\int_0^t\sy^n_sd\widetilde{M}^{\tau_n}_s\Big)(\omega)\end{equation} at almost every $\omega$ for any $n$ such that $t \leq \tau_n(\omega)$, noting that such an $n$ exists (for almost every $\omega$). There is subtlety in this, as $n$ itself is a random variable (it is dependent on $\omega$) and the logic in which we are proceeding is vital. To be clear, we are \textit{not} considering the $n$ of (\ref{localintegratordefinition}) as a random variable; the random selection of $n$ occurs prior. That is to say we understand (\ref{localintegratordefinition}) as a definition of the left hand side pointwise $\mathbbm{P}-a.e.$ by fixing such an $\omega$, then fixing our $n$ as outlined, and then considering the right hand side as a process which can be evaluated at any $\omega' \in \Omega$, but is such that if $\omega' \neq \omega$ then it will not necessarily be true that $t \leq \tau_n(\omega')$. We simply evaluate this process at $\omega$ to make the definition. Of course to do this we require that at this choice of $n$, $\sy^n \in \mathcal{I}^{\mathcal{H}}_{\widetilde{M}^{\tau_n}}$: the process $(\mathbbm{1}_{t \leq \tau_n})$ is progressively measurable, as it is both left continuous and adapted (adaptedness becomes clear when for each fixed $t$, we write the random variable $\mathbbm{1}_{t \leq \tau_n}$ as $1 - \mathbbm{1}_{t > \tau_n}$). The square integrability in \ref{martingaleviableintegrands} comes from the fact that the random variable $$\int_0^t\norm{\sy^n_s}_{\mathcal{H}}^2d[\widetilde{M}^{\tau_n}]_s$$ is bounded by $n$ $\mathbbm{P}-a.s.$ (owing to (\ref{R_n})), hence the expectation satisfies the same bound. It is critical again here that the $n$ in (\ref{localintegratordefinition}) is not allowed to be random, as we would have instead a bound $$\bigg(\int_0^t\norm{\sy^n_s}_{\mathcal{H}}^2d[\widetilde{M}^{\tau_n}]_s\bigg)(\omega) \leq n(\omega)$$ so we cannot deduce a finite expectation as required because the bound is not uniform in $\omega$. Of course where $\widetilde{M}$ is itself a genuine martingale, this procedure defines the stochastic integral for processes with only the regularity (\ref{localmartingaleintegrability}). In this case we do not have to stop the integrator, just truncate the integrand.

\begin{definition} \label{localmartingalewrtBM}
In the special case where the continuous local martingale is given by the genuine martingale of Brownian Motion, we denote $\mathcal{I}^{\mathcal{H}}_W$ by simply $\overbar{\mathcal{I}}^{\mathcal{H}}$. This class of processes differs to $\mathcal{I}^{\mathcal{H}}$ because we only assume a bound almost everywhere, not in expectation.
\end{definition}

We extend properties of the stochastic integral to this class of processes. 

\begin{proposition} \label{bounded take it in }
    Let $\sy \in \bar{\mathcal{I}}^{\mathcal{H}}$ and $\phi \in L^\infty(\Omega;\mathcal{H})$ be $\mathcal{F}_0-$measurable. Then $\inner{\sy}{\phi} \in \bar{\mathcal{I}}^{\mathcal{H}}$ and for every $t>0$ we have that
    \begin{equation} \label{reffyyy}
        \left\langle \int_0^t\sy_r dW_r, \phi \right\rangle_{\mathcal{H}} = \int_0^t\left\langle \sy_r,\phi \right\rangle_{\mathcal{H}}dW_r
    \end{equation}
    $\mathbbm{P}-a.s.$. The above are random inner products defined by $$\inner{\sy_s}{\phi}_{\mathcal{H}}(\omega):=\inner{\sy_s(\omega)}{\phi(\omega)}_{\mathcal{H}}$$ and similarly for the left hand side. 
\end{proposition}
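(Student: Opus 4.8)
The plan is to reduce the identity, by localisation, to the deterministic-$\phi$ statement of \ref{dualityrep}, after first upgrading that statement to a random but $\mathcal{F}_0$-measurable and bounded $\phi$. The membership claim is quick: $(\omega,s)\mapsto(\sy_s(\omega),\phi(\omega))$ is progressively measurable into $\mathcal{H}\times\mathcal{H}$ — here the key point is that $\phi$, being $\mathcal{F}_0$-measurable and independent of $s$, is trivially progressively measurable — so composing with the continuous map $\inner{\cdot}{\cdot}_{\mathcal{H}}:\mathcal{H}\times\mathcal{H}\to\R$ keeps $\inner{\sy}{\phi}_{\mathcal{H}}$ progressively measurable. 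Since $[W]_s=s$, the almost-everywhere condition (\ref{localmartingaleintegrability}) for the real integrand follows from Cauchy--Schwarz and essential boundedness of $\phi$, as
\[
\int_0^T \inner{\sy_s}{\phi}_{\mathcal{H}}^2\, ds \le \norm{\phi}_{L^\infty(\Omega;\mathcal{H})}^2 \int_0^T \norm{\sy_s}_{\mathcal{H}}^2\, ds < \infty \quad a.e.,
\]
the last bound being exactly the hypothesis $\sy \in \bar{\mathcal{I}}^{\mathcal{H}}$.

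The core of the argument is to prove (\ref{reffyyy}) first for integrands in the smaller class $\mathcal{I}^{\mathcal{H}}$, where the obstacle is the randomness of $\phi$, which \ref{dualityrep} does not permit. I would handle this by approximation, the essential sub-lemma being that an $\mathcal{F}_0$-measurable factor commutes with the integral: for $A\in\mathcal{F}_0$ and any integrand $g$ one has $\mathbbm{1}_A\int_0^t g_s\, dW_s = \int_0^t \mathbbm{1}_A g_s\, dW_s$. I would prove this on simple processes — where $\mathbbm{1}_A a_i$ is still $\mathcal{F}_{t_i}$-measurable precisely because $\mathcal{F}_0\subseteq\mathcal{F}_{t_i}$, so $\mathbbm{1}_A g$ is again simple in the sense of \ref{simpleprocess} — and then pass to the $L^2$ limit of \ref{stochintdefined}. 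Given this, for a simple $\mathcal{F}_0$-measurable $\phi=\sum_k\mathbbm{1}_{A_k}\phi_k$ with the $A_k\in\mathcal{F}_0$ a finite partition and $\phi_k\in\mathcal{H}$ fixed, I expand both sides over the partition, apply \ref{dualityrep} to each fixed $\phi_k$, and use the sub-lemma (for the real integrands $\inner{\sy}{\phi_k}_{\mathcal{H}}$) to pull each $\mathbbm{1}_{A_k}$ through, recombining to (\ref{reffyyy}). For general bounded $\mathcal{F}_0$-measurable $\phi$ I would take simple $\mathcal{F}_0$-measurable $\phi^{(m)}\to\phi$ in $\mathcal{H}$, uniformly bounded in $L^\infty(\Omega;\mathcal{H})$; the left side converges by continuity of the inner product, while the right side converges in $L^2(\Omega;\R)$ by the It\^{o} isometry \ref{realItoIsom} and dominated convergence, since $\inner{\sy_s}{\phi^{(m)}-\phi}_{\mathcal{H}}^2 \le \norm{\sy_s}_{\mathcal{H}}^2\norm{\phi^{(m)}-\phi}_{\mathcal{H}}^2\to 0$ with an integrable dominating function on $\mathcal{I}^{\mathcal{H}}$.

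Finally I would remove the stronger integrability by localisation. For $\sy\in\bar{\mathcal{I}}^{\mathcal{H}}$ take the truncations $\sy^n_t=\sy_t\mathbbm{1}_{t\le R_n}$ with $R_n$ as in (\ref{R_n}); then $\sy^n\in\mathcal{I}^{\mathcal{H}}$ and, by (\ref{localintegratordefinition}) (no stopping of the integrator is needed, $W$ being a genuine martingale), $\int_0^t\sy_s\,dW_s=\int_0^t\sy^n_s\,dW_s$ on $\{t\le R_n\}$. Applying the previous step to $\sy^n$ gives $\inner{\int_0^t\sy^n_s\,dW_s}{\phi}_{\mathcal{H}}=\int_0^t\inner{\sy^n_s}{\phi}_{\mathcal{H}}\,dW_s$, and since $\inner{\sy^n_s}{\phi}_{\mathcal{H}}=\inner{\sy_s}{\phi}_{\mathcal{H}}\mathbbm{1}_{s\le R_n}$ lies in $\mathcal{I}^{\R}$ (its expected square integral is at most $\norm{\phi}_{L^\infty}^2 n$), the sequence $(R_n)$ is itself a valid localising sequence for the real integrand $\inner{\sy}{\phi}_{\mathcal{H}}$; the consistency of the construction (\ref{localintegratordefinition}) in the choice of localising sequence then identifies $\int_0^t\inner{\sy^n_s}{\phi}_{\mathcal{H}}\,dW_s$ with $\int_0^t\inner{\sy_s}{\phi}_{\mathcal{H}}\,dW_s$ on $\{t\le R_n\}$. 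Chaining these equalities gives (\ref{reffyyy}) on each $\{t\le R_n\}$, and as $R_n\uparrow\infty$ almost surely these events exhaust $\Omega$ up to a null set, yielding the claim $\mathbbm{P}$-a.s. The main obstacle throughout is the randomness of $\phi$: the $\mathcal{F}_0$-measurability is exactly what lets $\phi$ behave as a constant relative to the future increments of $W$, and it is indispensable both at the simple-process stage and in matching the two localisations.
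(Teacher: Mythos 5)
Your proof is correct, but it takes a genuinely different route from the paper's. The paper localises first, using a single cleverly weighted sequence of stopping times $\tau_j := j \wedge \inf\{t : (1+\norm{\phi}_{\mathcal{H}}^2)\int_0^t\norm{\sy_r}_{\mathcal{H}}^2dr \geq j\}$ (the factor $1+\norm{\phi}_{\mathcal{H}}^2$ makes the one sequence localise $\sy$ and $\inner{\sy}{\phi}_{\mathcal{H}}$ simultaneously), and then reruns the simple-process argument of \ref{dualityrep} from scratch for the stopped integrand: the $\mathcal{F}_0$-measurability of $\phi$ enters by showing that $\inner{\py^n}{\phi}_{\mathcal{H}}$ is still a simple process, because each coefficient $\inner{a^n_i}{\phi}_{\mathcal{H}}$ remains $\mathcal{F}_{t_i}$-measurable and square integrable, after which one passes to the limit along a common a.s.\ convergent subsequence. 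You instead keep \ref{dualityrep} as a black box for deterministic $\phi$ and push the randomness of $\phi$ into two other ingredients: an indicator-commutation sub-lemma $\mathbbm{1}_A\int g\,dW = \int \mathbbm{1}_A g\,dW$ for $A \in \mathcal{F}_0$ (which is the indicator case of \ref{real valued bounded take it in}, a result the paper states later without proof — so you supply an argument the paper omits), and an approximation of $\phi$ by finitely-valued $\mathcal{F}_0$-measurable functions with uniform $L^\infty$ bound, closed up via the It\^{o} isometry and dominated convergence; only then do you localise with the canonical times $(R_n)$. Both routes are sound; the paper's buys self-containment and avoids ever comparing two different localisations, while yours reuses existing results more economically. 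The one place you should be more explicit is the final matching step: you invoke "consistency of the construction (\ref{localintegratordefinition}) in the choice of localising sequence," i.e.\ that $\int_0^t\inner{\sy_s}{\phi}_{\mathcal{H}}\mathbbm{1}_{s\leq R_n}dW_s$ agrees on $\{t \leq R_n\}$ with the integral defined through the canonical localising times of the integrand $\inner{\sy}{\phi}_{\mathcal{H}}$. This is true — it follows from the standard local property that two $L^2$ integrands agreeing up to a stopping time have integrals agreeing up to that time — but it is not established anywhere in the paper, so in this setting it deserves a line of proof rather than an appeal; the paper's weighted stopping time is precisely the device that sidesteps this issue.
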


\begin{proof}
We should first justify that $\left\langle \sy_\cdot,\phi \right\rangle_{\mathcal{H}} \in \bar{\mathcal{I}}^{\R}$. The progressive measurability follows as for every $T>0$ the mapping $$\inner{\sy_{\cdot}}{\phi}_{\mathcal{H}}: t \times \omega \times \tilde{\omega} \rightarrow \inner{\sy_{t}(\omega)}{\phi(\tilde{\omega})}_{\mathcal{H}}$$ is $\mathcal{B}([0,T]) \times \mathcal{F}_T \times \mathcal{F}_0$ measurable, so in particular it is $\mathcal{B}([0,T]) \times \mathcal{F}_T \times \mathcal{F}_T$ measurable and as such $$\inner{\sy_{\cdot}}{\phi}_{\mathcal{H}}: t \times \omega  \rightarrow \inner{\sy_{t}(\omega)}{\phi(\omega)}_{\mathcal{H}}$$ is $\mathcal{B}([0,T]) \times \mathcal{F}_T$ measurable as required. Note that we have used the progressive measurability requirement on $\sy$. We also appreciate that for $\mathbbm{P}-a.e.$ $\omega$, $$\int_0^T\inner{\sy_r(\omega)}{\phi(\omega)}_{\mathcal{H}}^2dr \leq \norm{\phi(\omega)}_{\mathcal{H}}^2\int_0^T\norm{\sy_r(\omega)}_{\mathcal{H}}^2dr < \infty$$ again by assumption on $\sy \in \bar{\mathcal{I}}^{\mathcal{H}}$. Thus $\inner{\sy}{\phi}_{\mathcal{H}} \in \bar{\mathcal{I}}^{\R}$. To compute the integrals we introduce the stopping times $$\tau_j := j \wedge \inf\left\{0 \leq t < \infty: (1 + \norm{\phi}_{\mathcal{H}}^2)\int_0^t\norm{\sy_r}_{\mathcal{H}}^2dr \geq j \right\} $$ such that for every $j \in \N$, $\sy_{\cdot}\mathbbm{1}_{\cdot \leq \tau_j} \in \mathcal{I}^{\mathcal{H}}$,  $\inner{\sy_{\cdot}\mathbbm{1}_{\cdot \leq \tau_j}}{\phi}_{\mathcal{H}} \in \mathcal{I}^{\R}$. For $\omega$ fixed and $t$ fixed as in (\ref{reffyyy}) then we choose $n$ sufficiently large so that $\tau_j(\omega) \geq t$. The integrals are then defined at this $\omega$ by
\begin{equation} \label{reffyyy2}
        \left\langle \int_0^t\sy_r\mathbbm{1}_{r \leq \tau^j} dW_r, \phi \right\rangle_{\mathcal{H}} = \int_0^t\left\langle \sy_r\mathbbm{1}_{r \leq \tau^j},\phi \right\rangle_{\mathcal{H}}dW_r
    \end{equation}
so we in fact show that (\ref{reffyyy2}) holds $\mathbbm{P}-a.e.$ for every $j$. We now fix arbitrary $j \in \N$. Our plan is as follows: we consider a sequence of simple processes $(\py^n)$ which approximate $\sy_{\cdot}\mathbbm{1}_{\cdot \leq \tau_j}$ in $L^2(\Omega \times [0,t];\mathcal{H})$ as postulated in Proposition \ref{simpleapproximation}. We then claim that $(\inner{\py^n}{\phi}_{\mathcal{H}})$ is a sequence of $\R$ valued simple processes which converge to $\inner{\sy_{\cdot}\mathbbm{1}_{\cdot \leq \tau_j}}{\phi}_{\mathcal{H}}$ in $L^2(\Omega \times [0,t];\R)$. Following this we shall prove (\ref{reffyyy}) for this simple case and show the identity holds in the limit.\\

We first show that for each $n \in \N$, $\inner{\py^n}{\phi}_{\mathcal{H}}$ is a simple process. Let $\py^n$ have representation as in Definition \ref{simpleprocess}. Then
\begin{align*}
    \inner{\py^n}{\phi}_{\mathcal{H}} &= \left\langle a^n_0\mathbbm{1}_{\{0\}} + \sum_{i=0}^{\infty} a_i^n\mathbbm{1}_{(t^n_i,t^n_{i+1}]},\phi \right\rangle_{\mathcal{H}}\\
    &= \inner{a^n_0}{\phi}_{\mathcal{H}} \mathbbm{1}_{\{0\}} + \sum_{i=0}^{\infty} \left\langle a_i^n,\phi \right\rangle_{\mathcal{H}}\mathbbm{1}_{(t^n_i,t^n_{i+1}]}
\end{align*}
so this would satisfy the requirements of an $\R$ valued simple process if for each $i \in \N$, $\inner{a^n_i}{\phi}_{\mathcal{H}} \in L^2(\Omega;\R)$ and is $\mathcal{F}_{t_i}-$measurable. For the square integrability constraint, observe that $$\mathbbm{E}\left(\inner{a^n_i}{\phi}_{\mathcal{H}}^2 \right) \leq \mathbbm{E}\left(\norm{a^n_i}_{\mathcal{H}}^2\norm{\phi}_{\mathcal{H}}^2\right) \leq \norm{\phi}_{L^\infty(\Omega;\mathcal{H})}^2\mathbbm{E}\left(\norm{a^n_i}_{\mathcal{H}}^2\right) < \infty$$ by the assumptions of $a_i \in L^2(\Omega;\mathcal{H})$ and $\phi \in L^\infty(\Omega;\mathcal{H})$. The $\mathcal{F}_{t_i}$ measurability follows in the same way as the progressive measurability of $\inner{\sy}{\phi}_{\mathcal{H}}$. Indeed the required $L^2\left(\Omega\times [0,t]; \R\right)$ convergence follows similarly as \begin{align*}
\left\Vert \inner{\sy_{\cdot}\mathbbm{1}_{\cdot \leq \tau_j}}{\phi}_{\mathcal{H}} - \inner{\py^n_{\cdot}}{\phi}_{\mathcal{H}}\right\Vert_{L^2(\Omega \times [0,t];\R)} &= \norm{ \inner{\sy_{\cdot}\mathbbm{1}_{\cdot \leq \tau_j} - \py^n}{\phi}_{\mathcal{H}}}_{L^2(\Omega \times [0,t];\R)}\\ &\leq \left\Vert \norm{\sy_{\cdot}\mathbbm{1}_{\cdot \leq \tau_j} - \py^n}_{\mathcal{H}}\norm{\phi}_{\mathcal{H}}\right\Vert_{L^2(\Omega \times [0,t];\R)} \\&\leq \norm{\phi}_{L^\infty(\Omega;\mathcal{H})}  \left\Vert \norm{\sy_{\cdot}\mathbbm{1}_{\cdot \leq \tau_j} - \py^n}_{\mathcal{H}}\right\Vert_{L^2(\Omega \times [0,t];\R)}
\end{align*}
and by assumption $$\left\Vert \norm{\sy_{\cdot}\mathbbm{1}_{\cdot \leq \tau_j} - \py^n}_{\mathcal{H}}\right\Vert_{L^2(\Omega \times [0,t];\R)} \longrightarrow 0$$ as $n \rightarrow \infty$, so the convergence is proved. To show the identity (\ref{reffyyy}) in the case of the simple process $\py^n$, observe that
\begin{align*}
    \left\langle \int_0^t \py^n_r dW_r, \phi\right\rangle_{\mathcal{H}} &= \left\langle\sum_{i=0}^{\infty} a^n_i\big(W_{t^n_{i+1}\wedge t}-W_{t^n_i \wedge t}\big), \phi\right\rangle_{\mathcal{H}}\\
	     &= \sum_{i=0}^{\infty}\left\langle a^n_i\left(W_{t^n_{i+1}\wedge t}-W_{t^n_i \wedge t}\right),\phi\right\rangle_{\mathcal{H}}\\
	     &= \sum_{i=0}^{\infty}\inner{a^n_i}{\phi}_{\mathcal{H}}\left(W_{t^n_{i+1}\wedge t}-W_{t^n_i \wedge t}\right)\\
	     &= \int_0^t\inner{\py^n_r}{\phi}_{\mathcal{H}}dW_r
\end{align*}
as required. In order to conclude the argument, by definition of the integral we have that \begin{align*}\int_0^t\sy_r\mathbbm{1}_{r \leq \tau^j} dW_r &= \lim_{n \rightarrow \infty}\int_0^t\py^n_r dW_r\\
\int_0^t\inner{\sy_r\mathbbm{1}_{r \leq \tau^j}}{\phi}_{\mathcal{H}} dW_r &= \lim_{n \rightarrow \infty}\int_0^t\inner{\py^n_r}{\phi}_{\mathcal{H}} dW_r
\end{align*}
where the first limit is taken in $L^2(\Omega;\mathcal{H})$ and the second one in $L^2(\Omega;\R)$. For each we can thus extract a $\mathbbm{P}-a.s.$ convergent subsequence in the appropriate space, so by taking successive subsequences we can find one common subsequence indexed by $(n_k)$ such that the above limits hold $\mathbbm{P}-a.s.$. Thus
\begin{align*}
    \left \langle \int_0^t\sy_r\mathbbm{1}_{r \leq \tau^j} dW_r, \phi \right\rangle_{\mathcal{H}} &= \left \langle \lim_{n_k \rightarrow \infty}\int_0^t\py^{n_k}_r dW_r, \phi \right \rangle_{\mathcal{H}}\\
    &=\lim_{n_k \rightarrow \infty} \left \langle \int_0^t\py^{n_k}_r dW_r, \phi \right \rangle_{\mathcal{H}}\\
    &= \lim_{n_k \rightarrow \infty}\int_0^t\inner{\py^n_r}{\phi}_{\mathcal{H}}dW_r\\
    &= \int_0^t\inner{\sy_r\mathbbm{1}_{r \leq \tau^j}}{\phi}_{\mathcal{H}} dW_r
\end{align*}
so (\ref{reffyyy2}) is justified and the proof is complete.

\end{proof}

We note that the $\mathcal{F}_0-$measurability requirement on $\phi$ really comes into play in showing the $\mathcal{F}_{t_i}-$measurability of $\inner{a^n_i}{\phi}_\mathcal{H}$. If we were to consider the integral over some $[s,t]$ interval instead, then one could relax $\phi$ to only being $\mathcal{F}_s-$measurable. In fact the result can also be extended to unbounded $\phi$. To do this we shall prove a Stochastic Dominated Convergence Theorem. 

\begin{lemma} \label{stochstic dominated convergence}
    Let $(\sy^n)$ be a sequence in $\bar{\mathcal{I}}^{\mathcal{H}}$ such that there exists processes $\sy:\Omega \times [0,\infty) \rightarrow \mathcal{H}$ and $\py \in \bar{\mathcal{I}}^{\mathcal{H}}$ with the properties that for every $T>0$, $ \mathbbm{P} \times \lambda  - a.e.$ $(\omega,t) \in \Omega \times [0,T]$:
    \begin{enumerate}
        \item \label{enum1} $\norm{\sy^n_t(\omega)}_{\mathcal{H}} \leq \norm{\py_t(\omega)}_{\mathcal{H}}$ for all $n \in \N$;
        \item $(\sy^n_t(\omega))$ is convergent to $\sy_t(\omega)$ in $\mathcal{H}$.
    \end{enumerate}
    Then $\sy \in \bar{\mathcal{I}}^{\mathcal{H}}$ and for every $t>0$, there exists a subsequence indexed by $(n_k)$ such that \begin{equation}\label{endgame}
    \lim_{n_k \rightarrow \infty}\int_0^t\sy^{n_k}_rdW_r = \int_0^t\sy_r dW_r\end{equation} $\mathbbm{P}-a.s.$. 
\end{lemma}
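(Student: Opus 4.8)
The plan is to first confirm $\sy\in\bar{\mathcal{I}}^{\mathcal{H}}$ and then to reduce the almost sure convergence of the (localised) integrals to an $L^2$ statement about \emph{truncated} integrals, to which the It\^{o} Isometry \ref{realItoIsom} together with the \emph{classical} dominated convergence theorem apply. For membership, note that $\sy$ is the pointwise ($\mathbbm{P}\times\lambda$-a.e.) limit of the progressively measurable $\sy^n$, so, after redefining it to be $0$ on the null set where the limit fails to exist, $\sy$ is itself progressively measurable. Passing to the limit in hypothesis \ref{enum1} gives $\norm{\sy_t(\omega)}_{\mathcal{H}}\leq\norm{\py_t(\omega)}_{\mathcal{H}}$ for a.e.\ $(\omega,t)$, whence for each $T>0$ and a.e.\ $\omega$,
$$\int_0^T\norm{\sy_s}_{\mathcal{H}}^2\,ds\leq\int_0^T\norm{\py_s}_{\mathcal{H}}^2\,ds<\infty$$
because $\py\in\bar{\mathcal{I}}^{\mathcal{H}}$; hence $\sy\in\bar{\mathcal{I}}^{\mathcal{H}}$.

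Next I introduce a \emph{common} localisation adapted to the dominating process. Define
$$\tau_j:=j\wedge\inf\Big\{t\geq 0:\int_0^t\norm{\py_s}_{\mathcal{H}}^2\,ds\geq j\Big\},$$
which are stopping times (first hitting times of a continuous adapted increasing process) increasing to $\infty$ a.s.\ by the a.e.\ finiteness of $\int_0^T\norm{\py_s}_{\mathcal{H}}^2\,ds$. Domination forces both $\sy^n\mathbbm{1}_{\cdot\leq\tau_j}$ and $\sy\mathbbm{1}_{\cdot\leq\tau_j}$ into $\mathcal{I}^{\mathcal{H}}$, with square-integral bounded by $j$. The point I will exploit is that, since the integral of a $\bar{\mathcal{I}}^{\mathcal{H}}$ process is defined by localisation and its value is independent of the localising sequence, on the event $\{t\leq\tau_j\}$ one has $\int_0^t\sy^n_r\,dW_r=\int_0^t\sy^n_r\mathbbm{1}_{r\leq\tau_j}\,dW_r$, and likewise for $\sy$.

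For fixed $j$ and $t$, the It\^{o} Isometry \ref{realItoIsom} gives
$$\mathbbm{E}\Big\|\int_0^t(\sy^n_r-\sy_r)\mathbbm{1}_{r\leq\tau_j}\,dW_r\Big\|_{\mathcal{H}}^2=\mathbbm{E}\int_0^{t\wedge\tau_j}\norm{\sy^n_r-\sy_r}_{\mathcal{H}}^2\,dr.$$
The integrand tends to $0$ a.e.\ by hypothesis and is dominated by the fixed integrable function $4\norm{\py_r}_{\mathcal{H}}^2\mathbbm{1}_{r\leq\tau_j}$, whose integral over $\Omega\times[0,t]$ is at most $4j$; the classical dominated convergence theorem then sends the right-hand side to $0$. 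Thus $\int_0^t\sy^n_r\mathbbm{1}_{r\leq\tau_j}\,dW_r\to\int_0^t\sy_r\mathbbm{1}_{r\leq\tau_j}\,dW_r$ in $L^2(\Omega;\mathcal{H})$, hence along a subsequence $\mathbbm{P}$-a.s. Extracting nested subsequences over $j=1,2,\dots$ and diagonalising produces a single sequence $(n_k)$ along which this almost sure convergence holds for every $j$ simultaneously. Finally, for a.e.\ $\omega$ I select $j$ with $t\leq\tau_j(\omega)$ (possible as $\tau_j\uparrow\infty$) and invoke the consistency of the previous paragraph to replace the truncated integrals by the genuine ones, which yields (\ref{endgame}).

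The delicate step, and the one I would treat with most care, is the consistency asserted above: that the localised integral coincides on $\{t\leq\tau_j\}$ with the $\mathcal{I}^{\mathcal{H}}$-integral of the $\tau_j$-truncated process, i.e.\ independence of the chosen localising sequence, since the construction in \ref{localmartingaleintegrator} fixes a \emph{particular} sequence tailored to each $\sy^n$. I would establish this directly from the Isometry --- the integral of a process supported on $\{r>\sigma\}$, when evaluated before $\sigma$, vanishes --- so that any two valid localising sequences agree on their common sub-intervals. Everything else is a routine combination of the It\^{o} Isometry, deterministic dominated convergence, and the diagonal subsequence argument.
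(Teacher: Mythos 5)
Your proof follows essentially the same route as the paper's: the same stopping times $\tau_j$ built from the dominating process $\py$, the It\^{o} Isometry \ref{realItoIsom} combined with the classical dominated convergence theorem to obtain $L^2(\Omega;\mathcal{H})$ convergence of the truncated integrals, and extraction of a $\mathbbm{P}$-a.s.\ convergent subsequence. If anything, you are more careful than the paper on two points it glosses over --- the diagonalisation over $j$ to produce a single subsequence valid for all localising times simultaneously, and the consistency of the localised integral under a change of localising sequence --- so your write-up fills in small steps the paper leaves implicit.
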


\begin{proof}
Immediately we note that $\sy$ inherits the progressive measurablity from $\sy^n$ from the almost everywhere limit in the product space $\Omega \times [0,T]$ when equipped with product sigma algebra $\mathcal{F}_T \times \mathcal{B}([0,T])$. Similarly we must have that for $ \mathbbm{P} \times \lambda  - a.e.$ $(\omega,t)$, $\norm{\sy_t(\omega)}_{\mathcal{H}} \leq \norm{\py_t(\omega)}_{\mathcal{H}}$ so $\sy$ must too satisfy the integrability constraints and hence belongs to $\bar{\mathcal{I}}^{\mathcal{H}}$. We look to find a common sequence of localising times for the stochastic integrals, and then demonstrate (\ref{endgame}) by the showing the identity holds true when stopped at each localising time. To this end we introduce the stopping times
$$\tau_j := j \wedge \inf\left\{0 \leq t < \infty: \int_0^t\norm{\py_r}_{\mathcal{H}}^2dr \geq j \right\}$$
which from item \ref{enum1} serve as a sequence of localising times for every $\sy^n$, and too for $\sy$. Thus for any fixed $t >0$ and $j \in \N$ we wish to show that
\begin{equation} \label{newest show that}
    \lim_{n_k \rightarrow \infty}\int_0^t\sy^{n_k}_r\mathbbm{1}_{r \leq \tau_j}dW_r = \int_0^t\sy_r\mathbbm{1}_{r \leq \tau_j} dW_r
\end{equation}
for a subsequence $(n_k)$ $\mathbbm{P}-a.s.$, or equivalently that
$$ \lim_{n_k \rightarrow \infty}\int_0^t(\sy^{n_k}_r-\sy_r)\mathbbm{1}_{r \leq \tau_j}dW_r =0.$$
We first assess the convergence in $L^2(\Omega;\mathcal{H})$, applying Corollary \ref{realItoIsom} for each fixed $n$ to see that
$$\mathbbm{E}\left\Vert \int_0^t(\sy^{n}_r-\sy_r)\mathbbm{1}_{r \leq \tau_j}dW_r  \right\Vert_{\mathcal{H}}^2 = \mathbbm{E}\left( \int_0^t\norm{(\sy^{n}_r-\sy_r)\mathbbm{1}_{r \leq \tau_j}}_{\mathcal{H}}^2 dr  \right). $$
Observing that for $\mathbbm{P} \times \lambda-a.e.$ $(\omega,t)$, \begin{align*} \norm{(\sy^{n}_r(\omega)-\sy_r(\omega))\mathbbm{1}_{r \leq \tau_j}(\omega)}_{\mathcal{H}}^2 &\leq \left(\norm{\sy^{n}_r(\omega)\mathbbm{1}_{r \leq \tau_j(\omega)}}_{\mathcal{H}} + \norm{\sy_r(\omega)\mathbbm{1}_{r \leq \tau_j}(\omega)}_{\mathcal{H}}\right)^2\\
&\leq 4\norm{\py_r(\omega)\mathbbm{1}_{r \leq \tau_j}(\omega)}_{\mathcal{H}}^2
\end{align*}
Then with dominating function $4\norm{\py_\cdot\mathbbm{1}_{\cdot \leq \tau_j}}_{\mathcal{H}}^2$ we can apply the standard Dominated Convergence Theorem for the integral over the product space (we face no problems with the order and configuration of integration from Tonelli's Theorem given the progressive measurability) to deduce that $$ \lim_{n \rightarrow \infty}\mathbbm{E}\left( \int_0^t\norm{(\sy^{n}_r-\sy_r)\mathbbm{1}_{r \leq \tau_j}}_{\mathcal{H}}^2 dr  \right) = 0$$ and therefore $$\lim_{n \rightarrow \infty}\mathbbm{E}\left\Vert \int_0^t(\sy^{n}_r-\sy_r)\mathbbm{1}_{r \leq \tau_j}dW_r  \right\Vert_{\mathcal{H}}^2 = 0 .$$
Thus we have demonstrated the convergence (\ref{newest show that}) but for the whole sequence in $L^2(\Omega;\mathcal{H})$, from which we can deduce a $\mathbbm{P}-a.s.$ convergent subsequence and the result is proved.

\end{proof}

\begin{proposition} \label{unbounded take it in}
    Let $\sy \in \bar{\mathcal{I}}^{\mathcal{H}}$ and $\phi: \Omega \rightarrow \mathcal{H}$ be $\mathcal{F}_0-$measurable. Then $\inner{\sy}{\phi} \in \bar{\mathcal{I}}^{\mathcal{H}}$ and for every $t>0$ we have that
    \begin{equation} \label{reffyyy3}
        \left\langle \int_0^t\sy_r dW_r, \phi \right\rangle_{\mathcal{H}} = \int_0^t\left\langle \sy_r,\phi \right\rangle_{\mathcal{H}}dW_r
    \end{equation}
    $\mathbbm{P}-a.s.$. 
\end{proposition}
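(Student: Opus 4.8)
The plan is to reduce to the bounded case already established in Proposition~\ref{bounded take it in } by truncating $\phi$, and then to remove the truncation by invoking the Stochastic Dominated Convergence Theorem of Lemma~\ref{stochstic dominated convergence}. First I would record that $\inner{\sy}{\phi}_{\mathcal{H}} \in \bar{\mathcal{I}}^{\R}$. The progressive measurability is verified by the identical argument used in Proposition~\ref{bounded take it in }, which nowhere exploited the boundedness of $\phi$; the pathwise square-integrability holds because for $\mathbbm{P}$-a.e. $\omega$,
$$\int_0^T \inner{\sy_r(\omega)}{\phi(\omega)}_{\mathcal{H}}^2 \, dr \leq \norm{\phi(\omega)}_{\mathcal{H}}^2 \int_0^T \norm{\sy_r(\omega)}_{\mathcal{H}}^2 \, dr < \infty,$$
the finiteness following since $\norm{\phi(\omega)}_{\mathcal{H}}$ is finite for each $\omega$ (merely not bounded uniformly in $\omega$) and $\sy \in \bar{\mathcal{I}}^{\mathcal{H}}$.

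Next I would set $\phi_m := \phi\,\mathbbm{1}_{\{\norm{\phi}_{\mathcal{H}} \leq m\}}$ for $m \in \N$. Each $\phi_m$ is $\mathcal{F}_0$-measurable, as $\{\norm{\phi}_{\mathcal{H}} \leq m\} \in \mathcal{F}_0$, and satisfies $\norm{\phi_m}_{\mathcal{H}} \leq m$ everywhere, so $\phi_m \in L^\infty(\Omega;\mathcal{H})$ is admissible in Proposition~\ref{bounded take it in }. That proposition therefore gives, for each fixed $m$,
$$\left\langle \int_0^t \sy_r \, dW_r, \phi_m \right\rangle_{\mathcal{H}} = \int_0^t \inner{\sy_r}{\phi_m}_{\mathcal{H}} \, dW_r$$
$\mathbbm{P}$-a.s.; as there are only countably many $m$, I may fix a single $\mathbbm{P}$-null set off which this holds simultaneously for all $m$.

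It then remains to let $m \to \infty$ on each side. On the left, for each $\omega$ we have $\phi_m(\omega) = \phi(\omega)$ once $m \geq \norm{\phi(\omega)}_{\mathcal{H}}$, so by continuity of the inner product the left-hand side is eventually equal to $\left\langle \int_0^t \sy_r \, dW_r, \phi \right\rangle_{\mathcal{H}}$ and hence converges $\mathbbm{P}$-a.s. along the full sequence. On the right I would apply Lemma~\ref{stochstic dominated convergence} to the $\R$-valued processes $\sy^m := \inner{\sy_\cdot}{\phi_m}_{\mathcal{H}} \in \bar{\mathcal{I}}^{\R}$: by Cauchy--Schwarz and $\norm{\phi_m}_{\mathcal{H}} \leq \norm{\phi}_{\mathcal{H}}$ they are dominated uniformly in $m$ by $\py := \norm{\sy_\cdot}_{\mathcal{H}}\norm{\phi}_{\mathcal{H}}$, which lies in $\bar{\mathcal{I}}^{\R}$ exactly by the integrability bound above, and they converge $\mathbbm{P} \times \lambda$-a.e. to $\inner{\sy_\cdot}{\phi}_{\mathcal{H}}$ by continuity of the inner product. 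The lemma supplies a subsequence $(m_k)$ along which $\int_0^t \inner{\sy_r}{\phi_{m_k}}_{\mathcal{H}} \, dW_r \to \int_0^t \inner{\sy_r}{\phi}_{\mathcal{H}} \, dW_r$ $\mathbbm{P}$-a.s. Passing to $m = m_k \to \infty$ in the identity for $\phi_m$ then yields (\ref{reffyyy3}) $\mathbbm{P}$-a.s.

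I expect the right-hand passage to the limit to be the only genuine obstacle: one cannot interchange a limit with a stochastic integral directly, which is precisely why Lemma~\ref{stochstic dominated convergence} is needed, and checking its domination hypothesis by a single process $\py \in \bar{\mathcal{I}}^{\R}$ is the crux. Everything else amounts to the elementary observation that the truncations stabilise pointwise and to bookkeeping of countably many null sets.
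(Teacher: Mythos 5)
Your proposal is correct and follows essentially the same route as the paper's proof: truncate $\phi$ to a bounded, $\mathcal{F}_0$-measurable $\phi_m$, apply Proposition \ref{bounded take it in } for each $m$, and pass to the limit on the stochastic-integral side via the Stochastic Dominated Convergence Theorem \ref{stochstic dominated convergence}, with the left-hand side stabilising pointwise. The only cosmetic differences are that the paper writes the truncation as a sum over shells $\{k \leq \norm{\phi}_{\mathcal{H}} < k+1\}$ and takes the dominating process to be $\inner{\sy}{\phi}_{\mathcal{H}}$ itself, whereas you use the cumulative cutoff $\mathbbm{1}_{\{\norm{\phi}_{\mathcal{H}} \leq m\}}$ and dominate by $\norm{\sy_\cdot}_{\mathcal{H}}\norm{\phi}_{\mathcal{H}}$; both choices satisfy the hypotheses of the lemma.
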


\begin{proof}
A justification that $\left\langle \sy_r,\phi \right\rangle _{\mathcal{H}} \in \bar{\mathcal{I}}^{\R}$ is precisely as in Proposition \ref{bounded take it in }. To apply this result, we rewrite $\phi$ in a trivial way as $$\phi:= \sum_{k=1}^\infty \phi \mathbbm{1}_{k \leq \norm{\phi}_{\mathcal{H}} < k+1}$$ where the limit is taken $\mathbbm{P}-a.s.$ in $\mathcal{H}$ (similarly to (\ref{simpleintegralsecond}) this is just a finite sum at each fixed $\omega$, or more precisely just a single element of the sum). Introducing the notation $$\phi^n:= \sum_{k=1}^n \phi \mathbbm{1}_{k \leq \norm{\phi}_{\mathcal{H}} < k+1} $$ then clearly $\phi^n \in L^\infty(\Omega;\mathcal{H})$ and is still $\mathcal{F}_0-$measurable, so we can apply Proposition \ref{bounded take it in } to see that $$\left\langle \int_0^t\sy_r dW_r, \phi^n \right\rangle_{\mathcal{H}} = \int_0^t\left\langle \sy_r,\phi^n \right\rangle_{\mathcal{H}}dW_r. $$
We can take the $\mathbbm{P}-a.s.$ limit in $\mathcal{H}$ outside of the inner product on the left hand side, so it is sufficient to show that 
\begin{equation} \label{new sufficient to show}\lim_{n \rightarrow \infty}\int_0^t\left\langle \sy_r,\phi^n \right\rangle_{\mathcal{H}}dW_r = \int_0^t\left\langle \sy_r,\phi \right\rangle_{\mathcal{H}}dW_r\end{equation}
or at least that this is true for a subsequence. This is an immediate application of Lemma \ref{stochstic dominated convergence}, with dominating function simply the limit $\inner{\sy}{\phi}_{\mathcal{H}}$.


\end{proof}

The same is true for multiplication by real valued random variables, where the proof is identical. We state the result here.

\begin{proposition} \label{real valued bounded take it in}
    Let $\sy \in \bar{\mathcal{I}}^{\mathcal{H}}$ and $\eta: \Omega \rightarrow \R$ be $\mathcal{F}_0-$measurable. Then $\eta \sy \in \bar{\mathcal{I}}^{\mathcal{H}}$ and for every $t > 0$ we have that $$\eta \int_0^t\sy_rdW_r = \int_0^t \eta \sy_r dW_r$$ $\mathbbm{P}-a.s.$. 
\end{proposition}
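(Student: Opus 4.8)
The plan is to follow the template of \ref{unbounded take it in} essentially verbatim, replacing the inner product $\inner{\cdot}{\phi}_{\mathcal{H}}$ by scalar multiplication by $\eta$ and keeping the target space $\mathcal{H}$ throughout. First I would establish $\eta\sy \in \bar{\mathcal{I}}^{\mathcal{H}}$: since $\eta$ is $\mathcal{F}_0$-measurable and constant in time it is $\mathcal{F}_t$-measurable for every $t$, so $(t,\omega)\mapsto \eta(\omega)\sy_t(\omega)$ inherits progressive measurability from $\sy$ exactly as in \ref{bounded take it in }, while the pathwise bound $\int_0^T\norm{\eta\sy_r}_{\mathcal{H}}^2\,dr = \eta^2\int_0^T\norm{\sy_r}_{\mathcal{H}}^2\,dr < \infty$ holds $\mathbbm{P}$-a.e. because $\eta(\omega)$ is finite a.e. and $\sy \in \bar{\mathcal{I}}^{\mathcal{H}}$.

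Next I would prove the bounded analogue of \ref{bounded take it in }: if in addition $\eta \in L^\infty(\Omega;\R)$, then $\eta\int_0^t\sy_r\,dW_r = \int_0^t\eta\sy_r\,dW_r$ $\mathbbm{P}$-a.s. As there, I localise by stopping times $\tau_j$ so that $\sy\mathbbm{1}_{\cdot\le\tau_j}\in\mathcal{I}^{\mathcal{H}}$, take simple processes $\py^n$ approximating $\sy\mathbbm{1}_{\cdot\le\tau_j}$ in $L^2(\Omega\times[0,t];\mathcal{H})$ as in \ref{simpleapproximation}, and observe that $\eta\py^n$ is again a simple $\mathcal{H}$-valued process: its coefficients $\eta a_i^n$ are $\mathcal{F}_{t_i}$-measurable and lie in $L^2(\Omega;\mathcal{H})$ by boundedness of $\eta$, and $\eta\py^n \to \eta\sy\mathbbm{1}_{\cdot\le\tau_j}$ in $L^2(\Omega\times[0,t];\mathcal{H})$ since $\norm{\eta(\py^n-\sy\mathbbm{1}_{\cdot\le\tau_j})}_{\mathcal{H}}\le \norm{\eta}_{L^\infty(\Omega;\R)}\norm{\py^n-\sy\mathbbm{1}_{\cdot\le\tau_j}}_{\mathcal{H}}$. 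For the simple process the identity is immediate from \ref{simpleprocessintegral}, because $\eta$ is a fixed random scalar that factors out of the finite sum $\sum_i a_i^n(W_{t_{i+1}\wedge t}-W_{t_i\wedge t})$; passing to $\mathbbm{P}$-a.s. convergent subsequences of both $L^2$ limits then transfers the identity to $\sy\mathbbm{1}_{\cdot\le\tau_j}$, and unravelling the localisation gives the bounded case.

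Finally I would remove the boundedness assumption exactly as in \ref{unbounded take it in}. Setting $\eta^n := \eta\mathbbm{1}_{\abs{\eta}\le n}$, each $\eta^n$ is bounded and $\mathcal{F}_0$-measurable, so the bounded case yields $\eta^n\int_0^t\sy_r\,dW_r = \int_0^t\eta^n\sy_r\,dW_r$. Letting $n\to\infty$, the left-hand side converges $\mathbbm{P}$-a.s. to $\eta\int_0^t\sy_r\,dW_r$ because $\eta^n\to\eta$ a.s. and $\int_0^t\sy_r\,dW_r$ is a fixed random element of $\mathcal{H}$; for the right-hand side the sequence $\eta^n\sy$ satisfies $\norm{\eta^n_t\sy_t}_{\mathcal{H}}\le\norm{\eta\sy_t}_{\mathcal{H}}$ with $\eta\sy\in\bar{\mathcal{I}}^{\mathcal{H}}$ and converges pointwise to $\eta\sy$, so \ref{stochstic dominated convergence} supplies a subsequence along which $\int_0^t\eta^{n_k}\sy_r\,dW_r\to\int_0^t\eta\sy_r\,dW_r$ $\mathbbm{P}$-a.s. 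Matching the two limits along this subsequence closes the argument. I do not expect a genuine obstacle: the only delicate point is that $\eta$ is random and so cannot be pulled out of the defining $L^2$ limit of the stochastic integral directly, which is precisely why the bounded case is treated first through simple processes (where $\eta$ factors out of a finite sum) and the general case is then reached by truncation together with the stochastic dominated convergence theorem.
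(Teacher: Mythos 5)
Your proposal is correct and follows exactly the route the paper intends: the paper's own ``proof'' of \ref{real valued bounded take it in} is simply the remark that the argument is identical to the chain \ref{bounded take it in }--\ref{stochstic dominated convergence}--\ref{unbounded take it in}, with $\eta$ in place of $\inner{\cdot}{\phi}_{\mathcal{H}}$. Your write-up fleshes out precisely that template (simple processes and localisation for bounded $\eta$, then truncation plus the stochastic dominated convergence theorem), and each step checks out.
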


 \subsection{Cylindrical Processes} \label{sub cylindrical processes}
    
Having now addressed the question of how to integrate a Hilbert Space valued process with respect to a finite dimensional driving martingale, we look to extend this theory to the case of an infinite dimensional driving martingale. The aforementioned construction then arises from the one dimensional projections of the driving process. Our notion of infinite dimensional Brownian Motion is a Cylindrical Brownian Motion, which we explore in this subsection. Additional operator theory is used in this subsection to better understand the background of the process, however it is not necessary in the resulting integral constructed in Subsection \ref{subsection inf dim integral}. We will denote by $Y$ a real-valued zero-mean Gaussian process with correlation function $R(t,s)$, $\mathcal{H}$ a Hilbert space and $Q$ a bounded positive self-adjoint operator on $\mathcal{H}.$

    \begin{definition} \label{cylindricalprocess} A \textit{$Q$-Cylindrical $Y$ process} over $\mathcal{H}$ is a process $X^Q$ taking values in the space of functions from $\mathcal{H}$ to $\R$, that is $$X^Q: \R_+ \times \Omega \rightarrow \R^{\mathcal{H}}$$ such that for each $h \in \mathcal{H}$, $X^Q_h: \R_+ \times \Omega \rightarrow \R$ is a process of zero mean Gaussian random variables, and for every $g, h \in \mathcal{H}$ and all times $t,s$, $$\mathbbm{E}\big(X^Q_g(t)X^Q_h(s)\big) = \inner{Qg}{h}_{\mathcal{H}}R(t,s).$$

\end{definition}

\begin{definition} 
A $Q$-Cylindrical Brownian Motion is defined as above where the $Y$ process is a Brownian Motion. A Cylindrical Brownian Motion is a $Q$-Cylindrical Brownian Motion such that $Q$ is the identity operator. 
\end{definition}

To make this abstract definition workable, we look to how such processes can be represented. This motivates the notion of a regular process.

\begin{definition}
A $Q$-Cylindrical $Y$ process $X^Q$ is said to be regular if there exists a square integrable $\mathcal{H}$ valued process $\widetilde{X}^Q$ (that is, $\widetilde{X}^Q_t \in L^2\big(\Omega; \mathcal{H}\big)$ for all $t \geq 0$) such that for every $h \in \mathcal{H}$, $X^Q_h$ has the same law as the process $\inner{ \widetilde{X}^Q}{h}_{\mathcal{H}}$ defined by $$\inner{ \widetilde{X}^Q}{h}_{\mathcal{H}}(t,\omega):=\inner{ \widetilde{X}^Q(t, \omega)}{h}_{\mathcal{H}}. $$
\end{definition}

 We will not hesitate to identify the functional valued process $X^Q$ with the Hilbert space one $\widetilde{X}^Q.$ In the next theorem however, we keep the distinction for clarity.

\begin{theorem}
A $Q$-Cylindrical $Y$ process $X^Q$ is regular if and only if $Q$ is trace-class. In this case $X^Q$ admits the regular representation \begin{equation} \label{regularrep} \widetilde{X}^Q(t) = \sum_{i=1}^\infty \sqrt{\lambda_i}e_iY^i_t \end{equation} where the $(e_i)$ are an orthonormal basis of $\mathcal{H}$ consisting of eigenvectors of the self-adjoint trace-class (hence compact) operator $Q$, $(\lambda_i)$ are the corresponding eigenvalues, and $(Y^i)$ are independent copies of the process $Y$. The limit is taken in $L^2\big(\Omega;\mathcal{H}\big).$
\end{theorem}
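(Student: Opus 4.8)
The statement is an equivalence packaged with an explicit formula, so the plan is to split into the two implications and to treat the construction of the representation (\ref{regularrep}) as the substance of the direction ``$Q$ trace-class $\Rightarrow$ regular''. Throughout I will assume $Y$ is genuinely non-degenerate, in the sense that $R(t,t) > 0$ for some $t > 0$ (for Brownian motion $R(t,t) = t$, so this holds automatically); without it the process $X^Q$ vanishes identically and $Q$ is left unconstrained, so the equivalence can only hold under this mild hypothesis.

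For the reverse implication, suppose $X^Q$ is regular with square-integrable representative $\widetilde{X}^Q$. The idea is to read off the trace from second moments. Fixing any orthonormal basis $(e_i)$ of $\mathcal{H}$ and any time $t$, equality in law of $X^Q_{e_i}$ and $\inner{\widetilde{X}^Q}{e_i}_{\mathcal{H}}$ forces equal second moments, so that
\[
\inner{Q e_i}{e_i}_{\mathcal{H}}\, R(t,t) = \mathbbm{E}\big( X^Q_{e_i}(t)^2 \big) = \mathbbm{E}\big( \inner{\widetilde{X}^Q(t)}{e_i}_{\mathcal{H}}^2 \big).
\]
Summing over $i$ and interchanging the sum with the expectation by Tonelli, Parseval's identity collapses the right-hand side to $\mathbbm{E}\norm{\widetilde{X}^Q(t)}_{\mathcal{H}}^2$, which is finite since $\widetilde{X}^Q(t) \in L^2(\Omega;\mathcal{H})$. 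Choosing $t$ with $R(t,t)>0$ then gives $\sum_i \inner{Qe_i}{e_i}_{\mathcal{H}} = R(t,t)^{-1}\,\mathbbm{E}\norm{\widetilde{X}^Q(t)}_{\mathcal{H}}^2 < \infty$, i.e. $Q$ is trace-class.

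For the forward implication, assume $Q$ is trace-class. Being positive, self-adjoint and trace-class it is compact, so the spectral theorem supplies an orthonormal eigenbasis $(e_i)$ with eigenvalues $\lambda_i \geq 0$ satisfying $\sum_i \lambda_i = \mathrm{Tr}(Q) < \infty$. I then define $\widetilde{X}^Q(t)$ by the series (\ref{regularrep}), with $(Y^i)$ independent copies of $Y$, and first check convergence in $L^2(\Omega;\mathcal{H})$: orthonormality of $(e_i)$ together with independence (hence orthogonality) of the $Y^i$ makes the summands orthogonal in $L^2(\Omega;\mathcal{H})$, so the squared norm of a tail reduces to $R(t,t)\sum_{i=m}^{n}\lambda_i$, which vanishes as $m,n\to\infty$ by summability of $(\lambda_i)$. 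The same computation shows $\widetilde{X}^Q(t)\in L^2(\Omega;\mathcal{H})$.

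It remains to verify $X^Q_h \stackrel{d}{=} \inner{\widetilde{X}^Q}{h}_{\mathcal{H}}$ for each fixed $h$. Passing the inner product through the $L^2$-limit by its continuity, $\inner{\widetilde{X}^Q(t)}{h}_{\mathcal{H}} = \sum_i \sqrt{\lambda_i}\,\inner{e_i}{h}_{\mathcal{H}} Y^i_t$ is an $L^2$-limit of finite linear combinations of the jointly Gaussian family $(Y^i)$, hence itself a centred Gaussian process; a direct covariance computation using $\mathbbm{E}(Y^i_t Y^j_s)=\delta_{ij}R(t,s)$ gives
\[
\mathbbm{E}\big( \inner{\widetilde{X}^Q(t)}{g}_{\mathcal{H}} \inner{\widetilde{X}^Q(s)}{h}_{\mathcal{H}} \big) = R(t,s)\sum_i \lambda_i \inner{g}{e_i}_{\mathcal{H}}\inner{e_i}{h}_{\mathcal{H}} = \inner{Qg}{h}_{\mathcal{H}}\, R(t,s),
\]
matching the covariance prescribed in Definition \ref{cylindricalprocess}. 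Since a centred Gaussian process is determined by its covariance function, $\inner{\widetilde{X}^Q}{h}_{\mathcal{H}}$ and $X^Q_h$ share all finite-dimensional distributions and hence the same law, establishing regularity. I expect the main obstacle to be exactly this last step: one must ensure that the object built from the $L^2$-limit is genuinely Gaussian, so that equality of covariances upgrades to equality in law; this rests on stability of Gaussianity under $L^2$-limits and on the joint Gaussianity of the independent family $(Y^i)$, whereas the convergence bookkeeping and the Tonelli/Parseval interchanges are routine by comparison.
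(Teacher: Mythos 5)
Your proof is correct and follows essentially the same route as the paper's: the same Cauchy-sequence argument for convergence of the series in $L^2\big(\Omega;\mathcal{H}\big)$, Gaussianity preserved under $L^2$ limits, the covariance computation via independence of the copies $(Y^i)$, and, for the converse, the same trace identity obtained from Parseval and monotone convergence. Your explicit non-degeneracy remark (that $R(t,t)>0$ for some $t$) is a small refinement the paper uses only implicitly when dividing by $R(t,t)$, and does not change the substance of the argument.
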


\begin{proof} We consider the two directions. In the first one we show that if $Q$ is trace-class then (\ref{regularrep}) is a regular representation of $X^Q$. In the second we show that if $X^Q$ is regular then $Q$ is trace-class, so by the first implication, (\ref{regularrep}) is again a regular representation of $X^Q$.
\begin{enumerate}
    \item[$\impliedby$:]
    A sensible place to start would be to verify that for trace-class $Q$, (\ref{regularrep}) does indeed define an element of $L^2\big(\Omega;\mathcal{H}\big).$ We will rely on completeness of the space and show that the sequence of partial sums is Cauchy. Observe that $$\mathbbm{E}\big(\norm{\sum_{i=m}^n\sqrt{\lambda_i}e_iY^i_t}^2_{\mathcal{H}}\big) = \mathbbm{E}\big(\sum_{i=m}^n \abs{\lambda_i}\abs{Y^i_t}^2\big) = \mathbbm{E}\big(\abs{Y_t}^2\big) \sum_{i=m}^n \lambda_i.$$ All we then require to conclude the Cauchy property is that $\sum_{i=1}^{\infty} \lambda_i < \infty$ which it is given that $Q$ is trace class. To conclude that (\ref{regularrep}) is a regular representation of $X^Q$, it is sufficient to show that the one dimensional processes $\inner{ \widetilde{X}^Q}{h}_{\mathcal{H}}$ satisfy the conditions postulated of the $X^Q_h$ in Definition \ref{cylindricalprocess} as these characterise the distribution. 
    
    First of all for each $h \in \mathcal{H}$ and time $t$, we must verify that the random variable \begin{equation} \label{rv} \omega \mapsto \Big\langle\sum_{i=1}^\infty \sqrt{\lambda_i}e_iY^i_t(\omega),h\Big\rangle_{\mathcal{H}}\end{equation} is zero mean Gaussian. Note that $$\Big\langle\lim_{n \rightarrow \infty} \sum_{i=1}^n \sqrt{\lambda_i}e_iY^i_t,h\Big\rangle_{\mathcal{H}} = \lim_{n \rightarrow \infty} \Big\langle \sum_{i=1}^n \sqrt{\lambda_i}e_iY^i_t,h\Big\rangle_{\mathcal{H}}$$ where the second limit is in $L^2\big(\Omega;\R\big)$ just as we did in Theorem \ref{dualityrep}. The random variable (\ref{rv}) is thus an $L^2$ limit of zero mean Gaussian random variables, so is itself zero mean Gaussian (convergence in $L^2$ implies that in distribution so we have Gaussianity, and $L^2$ convergence implies $L^1$ from which we readily deduce the zero mean property). It remains to show that for each $g,h \in \mathcal{H}$ and $s,t >0$ that
     $$\mathbbm{E}\bigg(\Big\langle\sum_{i=1}^\infty \sqrt{\lambda_i}e_iY^i_t,g\Big\rangle_{\mathcal{H}}\Big\langle\sum_{j=1}^\infty \sqrt{\lambda_j}e_iY^j_s,h\Big\rangle_{\mathcal{H}}\bigg) = \inner{Qg}{h}_{\mathcal{H}}R(t,s).$$ We take the limit through the first inner product on the LHS as above, that is
    \begin{equation} \label{ddd}\mathbbm{E}\Bigg(\bigg(\lim_{n \rightarrow \infty}\Big\langle\sum_{i=1}^n \sqrt{\lambda_i}e_iY^i_t,g\Big\rangle_{\mathcal{H}}\bigg)\Big\langle\sum_{j=1}^\infty \sqrt{\lambda_j}e_iY^j_s,h\Big\rangle_{\mathcal{H}}\Bigg)\end{equation}
    and argue that for a sequence of functions $(f_n)$ convergent to $f$ in $L^2\big(\Omega;\R\big)$, and $a \in $$L^2\big(\Omega;\R\big)$, that $$(\lim_{L^2}f_n)(a) = \lim_{L^1}(f_na).$$ The right side is well defined as the limit of a Cauchy sequence: $$\mathbbm{E}\big(\abs{f_na-f_ma}\big) = \mathbbm{E}\big(\abs{(f_n - f_m)a}\big) \leq \mathbbm{E}\big(\abs{f_n-f_m}^2\big)\mathbbm{E}\big(\abs{a}^2\big)$$ and a similar calculation shows that this element of $L^1\big(\Omega;\R\big)$ is the left side. Applying to (\ref{ddd}) and pulling the $L^1$ limit through the expectation produces
    $$\lim_{n \rightarrow \infty}\mathbbm{E}\bigg(\Big\langle\sum_{i=1}^n \sqrt{\lambda_i}e_iY^i_t,g\Big\rangle_{\mathcal{H}}\Big\langle\sum_{j=1}^\infty \sqrt{\lambda_j}e_jY^j_s,h\Big\rangle_{\mathcal{H}}\bigg)$$ and playing the same game, this is
    $$\lim_{n \rightarrow \infty}\lim_{m \rightarrow \infty} \mathbbm{E}\bigg(\Big\langle\sum_{i=1}^n \sqrt{\lambda_i}e_iY^i_t,g\Big\rangle_{\mathcal{H}}\Big\langle\sum_{j=1}^m \sqrt{\lambda_j}e_jY^j_s,h\Big\rangle_{\mathcal{H}}\bigg)$$ and further
    $$\lim_{n \rightarrow \infty}\lim_{m \rightarrow \infty} \mathbbm{E}\bigg(\sum_{i=1}^n\sum_{j=1}^m\big\langle\sqrt{\lambda_i}e_i,g\big\rangle_{\mathcal{H}}\big\langle\sqrt{\lambda_j}e_j,h\big\rangle_{\mathcal{H}}Y^i_tY^j_s\bigg).$$ Independence of the copies $(Y^i)$ and definition of the correlation function gives that this is equal to
     $$\lim_{n \rightarrow \infty} \sum_{i=1}^n\big\langle\sqrt{\lambda_i}e_i,g\big\rangle_{\mathcal{H}}\big\langle\sqrt{\lambda_i}e_i,h\big\rangle_{\mathcal{H}}R(t,s) =\lim_{n \rightarrow \infty} \sum_{i=1}^n\big\langle \lambda_ie_i,g\big\rangle_{\mathcal{H}}\big\langle e_i,h\big\rangle_{\mathcal{H}}R(t,s)$$ which is just $$\Big\langle\sum_{i=1}^\infty\lambda_i\inner{e_i}{g}e_i,h\Big\rangle_{\mathcal{H}}R(t,s) = \inner{Qg}{h}_{\mathcal{H}}R(t,s)$$ as required.
     
     \item[$\implies$:] For the reverse direction, assume that $X^Q$ is regular with corresponding process $\widetilde{X}^Q$ (which is not necessarily of the form (\ref{regularrep})). We want an expression in terms of the trace of $Q$, so we exploit Definition \ref{cylindricalprocess}: $$\sum_{i=1}^\infty \mathbbm{E}\big(\abs{X^Q_{e_i}(t)}^2\big) = \sum_{i=1}^\infty \inner{Qe_i}{e_i}_{\mathcal{H}}R(t,t)$$ and use an alternative expression from the assumed regular representation: $$\sum_{i=1}^\infty \mathbbm{E}\big(\abs{X^Q_{e_i}(t)}^2\big) = \sum_{i=1}^\infty \mathbbm{E}\big(\inner{\widetilde{X}^Q(t)}{e_i}_{\mathcal{H}}^2\big) = \mathbbm{E}\Big( \sum_{i=1}^\infty \inner{\widetilde{X}^Q(t)}{e_i}_{\mathcal{H}}^2\Big) =  \mathbbm{E}\big(\norm{\widetilde{X}^Q(t)}_{\mathcal{H}}^2\big) < \infty$$ where the infinite sum is pulled inside the expectation from the monotone convergence theorem, and finiteness is by assumption on $\widetilde{X}^Q  \in L^2\big(\Omega; \mathcal{H}\big).$ Hence $Q$ is trace-class, and by the first implication, (\ref{regularrep}) is a regular representation of $X^Q$.
     
    \end{enumerate}
    
\end{proof}

A standard Cylindrical Brownian Motion, that is where $Q$ is the identity, is thus not regular. However it would be convenient to have such a representation for Cylindrical Brownian Motion (denote this $\mathcal{W}_t$); we would like this to be something along the lines of $$\mathcal{W}_t = \sum_{i=1}^{\infty}e_iW^i_t$$ where the $(e_i)$ form an orthonormal basis of $\mathcal{H}$, and $(W^i)$ are standard independent Brownian Motions. We can in fact explicitly construct a larger Hilbert space $\mathcal{H}'$ such that the inclusion mapping $J: \mathcal{H} \xhookrightarrow{}\mathcal{H}'$ is Hilbert-Schmidt. The composition $Q:=JJ^*$ is then trace-class on $\mathcal{H}'$, and indeed $\mathcal{W}_t$ is a $Q$-Cylindrical Brownian Motion on $\mathcal{H}$': we defer the details to e.g. [\cite{lototsky2017stochastic}] Problem 3.2.6. To be precise, that is $J(e_i) = \sqrt{\lambda_i}\eta_i$ for each $i$, where the $(\eta_i)$ form an orthonormal basis (of $\mathcal{H}'$) of eigenfunctions of $JJ^*$ with eigenvalues $\lambda_i$, and for any $h \in \mathcal{H}$, $$\mathcal{W}_h(t) = \inner{\sum_{i=1}^\infty J(e_i)W^i_t}{J(h)}_{\mathcal{H}'}.$$

\subsection{Cylindrical and Hilbert Space Valued Martingales}

As we look to make cylindrical processes the driving force in our stochastic integral, it will come as no surprise that we introduce martingality in this setting.

\begin{definition}
A $Q$-Cylindrical $Y$ process over $\mathcal{H}$, $X^Q$,  is said to be a martingale if for every $h \in \mathcal{H}$, the real valued process $X^Q_h$ is a martingale. 
\end{definition}

Whilst the choice is natural, there seems no way to extend the definition to include two key aspects of our martingale integrators for a one dimensional driving process, namely continuity and square integrability. As such we consider the analogy in Hilbert spaces, covering at least the cylindrical processes which have a regular representation, with an understanding that this would most likely be the starting point for developing the integration theory with respect to an infinite dimensional martingale.

\begin{definition} \label{def of m2ch}
A process $M$ taking values in a Hilbert space $\mathcal{H}$ is said to be a martingale if for every $h \in \mathcal{H}$, the process $\inner{M}{h}_{\mathcal{H}}$ is a real valued martingale. The martingale is said to be continuous if for $\mathbbm{P}-a.e.$ $\omega$ and every $T>0$, $M(\omega):[0,T] \rightarrow \mathcal{H}$ is continuous. The martingale is said to be square integrable if for every $t \geq  0$, $\mathbbm{E}\big(\norm{M_t}_{\mathcal{H}}^2\big) < \infty$. The class of continuous square integrable martingales will be denoted $\mathcal{M}^2_c(\mathcal{H}).$\footnote{Recall that $\mathcal{M}^2_c := \mathcal{M}^2_c(\R)$, see Definition \ref{m2c}.}
\end{definition}

We look to show that $\mathcal{M}^2_c(\mathcal{H})$ is closed in a relevant topology, for which we shall use a sufficient condition for conitnuity proven now.

\begin{lemma} \label{weak to strong continuity}
    Let $\sy \in C_w\left([0,T];\mathcal{H}\right)$ and $\norm{\sy}_{\mathcal{H}}^2 \in C\left([0,T];\R\right)$. Then $\sy \in C\left([0,T];\mathcal{H}\right)$.
\end{lemma}

\begin{proof}
    We fix some $t \in [0,T]$ and look to show that $\sy$ is continuous at $t$. To this end consider arbitrary $s \in [0,T]$. Then
    \begin{align*}
        \norm{\sy_t-\sy_s}_{\mathcal{H}}^2 &= \inner{\sy_t-\sy_s}{\sy_t-\sy_s}_{\mathcal{H}}\\ &= \inner{\sy_t-\sy_s}{\sy_t}_{\mathcal{H}} - \inner{\sy_t-\sy_s}{\sy_s}_{\mathcal{H}}\\ &= \inner{\sy_t-\sy_s}{\sy_t}_{\mathcal{H}} + \norm{\sy_s}_{\mathcal{H}}^2 - \inner{\sy_t}{\sy_s}_{\mathcal{H}}\\
        &= \inner{\sy_t-\sy_s}{\sy_t}_{\mathcal{H}} + \norm{\sy_s}_{\mathcal{H}}^2 + \inner{\sy_t}{\sy_t - \sy_s}_{\mathcal{H}} - \norm{\sy_t}_{\mathcal{H}}^2\\
        &= 2\inner{\sy_t-\sy_s}{\sy_t}_{\mathcal{H}} + \left(\norm{\sy_s}_{\mathcal{H}}^2 - \norm{\sy_t}_{\mathcal{H}}^2\right).
    \end{align*}
    For any given $\varepsilon > 0$, there exists a $\delta > 0$ such that for all $s \in [0 \vee (t- \delta), (t + \delta) \wedge T]$,
    \begin{align*}
        \inner{\sy_t-\sy_s}{\sy_t}_{\mathcal{H}} &< \frac{\varepsilon}{3},\\
        \norm{\sy_s}_{\mathcal{H}}^2 - \norm{\sy_t}_{\mathcal{H}}^2 &< \frac{\varepsilon}{3}
    \end{align*}
    from each of the two assumptions, which gives the result.
\end{proof}

\begin{proposition} \label{prop m2c closed}
    Let $(M^n)$ be a sequence in $\mathcal{M}^2_c(\mathcal{H})$ and $M$ a process with values in $\mathcal{H}$ such that at every time $t \geq 0$, $$\lim_{n \rightarrow \infty}\mathbbm{E}\left( \norm{M^n_t - M_t}_{\mathcal{H}}^2\right) = 0.$$
    Then $M \in \mathcal{M}^2_c(\mathcal{H})$.
\end{proposition}

\begin{proof}
    The square integrability is trivial, so we just consider the martingale property and continuity. For every $\phi \in \mathcal{H}$, at every $t \geq 0$, we have that \begin{align*} \mathbbm{E}\left( \left\vert \inner{M^n_t}{\phi}_{\mathcal{H}} - \inner{M_t}{\phi}_{\mathcal{H}} \right\vert \right) = \mathbbm{E}\left( \left\vert \inner{M^n_t- M_t}{\phi}_{\mathcal{H}} \right\vert \right) &\leq \norm{\phi}_{\mathcal{H}}\mathbbm{E}\left(\norm{M^n_t-M_t}_{\mathcal{H}}\right)\\ &\leq \norm{\phi}_{\mathcal{H}}\left(\mathbbm{E}\left(\norm{M^n_t-M_t}_{\mathcal{H}}^2\right)\right)^\frac{1}{2}\end{align*}
which converges to zero as $n \rightarrow \infty$. In particular, $(\inner{M^n_t}{\phi}_{\mathcal{H}})$ converges to $\inner{M_t}{\phi}_{\mathcal{H}}$ in $L^1\left(\Omega;\R\right)$, and as the former is a sequence of real-valued martingales by definition, then it is standard that $\inner{M}{\phi}_{\mathcal{H}}$ is again a martingale. As $\phi$ was arbitrary, we deduce the martingality of $M$. It remains to show pathwise continuity. For this we introduce the finite dimensional projections $(\mathcal{P}_k)$, defined by $$\mathcal{P}_k: \phi \mapsto \sum_{i=1}^k\inner{\phi}{e_i}_{\mathcal{H}}e_i$$ for $(e_i)$ a fixed orthonormal basis of $\mathcal{H}$. The goal is to first show that $\mathcal{P}_kM$ has continuous paths for each $k$; as $\mathcal{P}_k$ is an orthogonal projection in $\mathcal{H}$, it is clear that at every $t \geq 0$,
$$ \lim_{n \rightarrow \infty}\mathbbm{E}\left(\norm{\mathcal{P}_k\left(M^n_t - M_t\right)}_{\mathcal{H}}^2\right) = 0$$ and in particular $$\lim_{n \rightarrow \infty}\mathbbm{E}\left(\sum_{i=1}^k \inner{M^n_t - M_t}{e_i}_{\mathcal{H}}^2\right) = 0.$$ We look to deduce some convergence for a real valued martingale and use the developed theory in this area, for which we note that \begin{align*}
   \lim_{n \rightarrow \infty}\mathbbm{E}\left( \left\vert\sum_{i=1}^k \inner{M^n_t}{e_i} - \sum_{i=1}^k \inner{M_t}{e_i}_{\mathcal{H}}\right\vert^2\right) \leq k\lim_{n \rightarrow \infty}\mathbbm{E}\left(\sum_{i=1}^k \inner{M^n_t - M_t}{e_i}_{\mathcal{H}}^2\right) = 0
\end{align*}
so the sequence in $n$, $\left(\sum_{i=1}^k \inner{M^n}{e_i} \right)$, of real valued martingales is convergent in $L^2\left(\Omega;\R\right)$ at every $t \geq 0$. It is known, for example [\cite{karatzas1991brownian}] Proposition 1.5.23, that $\mathcal{M}^2_c$ is closed in this topology hence $\sum_{i=1}^k \inner{M}{e_i}$ is continuous. As this is true for every $k \in \N$, we may take differences to see that $\inner{M}{e_i}$ is continuous for every $i$, and therefore $\mathcal{P}_kM$ is continuous in $\mathcal{H}$ for each $k$ (of course, still pathwise $\mathbbm{P}-a.s.$). In fact we take the same approach as the referenced proposition, working directly with the sequence of submartingales $(\sum_{i=1}^k \inner{M}{e_i}_{\mathcal{H}}^2)$ indexed by $k$. For any $\varepsilon > 0$, by Doob's Inequality ([\cite{karatzas1991brownian}] Theorem 1.3.8), we have that
\begin{align} \label{may as well label}
    \mathbbm{P}\left(\left\{ \omega \in \Omega: \sup_{t \in [0,T]}\sum_{i=k+1}^{j}\inner{M_t(\omega)}{e_i}_{\mathcal{H}}^2 > \varepsilon \right\}\right) \leq \frac{1}{\varepsilon}\mathbbm{E}\left( \sum_{i=k+1}^{j}\inner{M_T}{e_i}_{\mathcal{H}}^2 \right)
\end{align}
but we have already established that $\mathbbm{E}\left(\norm{M_T}_{\mathcal{H}}^2 \right) < \infty$, or equivalently $\mathbbm{E}\left(\sum_{i=1}^{\infty}\inner{M_t(\omega)}{e_i}_{\mathcal{H}}^2 \right) < \infty$. Therefore in the limit as $k \rightarrow \infty$, (\ref{may as well label}) approaches zero uniformly in $j > k$. One may choose a subsequence such that for all $k_l < k_m$, $$\mathbbm{P}\left(\left\{ \omega \in \Omega: \sup_{t \in [0,T]}\sum_{i=k_l+1}^{k_m}\inner{M_t(\omega)}{e_i}_{\mathcal{H}}^2 > \frac{1}{l} \right\}\right) \leq \frac{1}{2^l}$$ so by the Borel Cantelli Lemma the subsequence $\sum_{i=1}^{k_l}\inner{M}{e_i}_{\mathcal{H}}^2$ is Cauchy in $C\left([0,T];\R\right)$ $\mathbbm{P}-a.s.$. It thus admits a limit $\mathbbm{P}-a.s.$ in $C\left([0,T];\R\right)$, which agrees with the limit at each $t$, given by $\sum_{i=1}^\infty \inner{M_t}{e_i}_{\mathcal{H}}^2$ which is of course $\norm{M_t}_{\mathcal{H}}^2$. Therefore, the process $\norm{M}_{\mathcal{H}}^2$ is pathwise continuous; from Lemma \ref{weak to strong continuity}, it is now sufficient to just show weak continuity. This is clear however, as for any given $\phi \in \mathcal{H}$ and $t \geq 0$, we have again that
$$\lim_{n \rightarrow \infty}\mathbbm{E}\left( \inner{M^n_t - M_t}{\phi}_{\mathcal{H}}^2\right) = 0$$ so $\inner{M}{\phi}_{\mathcal{H}}$ is shown to belong to $\mathcal{M}^2_c$ exactly as was done for $\sum_{i=1}^k\inner{M}{e_i}_{\mathcal{H}}$, thus concluding the proof.

\end{proof}

\begin{proposition} \label{1Dstochintismartingale}
    For $\sy \in \mathcal{I}^{\mathcal{H}}$, the It\^{o} stochastic integral $$\int_0^\cdot\sy_s dW_s$$ belongs to $\mathcal{M}^2_c(\mathcal{H})$.
\end{proposition}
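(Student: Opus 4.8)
The definition \ref{def of m2ch} asks for three properties of $M_t := \int_0^t \sy_s dW_s$: square integrability, the (weak) martingale property, and almost sure continuity of paths. The first two are quick consequences of what we have already built. For square integrability, the It\^o Isometry \ref{realItoIsom} gives $\mathbbm{E}(\norm{M_t}_{\mathcal{H}}^2) = \mathbbm{E}(\int_0^t \norm{\sy_s}_{\mathcal{H}}^2\, ds) < \infty$ since $\sy \in \mathcal{I}^{\mathcal{H}}$. For the martingale property, I would fix $h \in \mathcal{H}$ and appeal to the duality representation \ref{dualityrep} to write $\inner{M_t}{h}_{\mathcal{H}} = \int_0^t \inner{\sy_s}{h}_{\mathcal{H}}\, dW_s$; the integrand $\inner{\sy}{h}_{\mathcal{H}}$ lies in $\mathcal{I}^{\R}$ (this was checked inside the proof of \ref{dualityrep}), so the right-hand side is a genuine real-valued It\^o integral and hence a real martingale. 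As $h$ was arbitrary, $M$ is a martingale in the sense of \ref{def of m2ch}.

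The substance of the proposition is continuity, and here the plan is the usual approximation argument upgraded to the Hilbert-space setting. Let $\sy^n$ be the simple processes from \ref{simpleapproximation} converging to $\sy$ in $L^2(\Omega\times[0,t];\mathcal{H})$, and set $M^n_t := \int_0^t \sy^n_s\, dW_s$. Each $M^n$ is explicitly a finite sum of terms $a_i^n(W_{t_{i+1}\wedge t} - W_{t_i\wedge t})$, so its paths are continuous in $\mathcal{H}$; moreover, projecting against any $h$ and using \ref{simpleprocessintegral} shows each $\inner{M^n}{h}_{\mathcal{H}}$ is a real martingale, so $M^n \in \mathcal{M}^2_c(\mathcal{H})$, and consequently so is each difference $M^n - M^m$.

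The key estimate is a Doob-type maximal inequality in $\mathcal{H}$. Since $M^n - M^m$ is an $\mathcal{H}$-valued martingale and $\norm{\cdot}_{\mathcal{H}}$ is convex, the real process $\norm{M^n - M^m}_{\mathcal{H}}$ is a nonnegative continuous submartingale (via conditional Jensen, after noting that for square-integrable processes the weak martingale property of \ref{def of m2ch} coincides with the Bochner conditional-expectation identity). Doob's $L^2$ maximal inequality then yields
$$\mathbbm{E}\Big(\sup_{s \leq t}\norm{M^n_s - M^m_s}_{\mathcal{H}}^2\Big) \leq 4\,\mathbbm{E}\big(\norm{M^n_t - M^m_t}_{\mathcal{H}}^2\big) = 4\norm{\sy^n - \sy^m}_{L^2(\Omega\times[0,t];\mathcal{H})}^2,$$
the last equality being the isometry \ref{realItoIsom} applied to $\sy^n - \sy^m$. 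As $(\sy^n)$ is Cauchy in $L^2(\Omega\times[0,t];\mathcal{H})$, the sequence $(M^n)$ is Cauchy for the norm $\big(\mathbbm{E}\sup_{s\leq t}\norm{\cdot}_{\mathcal{H}}^2\big)^{1/2}$. By completeness of the space of continuous $\mathcal{H}$-valued processes under this norm I would extract a limit $\widetilde{M}$ with continuous paths and identify $\widetilde{M}_s = M_s$ almost surely for each $s$ (both are $L^2(\Omega;\mathcal{H})$ limits of $M^n_s$), so that this continuous modification is the desired representative.

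I expect the main obstacle to be precisely this maximal-inequality step: one must justify that the weak martingale definition of \ref{def of m2ch} actually delivers the Bochner conditional-expectation property needed to make $\norm{M^n - M^m}_{\mathcal{H}}$ a submartingale, and confirm that the scalar Doob inequality transfers verbatim to this norm. Everything else reduces cleanly to the one-dimensional theory together with the isometry and the duality representation.
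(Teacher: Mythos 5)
Your proposal is correct and takes essentially the same route as the paper: square integrability from the It\^{o} Isometry \ref{realItoIsom}, and martingality by projecting with \ref{dualityrep} onto the one-dimensional theory. For continuity the paper simply defers to the finite-dimensional construction in [\cite{Kara}], and your argument (simple approximants, the weak-to-Bochner martingale upgrade with conditional Jensen, Doob's $L^2$ maximal inequality, and extraction of a continuous modification) is precisely that construction written out in the Hilbert-space setting, so there is no substantive difference.
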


\begin{proof}
Recalling Definition \ref{stochintdefined}, the integral is defined at each time $t$ as a limit in $L^2\left(\Omega;\mathcal{H}\right)$ of simple integrals. From Proposition \ref{prop m2c closed} it is sufficient to show that these approximating integrals all belong to $\mathcal{M}^2_c(\mathcal{H})$, which is clear referring to the representation (\ref{simpleintegralsecond}) and the definition of $\mathcal{M}^2_c(\mathcal{H})$.

\end{proof}

This result extends to the case of a general martingale integrator completely synonymous with the finite dimensional setting. Local martingality is then defined as we would expect, and we have the following result.

\begin{proposition} \label{integralislocalmartingale}
    For a continuous local martingale $\widetilde{M}$ and $\sy \in \mathcal{I}^{\mathcal{H}}_{\widetilde{M}}$, the It\^{o} stochastic integral $$\int_0^t \sy_s d\widetilde{M}_s$$ is itself a continuous local martingale.
\end{proposition}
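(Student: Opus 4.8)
The plan is to reduce the claim to two previously established facts: that the stochastic integral against a genuine continuous square integrable martingale yields an element of $\mathcal{M}^2_c(\mathcal{H})$ (the analogue of \ref{1Dstochintismartingale} for martingale integrators), and that the localisation construction of (\ref{localintegratordefinition}) patches these together consistently. The definition of a continuous local martingale in $\mathcal{H}$ (via \ref{def of m2ch}) should be: a process $N$ such that there exist stopping times $(\sigma_n)$ increasing to infinity $a.s.$ with each stopped process $N^{\sigma_n} \in \mathcal{M}^2_c(\mathcal{H})$. So the goal is to exhibit a suitable localising sequence for the process $t \mapsto \int_0^t \sy_s\, d\widetilde{M}_s$.

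First I would recall the construction from \ref{localmartingaleintegrator}: we have the stopping times $\tau_n = R_n \wedge T_n$ increasing to infinity $a.s.$, with $\widetilde{M}^{\tau_n} \in \mathcal{M}^2_c$ and truncated integrands $\sy^n_\cdot = \sy_\cdot \mathbbm{1}_{\cdot \leq \tau_n} \in \mathcal{I}^{\mathcal{H}}_{\widetilde{M}^{\tau_n}}$. I would take $(\tau_n)$ itself as the candidate localising sequence. The key step is to show that the stopped integral satisfies
\begin{equation} \label{stoppedidentity}
\left(\int_0^t \sy_s\, d\widetilde{M}_s\right)^{\tau_n} = \int_0^{t} \sy^n_s\, d\widetilde{M}^{\tau_n}_s,
\end{equation}
for then the right-hand side is the integral of a process in $\mathcal{I}^{\mathcal{H}}_{\widetilde{M}^{\tau_n}}$ against the genuine martingale $\widetilde{M}^{\tau_n} \in \mathcal{M}^2_c$, and hence lies in $\mathcal{M}^2_c(\mathcal{H})$ by the martingale-integrator analogue of \ref{1Dstochintismartingale}. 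The continuity of the full process follows from the continuity of each such integral together with the consistency relation.

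To verify (\ref{stoppedidentity}) I would fix $\omega$ (outside a null set) and $t \geq 0$, and choose $m \geq n$ large enough that $t \leq \tau_m(\omega)$; by definition (\ref{localintegratordefinition}) the left-hand integral evaluated at time $t \wedge \tau_n$ equals $\int_0^{t \wedge \tau_n} \sy^m_s\, d\widetilde{M}^{\tau_m}_s$ at $\omega$. The consistency properties already recorded in \ref{localmartingaleintegrator} — namely $\sy_\cdot \mathbbm{1}_{\cdot \leq \tau_m} = \sy_\cdot \mathbbm{1}_{\cdot \leq \tau_n}$ on $\{\,\cdot \leq \tau_n\}$ and $\widetilde{M}^{\tau_m}_{\cdot \wedge \tau_n} = \widetilde{M}^{\tau_n}_{\cdot}$ — let me identify this with $\int_0^{t} \sy^n_s\, d\widetilde{M}^{\tau_n}_s$ at $\omega$, giving the claimed pathwise equality. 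The cleanest formulation uses that stopping an It\^{o} integral at a stopping time equals integrating the integrand truncated by that time against the stopped integrator, which is a standard property that transfers from the real-valued theory through the duality relation \ref{dualityrep}.

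\emph{The main obstacle} is the pathwise, $\omega$-dependent nature of the local construction: because the definition (\ref{localintegratordefinition}) selects $n$ randomly and only then evaluates a (non-random-$n$) process, I must be careful that the identity (\ref{stoppedidentity}) is an equality of genuine processes rather than a pointwise coincidence that fails to respect measurability. I expect the delicacy to lie exactly where the text already flagged it — in keeping the selected index non-random when invoking square-integrability and martingality — so the bulk of the work is bookkeeping to ensure that on the event $\{t \leq \tau_n\}$ the two stopped objects agree as elements of $\mathcal{M}^2_c(\mathcal{H})$, with the increasing family $(\tau_n)$ exhausting $\Omega$ up to a null set. Once (\ref{stoppedidentity}) is secured, local martingality is immediate from the definition and the established result for genuine martingale integrators.
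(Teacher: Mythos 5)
Your proposal is correct and follows essentially the same route as the paper: take the $(\tau_n)$ from the construction as the localising sequence, identify the stopped process with the genuine integral $\int_0^t \sy^n_s\, d\widetilde{M}^{\tau_n}_s$, and invoke the martingale-integrator analogue of \ref{1Dstochintismartingale}. The only difference is cosmetic — where you pick a random $m \geq n$ with $t \leq \tau_m(\omega)$ and then appeal to the consistency relations, the paper observes that $m = n$ is always a valid choice (since $t \wedge \tau_n(\omega) \leq \tau_n(\omega)$), which makes the identification of the stopped process immediate and uniform in $\omega$.
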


\begin{proof}
    We claim that the localising stopping times are given simply by the $(\tau_n)$ used in the definition of the integral. We have already seen that these tend to infinity almost surely, so it just remains to show that at any fixed $n \in \N$ the stopped process is a continuous martingale. We fix such $n$ and look at how the stopped process $$\int_0^{t\wedge \tau_n} \sy_s d\widetilde{M}_s$$ is actually defined. Well, at each fixed $\omega$ we need to choose an $m$ such that $t \wedge \tau_n(\omega) \leq \tau_m(\omega)$, so we can simply choose $m$ to be $n$. Indeed this can be done uniformly across all $\omega$, so the stopped process is genuinely the integral $$\int_0^t\sy^n_sd\widetilde{M}^{\tau_n}_s$$ for this fixed $n$, which is a continuous martingale simply as in Proposition \ref{1Dstochintismartingale}.
\end{proof}

The next ingredient would be a definition of quadratic variation, which we look to do via a Doob-Meyer decomposition for $M \in \mathcal{M}^2_c(\mathcal{H})$. That is, can we show that $\norm{M}^2_{\mathcal{H}}$ defines a sub-martingale? Well we have integrability by definition, and $$\norm{M_t}^2_{\mathcal{H}} = \sum_{i=1}^\infty\inner{M_t}{e_i}_{\mathcal{H}}^2$$ where the limit is defined $\mathbbm{P}-a.e.$. Again by definition the above projections are martingales and so the squares are sub-martingales. The process $\norm{M}^2_{\mathcal{H}}$ is adapted as each $\norm{M_t}^2_{\mathcal{H}}$ is the $a.e.$ limit of $\mathcal{F}_t-$measurable random variables (on the complete measure space), and it is a sub-martingale as we can apply the Monotone Convergence Theorem to take the limit through the expectation for the defining sub-martingale property. As such, we have the following.

\begin{definition} \label{def of quad}
    For $M \in \mathcal{M}^2_c(\mathcal{H})$, the quadratic variation $[ M ]$ of $M$ is defined to be the unique\footnote{Uniqueness here is `up to indistinguishability', which is to say there exists a set $A \in \mathcal{F}$ with $\mathbbm{P}(A) = 1$ such that at all $t \geq 0$ and all $\omega \in A$, the processes are equal.} continuous, adapted, non-decreasing process with $[M]_0 = 0$ ($\mathbbm{P}-a.s.$) specified in the Doob-Meyer decomposition such that $$\norm{M}^2_{\mathcal{H}} - [M]$$ is a real valued martingale.
\end{definition}

\begin{proposition} \label{quad of int}
    Suppose that $\sy \in \mathcal{I}^{\mathcal{H}}$, then \begin{equation}\label{crawrt}\left[\int_0^{\cdot}\sy_r dW_r \right]_t = \int_0^t\norm{\sy_r}_H^2dr.\end{equation}
\end{proposition}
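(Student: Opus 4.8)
The plan is to appeal directly to the defining property of the quadratic variation in \ref{def of quad}: writing $M_t := \int_0^t \sy_r dW_r$ and $A_t := \int_0^t \norm{\sy_r}_{\mathcal{H}}^2 dr$, it suffices to check that $A$ is a continuous, adapted, non-decreasing process starting from zero, and that $\norm{M}_{\mathcal{H}}^2 - A$ is a real valued martingale; the uniqueness clause of the Doob-Meyer decomposition then forces $[M] = A$. The pathwise regularity of $A$ is immediate: the integrand $\norm{\sy_r}_{\mathcal{H}}^2$ is non-negative, giving monotonicity and the zero starting value, continuity is that of an indefinite Lebesgue integral, and adaptedness follows from the progressive measurability of $\sy$. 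Note that $M \in \mathcal{M}^2_c(\mathcal{H})$ by \ref{1Dstochintismartingale}, so $\norm{M_t}_{\mathcal{H}}^2$ is integrable, and $\mathbbm{E}(A_t) < \infty$ by (\ref{integrable}).

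The substance is the martingale property. Fixing an orthonormal basis $(e_i)$ of $\mathcal{H}$, Parseval's identity together with the duality relation \ref{dualityrep} gives
$$\norm{M_t}_{\mathcal{H}}^2 = \sum_{i=1}^\infty \inner{M_t}{e_i}_{\mathcal{H}}^2 = \sum_{i=1}^\infty \Big(\int_0^t \inner{\sy_r}{e_i}_{\mathcal{H}}dW_r\Big)^2,$$
while $A_t = \sum_{i=1}^\infty \int_0^t \inner{\sy_r}{e_i}_{\mathcal{H}}^2 dr$, again by Parseval applied under the integral. Each $\inner{\sy_\cdot}{e_i}_{\mathcal{H}}$ is a valid one dimensional integrand (exactly as verified in the proof of \ref{dualityrep}), so the classical scalar result identifies
$$N^i_t := \Big(\int_0^t \inner{\sy_r}{e_i}_{\mathcal{H}}dW_r\Big)^2 - \int_0^t \inner{\sy_r}{e_i}_{\mathcal{H}}^2 dr$$
as a real valued martingale for every $i$.

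It remains to sum these identities, and this interchange of the infinite series with the conditional expectation is the main obstacle. I would avoid summing the signed martingales $N^i$ directly and instead treat the two non-negative series separately. For $s \leq t$, the summands $\inner{M_t}{e_i}_{\mathcal{H}}^2$ and $\int_0^t \inner{\sy_r}{e_i}_{\mathcal{H}}^2 dr$ are non-negative, so the conditional monotone convergence theorem yields
$$\mathbbm{E}\big(\norm{M_t}_{\mathcal{H}}^2 \mid \mathcal{F}_s\big) = \sum_{i=1}^\infty \mathbbm{E}\big(\inner{M_t}{e_i}_{\mathcal{H}}^2 \mid \mathcal{F}_s\big), \qquad \mathbbm{E}\big(A_t \mid \mathcal{F}_s\big) = \sum_{i=1}^\infty \mathbbm{E}\Big(\int_0^t \inner{\sy_r}{e_i}_{\mathcal{H}}^2 dr \,\Big|\, \mathcal{F}_s\Big).$$
Both left hand sides are finite almost surely by the integrability noted above, so the two series may be subtracted term by term; the scalar martingale property of each $N^i$ then collapses the difference to $\sum_{i=1}^\infty \big(\inner{M_s}{e_i}_{\mathcal{H}}^2 - \int_0^s \inner{\sy_r}{e_i}_{\mathcal{H}}^2 dr\big) = \norm{M_s}_{\mathcal{H}}^2 - A_s$, a final use of Parseval. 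This is precisely the martingale identity for $\norm{M}_{\mathcal{H}}^2 - A$, completing the verification and hence the proof.
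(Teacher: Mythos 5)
Your proposal is correct, and its skeleton is the same as the paper's: identify the candidate process, check the pathwise properties required by \ref{def of quad}, decompose $\norm{M_t}_{\mathcal{H}}^2$ and $A_t$ into coordinates via Parseval and \ref{dualityrep}, and invoke the classical scalar fact that each $N^i$ is a martingale, so that everything reduces to interchanging the infinite sum with the martingale property. Where you genuinely diverge is in how that interchange is justified. The paper sums the \emph{signed} processes $N^i$ directly: it observes that the partial sums $\sum_{i=1}^n N^i_t$ are martingales converging $\mathbbm{P}$-a.s., and then argues the convergence also holds in $L^1(\Omega;\R)$, which preserves martingality; the cited ``monotone convergence'' there is really a tail-summability bound of the form $\mathbbm{E}\vert\sum_{i>n} N^i_t\vert \leq 2\sum_{i>n}\mathbbm{E}\int_0^t\inner{\sy_r}{e_i}_{\mathcal{H}}^2\,dr \to 0$, since the individual terms are signed with zero mean. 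You instead never form the signed series at all: you apply the conditional monotone convergence theorem to the two non-negative series separately and subtract term by term, which is legitimate because both conditional expectations are a.s.\ finite and the resulting difference series is absolutely convergent. Your route buys a cleaner justification at this one point --- it sidesteps any question about the mode of convergence of a signed series and works purely with conditional expectations of non-negative terms --- while the paper's route ($L^1$ limits of martingales are martingales) is a reusable template that it deploys again in \ref{martingale quadratic variation limit}, \ref{cross variation convergence} and \ref{giving up}, which is presumably why it is phrased that way.
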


\begin{proof}
The fact that the process (\ref{crawrt}) is continuous, adapted, non-decreasing and starting from zero is clear. It simply remains to show the required martingality. To this end observe that at each time $t$, 
\begin{align*}\left\Vert\int_0^{t}\sy_r dW_r\right\Vert_{\mathcal{H}}^2 - \int_0^t\norm{\sy_r}_H^2dr &= \sum_{i=1}^\infty \left\langle\int_0^{t}\sy_r dW_r, e_i \right\rangle^2_{\mathcal{H}} - \int_0^t\sum_{i=1}^\infty \inner{\sy_r}{e_i}_H^2dr\\
&= \sum_{i=1}^\infty\left(\int_0^t\inner{\sy_r}{e_i}_{\mathcal{H}}dW_r \right)^2 - \sum_{i=1}^\infty \int_0^t\inner{\sy_r}{e_i}_H^2dr\\
&= \sum_{i=1}^\infty\left(\left(\int_0^t\inner{\sy_r}{e_i}_{\mathcal{H}}dW_r \right)^2 -  \int_0^t\inner{\sy_r}{e_i}_H^2dr \right)
\end{align*}
having applied Theorem \ref{dualityrep} to the first term and the Monotone Convergence Theorem to the second term, where the infinite sum is a limit taken $\mathbbm{P}-a.s.$. From the standard one dimensional theory, for each $n$,
$$\sum_{i=1}^n\left(\left(\int_0^t\inner{\sy_r}{e_i}_{\mathcal{H}}dW_r \right)^2 -  \int_0^t\inner{\sy_r}{e_i}_H^2dr \right) $$
is a real valued martingale so to conclude the proof we only need to justify that the $\mathbbm{P}-a.e.$ limit also holds in $L^1(\Omega;\R)$ as convergence in this space preserves martingality. This is a straightforwards application of the Monotone Convergence Theorem applied to each integral separately, which concludes the proof.


\end{proof}

We also look to reconcile this definition with the one often stated in the real valued case, as a limit in probability over any time partition with mesh approaching zero. 

\begin{proposition} \label{analagous one}
    Let $\sy \in \mathcal{I}^{\mathcal{H}}_T$ and consider any sequence of partitions $$I_l:= \left\{0=t^l_0 < t^l_1 < \dots < t^l_{k_l}=T\right\}$$ with $\max_j\vert t^l_{j}-t^l_{j-1} \vert \rightarrow 0$ as $l \rightarrow \infty$. Then for all $t\in[0,T]$, for any $\varepsilon > 0$, 
 $$\lim_{l \rightarrow \infty}\mathbbm{P}\left(\left\{\left\vert \sum_{t^l_{j+1} \leq t}\left\Vert\int_{t^l_{j}}^{t^l_{j+1}}\sy_r dW_r\right\Vert_{\mathcal{H}}^2 - \int_0^t\norm{\sy_r}_H^2dr \right\vert > \varepsilon \right\}\right) = 0. $$

\end{proposition}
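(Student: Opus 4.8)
The plan is to recognise the partition sum as the sum of squared increments of the Hilbert-space martingale $M := \int_0^{\cdot}\sy_r\,dW_r$, which by \ref{quad of int} has quadratic variation $[M]_t = \int_0^t\norm{\sy_r}_{\mathcal{H}}^2dr$, and then to reduce the whole statement to the scalar theory through an orthonormal basis $(e_i)$ of $\mathcal{H}$. First I would expand each increment by Parseval together with \ref{dualityrep}:
$$\norm{\int_{t^l_j}^{t^l_{j+1}}\sy_r\,dW_r}_{\mathcal{H}}^2 = \sum_{i=1}^\infty \left(\int_{t^l_j}^{t^l_{j+1}}\inner{\sy_r}{e_i}_{\mathcal{H}}\,dW_r\right)^2.$$
Summing the finite collection over $j$ and exchanging with the nonnegative series in $i$ (Tonelli) gives $A_l := \sum_{t^l_{j+1}\le t}\norm{\int_{t^l_j}^{t^l_{j+1}}\sy_r\,dW_r}_{\mathcal{H}}^2 = \sum_{i=1}^\infty A_l^{(i)}$, where $A_l^{(i)} := \sum_{t^l_{j+1}\le t}\big(\int_{t^l_j}^{t^l_{j+1}}\inner{\sy_r}{e_i}_{\mathcal{H}}\,dW_r\big)^2$, and likewise $\int_0^t\norm{\sy_r}_{\mathcal{H}}^2dr = \sum_i B^{(i)}$ with $B^{(i)} := \int_0^t\inner{\sy_r}{e_i}_{\mathcal{H}}^2dr$. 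For fixed $i$, $A_l^{(i)}$ is exactly the sum of squared increments of the scalar continuous $L^2$ martingale $\int_0^{\cdot}\inner{\sy_r}{e_i}_{\mathcal{H}}\,dW_r$, whose quadratic variation is $B^{(i)}$.

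Next, for each fixed $i$ I would invoke the standard one-dimensional reconciliation result [\cite{Kara}] that the sums of squared increments of a continuous $L^2$ martingale converge in $L^1$ to its quadratic variation. Since the proposition's sum omits the final partial subinterval, I would also note that this omission is harmless: its contribution has $L^1$-norm $\mathbbm{E}\int_{t^*_l}^t\inner{\sy_r}{e_i}_{\mathcal{H}}^2dr$, where $t^*_l$ is the largest partition point below $t$, and this tends to $0$ by continuity of the quadratic variation as the mesh vanishes. Consequently $f_i(l) := \mathbbm{E}\abs{A_l^{(i)} - B^{(i)}} \to 0$ as $l\to\infty$ for every $i$.

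The crux is then passing the limit through the infinite sum over $i$. By the triangle inequality $\mathbbm{E}\abs{A_l - B}\le \sum_i f_i(l)$, so it suffices to dominate the $f_i(l)$ by a summable sequence independent of $l$. As $A_l^{(i)},B^{(i)}\ge 0$ we have $f_i(l)\le \mathbbm{E}(A_l^{(i)}) + \mathbbm{E}(B^{(i)})$, and by the scalar It\^{o} Isometry applied on each subinterval, together with $t^*_l\le T$,
$$\mathbbm{E}(A_l^{(i)}) \le \mathbbm{E}\int_0^T\inner{\sy_r}{e_i}_{\mathcal{H}}^2dr =: c_i, \qquad \mathbbm{E}(B^{(i)}) \le c_i,$$
so $f_i(l)\le 2c_i$, while Parseval and Tonelli give $\sum_i c_i = \mathbbm{E}\int_0^T\norm{\sy_r}_{\mathcal{H}}^2dr < \infty$. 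Dominated convergence with respect to the counting measure then yields $\lim_{l\to\infty}\sum_i f_i(l) = \sum_i \lim_{l\to\infty} f_i(l) = 0$, whence $\mathbbm{E}\abs{A_l - B}\to 0$ as required.

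I expect this final interchange of the limit $l\to\infty$ with the basis expansion to be the main obstacle, secured precisely by the uniform summable bound $2c_i$ coming from the isometry; by contrast the scalar convergence and the per-component estimates are routine. An alternative, more self-contained route would bypass the basis by approximating $\sy$ with bounded integrands and computing a variance bound on $A_l - B$ directly in $\mathcal{H}$, but the projection argument above is cleaner given that \ref{quad of int} and \ref{dualityrep} are already available.
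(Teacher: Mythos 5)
Your proposal is correct and follows essentially the same route as the paper's proof: expand each increment over an orthonormal basis via Parseval and \ref{dualityrep}, pass the expectation through the sum in $i$, invoke the scalar reconciliation result per component, and justify the interchange of $\lim_{l}$ with $\sum_i$ by dominated convergence on the counting measure using the uniform-in-$l$, summable bound $2\,\mathbbm{E}\int_0^t\inner{\sy_r}{e_i}_{\mathcal{H}}^2dr$ obtained from the It\^{o} Isometry. The only difference is cosmetic: you explicitly handle the omitted final partial subinterval $[t^*_l,t]$, a point the paper absorbs into its citation of the standard one-dimensional theory.
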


\begin{proof}
We once again look to prove this by considering the finite dimensional projections on which the result is known to be true, before showing that it is preserved in the limit. As in the real valued case, [\cite{karatzas1991brownian}] Theorem 1.5.8, we introduce a sequence of stopping times $(\tau^n)$ defined at every $n \in \N$ by
$$\tau^n := n \wedge \inf\left\{ t \in [0,T]: \left\Vert\int_{0}^t\sy_r dW_r\right\Vert_{\mathcal{H}}^2 + \int_0^t\norm{\sy_r}_H^2dr \geq n \right\}.$$
For every $n$ we define the process $$\sy^n_{\cdot} := \sy_{\cdot}\mathbbm{1}_{\cdot \leq \tau^n}$$
and now look to show that 
\begin{equation} \label{new look to show that} \lim_{l \rightarrow \infty}\mathbbm{E}\left(\left\vert \sum_{t^l_{j+1} \leq t}\left\Vert\int_{t^l_{j}}^{t^l_{j+1}}\sy^n_r dW_r\right\Vert_{\mathcal{H}}^2 - \int_0^t\norm{\sy^n_r}_H^2dr \right\vert \right) = 0.\end{equation}
As $(\tau^n)$ approach infinity $\mathbbm{P}-a.s.$ then (\ref{new look to show that}) is sufficient to deduce the result, as in [\cite{karatzas1991brownian}] Theorem 1.5.8. Identically to the proof of Proposition \ref{quad of int} we have that
\begin{align*}
    &\mathbbm{E}\left(\left\vert \sum_{t^l_{j+1} \leq t}\left\Vert\int_{t^l_{j}}^{t^l_{j+1}}\sy^n_r dW_r\right\Vert_{\mathcal{H}}^2 - \int_0^t\norm{\sy^n_r}_H^2dr \right\vert \right)\\ & \qquad \qquad \qquad \qquad \qquad \qquad  = \mathbbm{E}\left(\left\vert \sum_{t^l_{j+1} \leq t}\sum_{i=1}^\infty\left(\int_{t^l_{j}}^{t^l_{j+1}}\inner{\sy^n_r}{e_i}_{\mathcal{H}}dW_r \right)^2 -  \int_0^t\inner{\sy^n_r}{e_i}_H^2dr  \right\vert \right)\\
    & \qquad \qquad \qquad \qquad \qquad \qquad  \leq  \mathbbm{E}\left(\sum_{i=1}^\infty\left\vert \sum_{t^l_{j+1} \leq t}\left(\int_{t^l_{j}}^{t^l_{j+1}}\inner{\sy^n_r}{e_i}_{\mathcal{H}}dW_r \right)^2 -  \int_0^t\inner{\sy^n_r}{e_i}_H^2dr  \right\vert \right)\\
    & \qquad \qquad \qquad \qquad \qquad \qquad  = \sum_{i=1}^\infty \mathbbm{E}\left(\left\vert \sum_{t^l_{j+1} \leq t}\left(\int_{t^l_{j}}^{t^l_{j+1}}\inner{\sy^n_r}{e_i}_{\mathcal{H}}dW_r \right)^2 -  \int_0^t\inner{\sy^n_r}{e_i}_H^2dr  \right\vert \right)
\end{align*}
where on the last line we have applied the Monotone Convergence Theorem. From [\cite{karatzas1991brownian}] Theorem 1.5.8 we know that for each $i \in \N$, 
$$\lim_{l \rightarrow 0}\mathbbm{E}\left(\left\vert \sum_{t^l_{j+1} \leq t}\left[\left(\int_{t^l_{j}}^{t^l_{j+1}}\inner{\sy^n_r}{e_i}_{\mathcal{H}}dW_r \right)^2 -  \int_0^t\inner{\sy^n_r}{e_i}_H^2dr \right] \right\vert \right) = 0 $$
so we would be done if we can justify the interchange of infinite sum and limit in $l$. We look to apply the Dominated Convergence Theorem (for the infinite sum again considered as an integral with counting measure), noting that for each fixed $i,l \in \N$,
\begin{align}
    \nonumber &\mathbbm{E}\left(\left\vert \sum_{t^l_{j+1} \leq t}\left[\left(\int_{t^l_{j}}^{t^l_{j+1}}\inner{\sy^n_r}{e_i}_{\mathcal{H}}dW_r \right)^2 -  \int_0^t\inner{\sy^n_r}{e_i}_H^2dr \right] \right\vert \right)\\\nonumber & \qquad \qquad \qquad \qquad \qquad \qquad  \leq \sum_{t^l_{j+1} \leq t}\mathbbm{E}\left[\left(\int_{t^l_{j}}^{t^l_{j+1}}\inner{\sy^n_r}{e_i}_{\mathcal{H}}dW_r \right)^2\right] + \mathbbm{E}\left[\int_0^t\inner{\sy^n_r}{e_i}_H^2dr \right]\\
    \nonumber&\qquad \qquad \qquad \qquad \qquad \qquad  = \sum_{t^l_{j+1} \leq t}\mathbbm{E}\left[\int_{t^l_{j}}^{t^l_{j+1}}\inner{\sy^n_r}{e_i}_H^2dr\right] + \mathbbm{E}\left[\int_0^t\inner{\sy^n_r}{e_i}_H^2dr \right]\\
    &\qquad \qquad \qquad \qquad \qquad \qquad = 2\mathbbm{E}\left[\int_0^t\inner{\sy^n_r}{e_i}_H^2dr \right] \label{nearly finished}
\end{align}
which is a bound uniform in $l$ and summable in $i$. This is our dominating function which justifies the application of the Dominated Convergence Theorem, concluding the proof.

\end{proof}

\begin{lemma} \label{martingale quadratic variation limit}
    Suppose that $(M^n)$ is a sequence of martingales in $\mathcal{M}^2_c(\mathcal{H})$ which at every time $t \geq 0$, converges in $L^2\big(\Omega;\mathcal{H}\big)$ to some $M_t$. Suppose in addition that at any time $t \geq 0$, the sequence $\left([M^n]_t \right)$ converges to some $\psi_t$ in $L^1\big(\Omega;\R\big)$ where $\psi$ is continuous, adapted and non-decreasing ($\mathbbm{P}-a.s.$). Then $M \in\mathcal{M}^2_c(\mathcal{H})$ and $[M]$ is indistinguishable from $\psi$. 
\end{lemma}

\begin{proof}
    The fact that $M \in \mathcal{M}^2_c(\mathcal{H})$ is immediate from Proposition \ref{prop m2c closed}, so we move on to the quadratic variation. Observe that for each $n$, by definition $$\norm{M^n}^2_{\mathcal{H}}-[M^n]$$ is a real valued martingale, so the $L^1\big(\Omega;\R\big)$ limit at each time $t$, $$\lim_{n \rightarrow \infty}\big(\norm{M^n}^2_{\mathcal{H}}-[M^n]\big)$$ is again a real valued martingale if it exists.  But this is clear as it is just 


   $$\lim_{n \rightarrow \infty}\norm{M^n}^2_{\mathcal{H}}-\lim_{n \rightarrow \infty}[M^n] = \norm{M}^2_\mathcal{H} - \psi$$ by definition of the limit. From Definition \ref{def of quad} then if $\psi_0 = 0$ $\mathbbm{P}-a.s.$ we have that $\psi$ satisfies the conditions of the quadratic variation of $M$, so is indistinguishable from it. This property is immediate as $\psi_0$ is the limit in $L^1\left(\Omega;\R\right)$ of $([M^n]_0)$ by assumption, whilst this is just a sequence of zeros $\mathbbm{P}-a.s.$, hence the result. 
\end{proof}

   



In part due to this result, we provide an alternative characterisation of the quadratic variation under some additional boundedness assumptions. We reintroduce the projections $(\mathcal{P}_k)$ used in Proposition \ref{prop m2c closed}. 

\begin{proposition} \label{characterisation of quadratic}
    For any $M \in \mathcal{M}^2_c(\mathcal{H})$, $[M]$ has the representation $$[M] = \sum_{i=1}^\infty \left[\inner{M}{e_i}_{\mathcal{H}}\right]$$
    $\mathbbm{P}-a.s.$ for the limit taken in $C\left([0,T];\R\right)$ for any $T \geq 0$. 
\end{proposition}


\begin{proof}
     Observe that $$\norm{\mathcal{P}_kM}_{\mathcal{H}}^2 = \sum_{i=1}^k \inner{M}{e_i}_{\mathcal{H}}^2$$ which is the sum of $k$ one dimensional submartingales, and in particular $$ \sum_{i=1}^k \left(\inner{M}{e_i}_{\mathcal{H}}^2 - \left[ \inner{M}{e_i}_{\mathcal{H}}\right]\right)$$
    is a martingale. From the definition of the quadratic variation it is thus clear that \begin{equation}\label{clear rep}\left[\mathcal{P}_kM\right] = \sum_{i=1}^k\left[ \inner{M}{e_i}_{\mathcal{H}}\right].\end{equation}
    Moreover we have that at each $t \geq 0$, $\mathcal{P}_kM_t$ is convergent to $M_t$ $\mathbbm{P}-a.s.$ in $\mathcal{H}$, and furthermore in $L^2\left(\Omega;\mathcal{H}\right)$ from an application of the Monotone Convergence Theorem to the difference process $\norm{M - \mathcal{P}_kM}_{\mathcal{H}}^2 = \sum_{i=k+1}^\infty \inner{M}{e_i}_{\mathcal{H}}^2$. Reminiscent of Lemma \ref{martingale quadratic variation limit} we look to show convergence of the sequence $\left(\left[\mathcal{P}_kM\right] \right)$ to some $\psi$ at each $t \geq 0$ in $L^1\left(\Omega;\R\right)$, and in fact we show the stronger convergence in $L^1\left(\Omega;C\left([0,T];\R\right)\right)$ for each $T \geq 0$. We proceed by showing the Cauchy property in this Banach Space. To do this we consider, for $j < k$, \begin{equation}\label{its a me identity}[\mathcal{P}_kM] - [\mathcal{P}_jM] = \sum_{i=j+1}^k\left[ \inner{M}{e_i}_{\mathcal{H}}\right].\end{equation}
    From this identity and the preceding work, it is clear that
    \begin{equation}\label{before therefore}[\mathcal{P}_kM] - [\mathcal{P}_jM] = [\mathcal{P}_kM -\mathcal{P}_jM]\end{equation}
    and therefore
    \begin{equation} \label{its a therefore}\sup_{t\in[0,T]}\left\vert [\mathcal{P}_kM]_t - [\mathcal{P}_jM]_t\right\vert = \sup_{t\in[0,T]}[\mathcal{P}_kM -\mathcal{P}_jM]_t = [\mathcal{P}_kM -\mathcal{P}_jM]_T. 
    \end{equation}
    Furthermore we are concerned with a control in expectation of this term. From the property that real valued martingales have constant expectation,
    \begin{align*}\mathbbm{E}\left( \norm{\mathcal{P}_kM_T -\mathcal{P}_jM_T }^2_{\mathcal{H}}-[\mathcal{P}_kM - \mathcal{P}_jM]_T\right) &= \mathbbm{E}\left( \norm{\mathcal{P}_kM_0 - \mathcal{P}_jM_0}^2_{\mathcal{H}}-[\mathcal{P}_kM - \mathcal{P}_jM]_0\right)\\ &= \mathbbm{E}\left( \norm{\mathcal{P}_kM_0 - \mathcal{P}_jM_0}^2_{\mathcal{H}}\right)
    \end{align*}
    so in particular
    \begin{align*}\mathbbm{E}\left([\mathcal{P}_kM - \mathcal{P}_jM]_T \right) &= \mathbbm{E}\left( \norm{\mathcal{P}_kM_T - \mathcal{P}_jM_T}^2_{\mathcal{H}}-\norm{\mathcal{P}_kM_0 - \mathcal{P}_jM_0}^2_{\mathcal{H}}\right)\\
    &= \mathbbm{E}\left(\sum_{i=j+1}^k\left(\inner{M_T}{e_i}_{\mathcal{H}}^2 - \inner{M_0}{e_i}_{\mathcal{H}}^2\right)\right)\\
    &\leq \mathbbm{E}\left(\sum_{i=j+1}^k\inner{M_T}{e_i}_{\mathcal{H}}^2 \right) + \mathbbm{E}\left(\sum_{i=j+1}^k \inner{M_0}{e_i}_{\mathcal{H}}^2\right).
    \end{align*}
By the square integrability assumption on $M$ we have that at every $t \geq 0$, $\mathbbm{E}\left(\sum_{i=1}^\infty\inner{M_t}{e_i}_{\mathcal{H}}^2 \right) < \infty$ which justifies that the right hand side of the above approaches zero as $j \rightarrow \infty$, uniformly in $k$. With (\ref{its a therefore}) we deduce that the sequence $([\mathcal{P}_k M] ) $ is Cauchy in $L^1\left(\Omega;C\left([0,T];\R\right)\right)$ for each $T \geq 0$ so admits a limit in this space which we call $\psi$. Through the $L^1\left(\Omega;C\left([0,T];\R\right)\right)$ convergence we can deduce the existence of a subsequence which is $\mathbbm{P}-a.s.$ convergent in $C\left([0,T];\R\right)$. In fact we can upgrade this to convergence over the whole sequence, as was note that $\mathbbm{P}-a.s.$ at each $t \geq 0$ the real valued sequence $([\mathcal{P}_kM]_t)$ not only has a convergent subsequence but is non-decreasing in $k$, which implies convergence of the whole sequence. Convergence of the whole sequence $\mathbbm{P}-a.s.$ in $C\left([0,T];\R\right)$ can then be deduced by the Cauchy property with (\ref{its a therefore}) and (\ref{before therefore}). In summary thus far we have that 
$$\psi = \sum_{i=1}^\infty \left[\inner{M}{e_i}_{\mathcal{H}}\right]$$
    $\mathbbm{P}-a.s.$ for the limit taken in $C\left([0,T];\R\right)$ for any $T \geq 0$. It only remains to show that $\psi$ is indistinguishable from $[M]$, which we do by verifying the conditions of Lemma \ref{martingale quadratic variation limit}. The convergence has already been established hence we need only the regularity on $\psi$. It is continuous by construction, and must be adapted as $\psi_t$ is the $\mathbbm{P}-a.s.$ limit of the $\mathcal{F}_t-$measurable $[\mathcal{P}_{k}M]_t$ on the complete measure space. This limit similarly preserves the non-decreasing property, so $\psi$ satisfies the required conditions and must be indistinguishable from $[M]$ due to Lemma \ref{martingale quadratic variation limit}.

\end{proof}





Following on from the quadratic variation we look to introduce the cross-variation between martingales. Guided by a motivation to use the Stratonovich integral, introduced in Subsection \ref{sub strato}, we will need to consider the cross-variation between elements of $\mathcal{M}^2_c(\mathcal{H})$ and $\mathcal{M}^2_c$. Defining this akin to a polarisation identity is out of the question as one cannot take sums of these martingales, so we look to use the characterisation in terms of their product.\\

Indeed from the classical theory, for example [\cite{karatzas1991brownian}] Theorem 1.5.13, for any given $\sy \in \mathcal{M}^2_c(\mathcal{H})$, $Y \in \mathcal{M}^2_c$ and $e_i$ a basis vector of $\mathcal{H}$, there exists a unique continuous, adapted, bounded-variation process $[\inner{\sy}{e_i}_{\mathcal{H}}, Y]$ with $[\inner{\sy}{e_i}_{\mathcal{H}}, Y]_0 = 0$ ($\mathbbm{P}-a.s.$) such that $$\inner{\sy}{e_i}_{\mathcal{H}} Y - [\inner{\sy}{e_i}_{\mathcal{H}}, Y]$$
is a real valued martingale. We would love to immediately have the existence and uniqueness of a corresponding $\mathcal{H}$ valued process which gives a martingale when subtracted from $\sy Y$, but that isn't clear in the same way that the quadratic variation was as in that case $\norm{\sy}_\mathcal{H}^2$ was a genuine real valued submartingale. Our approach, therefore, comes from the characterisation in Proposition \ref{characterisation of quadratic}. We make a first definition for the projected process.

\begin{definition} \label{fin dim cross def}
    For $\sy \in \mathcal{M}^2_c(\mathcal{H})$ and $Y \in \mathcal{M}^2_c$, for any $k \in \N$, we define the cross-variation process $[\mathcal{P}_k\sy, Y]$ by $$ [\mathcal{P}_k\sy, Y]:= \sum_{i=1}^k[\inner{\sy}{e_i}_{\mathcal{H}}, Y]e_i. $$
\end{definition}

Let's take a moment to process this definition, particularly in terms of how we would like to define the cross-variation process. It falls from the corresponding properties of the real valued cross-variations that $[\mathcal{P}_k\sy, Y]$ is continuous, adapted and of bounded-variation (one can apply the triangle inequality for the norm $\norm{\cdot}_{\mathcal{H}})$ satisfying $[\mathcal{P}_k\sy, Y]_0 = 0$ $\mathbbm{P}-a.s.$. In addition observe that $$ (\mathcal{P}_k\sy) Y - [\mathcal{P}_k\sy, Y] = \sum_{i=1}^k\left(\inner{\sy}{e_i}_{\mathcal{H}}Y -  [\inner{\sy}{e_i}_{\mathcal{H}}, Y]\right)e_i$$ 
which we hope to be an $\mathcal{H}$ valued martingale. To show this consider arbitrary $\phi \in \mathcal{H}$. Then
\begin{align*}
    \left\langle (\mathcal{P}_k\sy) Y - [\mathcal{P}_k\sy, Y], \phi \right\rangle_{\mathcal{H}} &= \left\langle \sum_{i=1}^k\left(\inner{\sy}{e_i}_{\mathcal{H}}Y -  [\inner{\sy}{e_i}_{\mathcal{H}}, Y]\right)e_i, \sum_{j=1}^\infty\inner{\phi}{e_j}_{\mathcal{H}}e_j \right\rangle_{\mathcal{H}}\\
    &= \sum_{i=1}^k\sum_{j=1}^\infty \left\langle \left(\inner{\sy}{e_i}_{\mathcal{H}}Y -  [\inner{\sy}{e_i}_{\mathcal{H}}, Y]\right)e_i, \inner{\phi}{e_j}_{\mathcal{H}}e_j \right\rangle_{\mathcal{H}}\\
    &= \sum_{i=1}^k \left(\inner{\sy}{e_i}_{\mathcal{H}}Y -  [\inner{\sy}{e_i}_{\mathcal{H}}, Y]\right)\inner{\phi}{e_i}_{\mathcal{H}}
\end{align*}
where we recall that each $\inner{\sy}{e_i}_{\mathcal{H}}Y -  [\inner{\sy}{e_i}_{\mathcal{H}}, Y]$ is a martingale, hence too is

$\left(\inner{\sy}{e_i}_{\mathcal{H}}Y -  [\inner{\sy}{e_i}_{\mathcal{H}}, Y]\right)\inner{\phi}{e_i}_{\mathcal{H}}$ and therefore the finite sum is a martingale. To comment on the uniqueness, we introduce a characterisation of indistinguishability in Hilbert Spaces.

\begin{lemma} \label{arming lemma}
    Let $\sy$ and $\py$ be $\mathcal{H}$ valued processes. Then $\sy$ is indistinguishable from $\py$ if and only if for every basis vector $e_i$, $\inner{\sy}{e_i}_{\mathcal{H}}$ is indistinguishable from $\inner{\py}{e_i}_{\mathcal{H}}$.
\end{lemma}

\begin{proof}
    The first implication is trivial so we consider only the reverse one. That is, assume that for every $e_i$ there exists a set $A_i \in \mathcal{F}$ with $\mathbbm{P}(A_i) = 1$ and for all $ t \geq 0$ and $\omega \in A_i$, $$\inner{\sy_t(\omega)}{e_i}_{\mathcal{H}} = \inner{\py_t(\omega)}{e_i}_{\mathcal{H}}.$$
    We now define $A:=\bigcap_{i}A_i$, which is again of full probability and in $\mathcal{F}$, and for any $\omega \in A$, $t \geq 0$,
\begin{align*}
    \norm{\sy_t(\omega) - \py_t(\omega)}_{\mathcal{H}}^2 = \sum_{i=1}^\infty \inner{\sy_t(\omega) - \py_t(\omega)}{e_i}_{\mathcal{H}}^2 = 0
\end{align*}
which completes the proof.
\end{proof}

Armed with this lemma we consider the uniqueness of $[\mathcal{P}_k\sy, Y]$. Suppose that $\Pi$ is an $\mathcal{H}$ valued process which continuous, adapted and of bounded-variation satisfying $\Pi_0 = 0$ $\mathbbm{P}-a.s.$. Moreover suppose that $$(\mathcal{P}_k \sy) Y - \Pi$$ is an $\mathcal{H}$ valued martingale. Take any basis vector $e_j$. Then \begin{equation}\nonumber\left\langle (\mathcal{P}_k \sy) Y - \Pi , e_j\right\rangle_{\mathcal{H}}\end{equation}
is a real valued martingale, but this is just \begin{equation}\label{the real valued martingale} \sum_{i=1}^k\inner{\sy}{e_i}_{\mathcal{H}}\inner{e_i}{e_j}_{\mathcal{H}}Y - \inner{\Pi}{e_j}_{\mathcal{H}}.\end{equation}
There are two cases here: $j \leq k$ and $j > k$. In the latter case then we have that $\inner{\Pi}{e_j}_{\mathcal{H}}$ is a martingale. In both cases from the regularity of $\Pi$ we have again that $\inner{\Pi}{e_j}_{\mathcal{H}}$ is continuous, adapted and of bounded-variation satisfying $\inner{\Pi_0}{e_j}_{\mathcal{H}} = 0$ $\mathbbm{P}-a.s.$. In the real valued case it is classical that a martingale of bounded-variation must be constant, hence $\mathbbm{P}-a.s.$ for all $t \geq 0$ we have that $\inner{\Pi_t}{e_j}_{\mathcal{H}} = 0.$ Therefore $\inner{\Pi}{e_j}_{\mathcal{H}}$ is indistinguishable from $\left\langle [\mathcal{P}_k \sy, Y], e_j\right\rangle $ which is similarly zero (recall Definition \ref{fin dim cross def}).\\

In the alternative case $j \leq k$, the real valued martingale (\ref{the real valued martingale}) is given by $$ \inner{\sy}{e_j}_{\mathcal{H}}Y - \inner{\Pi}{e_j}_{\mathcal{H}}$$
and from the much discussed uniqueness in this setting we have that $\inner{\Pi}{e_j}_{\mathcal{H}}$ is indistinguishable from $[\inner{\sy}{e_j}_{\mathcal{H}},Y]$ which is simply $\left\langle [\mathcal{P}_k \sy , Y] , e_k \right\rangle$ from its definition. Combining with Lemma \ref{arming lemma}, what we have proved is the following.

\begin{proposition} \label{prop for fin dim cross characterisation}
    For $\sy \in \mathcal{M}^2_c(\mathcal{H})$ and $Y \in \mathcal{M}^2_c$, for any $k \in \N$,  $[\mathcal{P}_k\sy, Y]$ is the unique continuous, adapted, bounded-variation $\mathcal{H}$ valued process satisfying $[\mathcal{P}_k\sy, Y]_0 = 0$ $\mathbbm{P}-a.s.$ such that $$(\mathcal{P}_k\sy) Y - [\mathcal{P}_k\sy, Y] $$
    is an $\mathcal{H}$ valued martingale.
\end{proposition}

It remains for us to define the cross-variation process $[\sy,Y]$. Referencing [\cite{karatzas1991brownian}] Problem 1.5.7 for example, we know that $\mathbbm{P}-a.s.$ for any $t \geq 0$, $$[\inner{\sy}{e_i}_{\mathcal{H}}, Y]_t^2 \leq [\inner{\sy}{e_i}_{\mathcal{H}}]_t[Y]_t$$
hence \begin{align*}
    \sup_{t \in [0,T]}[\inner{\sy}{e_i}_{\mathcal{H}}, Y]_t^2 &\leq \sup_{t\in [0,T]}\left([\inner{\sy}{e_i}_{\mathcal{H}}]_t[Y]_t \right)\\
    &\leq \left(\sup_{t\in [0,T]}[\inner{\sy}{e_i}_{\mathcal{H}}]_t\right)\left(\sup_{t\in[0,T]}[Y]_t \right)\\
    &= [\inner{\sy}{e_i}_{\mathcal{H}}]_T[Y]_T.   
\end{align*}
Moreover for $j < k$,
\begin{align*}
    \sup_{t \in [0,T]}\left\Vert [\mathcal{P}_k\sy, Y]_t - [\mathcal{P}_j\sy, Y]_t\right\Vert_{\mathcal{H}}^2 &= \sup_{t \in [0,T]}\left\Vert \sum_{i=j+1}^k\left[\inner{\sy}{e_i}_{\mathcal{H}},Y \right]_t e_i\right\Vert_{\mathcal{H}}^2\\
    &= \sup_{t \in [0,T]}\sum_{i=j+1}^k\left[\inner{\sy}{e_i}_{\mathcal{H}},Y \right]_t^2\\
&\leq \sum_{i=j+1}^k\sup_{t \in [0,T]}\left[\inner{\sy}{e_i}_{\mathcal{H}},Y \right]_t^2\\
    &\leq \sum_{i=j+1}^k[\inner{\sy}{e_i}_{\mathcal{H}}]_T[Y]_T\\
    &=  [\mathcal{P}_k\sy - \mathcal{P}_j\sy]_T[Y]_T
\end{align*}
referring to (\ref{its a me identity}) and (\ref{before therefore}) for the last line. We look to follow a similar approach to Proposition \ref{characterisation of quadratic}, taking the expectation. To do this we introduce the localising times $$\tau_n := n \wedge \inf\left\{0 \leq t < \infty : [Y]_t \geq n \right\}.$$
Then
\begin{align*}
    &\mathbbm{E}\left( \left( \sup_{t \in [0,T]}\left\Vert [\mathcal{P}_k\sy, Y]_t\mathbbm{1}_{t \leq \tau_n} - [\mathcal{P}_j\sy, Y]_t\mathbbm{1}_{t \leq \tau_n}\right\Vert_{\mathcal{H}} \right)^2 \right)\\ & \qquad \qquad \qquad \qquad \qquad \qquad \qquad \qquad \qquad = \mathbbm{E}\left( \sup_{t \in [0,T]}\left\Vert [\mathcal{P}_k\sy, Y]_t\mathbbm{1}_{t \leq \tau_n} - [\mathcal{P}_j\sy, Y]_t\mathbbm{1}_{t \leq \tau_n}\right\Vert_{\mathcal{H}}^2 \right)\\
    &\qquad \qquad \qquad \qquad \qquad \qquad \qquad \qquad \qquad\leq \mathbbm{E}\left( [\mathcal{P}_k\sy - \mathcal{P}_j\sy]_T[Y]_T\mathbbm{1}_{t \leq \tau_n} \right)\\
    & \qquad \qquad \qquad \qquad \qquad \qquad \qquad \qquad \qquad \leq n\mathbbm{E}\left( [\mathcal{P}_k\sy - \mathcal{P}_j\sy]_T\right)
\end{align*}
which was shown to approach zero as $j \rightarrow \infty$, uniformly in $k$, in Proposition \ref{characterisation of quadratic}. We have thus demonstrated that for every $n\in \N$ and $T \geq 0$, the sequence $\left([\mathcal{P}_k\sy, Y]_\cdot\mathbbm{1}_{\cdot \leq \tau_n} \right)$ is Cauchy in the Banach Space $L^2\left( \Omega; L^\infty\left([0,T];\mathcal{H}\right)\right)$. We can therefore extract a subsequence which converges $\mathbbm{P}-a.s.$ in $L^\infty\left([0,T];\mathcal{H}\right)$. To remove the truncation we introduce the sets $$ A_n := \left\{ \omega \in \Omega: \tau_n(\omega) \geq T \right\}.$$
Then on every $A_n$, there exists a subsequence of $\left([\mathcal{P}_k\sy, Y]\right)$ which is $\mathbbm{P}-a.s.$ (within $A_n$) convergent in $L^\infty\left([0,T];\mathcal{H}\right)$. It should be noted that the choice of subsequence may be dependent on $n$, hence the separation, and also that this convergence is now of continuous processes so can be taken in $C\left([0,T];\mathcal{H}\right)$. We look to upgrade this convergence to be of the whole sequence, by another Cauchy argument. Fix any $\varepsilon > 0$, and we look to show the existence of a $J \in \N$ such that for all $k \geq J$,
$$ \sup_{t \in [0,T]}\left\Vert [\mathcal{P}_k\sy, Y]_t - [\mathcal{P}_J\sy, Y]_t\right\Vert_{\mathcal{H}}^2 < \varepsilon$$
or equivalently, as already shown in the proof,
$$ \sup_{t \in [0,T]}\sum_{i=J+1}^k\left[\inner{\sy}{e_i}_{\mathcal{H}},Y \right]_t^2 < \varepsilon.$$
Let the subsequence be indexed by $k_m$. The subsequence is convergent hence Cauchy, so there exists a $J_m$ such that for all $k_m > J_m$, 
$$ \sup_{t \in [0,T]}\left\Vert [\mathcal{P}_{k_m}\sy, Y]_t - [\mathcal{P}_{J_m}\sy, Y]_t\right\Vert_{\mathcal{H}}^2 < \varepsilon$$
or equivalently 
$$ \sup_{t \in [0,T]}\sum_{i=J_m+1}^{k_m}\left[\inner{\sy}{e_i}_{\mathcal{H}},Y \right]_t^2 < \varepsilon.$$
We now set $J := J_m$ and argue that for every $k > J$ there exists a $k_m > k$ such that
$$\sup_{t \in [0,T]}\sum_{i=J+1}^k\left[\inner{\sy}{e_i}_{\mathcal{H}},Y \right]_t^2 \leq \sup_{t \in [0,T]}\sum_{i=J+1}^{k_m}\left[\inner{\sy}{e_i}_{\mathcal{H}},Y \right]_t^2 < \varepsilon$$
which proves the Cauchy property. Hence on each $A_n$, the sequence $([\mathcal{P}_k\sy, Y])$ is $\mathbbm{P}-a.s.$ convergent in $C\left([0,T];\mathcal{H}\right)$. From the fact that $\mathbbm{P}\left(\bigcup_n A_n \right) = 1$, we obtain the $\mathbbm{P}-a.s.$ convergence on $\Omega$.

\begin{definition} \label{definition for cross variation inf dim}
     For $\sy \in \mathcal{M}^2_c(\mathcal{H})$ and $Y \in \mathcal{M}^2_c$, we define the cross-variation process $[\sy, Y]$ by 
     $$ [\sy, Y] := \sum_{i=1}^\infty[\inner{\sy}{e_i}_{\mathcal{H}}, Y]e_i$$ $\mathbbm{P}-a.s.$  for the limit taken in $C\left([0,T];\mathcal{H} \right)$.
\end{definition}

Our next question is then very natural: do we have the corresponding characterisation as in Proposition \ref{prop for fin dim cross characterisation}? The fact that this cross-variation is continuous, adapted and starting from zero is proven as in Proposition \ref{characterisation of quadratic}. In fact the martingality of \begin{equation} \label{new marty} \sy Y -  [\sy, Y]\end{equation}  is again proven near identically, as we use that $$(\mathcal{P}_k \sy) Y - \sum_{i=1}^k[\inner{\sy}{e_i}_{\mathcal{H}}, Y]e_i $$
is a martingale, and the convergence of this at each $t \geq 0$ in $L^1\left(\Omega ; \mathcal{H}\right)$ to (\ref{new marty}). 
It is the bounded-variation which proves problematic. Perhaps the most logical approach is to show that the finite sums are of uniformly bounded total variation, through an argument like
$$V^T_{\mathcal{H}}\left([\mathcal{P}_k \sy , Y] \right) = V^T_{\mathcal{H}}\left( \sum_{i=1}^k[\inner{\sy}{e_i}_{\mathcal{H}}, Y]e_i  \right) \leq \sum_{i=1}^k V^T_{\mathcal{H}}\left([\inner{\sy}{e_i}_{\mathcal{H}}, Y]e_i  \right) = \sum_{i=1}^k V^T_{\R}\left([\inner{\sy}{e_i}_{\mathcal{H}}, Y] \right).$$
Then from the real valued theory, see [\cite{karatzas1991brownian}] Problem 1.5.7, we have that $$V^T_{\R}\left([\inner{\sy}{e_i}_{\mathcal{H}}, Y] \right) \leq \frac{1}{2}\left( [ \inner{\sy}{e_i}_{\mathcal{H}}]_T + [Y]_T \right) $$
but this means taking $\sum_{i=1}^k [Y]_T$ which will explode as $k \rightarrow \infty$. In lieu of this bounded-variation, we do of course have the weaker property that every projection is of bounded variation. In fact, this is enough for uniqueness.

Suppose that $\Pi$ is an $\mathcal{H}$ valued process which is continuous, adapted, satisfying $\Pi_0 = 0$ and such that for every basis vector $e_j$, $\inner{\Pi}{e_j}$ is of bounded-variation  $\mathbbm{P}-a.s.$. Moreover suppose that $$\sy Y - \Pi$$ is an $\mathcal{H}$ valued martingale. Take any basis vector $e_j$. Then \begin{equation}\nonumber\left\langle \sy Y - \Pi , e_j\right\rangle_{\mathcal{H}}\end{equation}
is a real valued martingale, but this is just \begin{equation}\nonumber\sum_{i=1}^\infty\inner{\sy}{e_i}_{\mathcal{H}}\inner{e_i}{e_j}_{\mathcal{H}}Y - \inner{\Pi}{e_j}_{\mathcal{H}}\end{equation}
or simply
$$ \inner{\sy}{e_j}_{\mathcal{H}}Y - \inner{\Pi}{e_j}_{\mathcal{H}}.$$
Thus $\inner{\Pi}{e_j}_{\mathcal{H}}$ is indistinguishable from  $[\inner{\sy}{e_j}_{\mathcal{H}},Y]$ which is simply $\left\langle [\sy , Y] , e_k \right\rangle$ from its definition. Combining with Lemma \ref{arming lemma} we deduce the uniqueness in this class, so have proven the following.

\begin{proposition} \label{prop for inf dim cross characterisation}
    For $\sy \in \mathcal{M}^2_c(\mathcal{H})$ and $Y \in \mathcal{M}^2_c$, $[\sy, Y]$ is the unique continuous, adapted $\mathcal{H}$ valued process satisfying $[\sy, Y]_0 = 0$ $\mathbbm{P}-a.s.$ such that for every basis vector $e_j$, $\inner{[\sy, Y]}{e_j}_{\mathcal{H}}$ is of bounded-variation $\mathbbm{P}-a.s.$ and  $$\sy Y - [\sy, Y] $$
    is an $\mathcal{H}$ valued martingale.
\end{proposition}


\begin{lemma} \label{cross variation convergence}
    Suppose that $(\sy^n)$ is a sequence of martingales in $\mathcal{M}^2_c(\mathcal{H})$ which at every time $t \geq 0$, converges in $L^2\big(\Omega;\mathcal{H}\big)$ to some $\sy_t$. Let $Y \in \mathcal{M}^2_c$. Suppose in addition that at any time $t \geq 0$, the sequence $\left([\sy^n,Y]_t \right)$ converges to some $y_t$ in $L^1\big(\Omega;\R\big)$ where $y$ is continuous, adapted and for every basis vector $e_j$, $\inner{y}{e_j}_{\mathcal{H}}$ is of bounded variation $\mathbbm{P}-a.s.$. Then $\sy \in\mathcal{M}^2_c(\mathcal{H})$ and $[\sy,Y]$ is indistinguishable from $y$. 
\end{lemma}

\begin{proof}
    The proof is, unsurprisingly, similar to that of Lemma \ref{martingale quadratic variation limit}. For each $n$ the martingale in question is the $\mathcal{H}$ valued one $$\sy^nY -[\sy^n,Y].$$ We use again that the $L^1\big(\Omega;\mathcal{H}\big)$ limit preserves martingality and that by H\"{o}lder's inequality the $L^1\big(\Omega;\mathcal{H}\big)$ limit of $\sy^n_tY_t$ is $\sy_tY_t$. With the same arguments as in Lemma \ref{martingale quadratic variation limit} we conclude the proof.
\end{proof}



If the given processes were only (continuous) local martingales, then we can make a slightly modified version of the definition. Assuming without loss of generality that $\sy$ and $Y$ are locally square integrable (see the discussion after Definitionn \ref{localmartingaleintegrator}), localised by stopping times $(R_n)$ and $(T_n)$ respectively, then for a new sequence of stopping times defined by $$\tau_n = R_n \wedge T_n$$ the stopped processes $\sy^{\tau_n}$ and $Y^{\tau_n}$ are genuine square integrable martingales (in their respective spaces), so the cross variation $[\sy^{\tau_n},Y^{\tau_n}]$ can be defined. The canonical localisation procedure is evident once more, as the $(\tau_n)$ go to infinity almost surely, the consistency conditions that for $m \leq n$ and $t \leq \tau_m$ we have $$ \sy^{\tau_m}_t = \sy^{\tau_n}_t \qquad \textnormal{and} \qquad Y^{\tau_m}_t = Y^{\tau_n}_t$$ allow us once more to define the process at almost every $\omega$ and any $t$ by \begin{equation} \label{crossvlocalm} \Big([\sy,Y]_t\Big)(\omega) := \Big([\sy^{\tau_n},Y^{\tau_n}]_t\Big)(\omega)\end{equation} for any $n$ such that $t \leq \tau_n(\omega)$, independently of this choice of $n$. Then the process $$\sy Y - [\sy,Y]$$ is itself a local martingale, localised by the stopping times $(\tau_n)$. The argument justifying this is identical to Proposition \ref{integralislocalmartingale}, from which it is similarly clear that $$[\sy,Y]^{\tau_n} = [\sy^{\tau_n},Y^{\tau_n}].$$

In the traditional way, these notions can all be extended to semi-martingales (that is, a martingale plus a bounded-variation process). The quadratic and cross variation of such semi-martingales is then simply the quadratric/cross variation of the corresponding martingale parts. To this end we introduce the notation $\bar{\mathcal{M}}^2_c$ and $\bar{\mathcal{M}}^2_c(\mathcal{H})$ to be the corresponding spaces of square integrable continuous semi-martingales, and similarly $\bar{\mathcal{M}}_c, \bar{\mathcal{M}}_c(\mathcal{H})$ to be the spaces of continuous semi-martingales. 

\subsection{Integration Driven by Cylindrical Brownian Motion} \label{subsection inf dim integral}

For our analysis now we will need to make reference to two distinct Hilbert Spaces; one over which $\mathcal{W}$ is a Cylindrical Brownian Motion, and the other in which our integrand maps to. Henceforth we introduce $\mathfrak{U}$ as the Hilbert Space over which $\mathcal{W}$ is a Cylindrical Brownian Motion. We shall take $(e_i)$ as an orthonormal basis over $\mathfrak{U}$ and $(a_i)$ an orthonormal basis over $\mathcal{H}$.

\begin{definition}  \label{IHtw}
    Denote by $\mathcal{I}^{\mathcal{H}}_T(\mathcal{W})$ the class of progressively measurable operator valued processes $B$ belonging to the set $L^2\big(\Omega\times [0,T];\mathscr{L}^2(\mathfrak{U};\mathcal{H})\big).$ Measurability here is again defined with respect to the Borel Sigma algebra on $\mathscr{L}^2(\mathfrak{U};\mathcal{H}).$
\end{definition}

Note that we make no explicit reference to $\mathfrak{U}$, the space on which $\mathcal{W}$ is a cylindrical Brownian Motion. This is because, in practice, the space $\mathfrak{U}$ will be arbitrarily chosen; this shall be discussed later.

\begin{definition} \label{ihw}
    The class of processes $B$ such that $B \in \mathcal{I}^{\mathcal{H}}_T(\mathcal{W})$ for all $T$ will be denoted by $\mathcal{I}^{\mathcal{H}}(\mathcal{W}).$
\end{definition}

Recall from Subsection \ref{sub cylindrical processes} that if $\mathcal{W}$ is a Cylindrical Brownian motion over $\mathfrak{U}$, it can be formally represented by \begin{equation} \label{cylindricalBM} \mathcal{W}(t) = \sum_{i=1}^{\infty}e_iW^i_t\end{equation} where the $(W^i)$ are standard independent one-dimensional Brownian motions. 

\begin{definition} \label{squareintegrablewrtcylindrical}
    For $B \in \mathcal{I}^{\mathcal{H}}(\mathcal{W})$ we define the It\^{o} stochastic integral \begin{equation} \label{operatorintegral} \int_0^tB(s)d\mathcal{W}_s
    \end{equation}
    as the $\mathcal{H}$ valued random variable \begin{equation} \label{projectedintegral} \sum_{i=1}^{\infty}\int_0^tB_{e_i}(s)dW^i_s \end{equation}
    where each integral is defined as in Definition \ref{stochintdefined} and the infinite sum is taken in $L^2\big(\Omega;\mathcal{H}\big).$
    \end{definition}

The immediate response to this definition is to prove that $(\ref{projectedintegral})$ is well defined; that is the integrals are well defined, as is the limit. Firstly for each $i$, $B_{e_i}$ is trivially in $L^2\big(\Omega\times [0,T];\mathcal{H}\big)$ as this norm is bounded by the $L^2\big(\Omega\times [0,T];\mathscr{L}^2(\mathfrak{U};\mathcal{H})\big)$ norm of $B$. The progressive measurability is inherited from that of $B$.\\

In order to show that the limit of partial sums is well defined, we proceed similarly to the method applied for (\ref{regularrep}) and argue that the sequence of partial sums is Cauchy. Observe that
\begin{align*}
    \Big\vert\Big\vert\sum_{i=m}^n\int_0^tB_{e_i}(s)dW^i_s\Big\vert\Big\vert^2_{L^2\big(\Omega;\mathcal{H}\big)} &= \mathbbm{E}\Big\vert\Big\vert\sum_{i=m}^n\int_0^tB_{e_i}(s)dW^i_s\Big\vert\Big\vert^2_{\mathcal{H}}\\
    &= \sum_{i=m}^n\mathbbm{E}\int_0^t\norm{B_{e_i}(s)}^2_{\mathcal{H}}ds
\end{align*}
having applied the It\^{o} Isometry \ref{multidimItoIsom} to the above. But by assumption that $B \in L^2\big(\Omega\times [0,t];\mathcal{L}^2(\mathfrak{U};\mathcal{H})\big)$ we know $$\mathbbm{E}\int_0^t\sum_{i=1}^{\infty}\norm{B_{e_i}(s)}^2_{\mathcal{H}}ds < \infty$$ and thus, by Tonelli's theorem regarding the infinite sum as an integral with respect to the counting measure, $$\sum_{i=1}^\infty\mathbbm{E}\int_0^t\norm{B_{e_i}(s)}^2_{\mathcal{H}}ds < \infty$$ demonstrating the required Cauchy property. Of course the $L^2(\Omega;\mathcal{H})$ norm of the limit is the limit of the $L^2(\Omega;\mathcal{H})$ norms, so we have justified the following.
\begin{proposition} \label{ito isom for cylindrical}
    For $B \in \mathcal{I}^\mathcal{H}(\mathcal{W})$, we have 
    $$\mathbbm{E}\left(\left\Vert \int_0^t B(s) d\mathcal{W}_s\right\Vert_{\mathcal{H}}^2\right) = \mathbbm{E}\left(\int_0^t \norm{B(s)}_{\mathscr{L}^2(\mathfrak{U};\mathcal{H})}^2ds\right). $$
\end{proposition}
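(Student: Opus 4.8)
The plan is to reduce everything to the finite-dimensional It\^{o} isometry of Proposition \ref{multidimItoIsom} and then pass to the limit, mirroring exactly the Cauchy computation carried out immediately before the statement. Recall that by Definition \ref{squareintegrablewrtcylindrical} the integral $\int_0^t B(s)\,d\mathcal{W}_s$ is the $L^2\big(\Omega;\mathcal{H}\big)$ limit of the partial sums $S_n := \sum_{i=1}^n\int_0^t B_{e_i}(s)\,dW^i_s$, each of which is well defined since every $B_{e_i}\in\mathcal{I}^{\mathcal{H}}$ (this was observed in the discussion preceding the proposition).

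First I would fix $n$ and apply Proposition \ref{multidimItoIsom} to the finite collection $(B_{e_i})_{i=1}^n$ in $\mathcal{I}^{\mathcal{H}}$ together with the independent Brownian motions $(W^i)_{i=1}^n$, obtaining the exact identity $\mathbbm{E}\norm{S_n}_{\mathcal{H}}^2 = \sum_{i=1}^n \mathbbm{E}\int_0^t \norm{B_{e_i}(s)}_{\mathcal{H}}^2\,ds$. This is the only genuine analytic input; everything that follows is a limiting argument.

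Next I would let $n\to\infty$ on each side. On the left, the convergence $S_n \to \int_0^t B(s)\,d\mathcal{W}_s$ in the complete space $L^2\big(\Omega;\mathcal{H}\big)$, guaranteed by the Cauchy estimate established above, combines with continuity of the norm to force $\mathbbm{E}\norm{S_n}_{\mathcal{H}}^2 \to \mathbbm{E}\norm{\int_0^t B(s)\,d\mathcal{W}_s}_{\mathcal{H}}^2$. On the right, the partial sums are non-decreasing in $n$ and increase to $\sum_{i=1}^\infty \mathbbm{E}\int_0^t \norm{B_{e_i}(s)}_{\mathcal{H}}^2\,ds$; regarding the sum over $i$ as integration against the counting measure, Tonelli's theorem permits interchanging this sum with the expectation and the time integral to yield $\mathbbm{E}\int_0^t \sum_{i=1}^\infty \norm{B_{e_i}(s)}_{\mathcal{H}}^2\,ds$. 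A final appeal to the definition of the Hilbert--Schmidt norm in Definition \ref{definition of spaces}, namely $\sum_{i=1}^\infty \norm{B_{e_i}(s)}_{\mathcal{H}}^2 = \norm{B(s)}_{\mathscr{L}^2(\mathfrak{U};\mathcal{H})}^2$, identifies this with the right-hand side of the claim.

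There is no serious obstacle: the proposition is essentially a repackaging of the Cauchy computation that precedes it, and indeed that discussion already anticipates the result by remarking that the $L^2\big(\Omega;\mathcal{H}\big)$ norm of the limit is the limit of the norms. The only two points meriting a word of care are the continuity-of-norm step on the left, which is precisely where the definition of the integral as an $L^2$ limit is invoked, and the Tonelli interchange on the right, which is legitimate because the integrand is non-negative and its total mass is finite by the standing assumption $B\in\mathcal{I}^{\mathcal{H}}(\mathcal{W})$.
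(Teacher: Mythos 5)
Your proposal is correct and is essentially the paper's own argument: the paper likewise applies the multidimensional It\^{o} isometry (Proposition \ref{multidimItoIsom}) to the partial sums, uses Tonelli's theorem to identify the sum over $i$ with the Hilbert--Schmidt norm, and concludes by noting that the $L^2\big(\Omega;\mathcal{H}\big)$ norm of the limit is the limit of the norms. The only cosmetic difference is that the paper runs the isometry on the differences $\sum_{i=m}^{n}$ to exhibit the Cauchy property, whereas you run it on $S_n$ directly and pass to the limit; the content is identical.
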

It is worth noting that whilst we impose the condition  $$\mathbbm{E}\int_0^t\sum_{i=1}^{\infty}\norm{B_{e_i}(s)}^2_{\mathcal{H}}ds < \infty$$ one may instead require the looser condition \begin{equation} \label{cylindricallocalcondition} \int_0^t\sum_{i=1}^{\infty}\norm{B_{e_i}(s)}^2_{\mathcal{H}}ds < \infty \qquad \mathbbm{P}-a.s.\end{equation} or equivalently that $B:\Omega\rightarrow L^2\big([0,t] ; \mathscr{L}^2(\mathfrak{U};\mathcal{H})\big)$ for $\mathbbm{P}-a.e.$ $\omega$. Our formulation follows the classical construction as laid out in Subsection \ref{classicalconstruction}, ensuring that the integral is a genuine square integrable martingale. We can just as straightforwardly follow the arguments from Definition \ref{localmartingaleintegrator}, which are laid out here.

\begin{definition} \label{overbar1}
    Denote by $\overbar{\mathcal{I}}^{\mathcal{H}}_T(\mathcal{W})$ the class of progressively measurable operator valued processes $B$ such that $B(\omega)$ belongs to the set $L^2\big( [0,T];\mathscr{L}^2(\mathfrak{U};\mathcal{H})\big)$ for $\mathbbm{P}-a.e.$ $\omega$.
    \end{definition}

\begin{definition} \label{stochasticallyintegrableprocesses}
    The class of processes $B$ such that $B \in \overbar{\mathcal{I}}^{\mathcal{H}}_T(\mathcal{W})$ for all $T$ will be denoted by $\overbar{\mathcal{I}}^{\mathcal{H}}(\mathcal{W}).$
\end{definition}

Using the template after Definition \ref{localmartingaleintegrator}, for a process $B \in \overbar{\mathcal{I}}^{\mathcal{H}}(\mathcal{W})$ let's introduce $$\tau_n:= n \wedge \inf\{0 \leq t < \infty: \int_0^t\norm{B(s)}_{\mathscr{L}^2(\mathfrak{U};\mathcal{H})}^2ds \geq n\}$$ taking the convention that the infimum of the empty set is infinite. The $(\tau_n)$ are stopping times as they are simply first hitting times of the continuous and adapted random variable $\int_0^t\norm{B(s)}_{\mathscr{L}^2(\mathfrak{U};\mathcal{H})}^2ds.$ These times tend to infinity $\mathbbm{P}-a.s.$ by condition (\ref{cylindricallocalcondition}). Now define the truncated processes $B^n$ as $$B^n(t):= B(t)\mathbbm{1}_{t \leq \tau_n}$$ and using the fact that for $m \leq n$, and $t \leq \tau_m$, we have $$B(t)\mathbbm{1}_{t \leq \tau_n}=B(t)\mathbbm{1}_{t \leq \tau_m}$$ we can make the following consistent definition.

\begin{definition} \label{cylindricalintlocal}

In the setting described, we define

\begin{equation} \label{localintegratordefinition2}
\Big(\int_0^tB(s)d\mathcal{W}_s\Big)(\omega) := \Big(\int_0^tB^n(s)d\mathcal{W}_s\Big)(\omega)\end{equation}
at $\mathbbm{P}-a.e.$ $\omega$ for any $n$ such that $t \leq \tau_n(\omega)$, noting that such an $n$ exists from the assumed condition (\ref{cylindricallocalcondition}).

\end{definition}

The justification of this definition is identical to that discussed in Subsection \ref{localchapter}, such that for this fixed $n$ we have $B^n \in \mathcal{I}^{\mathcal{H}}(\mathcal{W})$ and subsequently the complete definition
\begin{equation} \label{completedefinition}\Big(\int_0^tB(s)d\mathcal{W}_s\Big)(\omega) := \Big(\sum_{i=1}^\infty\int_0^tB_{e_i}(s)\mathbbm{1}_{s \leq \tau_n}dW^i_s\Big)(\omega)\end{equation} with the limit again in $L^2\big(\Omega;\mathcal{H}\big)$. To be clear this is a limit averaging over all $\Omega$ which is taken for the fixed $n$, which was chosen with respect to the specific $\omega$ in which we are evaluating the limit. We will have no quarrels in writing (\ref{localintegratordefinition2}) as a formal expression \begin{equation} \label{formalexpression} \sum_{i=1}^\infty\int_0^tB_{e_i}(s)dW^i_s\end{equation} motivated by the fact that as $B_{e_i} \in \overbar{\mathcal{I}}^{\mathcal{H}}$ (\ref{localmartingalewrtBM}) then by definition $$\int_0^tB_{e_i}(s)dW^i_s = \int_0^tB_{e_i}(s)\mathbbm{1}_{s \leq \tau_n}dW^i_s$$ at this same choice of $\omega$, for the same fixed $n$. We say the expression is only formal though, as the infinite sum in (\ref{formalexpression}) is \textit{not} the $L^2\big(\Omega;\mathcal{H}\big)$ limit of the partial sums of the local martingales as presented. We understand (\ref{formalexpression}) only by (\ref{completedefinition}), that is by choosing the $\omega$ at which we evaluate (\ref{formalexpression}), then fixing our $n$ associated to this $\omega$, before then taking the limit in $L^2\big(\Omega;\mathcal{H}\big)$ of the genuine square integrable martingales (given by stopping the local martingales at $\tau_n$) which is finally then evaluated at $\omega$.\\

We now look to show a series of properties of this integral which were shown for a one dimensional Brownian Motion across the earlier subsections. The first is the corresponding result of Theorem \ref{operator through integral 1D}.

\begin{theorem} \label{operatorthroughstochasticintegral}
    Suppose that $\mathcal{H}_1, \mathcal{H}_2$ are Hilbert spaces such that $B \in \overbar{\mathcal{I}}^{\mathcal{H}_1}(\mathcal{W})$ and $T \in \mathscr{L}(\mathcal{H}_1;\mathcal{H}_2)$. Then the process $TB$ defined by $$TB_{e_i}(s,\omega) = T\big(B_{e_i}(s,\omega)\big)$$ belongs to $\overbar{\mathcal{I}}^{\mathcal{H}_2}(\mathcal{W})$ and is such that $$ T\Big(\int_0^tB(s)d\mathcal{W}_s\Big) = \int_0^tTB(s)d\mathcal{W}_s.$$ In addition, the two integrals are defined pointwise $a.e.$ with respect to the same stopping times. 
\end{theorem}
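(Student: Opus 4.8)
The plan is to verify the membership $TB \in \overbar{\mathcal{I}}^{\mathcal{H}_2}(\mathcal{W})$, then reduce the commutation identity to the genuine square-integrable case $B \in \mathcal{I}^{\mathcal{H}_1}(\mathcal{W})$ by localising both integrals with a single common sequence of stopping times, and finally to settle the genuine case by applying the one-dimensional result \ref{operator through integral 1D} term-by-term in the series representation of the integral. For the membership, progressive measurability of $TB$ is inherited from that of $B$, since postcomposition by $T$ induces the bounded linear map $\mathscr{L}^2(\mathfrak{U};\mathcal{H}_1) \to \mathscr{L}^2(\mathfrak{U};\mathcal{H}_2)$, $F \mapsto TF$, and measurability is preserved under composition with a continuous map. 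The integrability condition of \ref{overbar1} then follows from the pointwise Hilbert--Schmidt bound
$$\norm{TB(s)}_{\mathscr{L}^2(\mathfrak{U};\mathcal{H}_2)}^2 = \sum_{i=1}^\infty \norm{TB_{e_i}(s)}_{\mathcal{H}_2}^2 \leq \norm{T}_{\mathscr{L}(\mathcal{H}_1;\mathcal{H}_2)}^2 \sum_{i=1}^\infty \norm{B_{e_i}(s)}_{\mathcal{H}_1}^2 = \norm{T}_{\mathscr{L}(\mathcal{H}_1;\mathcal{H}_2)}^2 \norm{B(s)}_{\mathscr{L}^2(\mathfrak{U};\mathcal{H}_1)}^2,$$
which on integration in $s$ shows $TB(\omega) \in L^2([0,T];\mathscr{L}^2(\mathfrak{U};\mathcal{H}_2))$ for almost every $\omega$.

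Next I would address the localisation so as to establish the final sentence of the statement. Let $(\tau_n)$ be the stopping times associated to $B$ in the construction preceding \ref{cylindricalintlocal}. The displayed bound shows that $(\tau_n)$ also localises $TB$: the stopped integrands $TB^n = T(B^n)$ satisfy $\mathbbm{E}\int_0^t \norm{TB^n(s)}_{\mathscr{L}^2(\mathfrak{U};\mathcal{H}_2)}^2\,ds \leq \norm{T}_{\mathscr{L}(\mathcal{H}_1;\mathcal{H}_2)}^2 \, n < \infty$, so $TB^n \in \mathcal{I}^{\mathcal{H}_2}(\mathcal{W})$, while the consistency relations $TB^m_t = TB^n_t$ for $t \leq \tau_m$, $m \leq n$, are immediate from the corresponding ones for $B$. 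Hence the construction \ref{cylindricalintlocal} applied with this common sequence defines $\int_0^t TB(s)\,d\mathcal{W}_s$, which is exactly the assertion that both integrals are realised with respect to the same stopping times; it then suffices to prove the identity for each $B^n$, i.e. in the genuine case.

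For $B \in \mathcal{I}^{\mathcal{H}_1}(\mathcal{W})$ I would argue directly from \ref{squareintegrablewrtcylindrical}, writing the integral as the $L^2(\Omega;\mathcal{H}_1)$ limit of the partial sums of $\sum_{i=1}^\infty \int_0^t B_{e_i}(s)\,dW^i_s$. Boundedness of $T$ gives $\mathbbm{E}\norm{Tf_N - Tf}_{\mathcal{H}_2}^2 \leq \norm{T}_{\mathscr{L}(\mathcal{H}_1;\mathcal{H}_2)}^2\,\mathbbm{E}\norm{f_N - f}_{\mathcal{H}_1}^2$, so $T$ commutes with this $L^2$ limit, and with the finite partial sums by linearity. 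Applying \ref{operator through integral 1D} to each scalar-driven integral converts $T\big(\int_0^t B_{e_i}\,dW^i_s\big)$ into $\int_0^t TB_{e_i}\,dW^i_s$, and recognising the resulting $L^2(\Omega;\mathcal{H}_2)$ limit as $\int_0^t TB(s)\,d\mathcal{W}_s$ via \ref{squareintegrablewrtcylindrical} applied to $TB$ yields the identity in this case.

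The step requiring the most care is transferring the genuine-case identity back to the localised setting without letting the index $n$ become random, exactly the subtlety emphasised after \ref{localmartingaleintegrator}. The genuine identity $T\int_0^t B^n\,d\mathcal{W} = \int_0^t TB^n\,d\mathcal{W}$ holds, for each fixed $n$, only $\mathbbm{P}$-a.s.; since there are countably many $n$, I would take the union of the corresponding null sets, together with the null set on which some $\tau_n$ fails to eventually exceed $t$, to obtain a single null set off which the identity holds for every $n$ simultaneously. For $\omega$ outside this set I then select $n$ with $t \leq \tau_n(\omega)$, so that $\big(\int_0^t B\,d\mathcal{W}\big)(\omega) = \big(\int_0^t B^n\,d\mathcal{W}\big)(\omega)$ and likewise for $TB$ by definition; applying the pointwise deterministic operator $T$ to the first equality and chaining through the now deterministic-$n$ genuine identity gives the claim $\mathbbm{P}$-a.s. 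Since $T$ is continuous and both sides are continuous in $t$, the identity moreover holds simultaneously for all $t$ on a single full-measure set.
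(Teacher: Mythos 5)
Your proposal is correct and follows essentially the same route as the paper's proof: establish membership of $TB$ via the Hilbert--Schmidt bound, show the same stopping times $(\tau_n)$ localise both integrals, reduce to the genuine case $B^n \in \mathcal{I}^{\mathcal{H}_1}(\mathcal{W})$, and commute $T$ through the series representation term-by-term using \ref{operator through integral 1D}. The only (minor, and if anything cleaner) deviation is that you pass $T$ through the defining limit directly, using that boundedness of $T$ makes it continuous from $L^2(\Omega;\mathcal{H}_1)$ to $L^2(\Omega;\mathcal{H}_2)$, whereas the paper extracts a $\mathbbm{P}$-a.s.\ convergent subsequence and pushes $T$ through the pointwise limit before re-identifying it with the $L^2$ limit; your explicit countable union of null sets to handle the random index $n$ is likewise a sound rendering of the same localisation bookkeeping.
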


\begin{proof}

We shall prove first that $TB \in \overbar{\mathcal{I}}^{\mathcal{H}_2}(\mathcal{W})$. The progressive measurability is preserved under the continuity of $T$, and for $C$ the (square of the) boundedness constant associated to $T$ we have $$\int_0^t\sum_{i=1}^{\infty}\norm{TB_{e_i}(s)}^2_{\mathcal{H}_2}ds \leq  C\int_0^t\sum_{i=1}^{\infty}\norm{B_{e_i}(s)}^2_{\mathcal{H}_1}ds < \infty$$ holding $\mathbbm{P}-a.s.$ as $B \in \overbar{\mathcal{I}}^{\mathcal{H}_1}(\mathcal{W})$. In addition for any stopping time $\tau_n$ as in Definition \ref{cylindricalintlocal}, \begin{align*}\mathbbm{E}\Big(\int_0^t\sum_{i=1}^{\infty}\norm{\big(TB_{e_i}(s)\big)\mathbbm{1}_{s \leq \tau_n}}^2_{\mathcal{H}_2}ds\Big) &= \mathbbm{E}\Big(\int_0^t\sum_{i=1}^{\infty}\norm{T\big(B_{e_i}(s)\mathbbm{1}_{s \leq \tau_n}\big)}^2_{\mathcal{H}_2}ds\Big)\\ &\leq  C\mathbbm{E}\Big(\int_0^t\sum_{i=1}^{\infty}\norm{B_{e_i}(s)\mathbbm{1}_{s \leq \tau_n}}^2_{\mathcal{H}_1}ds\Big)\\ &< \infty\end{align*} so the new stochastic integral $$\int_0^tTB(s)d\mathcal{W}_s$$ can be constructed using the same sequence of stopping times. We will freely use linearity of $T$ to commute it with the indicator function, not showing this explicitly with brackets. To carry $T$ through the integral however let's avoid danger and write out explicitly what we want to show, which is for almost any choice of $\omega$ with the associated $\tau_n$ and fixed $n$ as in Definition \ref{cylindricalintlocal}, then $$T\bigg(\Big(\int_0^tB(s)d\mathcal{W}_s\Big)(\omega)\bigg):= T\bigg(\Big(\sum_{i=1}^\infty \int_0^t B_{e_i}^n(s)dW^i_s\Big)(\omega)\bigg) = \Big(\sum_{i=1}^\infty \int_0^t TB_{e_i}^n(s)dW^i_s\Big)(\omega)$$ where the left hand side limit is taken in $L^2\big(\Omega;\mathcal{H}_1\big)$ and the right side in $L^2\big(\Omega;\mathcal{H}_2\big)$. From the $L^2\big(\Omega;\mathcal{H}_1\big)$ limit there exists a subsequence convergent almost everywhere in $\mathcal{H}_1$ (we assume w.l.o.g that $\omega$ belongs to this set of full probability). Working with this subsequence, we can pass the $\mathbbm{P}-a.e.$ limit through the continuous $T$ such that it is now the $\mathbbm{P}-a.e.$ limit in $\mathcal{H}_2$. Linearity of $T$ allows us to hit each term in the sum individually, so we have now that \begin{equation} \label{embeddingproofog} T\bigg(\Big(\int_0^tB(s)d\mathcal{W}_s\Big)(\omega)\bigg) = \bigg(\lim_{n_k \rightarrow \infty} \sum_{i=1}^{n_k} T \Big(\int_0^t B_{e_i}^n(s)dW^{i}_s\Big)\bigg)(\omega)\end{equation} for the limit $\mathbbm{P}-a.e.$ of the subsequence indexed by $(n_k)$. Applying Theorem \ref{operator through integral 1D}, we can commute $T$ with the integral on the right side of (\ref{embeddingproofog}). However we have justified already that the limit over the whole sequence in $L^2\big(\Omega;\mathcal{H}_2\big)$ exists, agreeing with the $L^2\big(\Omega;\mathcal{H}_2\big)$ limit of the subsequence, which in turn agrees with the $\mathbbm{P}-a.e.$ limit. Thus by definition of the stochastic integral, (\ref{embeddingproofog}) is simply the equality $$T\bigg(\Big(\int_0^tB(s)d\mathcal{W}_s\Big)(\omega)\bigg) = \Big(\int_0^t TB(s)d\mathcal{W}_s\Big)(\omega)$$ completing the proof.

\end{proof}

This is stated in the more general form for a process only in $\overbar{\mathcal{I}}^{\mathcal{H}_1}(\mathcal{W})$, but from the proof the following is clear. 

\begin{corollary} \label{operatorthroughstochasticintegral2}
    Suppose that $\mathcal{H}_1, \mathcal{H}_2$ are Hilbert spaces such that $B \in \mathcal{I}^{\mathcal{H}_1}(\mathcal{W})$ and $T \in \mathscr{L}(\mathcal{H}_1;\mathcal{H}_2)$. Then the process $TB$ defined by $$TB_{e_i}(s,\omega) = T\big(B_{e_i}(s,\omega)\big)$$ belongs to $\mathcal{I}^{\mathcal{H}_2}(\mathcal{W})$ and is such that $$ T\Big(\int_0^tB(s)d\mathcal{W}_s\Big) = \int_0^tTB(s)d\mathcal{W}_s.$$ 
\end{corollary}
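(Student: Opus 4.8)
The plan is to recognise that this is genuinely a \emph{corollary} of Theorem \ref{operatorthroughstochasticintegral} rather than a fresh argument: since every $B \in \mathcal{I}^{\mathcal{H}_1}(\mathcal{W})$ lies in particular in $\overbar{\mathcal{I}}^{\mathcal{H}_1}(\mathcal{W})$, that theorem already supplies both the progressive measurability of $TB$ (inherited through the continuity of $T$) and the commutation identity $T\big(\int_0^t B(s)\,d\mathcal{W}_s\big) = \int_0^t TB(s)\,d\mathcal{W}_s$, valid $\mathbbm{P}$-a.s. in $\mathcal{H}_2$. The only extra content to be extracted is the \textbf{sharper} membership $TB \in \mathcal{I}^{\mathcal{H}_2}(\mathcal{W})$, which upgrades an almost-everywhere integrability bound to its expected-value counterpart.

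For this I would reuse the boundedness estimate that opens the proof of Theorem \ref{operatorthroughstochasticintegral}, but now also integrate against $\mathbbm{P}$. Writing $C$ for the square of the operator norm of $T$ and using linearity of $T$, for every $T>0$ one has
$$\mathbbm{E}\Big(\int_0^T\sum_{i=1}^\infty \norm{TB_{e_i}(s)}_{\mathcal{H}_2}^2\,ds\Big) \leq C\,\mathbbm{E}\Big(\int_0^T\sum_{i=1}^\infty \norm{B_{e_i}(s)}_{\mathcal{H}_1}^2\,ds\Big) = C\,\mathbbm{E}\Big(\int_0^T \norm{B(s)}_{\mathscr{L}^2(\mathfrak{U};\mathcal{H}_1)}^2\,ds\Big),$$
and the right-hand side is finite precisely because $B \in \mathcal{I}^{\mathcal{H}_1}(\mathcal{W})$, i.e. $B \in L^2\big(\Omega\times[0,T];\mathscr{L}^2(\mathfrak{U};\mathcal{H}_1)\big)$ by Definition \ref{IHtw}. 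The finiteness in particular confirms that $TB(s)$ is Hilbert--Schmidt, so $TB \in \mathcal{I}^{\mathcal{H}_2}_T(\mathcal{W})$ for each $T$, whence $TB \in \mathcal{I}^{\mathcal{H}_2}(\mathcal{W})$ by Definition \ref{ihw}.

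The last point is one of consistency rather than substance: for a process in the stronger class the integral is the direct $L^2\big(\Omega;\mathcal{H}\big)$ limit of Definition \ref{squareintegrablewrtcylindrical}, whereas the commutation in Theorem \ref{operatorthroughstochasticintegral} was phrased through the localised construction of \ref{cylindricalintlocal}. Since the localised and direct definitions coincide on $\mathcal{I}^{\mathcal{H}}(\mathcal{W})$ (the stopping times $\tau_n$ eventually exceed any fixed $t$ on a set of full measure, as already noted when reconciling these constructions in the excerpt), the a.s.\ identity of the theorem transfers verbatim to the direct integrals. I do not anticipate any genuine obstacle here; the entire analytic effort was spent in proving Theorem \ref{operatorthroughstochasticintegral}, and the corollary merely records that replacing the pathwise bound by its expectation keeps the integral inside the square-integrable martingale class $\mathcal{M}^2_c(\mathcal{H}_2)$ via Proposition \ref{main martingale result}.
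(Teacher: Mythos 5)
Your strategy is sound, and it is genuinely different from the paper's. The paper offers no standalone argument for this corollary: it simply remarks that the statement is ``clear from the proof'' of \ref{operatorthroughstochasticintegral}, meaning one reruns that proof with the localisation stripped out --- for $B \in \mathcal{I}^{\mathcal{H}_1}(\mathcal{W})$ the integral is already the direct $L^2(\Omega;\mathcal{H}_1)$ limit of $\sum_{i=1}^n \int_0^t B_{e_i}(s)\,dW^i_s$, so one extracts an a.s.\ convergent subsequence, passes $T$ through by continuity and linearity, applies \ref{operator through integral 1D} termwise, and identifies the $L^2(\Omega;\mathcal{H}_2)$ limit as $\int_0^t TB(s)\,d\mathcal{W}_s$. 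You instead treat the theorem as a black box (legitimate, since $\mathcal{I}^{\mathcal{H}_1}(\mathcal{W}) \subset \overbar{\mathcal{I}}^{\mathcal{H}_1}(\mathcal{W})$) and supplement it with the membership upgrade; your expectation bound, obtained by integrating the operator-norm estimate over $\Omega \times [0,T]$, is exactly the right observation and is correct. The trade-off is that the paper's route never leaves the $L^2$ setting and so needs no reconciliation, whereas yours buys a reusable principle: any identity proved for the localised integral transfers to the direct one once the two constructions are known to coincide on the smaller class.

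That coincidence is the one soft spot, and your stated justification is not adequate: the fact that $\tau_n \rightarrow \infty$ a.s.\ shows only that both constructions assign a value at a.e.\ $\omega$, not that the values agree, and the paper never actually proves this coincidence (it is left implicit). What your argument needs is a locality property: for fixed $n$ and $t$, the \emph{direct} integrals satisfy $\int_0^t B(s)\mathbbm{1}_{s \leq \tau_n}\,d\mathcal{W}_s = \int_0^t B(s)\,d\mathcal{W}_s$ $\mathbbm{P}$-a.s.\ on the event $\{\tau_n \geq t\}$. This is provable with the paper's tools. By linearity of the construction, the difference is $D_r := \int_0^r B(s)\mathbbm{1}_{s > \tau_n}\,d\mathcal{W}_s$, which by \ref{main martingale result} and \ref{giving up} is a continuous square-integrable $\mathcal{H}_1$-valued martingale with $[D]_r = \int_0^r \norm{B(s)}^2_{\mathscr{L}^2(\mathfrak{U};\mathcal{H}_1)}\mathbbm{1}_{s>\tau_n}\,ds$. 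Optional sampling of the real-valued martingale $\norm{D}_{\mathcal{H}_1}^2 - [D]$ (see \ref{def of quad}) at the bounded stopping time $t \wedge \tau_n$ gives $\mathbbm{E}\norm{D_{t\wedge\tau_n}}^2_{\mathcal{H}_1} = \mathbbm{E}[D]_{t\wedge\tau_n} = 0$, since the integrand defining $[D]$ vanishes on $[0,\tau_n]$; hence $D_{t\wedge\tau_n} = 0$ a.s., and on $\{\tau_n \geq t\}$ this reads $D_t = 0$, which is the claimed coincidence. Note that the global It\^{o} isometry \ref{ito isom for cylindrical} alone cannot deliver this, because $\mathbbm{E}[D]_t = \mathbbm{E}\int_{t\wedge\tau_n}^{t}\norm{B(s)}^2_{\mathscr{L}^2(\mathfrak{U};\mathcal{H}_1)}ds$ need not vanish; the event-wise statement genuinely requires the stopping argument. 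With this lemma inserted your proof is complete; without it, the ``verbatim transfer'' from the localised to the direct integral is an assertion rather than an argument.
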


\begin{proposition} \label{unbounded take it in 2}
    Let $B \in \bar{\mathcal{I}}^{\mathcal{H}}(\mathcal{W})$ and $\phi: \Omega \rightarrow \mathcal{H}$ be $\mathcal{F}_0-$measurable. Then for every $t>0$ we have that
    \begin{equation} \label{reffyyy4}
        \left\langle \int_0^tB_r d\mathcal{W}_r, \phi \right\rangle_{\mathcal{H}} = \int_0^t\left\langle B_r,\phi \right\rangle_{\mathcal{H}}d\mathcal{W}_r
    \end{equation}
    $\mathbbm{P}-a.s.$. 
\end{proposition}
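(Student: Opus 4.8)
The plan is to reduce the claim, component by component along an orthonormal basis $(e_i)$ of $\mathfrak{U}$, to the one-dimensional result \ref{unbounded take it in}, exactly as the cylindrical integral itself is built from its one-dimensional projections. First I would confirm that the right-hand side is meaningful, i.e. that the $\mathscr{L}^2(\mathfrak{U};\R)$-valued process $r \mapsto \langle B_r,\phi\rangle_{\mathcal{H}}$ defined by $u \mapsto \langle B_r u,\phi\rangle_{\mathcal{H}}$ lies in $\overbar{\mathcal{I}}^{\R}(\mathcal{W})$. Progressive measurability is obtained just as in \ref{bounded take it in }, using the $\mathcal{F}_0$-measurability of $\phi$ to pass from joint measurability in two independent copies of $\Omega$ to measurability on the diagonal; the local square-integrability follows from the componentwise Cauchy--Schwarz estimate $\|\langle B_r,\phi\rangle_{\mathcal{H}}\|^2_{\mathscr{L}^2(\mathfrak{U};\R)} = \sum_i \langle B_r e_i,\phi\rangle_{\mathcal{H}}^2 \leq \|\phi\|^2_{\mathcal{H}}\|B_r\|^2_{\mathscr{L}^2(\mathfrak{U};\mathcal{H})}$, which is $dr$-integrable a.e. because $B \in \overbar{\mathcal{I}}^{\mathcal{H}}(\mathcal{W})$.

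Next I would localise both sides with a single family of stopping times, following \ref{bounded take it in }: set $\tau_j := j \wedge \inf\{t \geq 0 : (1+\|\phi\|^2_{\mathcal{H}})\int_0^t \|B_s\|^2_{\mathscr{L}^2(\mathfrak{U};\mathcal{H})}ds \geq j\}$, which increase to infinity $\mathbbm{P}$-a.s. since for each fixed $\omega$ the integrand is finite. The pathwise bound built into $\tau_j$ forces the stopped process $B^j := B\mathbbm{1}_{\cdot \leq \tau_j}$ into $\mathcal{I}^{\mathcal{H}}(\mathcal{W})$ and $\langle B^j,\phi\rangle_{\mathcal{H}}$ into $\mathcal{I}^{\R}(\mathcal{W})$ simultaneously, and by the locality of the construction both integrals in (\ref{reffyyy4}) coincide at any $\omega$ with their $\tau_j$-truncated versions once $t \leq \tau_j(\omega)$; it therefore suffices to prove the identity for the genuinely square-integrable integrand $B^j$. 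For such $B^j$ the integral is by \ref{squareintegrablewrtcylindrical} the $L^2(\Omega;\mathcal{H})$ limit of $\sum_{i=1}^N \int_0^t B^j_{e_i}dW^i$, each $B^j_{e_i} \in \overbar{\mathcal{I}}^{\mathcal{H}}$ is an admissible one-dimensional integrand, and \ref{unbounded take it in} gives $\langle \int_0^t B^j_{e_i}dW^i,\phi\rangle_{\mathcal{H}} = \int_0^t \langle B^j_{e_i},\phi\rangle_{\mathcal{H}}dW^i$ $\mathbbm{P}$-a.s.; these countably many a.s. identities hold together on one full-measure set. Summing the right-hand sides reproduces, by definition, $\int_0^t \langle B^j,\phi\rangle_{\mathcal{H}}d\mathcal{W}$, so the whole matter is whether summing the left-hand sides reproduces $\langle \int_0^t B^j d\mathcal{W},\phi\rangle_{\mathcal{H}}$.

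This last interchange is the step I expect to be the main obstacle: the defining series converges only in $L^2(\Omega;\mathcal{H})$, an average over all of $\Omega$, whereas the inner product is taken against the pathwise, possibly unbounded $\phi$, so I cannot move a continuous functional through the series directly. I would instead extract from the partial sums a subsequence converging $\mathbbm{P}$-a.s. in $\mathcal{H}$ to $\int_0^t B^j d\mathcal{W}$, apply the pathwise continuous functional $\langle \cdot,\phi(\omega)\rangle_{\mathcal{H}}$ along this subsequence to obtain $\langle \int_0^t B^j d\mathcal{W},\phi\rangle_{\mathcal{H}}$ as an a.s. limit of $\sum_{i=1}^{N_k}\int_0^t \langle B^j_{e_i},\phi\rangle_{\mathcal{H}}dW^i$, and finally identify this a.s. limit with the $L^2(\Omega;\R)$ limit defining $\int_0^t \langle B^j,\phi\rangle_{\mathcal{H}}d\mathcal{W}$ by passing to a further common a.s.-convergent subsequence. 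This confines every interchange to pointwise statements where continuity of the inner product is available, exactly the subsequence device already used in \ref{bounded take it in }, and completes the proof.
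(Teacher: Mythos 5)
Your proposal is correct and follows essentially the same route as the paper: the paper verifies that $\left\langle B,\phi \right\rangle_{\mathcal{H}} \in \bar{\mathcal{I}}^{\R}(\mathcal{W})$ via the same Cauchy--Schwarz bound on the $\mathscr{L}^2(\mathfrak{U};\R)$ norm, and then performs exactly your localise/truncate, apply-\ref{unbounded take it in}-componentwise, and identify-limits-through-a.s.-convergent-subsequences argument, which it compresses into the remark that the extension is ``identical to the extension of \ref{operator through integral 1D} to \ref{operatorthroughstochasticintegral}.'' Your only deviation is importing the $(1+\norm{\phi}_{\mathcal{H}}^2)$-weighted stopping times from \ref{bounded take it in } so that both truncated integrands land simultaneously in the genuinely square-integrable classes --- a careful touch that the paper's terse deferral leaves implicit, since the stopping times of \ref{cylindricalintlocal} alone would not force $\inner{B^n}{\phi}_{\mathcal{H}} \in \mathcal{I}^{\R}(\mathcal{W})$ for unbounded $\phi$.
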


\begin{proof}
Firstly we make clear that $\left\langle B,\phi \right\rangle_{\mathcal{H}}$ is understood as a process defined by the mapping $$e_i \times \omega \times t \rightarrow \left\langle B_t(e_i,\omega),\phi(\omega) \right\rangle_{\mathcal{H}}.$$ The fact that $\left\langle B,\phi \right\rangle_{\mathcal{H}} \in \bar{\mathcal{I}}^{\R}(\mathcal{W})$ is completely analogous to Proposition \ref{bounded take it in }, where the progressive measurability follows as the mapping 
$$\inner{B_{\cdot}}{\phi}_{\mathcal{H}}: t \times \omega \times \tilde{\omega} \rightarrow \inner{B_{t}(\omega)}{\phi(\tilde{\omega})}_{\mathcal{H}} $$ is $\mathcal{B}([0,T]) \times \mathcal{F}_T \times \mathcal{F}_0$ measurable as a mapping into $\mathscr{L}^2(\mathfrak{U};\R)$. Similarly we have that $$\left\Vert\inner{B_t(\omega)}{\phi(\omega)}_{\mathcal{H}}\right\Vert_{\mathscr{L}^2(\mathfrak{U};\R)} \leq \norm{\phi(\omega)}_{\mathcal{H}}\norm{B_t(\omega)}_{\mathscr{L}^2(\mathfrak{U};\mathcal{H})}$$ which is sufficient to justify that $\left\langle B,\phi \right\rangle_{\mathcal{H}} \in \bar{\mathcal{I}}^{\R}(\mathcal{W})$. The extension of Proposition \ref{unbounded take it in} to this result is then identical to the extension of Theorem \ref{operator through integral 1D} to \ref{operatorthroughstochasticintegral}, so we conclude the proof here. 

\end{proof}

\begin{proposition} \label{real valued bounded take it in cylindrical}
    Let $B \in \bar{\mathcal{I}}^{\mathcal{H}}(\mathcal{W})$ and $\eta: \Omega \rightarrow \R$ be $\mathcal{F}_0-$measurable. Then $\eta B \in \bar{\mathcal{I}}^{\mathcal{H}}(\mathcal{W})$ and for every $t > 0$ we have that $$\eta \int_0^tB_rd\mathcal{W}_r = \int_0^t \eta B_r d\mathcal{W}_r$$ $\mathbbm{P}-a.s.$. 
\end{proposition}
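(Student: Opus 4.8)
The plan is to reduce the claimed identity to the one-dimensional pull-out result \ref{real valued bounded take it in}, applied to each coordinate process $B_{e_i}$, in exactly the way \ref{operatorthroughstochasticintegral} was deduced from \ref{operator through integral 1D}. First I would settle the membership claim. Writing $\eta B$ for the process with $(\eta B)_{e_i}(s,\omega)=\eta(\omega)B_{e_i}(s,\omega)$, its progressive measurability follows as in \ref{bounded take it in } and \ref{unbounded take it in 2}: since $\eta$ is $\mathcal{F}_0$- and hence $\mathcal{F}_t$-measurable and scalar multiplication on $\mathscr{L}^2(\mathfrak{U};\mathcal{H})$ is continuous, the product is $\mathcal{B}([0,T])\times\mathcal{F}_T$-measurable on $[0,T]\times\Omega$ for every $T$. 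The integrability condition (\ref{cylindricallocalcondition}) is immediate because, for $\mathbbm{P}$-a.e. $\omega$,
$$\int_0^t\norm{\eta(\omega) B_r(\omega)}_{\mathscr{L}^2(\mathfrak{U};\mathcal{H})}^2 dr = \abs{\eta(\omega)}^2\int_0^t\norm{B_r(\omega)}_{\mathscr{L}^2(\mathfrak{U};\mathcal{H})}^2 dr < \infty,$$
using finiteness of $\eta$ a.e. and $B\in\bar{\mathcal{I}}^{\mathcal{H}}(\mathcal{W})$. Hence $\eta B\in\bar{\mathcal{I}}^{\mathcal{H}}(\mathcal{W})$.

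For the identity the subtlety is that $B$ and $\eta B$ a priori carry different localising sequences. I would circumvent this by introducing a single sequence of stopping times adapted to both, namely
$$\rho_n := n \wedge \inf\left\{0 \leq t < \infty: (1+\eta^2)\int_0^t\norm{B_r}_{\mathscr{L}^2(\mathfrak{U};\mathcal{H})}^2 dr \geq n\right\},$$
which is a genuine stopping time because $\eta$ is $\mathcal{F}_0$-measurable, and which tends to infinity a.s. by (\ref{cylindricallocalcondition}) and finiteness of $\eta$. The weight $(1+\eta^2)$ is chosen so that both $B\mathbbm{1}_{\cdot\le\rho_n}$ and $\eta B\mathbbm{1}_{\cdot\le\rho_n}$ have their integrability functionals bounded by $n$ a.e., placing both truncated processes in the genuine square-integrable class $\mathcal{I}^{\mathcal{H}}(\mathcal{W})$. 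By the consistency of the localised definition \ref{cylindricalintlocal} (the value of the integral being independent of the localising sequence used, via the locality of the construction), it suffices to fix $t$, work at a.e. $\omega$ with $n$ chosen so that $t\le\rho_n(\omega)$, and prove the identity for the pair $\Psi := B\mathbbm{1}_{\cdot\le\rho_n}$ and $\eta\Psi = \eta B\mathbbm{1}_{\cdot\le\rho_n}$, both now in $\mathcal{I}^{\mathcal{H}}(\mathcal{W})$.

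At this level the cylindrical integrals are honest $L^2(\Omega;\mathcal{H})$ limits of the partial sums $S_N := \sum_{i=1}^N\int_0^t\Psi_{e_i}(r)dW^i_r$ and $\sum_{i=1}^N\int_0^t\eta\Psi_{e_i}(r)dW^i_r$ by Definition \ref{squareintegrablewrtcylindrical}. Applying \ref{real valued bounded take it in} to each projection $\Psi_{e_i}\in\mathcal{I}^{\mathcal{H}}\subseteq\bar{\mathcal{I}}^{\mathcal{H}}$ gives $\int_0^t\eta\Psi_{e_i}(r)dW^i_r = \eta\int_0^t\Psi_{e_i}(r)dW^i_r$ a.s., so that $\sum_{i=1}^N\int_0^t\eta\Psi_{e_i}(r)dW^i_r = \eta S_N$ a.s. for each $N$. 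Passing to the limit, I would extract one subsequence $(N_k)$ along which both $S_{N_k}\to\int_0^t\Psi_r d\mathcal{W}_r$ and $\sum_{i=1}^{N_k}\int_0^t\eta\Psi_{e_i}(r)dW^i_r\to\int_0^t\eta\Psi_r d\mathcal{W}_r$ hold $\mathbbm{P}$-a.s. (each $L^2$ limit admits an a.s.-convergent subsequence, and one passes to a common refinement). Since multiplication by the fixed random variable $\eta$ preserves a.s. convergence, $\eta S_{N_k}\to\eta\int_0^t\Psi_r d\mathcal{W}_r$ a.s., and equating the two a.s. limits yields $\int_0^t\eta\Psi_r d\mathcal{W}_r = \eta\int_0^t\Psi_r d\mathcal{W}_r$, which is the required identity at the chosen $\omega$.

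The main obstacle is precisely the interplay between the unboundedness of $\eta$ and the two limiting procedures: one cannot simply factor $\eta$ out of the $L^2(\Omega;\mathcal{H})$ limit defining the integral, because multiplication by an unbounded $\eta$ is not continuous on $L^2(\Omega;\mathcal{H})$. This forces the argument down to $\mathbbm{P}$-a.s. convergence along a subsequence, where scalar multiplication by $\eta(\omega)$ is harmless. The supporting technical point — that $B$ and $\eta B$ can be localised into the square-integrable class simultaneously — is exactly what the weight $(1+\eta^2)$ in $\rho_n$ secures, and it is the only genuine departure from the verbatim analogue of \ref{operatorthroughstochasticintegral}.
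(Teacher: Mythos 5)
Your proof is correct, and in outline it follows the same path as the paper's: the paper disposes of this proposition by declaring it identical to \ref{unbounded take it in 2}, which in turn follows the template of \ref{operatorthroughstochasticintegral} --- localise, reduce to the one-dimensional result on the projections $B_{e_i}$, and identify the $L^2(\Omega;\mathcal{H})$ limits of partial sums through $\mathbbm{P}$-a.s.\ convergent subsequences, exactly as you do. Where you genuinely add something is the localisation step. The template of \ref{operatorthroughstochasticintegral} reuses the stopping times of $B$ itself, which works there because $T$ is bounded ($\norm{TB}\leq C\norm{B}$), but fails verbatim for an unbounded $\eta$: truncating $B$ by its own stopping times need not make $\eta B\mathbbm{1}_{\cdot \leq \tau_n}$ square integrable in expectation, so the two integrals in the identity a priori carry different localising sequences --- a point the paper's ``follows identically'' glosses over. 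Your weighted stopping times $\rho_n$ with the factor $(1+\eta^2)$ (in effect borrowed from the proof of \ref{bounded take it in }) resolve this cleanly: since $\eta^2/(1+\eta^2)\leq 1$, both $B\mathbbm{1}_{\cdot\leq\rho_n}$ and $\eta B\mathbbm{1}_{\cdot\leq\rho_n}$ have integrability functionals bounded by the constant $n$ uniformly in $\omega$, hence lie in $\mathcal{I}^{\mathcal{H}}(\mathcal{W})$ simultaneously, and the unboundedness of $\eta$ is then entirely absorbed by the already-established one-dimensional result \ref{real valued bounded take it in}, with no further appeal to truncation of $\eta$ or to the stochastic dominated convergence lemma \ref{stochstic dominated convergence 2} at the cylindrical level. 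The one point to flag is that both your argument and the paper's rely on the standard but nowhere-proved fact that the localised integral of \ref{cylindricalintlocal} is independent of the choice of admissible localising sequence; since the paper invokes this freely itself (for instance in \ref{bounded take it in } and in \ref{hjkl}), your appeal to it is at the same level of rigour, and your proof stands.
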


\begin{proof}
The proof follows identically to Proposition \ref{unbounded take it in 2} in analogy with Proposition \ref{real valued bounded take it in}. We make explicit that $\eta B$ is defined by the mapping $$e_i \times \omega \times t \rightarrow  \eta(\omega)B_t(e_i,\omega).$$
\end{proof}

\begin{remark}
Although we have not explicitly addressed the construction of the integral over a time interval $[s,t]$ where $s>0$, this can be done without any extra difficulty just as in the standard real valued case. If we were to just consider the integral over $[s,t]$ in Proposition \ref{unbounded take it in 2}, then the results extends to any $\mathcal{F}_s-$measurable $\phi, \eta$ in Propositions \ref{unbounded take it in 2}, \ref{real valued bounded take it in cylindrical}. To show this we of course revisit Proposition \ref{bounded take it in }, appreciating that the $\mathcal{F}_s-$measurability does not disturb the measurability requirements of the simple process.  
\end{remark}

We also extend the Stochastic Dominated Convergence Theorem to this setting.

\begin{lemma} \label{stochstic dominated convergence 2}
    Let $(B^n)$ be a sequence in $\bar{\mathcal{I}}^{\mathcal{H}}(\mathcal{W})$ such that there exists processes $B:\Omega \times [0,\infty) \rightarrow \mathscr{L}^2(\mathfrak{U};\mathcal{H})$ and $Q \in \bar{\mathcal{I}}^{\mathcal{H}}(\mathcal{W})$ with the properties that for every $T>0$, $ \mathbbm{P} \times \lambda  - a.e.$ $(\omega,t) \in \Omega \times [0,T]$:
    \begin{enumerate}
        \item \label{enum2} $\norm{B^n_t(\omega)}_{\mathscr{L}^2(\mathfrak{U};\mathcal{H})} \leq \norm{Q_t(\omega)}_{\mathscr{L}^2(\mathfrak{U};\mathcal{H})}$ for all $n \in \N$;
        \item $(B^n_t(\omega))$ is convergent to $B_t(\omega)$ in $\mathscr{L}^2(\mathfrak{U};\mathcal{H})$.
    \end{enumerate}
    Then $B \in \bar{\mathcal{I}}^{\mathcal{H}}(\mathcal{W})$ and for every $t>0$, there exists a subsequence indexed by $(n_k)$ such that \begin{equation}\label{endgame2}
    \lim_{n_k \rightarrow \infty}\int_0^tB^{n_k}_rd\mathcal{W}_r = \int_0^tB_r d\mathcal{W}_r\end{equation} $\mathbbm{P}-a.s.$. 
\end{lemma}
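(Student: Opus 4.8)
The plan is to transcribe the proof of \ref{stochstic dominated convergence} into the cylindrical setting, replacing the $\mathcal{H}$-norm by the Hilbert--Schmidt norm $\norm{\cdot}_{\mathscr{L}^2(\mathfrak{U};\mathcal{H})}$ and the one-dimensional Itô isometry by its cylindrical counterpart \ref{ito isom for cylindrical}. First I would note that $B$ inherits progressive measurability from the $\mathbbm{P}\times\lambda$-a.e. limit of the $B^n$ in $\mathscr{L}^2(\mathfrak{U};\mathcal{H})$, exactly as in the opening of \ref{stochstic dominated convergence}. The domination in item \ref{enum2} passes to the limit, giving $\norm{B_t(\omega)}_{\mathscr{L}^2(\mathfrak{U};\mathcal{H})} \leq \norm{Q_t(\omega)}_{\mathscr{L}^2(\mathfrak{U};\mathcal{H})}$ for $\mathbbm{P}\times\lambda$-a.e. $(\omega,t)$; since $Q \in \bar{\mathcal{I}}^{\mathcal{H}}(\mathcal{W})$ satisfies the local integrability condition (\ref{cylindricallocalcondition}), so does $B$, whence $B \in \bar{\mathcal{I}}^{\mathcal{H}}(\mathcal{W})$.

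Next I would introduce a single family of localizing stopping times adapted to the dominating process,
\[\tau_j := j \wedge \inf\left\{0 \leq t < \infty: \int_0^t\norm{Q_r}^2_{\mathscr{L}^2(\mathfrak{U};\mathcal{H})}dr \geq j\right\},\]
which by item \ref{enum2} simultaneously localize every $B^n$ and $B$ in the sense of \ref{cylindricalintlocal}. Fixing $t>0$ and $j \in \N$, it then suffices to prove
\[\lim_{n_k \to \infty}\int_0^t(B^{n_k}_r - B_r)\mathbbm{1}_{r \leq \tau_j}d\mathcal{W}_r = 0\]
$\mathbbm{P}$-a.s. along a subsequence, since the stopped integrals coincide with the unstopped ones on $\{t \leq \tau_j\}$ and $\tau_j \to \infty$ a.s.

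Since the truncated processes $(B^n - B)\mathbbm{1}_{\cdot \leq \tau_j}$ lie in $\mathcal{I}^{\mathcal{H}}(\mathcal{W})$ --- the cut-off at $\tau_j$ forces the in-expectation bound, as $\int_0^t\norm{Q_r\mathbbm{1}_{r\leq\tau_j}}^2_{\mathscr{L}^2(\mathfrak{U};\mathcal{H})}dr \leq j$ uniformly in $\omega$ --- I can apply the cylindrical Itô isometry \ref{ito isom for cylindrical} to obtain
\[\mathbbm{E}\left\Vert\int_0^t(B^n_r - B_r)\mathbbm{1}_{r \leq \tau_j}d\mathcal{W}_r\right\Vert_{\mathcal{H}}^2 = \mathbbm{E}\int_0^t\norm{(B^n_r - B_r)\mathbbm{1}_{r \leq \tau_j}}^2_{\mathscr{L}^2(\mathfrak{U};\mathcal{H})}dr.\]
The integrand on the right converges to zero pointwise by item 2 and is dominated by $4\norm{Q_r\mathbbm{1}_{r\leq\tau_j}}^2_{\mathscr{L}^2(\mathfrak{U};\mathcal{H})}$, which is $\mathbbm{P}\times\lambda$-integrable by the bound $\leq j$ above; the ordinary Dominated Convergence Theorem on the product space (Tonelli posing no issue thanks to progressive measurability) then drives the right-hand side to zero. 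Hence the integrals converge to zero in $L^2(\Omega;\mathcal{H})$ along the full sequence, and extracting a $\mathbbm{P}$-a.s. convergent subsequence yields (\ref{endgame2}). The only point demanding care --- and the analogue of the recurring subtlety in \ref{localchapter} --- is the uniform-in-$\omega$ bound at $\tau_j$ that legitimizes both the application of \ref{ito isom for cylindrical} and the integrability of the dominating function; beyond this the argument is a verbatim translation of \ref{stochstic dominated convergence}.
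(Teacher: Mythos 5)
Your proposal is correct and is exactly the argument the paper intends: the paper's own proof of \ref{stochstic dominated convergence 2} consists of the single remark that it is mechanically identical to \ref{stochstic dominated convergence}, replacing $\mathcal{H}$ by $\mathscr{L}^2(\mathfrak{U};\mathcal{H})$ for the integrands and invoking the It\^{o} Isometry \ref{ito isom for cylindrical}, which is precisely the translation you carry out (same localizing times built from the dominating process $Q$, same $4\norm{Q_r\mathbbm{1}_{r\leq\tau_j}}^2$ domination, same $L^2$-to-a.s.\ subsequence extraction).
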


\begin{proof}
The proof is mechanically identical to that of Lemma \ref{stochstic dominated convergence}, simply replacing $\mathcal{H}$ by  $\mathscr{L}^2(\mathfrak{U};\mathcal{H})$ when dealing with the integrands and using the appropriate It\^{o} Isometry \ref{ito isom for cylindrical}.
\end{proof}

   We now shift attentions to results regarding the martingale properties of the integral.

\begin{proposition} \label{main martingale result}
    For $B \in \mathcal{I}^{\mathcal{H}}(\mathcal{W})$, the It\^{o} stochastic integral $$\int_0^tB(s)d\mathcal{W}_s$$ belongs to $\mathcal{M}^2_c(\mathcal{H}).$
\end{proposition}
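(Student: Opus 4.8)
The plan is to argue directly from the construction in \ref{squareintegrablewrtcylindrical}: the integral $\int_0^t B(s)\, d\mathcal{W}_s$ is the $L^2\big(\Omega;\mathcal{H}\big)$ limit at each fixed $t$ of the partial sums
\[
S^n_t := \sum_{i=1}^n \int_0^t B_{e_i}(s)\, dW^i_s .
\]
I would first check that each $S^n$ lies in $\mathcal{M}^2_c(\mathcal{H})$, and then pass the three defining properties of \ref{def of m2ch} --- square integrability, martingality, and continuity --- to the limit $S_t := \int_0^t B(s)\, d\mathcal{W}_s$. The square integrability and martingality transfer cheaply from the already-established $L^2$ convergence; the genuinely delicate point, and the step I expect to be the main obstacle, is continuity of the limiting paths, since convergence in $L^2\big(\Omega;\mathcal{H}\big)$ at each individual time carries no pathwise information.

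For the partial sums, each summand $\int_0^\cdot B_{e_i}(s)\, dW^i_s$ belongs to $\mathcal{M}^2_c(\mathcal{H})$ by \ref{1Dstochintismartingale} (applied to the Brownian motion $W^i$), and a finite sum of elements of $\mathcal{M}^2_c(\mathcal{H})$ is again in $\mathcal{M}^2_c(\mathcal{H})$: continuity and square integrability are immediate, while for each $h \in \mathcal{H}$ the process $\inner{S^n}{h}_{\mathcal{H}} = \sum_{i=1}^n \int_0^\cdot \inner{B_{e_i}(s)}{h}_{\mathcal{H}}\, dW^i_s$ (using \ref{dualityrep}) is a finite sum of real martingales, hence a real martingale, so $S^n$ is a martingale by \ref{def of m2ch}. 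The square integrability of the limit $S_t$ is exactly the content of \ref{ito isom for cylindrical}. For martingality of $S$, I would fix $h \in \mathcal{H}$ and note that $S^n_t \to S_t$ in $L^2\big(\Omega;\mathcal{H}\big)$ forces $\inner{S^n_t}{h}_{\mathcal{H}} \to \inner{S_t}{h}_{\mathcal{H}}$ in $L^2\big(\Omega;\R\big)$ by Cauchy--Schwarz, and since $L^2$ convergence implies $L^1$ convergence and conditional expectation is an $L^1$-contraction, the real martingale identity $\mathbbm{E}\big(\inner{S^n_t}{h}_{\mathcal{H}} \mid \mathcal{F}_s\big) = \inner{S^n_s}{h}_{\mathcal{H}}$ passes to the limit; thus $\inner{S}{h}_{\mathcal{H}}$ is a real martingale for every $h$, giving martingality of $S$.

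The hard part is continuity. Here I would upgrade the fixed-time $L^2$ convergence to uniform-in-time convergence along a subsequence via Doob's maximal inequality. Since $S^n - S^m \in \mathcal{M}^2_c(\mathcal{H})$, the process $\norm{S^n_\cdot - S^m_\cdot}_{\mathcal{H}}$ is a nonnegative continuous submartingale (for each $h$ in a countable dense subset of the unit ball, $\inner{S^n_\cdot - S^m_\cdot}{h}_{\mathcal{H}}$ is a martingale, and taking the supremum over such $h$ inside the conditional expectation yields the submartingale property, consistent with the fact used for \ref{def of quad} that $\norm{\cdot}_{\mathcal{H}}^2$ is a submartingale). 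Doob's $L^2$ maximal inequality then gives, for any fixed $T$,
\[
\mathbbm{E}\Big( \sup_{s \leq T} \norm{S^n_s - S^m_s}_{\mathcal{H}}^2 \Big) \leq 4\, \mathbbm{E}\big( \norm{S^n_T - S^m_T}_{\mathcal{H}}^2 \big),
\]
whose right-hand side tends to $0$ as $n,m \to \infty$ by the Cauchy property in $L^2\big(\Omega;\mathcal{H}\big)$ at time $T$ established before \ref{ito isom for cylindrical}. Hence $(S^n)$ is Cauchy in the complete space of continuous adapted $\mathcal{H}$-valued processes under the norm $\big(\mathbbm{E}\sup_{s\leq T}\norm{\cdot}_{\mathcal{H}}^2\big)^{1/2}$; extracting a subsequence that converges uniformly on $[0,T]$ for $\mathbbm{P}$-a.e.\ $\omega$, the limit has continuous paths as a uniform limit of continuous paths, and it must coincide almost surely with $S_t$ by uniqueness of $L^2$ limits. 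Exhausting $T$ through the integers completes the verification that $S \in \mathcal{M}^2_c(\mathcal{H})$.
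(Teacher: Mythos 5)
Your proposal is correct and follows essentially the same route as the paper: the paper's proof is the one-line observation that the partial sums lie in $\mathcal{M}^2_c(\mathcal{H})$ by \ref{1Dstochintismartingale} and that $\mathcal{M}^2_c(\mathcal{H})$ is closed under the fixed-time $L^2\big(\Omega;\mathcal{H}\big)$ convergence you invoke. The only difference is that you spell out the closedness argument the paper leaves implicit --- martingality via $L^1$-passage through conditional expectations, and path continuity via the submartingale property of $\norm{S^n-S^m}_{\mathcal{H}}$ plus Doob's maximal inequality to upgrade to uniform convergence along a subsequence --- all of which is sound.
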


\begin{proof}
This follows immediately from Propositions \ref{prop m2c closed} and \ref{1Dstochintismartingale}. 
\end{proof}

\begin{corollary} \label{main continuous local martingale result}
For $B \in \bar{\mathcal{I}}^{\mathcal{H}}(\mathcal{W})$, the It\^{o} stochastic integral $$\int_0^tB(s)d\mathcal{W}_s$$ is a continuous local martingale.
\end{corollary}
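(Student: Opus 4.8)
The plan is to mirror the localisation argument used to prove \ref{integralislocalmartingale} for finite-dimensional integrators, now in the cylindrical setting, leveraging the genuine martingale result \ref{main martingale result} on the stopped processes. The key observation is that the integral for $B \in \bar{\mathcal{I}}^{\mathcal{H}}(\mathcal{W})$ was constructed in \ref{cylindricalintlocal} precisely via the stopping times $(\tau_n)$ defined before that definition, with $B^n(t) = B(t)\mathbbm{1}_{t \leq \tau_n}$ belonging to $\mathcal{I}^{\mathcal{H}}(\mathcal{W})$. I claim these very same $(\tau_n)$ serve as the localising sequence. We have already noted that the $(\tau_n)$ are stopping times tending to infinity almost surely by condition (\ref{cylindricallocalcondition}), so the only substantive point is to show that for each fixed $n \in \N$ the stopped process $$\int_0^{t \wedge \tau_n} B(s)d\mathcal{W}_s$$ is a genuine continuous martingale in $\mathcal{M}^2_c(\mathcal{H})$.

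First I would examine how this stopped process is actually defined. By \ref{cylindricalintlocal}, at each fixed $\omega$ the value $\big(\int_0^{t \wedge \tau_n}B(s)d\mathcal{W}_s\big)(\omega)$ is computed by selecting some $m$ with $t \wedge \tau_n(\omega) \leq \tau_m(\omega)$; but since $t \wedge \tau_n \leq \tau_n$ always holds, we may take $m = n$ uniformly across all $\omega$. This is exactly the manoeuvre used in \ref{integralislocalmartingale}, and it is the crux of why the localisation is legitimate: the choice of index can be made independent of $\omega$. Consequently the stopped process coincides, for this single fixed $n$, with $$\int_0^t B^n(s)d\mathcal{W}_s,$$ where $B^n \in \mathcal{I}^{\mathcal{H}}(\mathcal{W})$. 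By \ref{main martingale result} this integral belongs to $\mathcal{M}^2_c(\mathcal{H})$, and in particular is a continuous martingale.

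With continuity and martingality established for each stopped process, together with $\tau_n \to \infty$ almost surely and the consistency of the truncated definitions for $m \leq n$, the process $\int_0^t B(s)d\mathcal{W}_s$ satisfies the definition of a continuous local martingale localised by $(\tau_n)$. I expect the argument to be essentially a direct transcription of \ref{integralislocalmartingale}, and indeed the cleanest write-up simply appeals to that proof. The main obstacle, such as it is, is conceptual rather than technical: one must keep careful track of the non-random selection of the index $n$, exactly as emphasised in the discussion following \ref{localmartingaleintegrator}, so that the identification of the stopped process with a genuine $\mathcal{I}^{\mathcal{H}}(\mathcal{W})$ integral is uniform in $\omega$ and hence yields a bona fide martingale rather than merely a pathwise object.
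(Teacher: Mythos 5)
Your proposal is correct and is exactly the paper's argument: the paper's proof simply says the result "follows identically to \ref{integralislocalmartingale} using \ref{main martingale result}", and your write-up is precisely that transcription — localising with the $(\tau_n)$ from \ref{cylindricalintlocal}, choosing the index $m=n$ uniformly in $\omega$, and invoking \ref{main martingale result} on the stopped integrals.
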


\begin{proof}
This now follows identically to Proposition \ref{integralislocalmartingale}.
\end{proof}




The martingality of the integral also allows us to consider the quadratic variation as defined in Definition \ref{def of quad}.

\begin{proposition} \label{giving up}
    For $B \in \mathcal{I}^{\mathcal{H}}(\mathcal{W})$, we have that
    \begin{equation} \label{newestsestest}
        \left[\int_0^\cdot B_r d\mathcal{W}_r \right]_t = \int_0^t\norm{B_r}_{\mathscr{L}^2(\mathfrak{U};\mathcal{H})}^2dr.
    \end{equation}
\end{proposition}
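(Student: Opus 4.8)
The plan is to identify $\int_0^t\norm{B_r}_{\mathscr{L}^2(\mathfrak{U};\mathcal{H})}^2 dr$ as the quadratic variation by checking it against the defining properties in \ref{def of quad}. It is manifestly continuous, adapted (from progressive measurability of $B$), non-decreasing and zero at $t=0$, so the whole content is to show that
$$\Big\Vert \int_0^t B_r d\mathcal{W}_r\Big\Vert_{\mathcal{H}}^2 - \int_0^t\norm{B_r}_{\mathscr{L}^2(\mathfrak{U};\mathcal{H})}^2 dr$$
is a real-valued martingale; uniqueness in the Doob--Meyer decomposition then forces equality. Rather than attack this directly I would exploit the approximation machinery already in place. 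Writing $M_t := \int_0^t B_r d\mathcal{W}_r$ and $M^n_t := \sum_{i=1}^n \int_0^t B_{e_i}(r) dW^i_r$, each $M^n$ is a finite sum of one-dimensional It\^o integrals and hence lies in $\mathcal{M}^2_c(\mathcal{H})$ by \ref{1Dstochintismartingale}, while by \ref{squareintegrablewrtcylindrical} we have $M^n_t \to M_t$ in $L^2\big(\Omega;\mathcal{H}\big)$ at each fixed $t$. Proposition \ref{martingale quadratic variation limit} then gives $[M^n]_t \to [M]_t$ in $L^1\big(\Omega;\R\big)$, so it suffices to compute $[M^n]_t$ and pass to the limit.

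For the truncated processes I would repeat the projection argument of \ref{quad of int}, now over a fixed orthonormal basis $(f_k)$ of $\mathcal{H}$ (distinct from the basis $(e_i)$ of $\mathfrak{U}$). By \ref{dualityrep}, applied termwise since the sum over $i$ is finite, each projection is $\inner{M^n_t}{f_k}_{\mathcal{H}} = \sum_{i=1}^n \int_0^t \inner{B_{e_i}(r)}{f_k}_{\mathcal{H}} dW^i_r$, an It\^o integral against the independent family $(W^i)_{i\le n}$. Since distinct $W^i$ contribute no cross-variation, the standard one-dimensional theory yields that $\inner{M^n_t}{f_k}_{\mathcal{H}}^2 - \sum_{i=1}^n \int_0^t \inner{B_{e_i}(r)}{f_k}_{\mathcal{H}}^2 dr$ is a real martingale. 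Summing over $k$, invoking Parseval on both $\norm{M^n_t}_{\mathcal{H}}^2 = \sum_k \inner{M^n_t}{f_k}_{\mathcal{H}}^2$ and $\norm{B_{e_i}(r)}_{\mathcal{H}}^2 = \sum_k \inner{B_{e_i}(r)}{f_k}_{\mathcal{H}}^2$, and justifying the interchange of the infinite $k$-sum with the expectation exactly as in \ref{quad of int} (the summands are nonnegative, so monotone convergence applies, and each term is in fact zero by the It\^o isometry), I would conclude $[M^n]_t = \sum_{i=1}^n \int_0^t \norm{B_{e_i}(r)}_{\mathcal{H}}^2 dr$.

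It remains to let $n\to\infty$. The sequence $[M^n]_t$ is non-decreasing in $n$ with nonnegative terms, so by Tonelli and monotone convergence it converges almost surely to $\int_0^t \sum_{i=1}^\infty \norm{B_{e_i}(r)}_{\mathcal{H}}^2 dr = \int_0^t \norm{B_r}_{\mathscr{L}^2(\mathfrak{U};\mathcal{H})}^2 dr$, by the very definition of the Hilbert--Schmidt norm. Because $B \in \mathcal{I}^{\mathcal{H}}(\mathcal{W})$ the limiting expectation is finite, so $\mathbbm{E}[M^n]_t \to \mathbbm{E}\int_0^t \norm{B_r}_{\mathscr{L}^2(\mathfrak{U};\mathcal{H})}^2 dr$, and monotonicity upgrades the almost sure convergence to convergence in $L^1\big(\Omega;\R\big)$. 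Matching this against the limit $[M^n]_t \to [M]_t$ supplied by \ref{martingale quadratic variation limit}, uniqueness of $L^1$ limits gives $[M]_t = \int_0^t \norm{B_r}_{\mathscr{L}^2(\mathfrak{U};\mathcal{H})}^2 dr$ almost surely, as claimed.

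I expect the main obstacle to be the truncated computation of $[M^n]_t$: one must simultaneously handle two infinite expansions—the Parseval sum over the $\mathcal{H}$-basis $(f_k)$ and the Hilbert--Schmidt sum over the $\mathfrak{U}$-basis $(e_i)$—and genuinely verify that $\sum_{i=1}^n \int_0^t \norm{B_{e_i}(r)}_{\mathcal{H}}^2 dr$ is the Doob--Meyer compensator and not merely a process matching it in expectation. The independence of the driving Brownian motions, which annihilates the cross-variation between different $i$, is the structural fact producing the clean diagonal form, and the nonnegativity of every summand is what legitimises the interchange of limits throughout.
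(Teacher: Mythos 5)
Your proposal is correct, and its skeleton is the paper's own: approximate the integral by the truncations $M^n_t = \sum_{i=1}^n\int_0^t B_{e_i}(r)\,dW^i_r$, compute $[M^n]$, pass to the limit via \ref{martingale quadratic variation limit}, and identify the limit of $\sum_{i=1}^n\int_0^t\norm{B_{e_i}(r)}^2_{\mathcal{H}}dr$ with the Hilbert--Schmidt norm by Tonelli/monotone convergence. Where you genuinely diverge is in the core computation of $[M^n]$. You work projection by projection: for each fixed basis vector $f_k$ of $\mathcal{H}$ you write $\inner{M^n_t}{f_k}_{\mathcal{H}}$ as a finite sum of scalar It\^{o} integrals against independent Brownian motions and cite the standard real-valued fact that their cross-variations vanish, so that $\inner{M^n_t}{f_k}_{\mathcal{H}}^2 - \sum_{i=1}^n\int_0^t\inner{B_{e_i}(r)}{f_k}_{\mathcal{H}}^2dr$ is already a martingale; the cross terms over $i\neq j$ never surface, and all that remains is the monotone-convergence interchange of the $k$-sum with the expectation, exactly as in \ref{quad of int}. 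The paper instead expands $\norm{M^n_t}^2_{\mathcal{H}}$ by Parseval, isolates the diagonal part (handled by \ref{quad of int}) from the off-diagonal part, and then proves by hand that the off-diagonal contribution is a martingale: integrability of each fixed-$k$ cross term via Cauchy--Schwarz, the martingale property via independence of the $W^i$, and an $L^1(\Omega;\R)$-Cauchy argument to push martingality through the infinite sum over $k$ --- this analysis occupies the bulk of the paper's proof. Your route is shorter and also gives a cleaner final step (monotonicity in $n$ upgrades the almost sure convergence to $L^1$ directly, where the paper runs another Cauchy argument); its cost is self-containedness, since you import from the scalar theory precisely the multi-Brownian cross-variation statement that the paper elects to reprove inside its Hilbert-space framework. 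Given that the paper itself invokes the vanishing of cross-variations of independent processes "as in the real valued scenario" elsewhere (before \ref{the conversion}), your citation is admissible, and both arguments are sound.
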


\begin{proof}
At each time $t$, the integral $$\int_0^tB_r d\mathcal{W}_r $$ is defined to be the $L^2(\Omega;\mathcal{H})$ limit of the sequence
$$\sum_{i=1}^n\int_0^tB_r(e_i)dW^i_r.$$ We look to infer the quadratic variation of this sequence of processes using Proposition \ref{quad of int}, to then apply Lemma \ref{martingale quadratic variation limit}. We claim that \begin{equation}\nonumber \left[\sum_{i=1}^n\int_0^\cdot B_r(e_i)dW^i_r \right]_t = \int_0^t\sum_{i=1}^n\norm{B_r(e_i)}_{\mathcal{H}}^2dr\end{equation} 
which is to say
\begin{equation}\label{another new claim} \left\Vert\sum_{i=1}^n \int_0^{t}B_r(e_i) dW^i_r\right\Vert_{\mathcal{H}}^2 - \int_0^t \sum_{i=1}^n\norm{B_r(e_i)}_H^2dr\end{equation} 
is a martingale. For the orthonormal basis $(a_k)$ of $\mathcal{H}$,
\begin{align*}&\left\Vert\sum_{i=1}^n \int_0^{t}B_r(e_i) dW^i_r\right\Vert_{\mathcal{H}}^2 - \int_0^t \sum_{i=1}^n\norm{B_r(e_i)}_H^2dr\\ & \qquad = \sum_{k=1}^\infty \left\langle\sum_{i=1}^n \int_0^{t}B_r(e_i) dW^i_r, a_k \right\rangle^2_{\mathcal{H}} - \int_0^t\sum_{i=1}^n \norm{B_r(e_i)}_H^2dr\\
&\qquad = \sum_{k=1}^\infty\sum_{i=1}^n\sum_{j=1}^n \left\langle \int_0^{t}B_r(e_i) dW^i_r, a_k \right\rangle_{\mathcal{H}} \left\langle \int_0^{t}B_r(e_j) dW^j_r,a_k\right\rangle_{\mathcal{H}} - \int_0^t\sum_{i=1}^n \norm{B_r(e_i)}_H^2dr \\
&\qquad = \left(\sum_{k=1}^\infty\sum_{i=1}^n\left\langle \int_0^{t}B_r(e_i) dW^i_r, a_k \right\rangle_{\mathcal{H}}^2 - \int_0^t\sum_{i=1}^n \norm{B_r(e_i)}_H^2dr\right) \\& \qquad \qquad \qquad + \left(\sum_{k=1}^\infty\sum_{i\neq j} \left\langle \int_0^{t}B_r(e_i) dW^i_r, a_k \right\rangle_{\mathcal{H}} \left\langle \int_0^{t}B_r(e_j) dW^j_r,a_k\right\rangle_{\mathcal{H}}\right)\\
&\qquad = \sum_{i=1}^n\left(\left\Vert \int_0^{t}B_r(e_i) dW^i_r\right\Vert_{\mathcal{H}}^2 - \int_0^t \norm{B_r(e_i)}_H^2dr\right)\\& \qquad \qquad \qquad + \left(\sum_{k=1}^\infty\sum_{i\neq j} \left\langle \int_0^{t}B_r(e_i) dW^i_r, a_k \right\rangle_{\mathcal{H}} \left\langle \int_0^{t}B_r(e_j) dW^j_r,a_k\right\rangle_{\mathcal{H}}\right).
\end{align*}
Inspecting the last equality, by Proposition \ref{quad of int} we have that $$ \sum_{i=1}^n\left(\left\Vert \int_0^{t}B_r(e_i) dW^i_r\right\Vert_{\mathcal{H}}^2 - \int_0^t \norm{B_r(e_i)}_H^2dr\right)$$ is a finite sum of martingales, so we would prove that the process defined in (\ref{another new claim}) also belongs to this class if we show that the same is true of \begin{equation}\label{true of} \sum_{k=1}^\infty\sum_{i\neq j} \left\langle \int_0^{t}B_r(e_i) dW^i_r, a_k \right\rangle_{\mathcal{H}} \left\langle \int_0^{t}B_r(e_j) dW^j_r,a_k\right\rangle_{\mathcal{H}}.\end{equation} We consider the above for each fixed $k$, rewriting it as \begin{equation}\label{each fixed k}\sum_{i\neq j} \left(\int_0^{t}\left\langle B_r(e_j),a_k\right\rangle_{\mathcal{H}}  dW^j_r\right)\left(\int_0^{t}\left\langle B_r(e_i),a_k\right\rangle_{\mathcal{H}}  dW^i_r \right).\end{equation}
Adaptedness of this process is clear, and to show integrability observe that
\begin{align*}
    &\mathbbm{E}\left\vert\sum_{i\neq j} \left(\int_0^{t}\left\langle B_r(e_j),a_k\right\rangle_{\mathcal{H}}  dW^j_r\right)\left(\int_0^{t}\left\langle B_r(e_i),a_k\right\rangle_{\mathcal{H}}  dW^i_r \right)\right\vert
    \\ &\qquad \qquad \leq \sum_{i\neq j} \mathbbm{E}\left\vert \left(\int_0^{t}\left\langle B_r(e_j),a_k\right\rangle_{\mathcal{H}}  dW^j_r\right)\left(\int_0^{t}\left\langle B_r(e_i),a_k\right\rangle_{\mathcal{H}}  dW^i_r \right)\right\vert
    \\ &\qquad \qquad \leq \sum_{i\neq j} \left(\mathbbm{E}\left\vert\int_0^{t}\left\langle B_r(e_i),a_k\right\rangle_{\mathcal{H}}  dW^i_r \right\vert^2   \right)^{1/2}\left(\mathbbm{E}\left\vert\int_0^{t}\left\langle B_r(e_j),a_k\right\rangle_{\mathcal{H}}  dW^j_r \right\vert^2    \right)^{1/2}\\
    &\qquad \qquad = \sum_{i\neq j} \left(\mathbbm{E}\int_0^{t}\left\langle B_r(e_i),a_k\right\rangle_{\mathcal{H}}^2  dr  \right)^{1/2}\left(\mathbbm{E}\int_0^{t}\left\langle B_r(e_j),a_k\right\rangle_{\mathcal{H}}^2  dr    \right)^{1/2}
    \\&\qquad \qquad \leq n^2\mathbbm{E}\int_0^{t}\sum_{i=1}^n\left\langle B_r(e_i),a_k\right\rangle_{\mathcal{H}}^2  dr\\
    &\qquad \qquad \leq n^2\mathbbm{E}\int_0^{t}\sum_{i=1}^n\norm{ B_r(e_i)}_{\mathcal{H}}^2  dr\\
    & \qquad \qquad< \infty.
\end{align*}
As for the martingale property, for any times $s<t$,
\begin{align}
    \nonumber &\mathbbm{E}\left(\sum_{i\neq j} \left(\int_s^{t}\left\langle B_r(e_j),a_k\right\rangle_{\mathcal{H}}  dW^j_r\right)\left(\int_s^{t}\left\langle B_r(e_i),a_k\right\rangle_{\mathcal{H}}  dW^i_r \right) \bigg\vert \mathcal{F}_s\right)\\ 
    \nonumber &\qquad \qquad = \mathbbm{E}\left(\sum_{i\neq j} \left(\int_s^{t}\left\langle B_r(e_j),a_k\right\rangle_{\mathcal{H}}  dW^j_r\right)\left(\int_s^{t}\left\langle B_r(e_i),a_k\right\rangle_{\mathcal{H}}  dW^i_r \right) \right)\\
    &\qquad \qquad =0 \label{its equal to zero}
\end{align}
where passage from the first line to the second is through the independent increments property of the Brownian Motions, and the second to the third is from the independence of the Brownian Motions. 
So the process defined in (\ref{each fixed k}) is shown to be a martingale, where we wish to show that this property remains true in the limit of the infinite sum for (\ref{true of}). Convergence of the infinite sum is defined $\mathbbm{P}-a.s.$, and it is sufficient to show that the convergence also holds in $L^1(\Omega;\R)$. For this we show that the sequence is Cauchy in $L^1(\Omega;\R)$, taking the difference of the $l^{th}$ and $m^{th}$ terms to see that 
\begin{align*}
    &\mathbbm{E}\left\vert\sum_{k=m+1}^l\sum_{i\neq j} \left(\int_0^{t}\left\langle B_r(e_j),a_k\right\rangle_{\mathcal{H}}  dW^j_r\right)\left(\int_0^{t}\left\langle B_r(e_i),a_k\right\rangle_{\mathcal{H}}  dW^i_r \right)\right\vert
    \\ &\qquad \qquad \leq \sum_{k=m+1}^l\mathbbm{E}\left\vert\sum_{i\neq j} \left(\int_0^{t}\left\langle B_r(e_j),a_k\right\rangle_{\mathcal{H}}  dW^j_r\right)\left(\int_0^{t}\left\langle B_r(e_i),a_k\right\rangle_{\mathcal{H}}  dW^i_r \right)\right\vert
    \\& \qquad \qquad \leq n^2\sum_{k=m+1}^l\mathbbm{E}\int_0^{t}\sum_{i=1}^n\left\langle B_r(e_i),a_k\right\rangle_{\mathcal{H}}^2  dr
    \\& \qquad \qquad \leq n^2\sum_{k=m+1}^\infty \mathbbm{E}\int_0^{t}\sum_{i=1}^n\left\langle B_r(e_i),a_k\right\rangle_{\mathcal{H}}^2  dr
\end{align*}
which is a monotone decreasing sequence to zero in $m$, hence the Cauchy property is shown so there exists a limit in $L^1(\Omega;\R)$ which must agree with the $\mathbbm{P}-a.s.$ limit (we can take a $\mathbbm{P}-a.s.$ convergent subsequence from the $L^1(\Omega;\R)$ convergence) and the martingale property of the process defined in (\ref{true of}) and hence (\ref{another new claim}) is shown. We thus apply Lemma \ref{martingale quadratic variation limit} and deduce that $\left[\int_0^\cdot B_r d\mathcal{W}_r\right]_t$ is the $L^1(\Omega;\R)$ limit of the sequence $$\int_0^t \sum_{i=1}^n\norm{B_r(e_i)}_{\mathcal{H}}^2dr$$ in $n$. Similarly to the analysis just conducted we can show that this sequence is Cauchy in $L^1(\Omega;\R)$ and agrees with the $\mathbbm{P}-a.s.$ limit, which is of course $$\int_0^t\norm{B_r}_{\mathscr{L}^2(\mathfrak{U};\mathcal{H})}^2dr $$ taking the infinite sum through the integral with either Tonelli's Theorem (identifying the infinite sum as a integral with respect to the counting measure) or the Monotone Convergence Theorem. The proof is concluded.
\end{proof}

We also have the analogous result to Proposition \ref{analagous one}.

\begin{proposition} \label{analagous two}
    Let $B \in \mathcal{I}^{\mathcal{H}}_T(\mathcal{W})$ and consider any sequence of partitions $$I_l:= \left\{0=t^l_0 < t^l_1 < \dots < t^l_{k_l}=T\right\}$$ with $\max_j\vert t^l_{j}-t^l_{j-1} \vert \rightarrow 0$ as $l \rightarrow \infty$. Then for all $t\in[0,T]$, for any $\varepsilon > 0$,
    \begin{equation}\label{the problem}\lim_{l \rightarrow \infty}\mathbbm{P}\left(\left\{\left\vert \sum_{t^l_{j+1} \leq t}\left\Vert\int_{t^l_{j}}^{t^l_{j+1}}B_r d\mathcal{W}_r\right\Vert_{\mathcal{H}}^2 - \int_0^t\norm{B_r}_{\mathscr{L}^2(\mathfrak{U};\mathcal{H})}^2dr \right\vert > \varepsilon \right\}\right) = 0. \end{equation}
\end{proposition}

\begin{proof}
Following the method used in Proposition \ref{analagous one} we again would like to reduce this to a familiar case and extrapolate the result to the limit. We introduce a sequence of stopping times $(\tau^n)$ defined at every $n \in \N$ by
$$\tau^n := n \wedge \inf\left\{ t \in [0,T]: \left\Vert\int_{0}^tB_r d\mathcal{W}_r\right\Vert_{\mathcal{H}}^2 + \int_0^t\norm{B_r}_{\mathscr{L}^2(\mathfrak{U};\mathcal{H})}^2dr \geq n \right\}.$$
For every $n$ we define the process $$B^n_{\cdot} := B_{\cdot}\mathbbm{1}_{\cdot \leq \tau^n}$$
and now look to show that 
\begin{equation} \label{new look to show that 2} \lim_{l \rightarrow \infty}\mathbbm{E}\left(\left\vert \sum_{t^l_{j+1} \leq t}\left\Vert\int_{t^l_{j}}^{t^l_{j+1}}B^n_r d\mathcal{W}_r\right\Vert_{\mathcal{H}}^2 - \int_0^t\norm{B^n_r}_{\mathscr{L}^2(\mathfrak{U};\mathcal{H})}^2dr \right\vert \right) = 0.\end{equation}
This is precisely in line with the method of Proposition \ref{analagous one}. We have that 
\begin{align*}
    &\mathbbm{E}\left(\left\vert \sum_{t^l_{j+1} \leq t}\left\Vert\int_{t^l_{j}}^{t^l_{j+1}}B^n_r d\mathcal{W}_r\right\Vert_{\mathcal{H}}^2 - \int_0^t\norm{B^n_r}_{\mathscr{L}^2(\mathfrak{U};\mathcal{H})}^2dr \right\vert \right)
    \\ & \qquad \qquad \qquad \qquad = \mathbbm{E}\left(\left\vert \sum_{t^l_{j+1} \leq t}\left\Vert\int_{t^l_{j}}^{t^l_{j+1}}B^n_r d\mathcal{W}_r\right\Vert_{\mathcal{H}}^2 -  \int_0^t\sum_{i=1}^\infty\norm{B^n_r(e_i)}_{\mathcal{H}}^2dr  \right\vert \right)
     \\ & \qquad \qquad \qquad \qquad = \mathbbm{E}\left(\left\vert \sum_{t^l_{j+1} \leq t}\sum_{k=1}^\infty\left\langle\int_{t^l_{j}}^{t^l_{j+1}}B^n_rd\mathcal{W}_r,a_k\right\rangle_{\mathcal{H}} ^2 -  \int_0^t\sum_{i=1}^\infty\sum_{k=1}^\infty\inner{B^n_r(e_i)}{a_k}_{\mathcal{H}}^2dr  \right\vert \right)
    \\ & \qquad \qquad \qquad \qquad = \mathbbm{E}\left(\left\vert \sum_{t^l_{j+1} \leq t}\sum_{k=1}^\infty\left(\int_{t^l_{j}}^{t^l_{j+1}}\inner{B^n_r}{a_k}_{\mathcal{H}}d\mathcal{W}_r\right) ^2 -  \int_0^t\sum_{k=1}^\infty\sum_{i=1}^\infty\inner{B^n_r(e_i)}{a_k}_{\mathcal{H}}^2dr  \right\vert \right)
    \\ & \qquad \qquad \qquad \qquad = \mathbbm{E}\left(\left\vert \sum_{t^l_{j+1} \leq t}\sum_{k=1}^\infty\left(\int_{t^l_{j}}^{t^l_{j+1}}\inner{B^n_r}{a_k}_{\mathcal{H}}d\mathcal{W}_r\right) ^2 -  \int_0^t\sum_{k=1}^\infty\norm{\inner{B^n_r(e_i)}{a_k}_{\mathcal{H}}}_{\mathscr{L}^2(\mathfrak{U};\R)}^2dr  \right\vert \right)\\
    & \qquad \qquad \qquad \qquad \leq \sum_{k=1}^\infty\mathbbm{E}\left(\left\vert \sum_{t^l_{j+1} \leq t}\left(\int_{t^l_{j}}^{t^l_{j+1}}\inner{B^n_r}{a_k}_{\mathcal{H}}d\mathcal{W}_r\right) ^2 -  \int_0^t\norm{\inner{B^n_r(e_i)}{a_k}_{\mathcal{H}}}_{\mathscr{L}^2(\mathfrak{U};\R)}^2dr  \right\vert \right)
\end{align*}
having applied Theorem \ref{operatorthroughstochasticintegral} and the Dominated Convergence Theorem to take the infinite sum in $k$ through the time integral and expectation. From Theorem \ref{operatorthroughstochasticintegral} and Proposition \ref{main martingale result} then $\int_0^\cdot\inner{B^n_r}{a_k}_{\mathcal{H}}d\mathcal{W}_r $ belongs to $\mathcal{M}^2_c$, with quadratic variation $\int_0^t\norm{\inner{B_r(e_i)}{a_k}_{\mathcal{H}}}_{\mathscr{L}^2(\mathfrak{U};\R)}^2dr$ coming from Proposition \ref{giving up}. Just as we used in Proposition \ref{analagous one}, we have that for each fixed $k \in \N$, $$\lim_{l \rightarrow \infty}\mathbbm{E}\left(\left\vert \sum_{t^l_{j+1} \leq t}\left(\int_{t^l_{j}}^{t^l_{j+1}}\inner{B^n_r}{a_k}_{\mathcal{H}}d\mathcal{W}_r\right) ^2 -  \int_0^t\norm{\inner{B^n_r(e_i)}{a_k}_{\mathcal{H}}}_{\mathscr{L}^2(\mathfrak{U};\R)}^2dr  \right\vert \right) = 0$$ so it is sufficient to justify the interchange of limit in $l$ and summation in $k$. This follows identically to the justification in Proposition \ref{analagous one}, appealing this time to the It\^{o} Isometry \ref{ito isom for cylindrical}. 
\end{proof}

\newpage

\section{Stochastic Differential Equations in Infinite Dimensions} \label{section 2}

Throughout this section we shall again use $(e_i)$ as an orthonormal basis of $\mathfrak{U}$, the space over which $\mathcal{W}$ is a Cylindrical Brownian Motion (recall (\ref{cylindricalBM})), and $(a_k)$ an orthonormal basis of the Hilbert Space in which the equation takes place. 


\subsection{The Stratonovich Integral} \label{sub strato}

We look at first to define the Stratonovich Integral with respect to a one dimensional martingale, before then doing with respect to a Cylindrical Brownian Motion. 

\begin{definition} \label{one d strat}
For $M \in \bar{\mathcal{M}}^2_c$ and $\sy \in \mathcal{I}^{\mathcal{H}}_{M} \cap \bar{\mathcal{M}}^2_c(\mathcal{H})$, the Stratonovich stochastic integral is defined as $$\int_0^t\sy_s \circ dM_s := \int_0^t\sy_s dM_s + \frac{1}{2}[\sy,M]_t.$$
\end{definition}

\begin{definition} \label{regular cylindrical strat}
For $B \in \mathcal{I}^{\mathcal{H}}(\mathcal{W})$ such that $B_{e_i} \in \bar{\mathcal{M}}^2_c({\mathcal{H}})$ for every $e_i$ and the limit $$\sum_{i=1}^\infty[B_{e_i},W^i]_t $$ is well defined in $L^2\left(\Omega;\mathcal{H}\right)$, the Stratnovich stochastic integral is defined as $$\int_0^tB(s) \circ d\mathcal{W}_s := \sum_{i=1}^\infty \left(\int_0^tB_{e_i}(s) dW^i_s + \frac{1}{2}[B_{e_i},W^i]_t\right)$$ where the limit is taken in $L^2\left(\Omega;\mathcal{H}\right)$. The class of such processes will be denoted $\mathcal{I}^{\mathcal{H}}_{\circ}(\mathcal{W}).$
\end{definition}

It will be necessary to extend this definition to processes $B \in \bar{\mathcal{I}}^{\mathcal{H}}(\mathcal{W})$, but we encounter more technical issues in trying to construct a sequence of stopping times such that the stopped process belongs to $\mathcal{I}^{\mathcal{H}}_{\circ}(\mathcal{W})$. We find it simplest to give the definition below.

\begin{definition} \label{irregular cylindrical strat}
Suppose that there exists a sequence of stopping times $(\tau_n)$ which are $\mathbbm{P}-a.s.$ monotone increasing and convergent to infinity such that:
\begin{enumerate}
    \item For every $n$, the process $$B^n(\cdot):=B({\cdot})\mathbbm{1}_{\cdot \leq \tau^n}$$
    belongs to $\mathcal{I}^{\mathcal{H}}(\mathcal{W})$;
 \item For every $n$ and $i$, the process 
 $$ B^{\tau_n}_{e_i}(\cdot):= B_{e_i}(\cdot \wedge \tau^n)$$
 belongs to $\bar{\mathcal{M}}^2_c(\mathcal{H})$;
 \item The limit $$\sum_{i=1}^\infty [B^{\tau_n}_{e_i},W^i]_t$$ is well defined in $L^2\left(\Omega;\mathcal{H}\right)$.
\end{enumerate}
Then the Stratonovich stochastic integral is defined at a fixed $\omega$ for any $\tau^n(\omega) \geq t$ as $$\left(\int_0^tB(s) \circ d\mathcal{W}_s\right)(\omega) := \left(\sum_{i=1}^\infty \left(\int_0^tB^n_{e_i}(s) dW^i_s + \frac{1}{2}[B^{\tau_n}_{e_i},W^i]_t\right)\right)(\omega)$$ where the limit is taken in $L^2\left(\Omega;\mathcal{H}\right)$. The class of such processes will be denoted $\bar{\mathcal{I}}^{\mathcal{H}}_{\circ}(\mathcal{W}).$
\end{definition}

This definition is of course completely analogous to the localisation procedure used in the previous constructions, except we postulate in the first instance the existence of the localising sequence $(\tau_n)$.

\subsection{Strong Solutions in the Abstract Framework} \label{subs 2.2}

We work with a quartet of embedded Hilbert Spaces $$V \hookrightarrow H \hookrightarrow U \hookrightarrow X$$ where the embedding is meant as a continuous linear injection. We introduce at first the It\^{o} SPDE
\begin{equation} \label{thespde}
    \sy_t = \sy_0 + \int_0^t \mathcal{Q}\sy_sds + \int_0^t\mathcal{G}\sy_sd\mathcal{W}_s
\end{equation}
where $\mathcal{W}$ continues to be a Cylindrical Brownian Motion over $\mathfrak{U}$ relative to our fixed filtered probability space $(\Omega,\mathcal{F},(\mathcal{F}_t),\mathbbm{P})$ with representation (\ref{cylindricalBM}). We impose now some conditions on the operators relative to these spaces. To do so we define the general operator $\tilde{K}: H \rightarrow \R$ by $$\tilde{K}(\phi):= c\left(1 + \norm{\phi}_U^p + \norm{\phi}_H^q\right)$$ for any constants $c,p,q$ independent of $\phi$. 
 \begin{assumption} \label{Qassumpt}
$\mathcal{Q}: V \rightarrow U$ is measurable and for any $\phi \in V$, $$\norm{\mathcal{Q}\phi}_U \leq \tilde{K}(\phi)[1 + \norm{\phi}_V^2].$$
 \end{assumption}
 
\begin{assumption} \label{Gassumpt}
$\mathcal{G}$ is understood as a measurable operator \begin{align*}
    \mathcal{G}&: V \rightarrow \mathscr{L}^2(\mathfrak{U};H)\\
     \mathcal{G}&: H \rightarrow \mathscr{L}^2(\mathfrak{U};U)\\
       \mathcal{G}&: U \rightarrow \mathscr{L}^2(\mathfrak{U};X)
\end{align*}
defined over $\mathfrak{U}$ by its action on the basis vectors $$\mathcal{G}(\cdot, e_i):= \mathcal{G}_i(\cdot).$$ Each $\mathcal{G}_i$ is linear and there exists constants $c_i$ such that for all $\phi \in V$, $\psi \in H$, $\eta \in U$:
\begin{align*}
    \norm{\mathcal{G}_i\phi}_{H} &\leq c_i \norm{\phi}_V\\
    \norm{\mathcal{G}_i\psi}_{U} &\leq c_i \norm{\psi}_H\\
    \norm{\mathcal{G}_i\eta}_{X} &\leq c_i \norm{\eta}_U\\
    \sum_{i=1}^\infty c_i^2 &< \infty.
\end{align*}
\end{assumption}

It is worth clarifying how $\mathcal{G}$ is defined over $\mathfrak{U}$: fix a $\phi \in V$ and consider $\mathcal{G}(\phi,\cdot):\mathfrak{U} \rightarrow H$ (the arguments here apply for the larger spaces as well). Any $\alpha \in \mathfrak{U}$ has the representation $$\sum_{i=1}^\infty \inner{\alpha}{e_i}_{\mathfrak{U}}e_i$$ where $$\sum_{i=1}^\infty \inner{\alpha}{e_i}_{\mathfrak{U}}^2< \infty.$$ Then $$\mathcal{G}(\phi,\cdot):\alpha \mapsto \sum_{i=1}^\infty\inner{\alpha}{e_i}_{\mathfrak{U}}\mathcal{G}_i\phi$$ is well defined as an element of $H$. This is justified by showing that the sequence of partial sums is Cauchy in $H$: note that from Cauchy-Schwarz,
$$\left\Vert\sum_{i=m}^n\inner{\alpha}{e_i}_{\mathfrak{U}}\mathcal{G}_i\phi  \right\Vert_{H}^2 \leq \left( \sum_{i=m}^n\inner{\alpha}{e_i}_{\mathfrak{U}}^2\right)\left(\sum_{i=m}^n\norm{\mathcal{G}_i\phi}^2_{H} \right) \leq \left( \sum_{i=m}^\infty\inner{\alpha}{e_i}_{\mathfrak{U}}^2\right)\left(\sum_{i=m}^\infty c_i^2\norm{\phi}_V^2 \right)$$ which approaches zero as $m \rightarrow \infty$ as the sums are finite. We introduce now the first notion of our strong solutions. 

\begin{definition} \label{definitionofregularsolution}
Let $\sy_0:\Omega \rightarrow H$ be $\mathcal{F}_0-$measurable. A pair $(\sy,\tau)$ where $\tau$ is a $\mathbbm{P}-a.s.$ positive stopping time and $\sy$ is a process such that for $\mathbbm{P}-a.e.$ $\omega$, $\sy_{\cdot}(\omega) \in C\left([0,T];H\right)$ and $\sy_{\cdot}(\omega)\mathbbm{1}_{\cdot \leq \tau(\omega)} \in L^2\left([0,T];V\right)$ for all $T>0$ with $\sy_{\cdot}\mathbbm{1}_{\cdot \leq \tau}$ progressively measurable in $V$, is said to be a local strong solution of the equation (\ref{thespde}) if the identity
\begin{equation} \label{identityindefinitionoflocalsolution}
    \sy_{t} = \sy_0 + \int_0^{t\wedge \tau} \mathcal{Q}\sy_sds + \int_0^{t \wedge \tau}\mathcal{G}\sy_s d\mathcal{W}_s
\end{equation}
holds $\mathbbm{P}-a.s.$ in $U$ for all $t \geq 0$.
\end{definition}

Let's take a few moments to process this definition and ensure that the integrals make sense with the given regularity of the solution and the operators $\mathcal{Q}$, $\mathcal{G}$. As an initial aside note that if $(\sy,\tau)$ is a $V$-valued local strong solution of the equation (\ref{thespde}), then $\sy_\cdot = \sy_{
\cdot \wedge \tau}$. Moreover the progressive measurability condition on $\sy_{\cdot}\mathbbm{1}_{\cdot \leq \tau}$ may look a little suspect as $\sy_0$ itself may only belong to $H$ and not $V$ making it impossible for $\sy_{\cdot}\mathbbm{1}_{\cdot \leq \tau}$ to be even adapted in $V$. We are mildly abusing notation here; what we really ask is that there exists a process $\py$ which is progressively measurable in $V$ and such that $\py_{\cdot} = \sy_{\cdot}\mathbbm{1}_{\cdot \leq \tau}$ almost surely over the product space $\Omega \times [0,T]$ for every $T \geq 0$ with product measure $\mathbbm{P}\times \lambda$.\\

The time integral in (\ref{identityindefinitionoflocalsolution}) is well defined in $U$ as a Bochner Integral: first of all $\sy_{\cdot}(\omega)\mathbbm{1}_{\cdot \leq \tau(\omega)} \in L^2\left([0,T];V\right)$ for $\mathbbm{P}-a.e.$ $\omega$ so $\sy_{\cdot}(\omega)\mathbbm{1}_{\cdot \leq \tau(\omega)}:[0,t]\rightarrow V$ is measurable hence $\mathcal{Q}(\sy_{\cdot}(\omega)\mathbbm{1}_{\cdot \leq \tau(\omega))}:[0,t] \rightarrow U$ is measurable from Assumption \ref{Qassumpt}. Moreover the mapping $\mathcal{Q}(\sy_{\cdot}(\omega)\mathbbm{1}_{\cdot \leq \tau(\omega)})\mathbbm{1}_{\cdot \leq \tau(\omega)}:[0,t] \rightarrow U$ is again measurable and we have that $$\int_0^{t\wedge \tau(\omega)} \mathcal{Q}\left(\sy_s(\omega)\right)ds = \int_0^{t} \mathcal{Q}\left(\sy_s(\omega)\mathbbm{1}_{s\leq \tau(\omega)}\right)\mathbbm{1}_{s \leq \tau(\omega)}ds$$ so the required measurability in order to define the integral is satisfied. In this vein we have that \begin{align*}\int_0^{t\wedge \tau(\omega)} \norm{\mathcal{Q}\left(\sy_s(\omega)\right)}_Uds &\leq \int_0^{t\wedge \tau(\omega)} \tilde{K}\left(\sy_s(\omega)\right)\left[1 + \norm{\sy_s(\omega)}_V^2 \right]ds\\ &\leq \sup_{s \in [0,t\wedge \tau(\omega)]}\left[  \tilde{K}\left(\sy_s(\omega)\right)\right]\int_0^{t\wedge \tau(\omega)}1+\norm{\sy_s(\omega)}_V^2 ds\\
&< \infty
\end{align*} employing Assumption \ref{Qassumpt} again and using the regularity specified by the local solution to deduce finiteness, which justifies that the integral is well defined. \\

As for the stochastic integral in (\ref{identityindefinitionoflocalsolution}), this is well defined in the sense of Definition \ref{cylindricalintlocal} in $H$ (which then embeds into $U$), though it is done so formally via the process $\py$ stipulated in the above remark regarding progressive measurability. We understand again that $$\int_0^{t \wedge \tau}\mathcal{G} \sy_s d\mathcal{W}_s = \int_0^{t}\mathcal{G} (\sy_s\mathbbm{1}_{s \leq \tau})\mathbbm{1}_{s\leq \tau} d\mathcal{W}_s$$ should the integral be well defined, and so we actually define the integral by $$\int_0^{t \wedge \tau}\mathcal{G} \sy_s d\mathcal{W}_s := \int_0^{t}\mathcal{G} \py_s\mathbbm{1}_{s\leq \tau} d\mathcal{W}_s.$$ The progressive measurability of $\mathcal{G}\py\mathbbm{1}_{\cdot \leq \tau}:[0,t] \times \Omega \rightarrow \mathscr{L}^2(\mathfrak{U};H)$ is immediate from the measurability assumption of \ref{Gassumpt}, the same arguments above for the time integral and the progressive measurability of $\mathbbm{1}_{\cdot \leq \tau}$ coming from the definition of the stopping time. Similarly we have that \begin{align*} \int_0^{t \wedge \tau}\sum_{i=1}^\infty\norm{\mathcal{G}_i\left(\py_s(\omega)\right)}_H^2ds = \int_0^{t \wedge \tau}\sum_{i=1}^\infty\norm{\mathcal{G}_i\left(\sy_s(\omega)\mathbbm{1}_{s \leq \tau}\right)}_H^2ds < \infty
\end{align*}
$\mathbbm{P}-a.s.$, using again Assumption \ref{Gassumpt}, validating that this term is well defined as a local martingale in $H$ and thus in $U$ from the continuous embedding. We make no claims that this is genuinely a square integrable martingale.\\

We treat the local solution here to deal with the additional technicalities which arise from accounting for the stopping time. Global solutions are similarly defined however.

\begin{definition} \label{definitionofregularsolutionglobal}
Let $\sy_0: \Omega \rightarrow H$ be $\mathcal{F}_0-$measurable. A process $\sy$ such that for $\mathbbm{P}-a.e.$ $\omega$, $\sy_{\cdot}(\omega) \in C\left([0,T];H\right) \cap L^2\left([0,T];V\right)$ for all $T>0$ with $\sy$ progressively measurable in $V$, is said to be a strong solution of the equation (\ref{thespde}) if the identity (\ref{thespde}) holds $\mathbbm{P}-a.s.$ in $U$ for all $t \geq 0$.
\end{definition}

\subsection{Stratonovich SPDEs in the Abstract Framework} \label{subs 2.3}

We work now with the same initial condition $\sy_0$ and operators $\mathcal{Q}$, $\mathcal{G}$ but instead pose the question of how to understand the Stratonovich SPDE
\begin{equation} \label{stratoSPDE}
    \sy_t = \sy_0 + \int_0^t \mathcal{Q}\sy_sds + \int_0^t\mathcal{G}\sy_s \circ d\mathcal{W}_s.
\end{equation}

We are slightly hesitant to define our strong solution for the equation (\ref{stratoSPDE}) as understanding the Stratonovich integral is delicate given the necessary semi-martingale structure. If we can show that $\sy$ satisfies an evolution equation of an It\^{o} stochastic integral plus a time integral then we can deduce the required semi-martingality and identify the martingale part, but we would need this assumption on semi-martingality \textit{a priori} to define the Stratonovich integral in the sense of \ref{irregular cylindrical strat} in order to show the desired representation. To this end, we identify a Stratonovich SPDE with an It\^{o} one in the sense given here.



\begin{theorem} \label{the conversion}
    If $\sy$ is a strong solution of the equation \begin{equation} \label{the identified one}
         \sy_t = \sy_0 + \int_0^t\left( \mathcal{Q}\sy_s + \frac{1}{2}\sum_{i=1}^\infty \mathcal{G}_i^2\sy_s\right)ds +  \int_0^t\mathcal{G}\sy_sd\mathcal{W}_s
    \end{equation}
    then $\sy$ satisfies the identity (\ref{stratoSPDE}) $\mathbbm{P}-$ $a.s.$ in $X$ for all $t \geq 0$. 
\end{theorem}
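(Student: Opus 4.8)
The plan is to show that, for the given strong solution $\sy$ of (\ref{the identified one}), the Stratonovich integral $\int_0^t \mathcal{G}\sy_s \circ d\mathcal{W}_s$ is well-defined and equals
$$\int_0^t \mathcal{G}\sy_s \circ d\mathcal{W}_s = \int_0^t \mathcal{G}\sy_s \, d\mathcal{W}_s + \frac{1}{2}\int_0^t \sum_{i=1}^\infty \mathcal{G}_i^2 \sy_s \, ds,$$
after which the $\frac12\sum_i\mathcal{G}_i^2\sy$ term in the drift of (\ref{the identified one}) is absorbed into the Stratonovich integral and the identity collapses to (\ref{stratoSPDE}). By the definition of the cylindrical Stratonovich integral (\ref{regular cylindrical strat}, or \ref{irregular cylindrical strat} after localisation) with $B = \mathcal{G}\sy$ and $B_{e_i} = \mathcal{G}_i\sy$, and since $\sum_i\int_0^t\mathcal{G}_i\sy_s\,dW^i_s = \int_0^t\mathcal{G}\sy_s\,d\mathcal{W}_s$ by \ref{squareintegrablewrtcylindrical}, the whole statement reduces to the single cross-variation identity
$$[\mathcal{G}_i\sy, W^i]_t = \int_0^t \mathcal{G}_i^2\sy_s \, ds \qquad \text{for each } i,$$
together with the convergence of $\sum_i[\mathcal{G}_i\sy,W^i]_t$.

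First I would expose the semimartingale structure of each component $\mathcal{G}_i\sy = (\mathcal{G}\sy)_{e_i}$. Because (\ref{the identified one}) holds in $U$ and $\mathcal{G}_i \in \mathscr{L}(U;X)$ by Assumption \ref{Gassumpt}, applying $\mathcal{G}_i$ through the identity — commuting it with the stochastic integral via \ref{operatorthroughstochasticintegral} and with the Bochner time integral by continuity and linearity — yields
$$\mathcal{G}_i\sy_t = \mathcal{G}_i\sy_0 + \int_0^t \mathcal{G}_i\Big(\mathcal{Q}\sy_s + \tfrac{1}{2}\sum_j \mathcal{G}_j^2\sy_s\Big)\,ds + \int_0^t \mathcal{G}_i\mathcal{G}\sy_s \, d\mathcal{W}_s,$$
exhibiting $\mathcal{G}_i\sy$ as a continuous $X$-valued semimartingale whose martingale part is the cylindrical integral $\int_0^t \mathcal{G}_i\mathcal{G}\sy_s\,d\mathcal{W}_s$ with $e_j$-component $\mathcal{G}_i\mathcal{G}_j\sy$. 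Only this martingale part contributes to the cross-variation with $W^i$.

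The crux is the cross-variation computation. Testing $[\mathcal{G}_i\sy,W^i]$ against an arbitrary $\phi$ in a basis of $X$ and using the real-valued characterisation of Definition \ref{def of cross}, the scalar projection $\langle \int_0^\cdot \mathcal{G}_i\mathcal{G}\sy_s\,d\mathcal{W}_s, \phi\rangle = \sum_j \int_0^\cdot \langle \mathcal{G}_i\mathcal{G}_j\sy_s, \phi\rangle\,dW^j_s$ has real cross-variation with $W^i$ equal to $\int_0^t \langle \mathcal{G}_i^2\sy_s, \phi\rangle\,ds$: every $j\neq i$ term vanishes by independence of the Brownian motions, while the $j=i$ term contributes its $ds$-density, exactly as in the diagonal/off-diagonal split already carried out in \ref{giving up}. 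Undoing the projection gives $[\mathcal{G}_i\sy,W^i]_t = \int_0^t \mathcal{G}_i^2\sy_s\,ds$. Summing over $i$ and interchanging with the time integral is licensed precisely by working in $X$: the chain $\mathcal{G}_i\in\mathscr{L}(H;U)$ and $\mathcal{G}_i\in\mathscr{L}(U;X)$ gives $\|\mathcal{G}_i^2\sy_s\|_X \le c_i^2\|\sy_s\|_H$ with $\sum_i c_i^2 < \infty$, and $\sy$ is locally bounded in $H$ by path-continuity, so $\sum_i \mathcal{G}_i^2\sy_s$ converges absolutely in $X$ and $\sum_i\int_0^t\mathcal{G}_i^2\sy_s\,ds = \int_0^t\sum_i\mathcal{G}_i^2\sy_s\,ds$ in $X$. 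This is the reason the final identity must be read in $X$ rather than in $U$.

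The main obstacle I anticipate is concentrated in these cross-variation steps and is twofold. First, well-definedness of the Stratonovich integral: since $\mathcal{G}\sy$ need only lie in $\bar{\mathcal{I}}^{H}(\mathcal{W})$, one must exhibit a localising sequence $(\tau_n)$ — built from $\int_0^t\|\mathcal{G}\sy_s\|^2_{\mathscr{L}^2(\mathfrak{U};H)}\,ds$ together with $\sup_{s\le t}\|\sy_s\|_H$ — under which each stopped component $\mathcal{G}_i\sy^{\tau_n}$ is a genuine continuous semimartingale and the cross-variation series converges in $L^2(\Omega;X)$, so that Definition \ref{irregular cylindrical strat} genuinely applies and the construction is independent of $n$. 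Second, the cross-variation identity itself requires careful passage between the cylindrical integral and its real projections and uniform control of the off-diagonal $i\neq j$ terms in the limit; this is mechanically the same bookkeeping as in \ref{giving up} and should be routine once the projections are set up. With the cross-variation pinned down, substitution into (\ref{the identified one}) is immediate.
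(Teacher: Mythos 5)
Your proposal is correct and follows essentially the same route as the paper: localise with stopping times controlling $\int_0^\cdot\norm{\sy_r}_V^2\,dr$, push $\mathcal{G}_i$ through the It\^{o} identity (via \ref{operatorthroughstochasticintegral2}) to expose $\mathcal{G}_i\sy^{\tau_n}$ as a continuous semimartingale in $X$ with martingale part $\int_0^\cdot\mathcal{G}_i\mathcal{G}\sy^n_s\,d\mathcal{W}_s$, deduce $[\mathcal{G}_i\sy^{\tau_n},W^i]_t=\int_0^t\mathcal{G}_i^2\sy^n_s\,ds$ by killing the $j\neq i$ terms through independence, and then sum in $i$ and interchange with the time integral before substituting into \ref{irregular cylindrical strat}. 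The only substantive divergence is interpretive: you attribute the need to work in $X$ to the summability bound $\norm{\mathcal{G}_i^2\sy_s}_X\leq c_i^2\norm{\sy_s}_H$, whereas the interchange of sum and integral in fact also works in $U$ (via $c_i^2\norm{\sy_s}_V$), and the paper's actual reason is that the drift $\int_0^\cdot\mathcal{G}_i\big(\mathcal{Q}+\tfrac{1}{2}\sum_j\mathcal{G}_j^2\big)\sy^n_s\,ds$ can only be identified as a finite-variation process in $X$, which is what the semimartingale cross-variation requires.
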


There is a little to unpack here before going on to the proof of this result. The first is how we understand the infinite sum of (\ref{the identified one}) and subsequently the SPDE. The operator $$\sum_{i=1}^\infty \mathcal{G}_i^2: V \rightarrow U$$ is defined as the pointwise limit of the partial sums, which is well defined as for any fixed $\phi \in V$, \begin{equation} \label{as seen in }\left\Vert \sum_{i=m}^n\mathcal{G}_i^2\phi \right\Vert_U \leq  \sum_{i=m}^n\left\Vert\mathcal{G}_i^2\phi \right\Vert_U \leq \sum_{i=m}^nc_i\left\Vert\mathcal{G}_i\phi \right\Vert_H \leq \sum_{i=m}^nc_i^2\left\Vert\phi \right\Vert_V\end{equation} which as seen before approaches zero as $m \rightarrow \infty$. To understand the strong solution as defined in Definition  \ref{definitionofregularsolutionglobal} we need to show that the new operator $\mathcal{Q} + \frac{1}{2}\sum_{i=1}^\infty \mathcal{G}_i^2$ satisfies the assumptions postulated in Assumption \ref{Qassumpt}, but this is clear as $\mathcal{G}_i: V \rightarrow H$ and $\mathcal{G}_i: H \rightarrow U$ are bounded linear hence continuous so measurable, thus too is $\mathcal{G}_i^2: V \rightarrow U$ and therefore the partial sums and the pointwise limit are as well. Moreover $$\left\Vert \sum_{i=1}^\infty \mathcal{G}_i^2\phi \right\Vert_U \leq \sum_{i=1}^\infty c_i^2\norm{\phi}_V$$ as seen in (\ref{as seen in }) so the boundedness is also satisfied so we can understand the SPDE (\ref{the identified one}) in the same manner as (\ref{thespde}). It also remains to be checked that the Stratonovich integral of (\ref{stratoSPDE}) is well defined for $\sy$ a strong solution of (\ref{the identified one}). We show this in the sense of Definition \ref{irregular cylindrical strat} for $\mathcal{H}=X$, the space in which the identity is satisfied. We show that the sequence of stopping times $$\tau_n:= n \wedge \inf\left\{s \geq 0: \int_0^s\norm{\sy_r}_V^2dr \geq n\right\}$$ fit the requirements of Definition \ref{irregular cylindrical strat}, noting immediately that the sequence is $\mathbbm{P}-a.s.$ monotone increasing and convergent to infinity as the process $$s \mapsto \int_0^s\norm{\sy_r}_V^2dr$$ is continuous. Moreover the process $\mathcal{G}\sy^n := \mathcal{G}(\sy_{\cdot}\mathbbm{1}_{\cdot \leq \tau_n}) = \mathcal{G}\sy_{\cdot}\mathbbm{1}_{\cdot \leq \tau_n}$ (using linearity of $\mathcal{G}$) is progressively measurable in $H$ (we remark again that this is really $\mathcal{G}\py^n$ as stipulated but we identify the two) and satisfies the bound $$\mathbbm{E}\int_0^t\sum_{i=1}^\infty\norm{\mathcal{G}_i\sy^n_r}_H^2dr \leq \mathbbm{E}\sum_{i=1}^\infty c_i^2\int_0^t\norm{\sy^n_r}_V^2 \leq \sum_{i=1}^\infty c_i^2 n < \infty$$ freely applying Tonelli's Theorem between the expectation, integral and sum. Thus $\mathcal{G}\sy^n \in \mathcal{I}^H(\mathcal{W})$ so the integral can be constructed in $H$ and then embedded into $X$, but we note that the embedding $J:H \xrightarrow{} X$ is a continuous linear operator and so from Corollary \ref{operatorthroughstochasticintegral2} then $J(\mathcal{G}\sy^n) \in \mathcal{I}^X(\mathcal{W})$ and in particular $$J\left(\int_0^t \mathcal{G}\sy^n_s d\mathcal{W}_s \right) = \int_0^t J(\mathcal{G}\sy^n_s) d\mathcal{W}_s$$ so there is no ambiguity in how we understand the integral as an element of $X$. Indeed we simply make the identification $\mathcal{G}\sy^n$ with $J(\mathcal{G}\sy^n)$ and will make no explicit reference to the embeddings in our analysis henceforth. As for showing that $\mathcal{G}_i\sy^{\tau_n} \in \bar{\mathcal{M}}^2_c(X)$, we look at the evolution equation satisfied by $\sy^{\tau_n}$ which is
$$\sy^{\tau_n}_t = \sy^{\tau_n}_0 + \int_0^t\left(\mathcal{Q} +  \frac{1}{2}\sum_{i=1}^\infty \mathcal{G}_i^2\right)(\sy^{n}_s)\mathbbm{1}_{s \leq \tau_n}ds +  \int_0^t\mathcal{G}\sy^{n}_sd\mathcal{W}_s$$ $\mathbbm{P}-a.s.$ in $U$, therefore from Corollary \ref{operatorthroughstochasticintegral2} we have that
\begin{equation}\label{integral representation}
\mathcal{G}_i\sy^{\tau_n}_t = \mathcal{G}_i\sy^{\tau_n}_0 + \int_0^t\mathcal{G}_i\left(\left(\mathcal{Q} +  \frac{1}{2}\sum_{i=1}^\infty \mathcal{G}_i^2\right)(\sy^{n}_s)\mathbbm{1}_{s \leq \tau_n}\right)ds +  \int_0^t\mathcal{G}_i\mathcal{G}\sy^{n}_sd\mathcal{W}_s\end{equation} $\mathbbm{P}-a.s.$ in $X$ ($\mathcal{G}_i:U \rightarrow X$ is bounded and linear). The time integral is of bounded-variation in $X$ and from Proposition \ref{main martingale result} we have the result. The last thing to prove here is that the infinite sum \begin{equation}\label{last to prove inf sum}\sum_{i=1}^\infty [\mathcal{G}_i\sy^{\tau_n}, W^i]_t\end{equation} converges in $L^2(\Omega;X)$. From the identity (\ref{integral representation}) and the definition of the cross-variation for the semi-martingale,
\begin{equation} \label{cross varry of semi m} [\mathcal{G}_i\sy^{\tau_n}, W^i]_t = \left[\int_0^\cdot\mathcal{G}_i\mathcal{G}\sy^n_sd\mathcal{W}_s,W^i \right]_t.\end{equation}
We can now use Lemma \ref{cross variation convergence} and the definition of the integral as an $L^2\left(\Omega; X\right)$ limit to see that $$ \left[\int_0^\cdot\mathcal{G}_i\mathcal{G}\sy^n_sd\mathcal{W}_s,W^i \right]_t = \left[\sum_{j=1}^\infty \int_0^\cdot \mathcal{G}_i\mathcal{G}_j\sy^n_sdW^j_s,W^i \right]_t = \lim_{m \rightarrow \infty}\left[\sum_{j=1}^m \int_0^\cdot \mathcal{G}_i\mathcal{G}_j\sy^n_sdW^j_s,W^i \right]_t$$ where the limit is taken in $L^1(\Omega,X)$, should this limit exist and satisfy the conditions of Lemma \ref{cross variation convergence}.
We consider $(a_k)$ as an orthonormal basis of $X$, so recalling Definition \ref{definition for cross variation inf dim}, $$\left[\sum_{j=1}^m \int_0^\cdot \mathcal{G}_i\mathcal{G}_j\sy^n_sdW^j_s,W^i \right]_t = \sum_{k=1}^\infty \left[\inner{\sum_{j=1}^m \int_0^\cdot \mathcal{G}_i\mathcal{G}_j\sy^n_sdW^j_s}{a_k}_{X},W^i \right]_ta_k.$$
Now we can first use Theorem \ref{dualityrep} to reduce this to
$$ \sum_{k=1}^\infty \left[\sum_{j=1}^m\int_0^\cdot \inner{ \mathcal{G}_i\mathcal{G}_j\sy^n_s}{a_k}_{X}dW^j_s,W^i \right]_ta_k$$
 from which the classical real valued theory informs us that for $m \geq i$, due to the independence of the Brownian Motions, this is simply $$\sum_{k=1}^\infty \left(\int_0^t\inner{\mathcal{G}_i^2\sy^n_s}{a_k}_Xds\right)a_k = \sum_{k=1}^\infty \inner{\int_0^t\mathcal{G}_i^2\sy^n_sds}{a_k}_Xa_k =\int_0^t\mathcal{G}_i^2\sy^n_sds.$$
Therefore the limit as $m \rightarrow \infty$ is well defined and we have the representation for (\ref{cross varry of semi m}). The convergence of the infinite sum in $L^2(\Omega,X)$, (\ref{last to prove inf sum}), will follow from our standard Cauchy argument, though we have to work a little harder here. The Cauchy argument requires showing that $$\mathbbm{E}\left\Vert\sum_{i=m}^k\int_0^t\mathcal{G}_i^2\sy^n_s ds \right\Vert_X^2 \longrightarrow 0$$ as $m,k \rightarrow \infty$ to which end we note that $$\mathbbm{E}\left\Vert\sum_{i=m}^k\int_0^t\mathcal{G}_i^2\sy^n_s ds \right\Vert_X^2 \leq \mathbbm{E}\left(\sum_{i=m}^k\int_0^t\left\Vert\mathcal{G}_i^2\sy^n_s \right\Vert_X ds\right)^2$$ and \begin{align*}
\mathbbm{E}\left(\sum_{i=m}^k\int_0^t\left\Vert\mathcal{G}_i^2\sy^n_s \right\Vert_X ds\right)^2 &\leq \mathbbm{E}\left( \sum_{i=m}^k c_i^2 \int_0^t\left\Vert\sy^n_s \right\Vert_H ds\right)^2\\ &\leq \mathbbm{E}\left( \sum_{i=m}^k c_i^2 \int_0^tc\left\Vert\sy^n_s \right\Vert_V ds\right)^2\\ &\leq \left(c\sum_{i=m}^kc_i^2\right)^2 \mathbbm{E}\left[ t\int_0^t \left\Vert\sy^n_s \right\Vert_V^2 ds\right]\\
&\leq  \left(c\sum_{i=m}^k c_i^2\right)^2 tn
\end{align*}
where $c$ is the constant from the embedding of $V \xhookrightarrow{} H$. As $\sum_{i=1}^\infty c_i^2 < \infty$ then the Cauchy property follows, hence the Stratonovich integral is indeed well defined.


\begin{proof}[Proof of \ref{the conversion}:]
We must show that $$\int_0^t\mathcal{G}\sy_s \circ d\mathcal{W}_s = \int_0^t\mathcal{G}\sy_sd\mathcal{W}_s  + \frac{1}{2}\int_0^t\sum_{i=1}^\infty \mathcal{G}_i^2\sy_sds $$ $\mathbbm{P}-a.s.$ in $X$ for all $t \geq 0$. So working with fixed arbitrary $t$ and $\omega$, we choose any $n$ such that $\tau_n(\omega) \geq t$ and have that at this $\omega$, 
\begin{align*}
    \int_0^t\mathcal{G}\sy_s \circ d\mathcal{W}_s &= \sum_{i=1}^\infty  \int_0^t\mathcal{G}_i\sy^n_s dW^i_s + \frac{1}{2}\sum_{i=1}^\infty [\mathcal{G}_i\sy^n, W^i]_t\\
    &= \int_0^t\mathcal{G}\sy_s d\mathcal{W}_s + \frac{1}{2}\sum_{i=1}^\infty [\mathcal{G}_i\sy^n, W^i]_t
\end{align*}
where the limit is taken in $L^2(\Omega;X)$. Of course we have just shown that $$\sum_{i=1}^\infty [\mathcal{G}_i\sy^n, W^i]_t = \sum_{i=1}^\infty \int_0^t\mathcal{G}_i^2\sy^n_sds$$ is well defined in this topology, but we must show that it is equal to $$\int_0^t\sum_{i=1}^\infty \mathcal{G}_i^2\sy_sds$$ evaluated at this fixed $\omega$, for the pointwise limit in $U$ as it was defined. Firstly note that $$\int_0^t\sum_{i=1}^\infty \mathcal{G}_i^2\sy_s(\omega)ds =  \int_0^t\sum_{i=1}^\infty \mathcal{G}_i^2\sy^n_s(\omega)ds $$ and by an application of the Dominated Convergence Theorem with dominating function $$\sum_{i=1}^\infty c_i^2 \norm{\sy^n}_V$$ we can rewrite $$\int_0^t\sum_{i=1}^\infty \mathcal{G}_i^2\sy^n_s ds =  \sum_{i=1}^\infty \int_0^t\mathcal{G}_i^2\sy^n_s ds  $$ as a limit $\mathbbm{P}-a.s.$ in $U$, and thus $\mathbbm{P}-a.s.$ in $X$. However the convergence in $L^2(\Omega;X)$ implies that of a subsequence $\mathbbm{P}-a.s.$ in $X$, which agrees with the limit of the whole sequence $\mathbbm{P}-a.s.$ in $X$ thus giving the result. 
\end{proof}

This theorem has been stated for the strong solution (Definition \ref{definitionofregularsolutionglobal}), though we note that all arguments follow in the corresponding local case by incorporating the stopping time as done in the justification that the integrals in Definition \ref{definitionofregularsolution} are well defined. The result is stated below.

\begin{corollary}
    If $(\sy,\tau)$ is a local strong solution of the equation (\ref{the identified one})
    then $\sy$ satisfies the identity
    $$ \sy_t = \sy_0 + \int_0^{t\wedge\tau}\mathcal{Q}\sy_s ds +  \int_0^{t \wedge \tau}\mathcal{G}\sy_s \circ d\mathcal{W}_s $$
    $\mathbbm{P}-$ $a.s.$ in $X$ for all $t \geq 0$. 
\end{corollary}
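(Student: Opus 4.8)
The plan is to reproduce the proof of Theorem \ref{the conversion} essentially verbatim, but with every integral truncated at the stopping time $\tau$, exactly in the manner used to make the integrals of Definition \ref{definitionofregularsolution} well defined. First I would record that, by definition of a local strong solution of (\ref{the identified one}), the process $\sy$ satisfies
$$\sy_t = \sy_0 + \int_0^{t \wedge \tau}\left(\mathcal{Q}\sy_s + \tfrac{1}{2}\sum_{i=1}^\infty \mathcal{G}_i^2 \sy_s\right)ds + \int_0^{t\wedge\tau}\mathcal{G}\sy_s \, d\mathcal{W}_s$$
$\mathbbm{P}$-a.s. in $U$, all integrands being understood through the progressively measurable representative $\py = \sy\mathbbm{1}_{\cdot\leq\tau}$ as in the remarks after Definition \ref{definitionofregularsolution}. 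Since both integrals on the right are constant for $t\geq\tau$, we have $\sy_\cdot = \sy_{\cdot\wedge\tau}$, and the corollary reduces to the stopped conversion identity
$$\int_0^{t\wedge\tau}\mathcal{G}\sy_s \circ d\mathcal{W}_s = \int_0^{t\wedge\tau}\mathcal{G}\sy_s \, d\mathcal{W}_s + \frac{1}{2}\int_0^{t\wedge\tau}\sum_{i=1}^\infty \mathcal{G}_i^2 \sy_s \, ds$$
holding $\mathbbm{P}$-a.s. in $X$.

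Next I would verify that the Stratonovich integral on the left is well defined in the sense of Definition \ref{irregular cylindrical strat} for the integrand $\mathcal{G}\sy\mathbbm{1}_{\cdot\leq\tau}\in\bar{\mathcal{I}}^X(\mathcal{W})$. The localising sequence is now chosen to respect both the solution's own localisation and the integrability requirement, for instance
$$\tau_n := n \wedge \inf\left\{s \geq 0: \int_0^s\norm{\sy_r\mathbbm{1}_{r\leq\tau}}_V^2\,dr \geq n\right\},$$
which is a.s. monotone increasing to infinity since $\sy_\cdot\mathbbm{1}_{\cdot\leq\tau}\in L^2([0,T];V)$ for every $T$. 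Writing $\sy^n := \sy\mathbbm{1}_{\cdot\leq\tau\wedge\tau_n}$, the bound $\mathbbm{E}\int_0^t\sum_i\norm{\mathcal{G}_i\sy^n_r}_H^2\,dr \leq n\sum_i c_i^2 < \infty$ from Assumption \ref{Gassumpt} places $\mathcal{G}\sy^n$ in $\mathcal{I}^H(\mathcal{W})\subset\mathcal{I}^X(\mathcal{W})$, precisely as in the theorem.

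The central step is the cross-variation computation, which transfers without change. Applying $\mathcal{G}_i\in\mathscr{L}(U;X)$ to the stopped identity and invoking Corollary \ref{operatorthroughstochasticintegral2}, the process $\mathcal{G}_i\sy$ admits the semi-martingale representation
$$\mathcal{G}_i\sy_t = \mathcal{G}_i\sy_0 + \int_0^{t\wedge\tau}\mathcal{G}_i\left(\mathcal{Q} + \tfrac12\sum_{j=1}^\infty \mathcal{G}_j^2\right)\sy_s\,ds + \int_0^{t\wedge\tau}\mathcal{G}_i\mathcal{G}\sy_s\,d\mathcal{W}_s$$
$\mathbbm{P}$-a.s. in $X$, the analogue of (\ref{integral representation}). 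As in the proof of Theorem \ref{the conversion}, the finite-variation part contributes nothing, the independence of the $(W^j)$ annihilates every term with $j\neq i$, and the relation $\left[\int_0^\cdot \mathcal{G}_i^2\sy_s\,dW^i_s, W^i\right]_t = \int_0^{t\wedge\tau}\mathcal{G}_i^2\sy_s\,ds$ yields $[\mathcal{G}_i\sy, W^i]_t = \int_0^{t\wedge\tau}\mathcal{G}_i^2\sy_s\,ds$. Summing over $i$ and passing the sum through the time integral by the same dominated-convergence and Cauchy-in-$L^2(\Omega;X)$ arguments, with dominating function $\sum_i c_i^2\norm{\sy^n}_V$, produces the stopped conversion identity and hence the corollary.

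I expect the only genuine obstacle to be bookkeeping rather than new mathematics: one must confirm that the single localising sequence $(\tau_n)$ simultaneously delivers membership in $\mathcal{I}^X(\mathcal{W})$, the continuous-semi-martingale property of each $\mathcal{G}_i\sy^n$ in $X$, and the $L^2(\Omega;X)$-convergence of $\sum_i[\mathcal{G}_i\sy^n,W^i]_t$, all while remaining compatible with the solution's stopping time $\tau$ so that the pointwise-in-$\omega$ prescription of Definition \ref{irregular cylindrical strat} is unambiguous. Since each of these was already established for $\sy\mathbbm{1}_{\cdot\leq\tau_n}$ in the global proof, intersecting the localisation with $\tau$ introduces no additional analytic difficulty, and the argument closes exactly as before.
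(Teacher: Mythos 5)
Your proposal is correct and takes essentially the same route as the paper: the paper's own justification of this corollary is precisely the remark that all arguments of Theorem \ref{the conversion} carry over to the local case by incorporating the stopping time $\tau$ in the same way that the integrals in Definition \ref{definitionofregularsolution} were made well defined, which is exactly the truncation-at-$\tau$ bookkeeping you have spelled out. Your explicit choice of localising times $\tau_n$ built from $\sy_{\cdot}\mathbbm{1}_{\cdot \leq \tau}$, the stopped semi-martingale representation of $\mathcal{G}_i\sy$, and the repeated cross-variation and dominated-convergence arguments are the intended (and correct) details behind the paper's one-line remark.
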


We should emphasise why the identity (\ref{stratoSPDE}) holds only in $X$ and not in $U$, the space in which (\ref{the identified one}) is satisfied. The evolution equation (\ref{integral representation}) allowed us to identify the semi-martingale structure in $X$ as is this is where the integrals are constructed. We can, however, construct the stochastic integral in (\ref{integral representation}) in $U$, which allows us to conclude that the time integral is itself an element of $U$ (as $\mathcal{G}_i\sy^n_t, \mathcal{G}_i\sy^n_0$ are as well). Unfortunately we cannot say that this is actually an integral in $U$ (just an integral in $X$ which is in turn an element of $U$) so poignantly we cannot say that this is of bounded-variation in $U$, which would be necessary when considering the cross-variation $[\mathcal{G}_i\sy^n,W^i]_t$ in $U$.\\

We use Theorem \ref{the conversion} as a way of defining the Stratonovich SPDE, however as discussed, with a priori martingality assumptions then the Stratonovich integral is well defined and one can prove a converse of this theorem. 

\begin{theorem} \label{how was it unlabelled}
    Let $\sy$ be such that for $\mathbbm{P}-a.e.$ $\omega$, $\sy_{\cdot}(\omega) \in C\left([0,T];H\right) \cap L^2\left([0,T];V\right)$ for all $T>0$ with $\sy$ progressively measurable in $V$. Assume in addition that $\mathcal{G}\sy \in \bar{\mathcal{I}}^{U}_{\circ}(\mathcal{W})$ and $\sy$ satisfies the identity (\ref{stratoSPDE}) $\mathbbm{P}-a.s.$ in $U$ for all $t \geq 0$. Then $\sy$ satisfies the identity (\ref{the identified one}) $\mathbbm{P}-a.s.$ in $X$ for all $t \geq 0$.
\end{theorem}

\begin{proof}
    By assumption the Stratonovich integral is well defined so we can write $$\sy_t = \sy_0 + \int_0^t\mathcal{Q}\sy_s ds + \sum_{i=1}^\infty\int_0^t \mathcal{G}_i\sy_s dW^i_s + \frac{1}{2}\sum_{i=1}^\infty\left[ \mathcal{G}_i \sy, W^i\right]_t$$ for the cross variation taken in $U$. We wish to write out this cross variation process explicitly, so as in the proof of Theorem \ref{the conversion} we consider the evolution equation satisfied by $\mathcal{G}_i\sy$. It is the cross-variation term which could be problematic, but we appreciate that
    $$\mathcal{G}_i\left( \sum_{j=1}^\infty\left[ \mathcal{G}_j \sy, W^j\right]_t\right)  =  \sum_{j=1}^\infty\mathcal{G}_i\left(\left[ \mathcal{G}_j \sy, W^j\right]_t\right)$$
    for the limit now in $L^2(\Omega;X)$, validated as $\mathcal{G}_i$ is bounded and linear from $U$ into $X$. It is not clear if this if of finite-variation, so to proceed similarly to (\ref{cross varry of semi m}), we wish to show that $$\left[\sum_{j=1}^\infty\mathcal{G}_i\left(\left[ \mathcal{G}_j \sy, W^j\right]\right) , W^i \right]_t = 0$$
    $\mathbbm{P}-a.s.$ for any $t \geq 0$. For this we again use Lemma \ref{cross variation convergence} to see that $$\left[\sum_{j=1}^\infty\mathcal{G}_i\left(\left[ \mathcal{G}_j \sy, W^j\right]\right) , W^i \right]_t = \lim_{m \rightarrow \infty}\left[\sum_{j=1}^m\mathcal{G}_i\left(\left[ \mathcal{G}_j \sy, W^j\right]\right) , W^i \right]_t$$
    for the limit in $L^1\left(\Omega;X\right)$. Each term in this sequence must be zero, though, as each $\left[ \mathcal{G}_j \sy, W^j\right]$ is of finite-variation in $U$ so $\mathcal{G}_i\left(\left[ \mathcal{G}_j \sy, W^j\right]\right)$ is of finite-variation in $X$.
 Thus through considering the identity (\ref{stratoSPDE}) in $X$, by the exact same process as Theorem \ref{the conversion}, we prove the result.

\end{proof}

\subsection{Weak Solutions in the Abstract Framework}

As in the study of PDEs, to expand the existence theory we shall also consider weaker notions of solution. We do this in two ways: \textit{analytically} weak solutions and \textit{probabilistically} weak solutions, the second of which we shall refer to as martingale solutions. We begin by giving a definition of analytically weak solutions in the established framework, though it is difficult to be too precise here given how the nature of weak solutions is dependent on the equation. As a brief example to illustrate that point we mention the Navier-Stokes Equation, where the weak form for the Laplacian involves passing a derivative over to the test function whereas the weak form for the nonlinear term uses no such adjoint but rather an understanding of the $L^2$ inner product as an $L^{6/5} \times L^6$ duality. To this end we shall just consider $\mathcal{Q}_1$, $\mathcal{Q}_2$ to be two functions on $H$ which are such that if $\phi \in V$ and $\psi \in H$ then \begin{equation}\label{property a la 1}\inner{\mathcal{Q}_1 \phi}{\mathcal{Q}_2 \psi}_U = \inner{\mathcal{Q}\phi}{ \psi}_U \end{equation} with enough regularity for $\inner{\mathcal{Q}_1 \phi}{\mathcal{Q}_2 \psi}_U$ to be well defined, along with the corresponding integrals to be considered shortly. As for the noise operator $\mathcal{G}$ we can be more precise, and assume that there exists an adjoint operator $\mathcal{G}_i^*$ with the same boundedness properties as $\mathcal{G}_i$ satisfying \begin{equation}\label{property a la 2}\inner{\mathcal{G}_i \phi}{\psi}_U = \inner{ \phi}{\mathcal{G}_i^*\psi}_U \end{equation} for all $\phi, \psi \in H$. To simplify notation we consider $\mathcal{G}^*$ as an operator on $\mathfrak{U}$ in the same way as we do for $\mathcal{G}$. We now also require that the inclusions $V \xhookrightarrow{} H \xhookrightarrow{} U$ are dense. With this in place we can define such a solution. 

\begin{definition} \label{analytically weak sol def}
    Let $\sy_0: \Omega \rightarrow U$ be $\mathcal{F}_0-$measurable. A process $\sy$ such that for $\mathbbm{P}-a.e.$ $\omega$, $\sy_{\cdot}(\omega) \in C_{w}\left([0,T];U\right) \cap L^2\left([0,T];H\right)$ for all $T > 0$ with $\sy_{\cdot}$ progressively measurable in $H$, is said to be a weak solution of the equation (\ref{thespde}) if the identity 
    \begin{equation} \label{identity for weak def}
        \inner{\sy_t}{\phi}_U = \inner{\sy_0}{\phi}_U + \int_0^t\inner{\mathcal{Q}_1\sy_s}{\mathcal{Q}_2 \phi}_Uds + 
        \int_0^t\inner{\sy_s}{\mathcal{G}^* \phi}_U d\mathcal{W}_s
    \end{equation}
    holds $\mathbbm{P}-a.s.$ in $\R$ for all $\phi \in H$ and $t \geq 0$. 
\end{definition}

Immediately we observe that the stochastic integral is well defined through exactly the same justification as the integral for strong solutions. We also justify our use of the terminology `weak'.

\begin{proposition}
    Let $\sy_0: \Omega \rightarrow H$ be $\mathcal{F}_0-$measurable, and suppose that $\sy$ is a process whereby for $\mathbbm{P}-a.e.$ $\omega$, $\sy_{\cdot}(\omega) \in C\left([0,T];H\right) \cap L^2\left([0,T];V\right)$ for all $T > 0$ with $\sy_{\cdot}$ progressively measurable in $V$. Then $\sy$ is a strong solution of the equation (\ref{thespde}) if and only if it is a weak solution.
\end{proposition}

\begin{proof}
    We consider the two implications in turn, noting that we only need to show the equivalence of the two identities (\ref{thespde}), (\ref{identity for weak def}):
    \begin{itemize}
        \item[$\implies$:] This direction is clear, as we simply take the inner product with $\phi$ in the identity (\ref{thespde}) and then use Proposition \ref{unbounded take it in 2}. The properties (\ref{property a la 1}), (\ref{property a la 2}) then conclude the implication.
        \item[$\impliedby$:] Via the identical process in reverse, we obtain that $$\inner{\sy_t}{\phi}_U = \inner{\sy_0}{\phi}_U + \inner{\int_0^t\mathcal{Q}\sy_sds}{\phi}_U + 
        \inner{\int_0^t\mathcal{G}\sy_s d\mathcal{W}_s}{\phi}_U$$
        for all $\phi \in H$. We then use the density of $H$ in $U$ to deduce this identity for all $\phi \in U$, from which the result follows. 
    \end{itemize}
\end{proof}

There is an important difference between the weak and strong solutions in terms of the Stratonovich Equation (\ref{stratoSPDE}). Recall from the previous subsection that we needed to pass to an additional Hilbert Space $X$ to make the conversion between It\^{o} and Stratonovich forms, which was owing to the `loss of a derivative' from $\mathcal{G}_i$. In the weak form this operator hits the test function instead, so it is there where some additional regularity is required. We have the following analogue of Theorem \ref{the conversion}.

\begin{theorem} \label{poiu}
    If $\sy$ is a weak solution of the equation (\ref{the identified one}) then $\sy$ satisfies the identity
     \begin{equation} \label{identity for weak def strato}
        \inner{\sy_t}{\phi}_U = \inner{\sy_0}{\phi}_U + \int_0^t\inner{\mathcal{Q}_1\sy_s}{\mathcal{Q}_2 \phi}_Uds + 
        \int_0^t\inner{\sy_s}{\mathcal{G}^* \phi}_U \circ d\mathcal{W}_s
    \end{equation}
    $\mathbbm{P}-a.s.$ in $\R$ for all $\phi \in V$ and $t \geq 0$ .
\end{theorem}

As we did for Theorem \ref{the conversion}, we first make precise the meaning of a weak solution of (\ref{the identified one}) in terms of the `It\^{o}-Stratonovich Corrector'. The infinite sum is again taken as a pointwise limit, where $\sy$ satisfies the identity $$\inner{\sy_t}{\phi}_U = \inner{\sy_0}{\phi}_U + \int_0^t\inner{\mathcal{Q}_1\sy_s}{\mathcal{Q}_2 \phi}_Uds + \frac{1}{2}\int_0^t\sum_{i=1}^\infty\inner{\mathcal{G}_i\sy_s}{\mathcal{G}_i^*\phi}_U ds + 
        \int_0^t\inner{\sy_s}{\mathcal{G}^* \phi}_U d\mathcal{W}_s.$$
        Many of the technicalities of this result were addressed in Theorem \ref{the conversion}, so in the proof here we only show directly that the It\^{o}-Stratonovich Corrector is of the right form, where now we consider $\tau^n$ as the first hitting time in $H$ and use the same notation $\sy^n$.
        
\begin{proof}[Proof of Theorem \ref{poiu}:]
    Through the arguments of Theorem \ref{the conversion}, it is sufficient to show that \begin{equation} \label{sufficient implies}\left[\inner{\sy^n}{\mathcal{G}_i^*\phi}_U, W^i \right]_t = \int_0^t \inner{\mathcal{G}_i\sy_s^n}{\mathcal{G}_i\phi}_U ds\end{equation}
for any given $\phi \in V$. For this we consider the evolution equation satisfied by $\inner{\sy^n}{\mathcal{G}_i^*\phi}_U$. Because of this then $\mathcal{G}_i^*\phi \in H$ and can be used as a test function in the weak formulation, such that $\sy$ satisfies the identity \begin{align*}\inner{\sy^n_t}{\mathcal{G}_i^*\phi}_U = \inner{\sy^n_0}{\mathcal{G}_i^*\phi}_U + \int_0^t\inner{\mathcal{Q}_1\sy^n_s}{\mathcal{Q}_2\mathcal{G}_i^* \phi}_Uds &+ \frac{1}{2}\int_0^t\sum_{i=1}^\infty\inner{\mathcal{G}_i\sy^n_s}{\mathcal{G}_i^*\mathcal{G}_i^*\phi}_U ds\\ &+ 
        \int_0^t\inner{\sy^n_s}{\mathcal{G}_i^*\mathcal{G}^* \phi}_U d\mathcal{W}_s.\end{align*}
Of course we can rewrite $\inner{\sy^n_s}{\mathcal{G}_i^*\mathcal{G}^* \phi}_U = \inner{\mathcal{G}_i\sy^n_s}{\mathcal{G}^* \phi}_U$ so through the same process as in Theorem \ref{the conversion}, the above identity implies (\ref{sufficient implies}) which concludes the proof.
\end{proof}


    


To conclude this subsection we very briefly comment on the notion of a probabilistically weak solution, which we refer to as a martingale solution. 

\begin{definition}
    Let $\sy_0: \Omega \rightarrow H$ be $\mathcal{F}_0-$measurable. If there exists a filtered probability space $\left(\tilde{\Omega},\tilde{\mathcal{F}},(\tilde{\mathcal{F}}_t), \tilde{\mathbbm{P}}\right)$, a cylindrical Brownian Motion $\tilde{\mathcal{W}}$ over $\mathfrak{U}$ with respect to $\left(\tilde{\Omega},\tilde{\mathcal{F}},(\tilde{\mathcal{F}}_t), \tilde{\mathbbm{P}}\right)$, an $\tilde{\mathcal{F}_0}-$measurable $\tilde{\sy}_0: \tilde{\Omega} \rightarrow H$ with the same law as $\sy_0$, a process $\tilde{\sy}$ such that for $\tilde{\mathbbm{P}}-a.e.$ $\omega$, $\tilde{\sy}_{\cdot}(\omega) \in C\left([0,T];H\right) \cap L^2\left([0,T];V\right)$ for all $T>0$ with $\tilde{\sy}$ progressively measurable in $V$, is said to be a martingale strong solution of the equation (\ref{thespde}) if the identity $$\tilde{\sy}_t = \tilde{\sy}_0 + \int_0^t \mathcal{Q}\tilde{\sy}_sds + \int_0^t\mathcal{G}\tilde{\sy}_sd\mathcal{W}_s$$ holds $\tilde{\mathbbm{P}}-a.s.$ in $U$ for all $t \geq 0$.
\end{definition}

\subsection{Time-Dependent Operators}

We did not facilitate time dependence in the operators $\mathcal{Q},\mathcal{G}$ as solely for the fact that if $\mathcal{G}$ was time dependent then the conversion from Stratonovich to It\^{o} Form would be much more troublesome. There is no real additional difficulty in establishing a framework for the It\^{o} Form for time-dependent operators, so we briefly do so now. There is no longer a need for the space $X$ so we work with the triple $$V \xhookrightarrow{} H \xhookrightarrow{} U$$ and now the SPDE
\begin{equation} \label{thespde*}
    \sy_t = \sy_0 + \int_0^t \mathcal{Q}(s,\sy_s)ds + \int_0^t\mathcal{G}(s,\sy_s)d\mathcal{W}_s.
\end{equation}
We require the assumptions now as:

 \begin{assumption} \label{Qassumpt2}
For any $T>0$, the operators $\mathcal{Q}:[0,T] \times V \rightarrow U$ and $\mathcal{G}:[0,T] \times V \rightarrow \mathscr{L}^2(\mathfrak{U};H)$ are measurable.
\end{assumption}

\begin{assumption} \label{Gassumpt2}
There exists a $C_{\cdot}:[0,\infty) \rightarrow \R$ bounded on $[0,T]$ for every $T$, and constants $c_i$ such that for every $\phi \in V$ and $t \in [0,\infty)$, \begin{align*}\norm{\mathcal{Q}(t,\boldsymbol{\phi})}_{U} &\leq C_t\tilde{K}(\phi)\left[1 + \norm{\boldsymbol{\phi}}_{V}^2\right]\\
 \norm{\mathcal{G}_i(t,\boldsymbol{\phi})}^2_{H} &\leq C_tc_i(1 + \norm{\phi}_V^2)\\
 \sum_{i=1}^\infty c_i &< \infty
    \end{align*}
\end{assumption}
Definitions of solutions in this framework, and a justification that the integrals are well-defined, then follows largely in the same way as for (\ref{thespde}) so we omit the details here. What is slightly more delicate is the progressive measurability of the process $\mathcal{G}(\cdot,\py_{\cdot})\mathbbm{1}_{\cdot \leq \tau}$ in $\mathscr{L}^2(\mathfrak{U};H)$. From the measurability of $\mathcal{G}$ and the progressive measurability of $\py$ we have that for any fixed $t$, the mapping $$\mathcal{G}(\cdot,\py_{\cdot}): [0,t] \times [0,t] \times \Omega \rightarrow \mathscr{L}^2(\mathfrak{U};H)$$ defined by $$(s,r,\omega) \mapsto \mathcal{G}(s,\py_{r}(\omega))$$ is $\mathcal{B}([0,t]) \times \mathcal{B}([0,t]) \times \mathcal{F}_t$ measurable, and hence the mapping $$(s,\omega) \mapsto \mathcal{G}(s,\py_{s}(\omega))$$ is $\mathcal{B}([0,t]) \times \mathcal{F}_t$ measurable as is the indicator up to the stopping time, so the product retains the required measurability. For completeness we define the notion of a strong solution here.

\begin{definition} \label{definitionofregularsolutionglobaltime}
A process $\sy$ such that for $\mathbbm{P}-a.e.$ $\omega$, $\sy_{\cdot}(\omega) \in C\left([0,T];H\right) \cap L^2\left([0,T];V\right)$ for all $T>0$ with $\sy$ progressively measurable in $V$, is said to be a strong solution of the equation (\ref{thespde*}) if the identity (\ref{thespde*}) holds $\mathbbm{P}-a.s.$ in $U$ for all $t \geq 0$.
\end{definition}

\subsection{An Energy Equality}
Having established this framework we introduce techniques to facilitate our analysis in it. The It\^{o} Formula is well regarded as one of the most useful tools in stochastic analysis, and we formulate an infinite dimensional version here in the case of an energy equality. We shall introduce a new setting in which our established solution framework falls, with the understanding that we would like to apply this to solutions whilst also using the results to deduce the existence of solutions when they are not a priori known. The ideas of this subsection are just a mild extension of [\cite{prevot2007concise}] Theorem 4.2.5.\\

To prove this energy equality we shall rely on looking at partitions in time over which some nice properties are satisfied, before taking the limit as the increments go to zero. Towards this we recall the following lemma from [\cite{prevot2007concise}], Lemma 4.2.6.

\begin{lemma} \label{4.2.6}
    Let $\mathcal{X}_1 \xhookrightarrow{} \mathcal{X}_2$ be two Banach Spaces with continuous embedding and suppose that for some $T>0$ and stopping time $\tau$, $\py: \Omega \times [0,T] \rightarrow \mathcal{X}_2$ is such that for $\mathbbm{P}-a.e.$ $\omega$, $\py_{\cdot}(\omega) \in C\left([0,T];\mathcal{X}_2\right)$ and  
        $\py_{\cdot}\mathbbm{1}_{\cdot \leq \tau} \in L^2\left(\Omega \times [0,T]; \mathcal{X}_1\right)$. Then for any $A \subset [0,T]$ with $\lambda(A) = 0$ there exists a sequence of partitions $(I_l)$ such that
    \begin{enumerate}
        \item $I_l \cap A = \emptyset$;
        \item $I_l \subset I_{l+1}$;
        \item $I_l:= \left\{0=t^l_0 < t^l_1 < \dots < t^l_{k_l}=T\right\}$, $\max_j\vert t^l_{j}-t^l_{j-1} \vert \rightarrow 0$ as $l \rightarrow \infty$;
        \item For $\mathbbm{P}-a.e.$ $\omega$ and every $t^l_j$ with $1 \leq j \leq k_{l-1}$, $\py_{t^l_j}(\omega)\mathbbm{1}_{t^l_j \leq \tau(\omega)} \in \mathcal{X}_1$;
        \item \label{does it work} The processes $\hat{\py}^l, \tilde{\py}^l$ defined at each $t \in [0,T]$ and $\omega \in \Omega$ by $$\hat{\py}^l_t(\omega) := \sum_{j=2}^{k_l}\mathbbm{1}_{[t^l_{j-1},t^l_j]}(t)\py_{t^l_{j-1}}(\omega)\mathbbm{1}_{t^l_{j-1} \leq \tau(\omega)}, \qquad  \tilde{\py}^l := \sum_{j=1}^{k_{l-1}}\mathbbm{1}_{[t^l_{j-1},t^l_j]}(t)\py_{t^l_{j}}(\omega)\mathbbm{1}_{t^l_j \leq \tau(\omega)}$$ belong to $L^2\left(\Omega \times [0,T]; \mathcal{X}_1\right)$ and both converge to $\py_{\cdot}\mathbbm{1}_{\cdot \leq \tau}$ in this space.
    \end{enumerate}

\end{lemma}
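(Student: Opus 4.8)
The plan is to separate the construction of the partitions (properties 1--4), which is essentially a point-selection argument, from the convergence statement (property 5), which carries the genuine analytic content. First I would record the consequence of the integrability hypothesis $\py_{\cdot}\mathbbm{1}_{\cdot\leq\tau}\in L^2\big(\Omega\times[0,T];\mathcal{X}_1\big)$: writing $\rho(t):=\mathbbm{E}\|\py_t\mathbbm{1}_{t\leq\tau}\|_{\mathcal{X}_1}^2$, Fubini's theorem gives $\rho\in L^1([0,T])$, so the set $N_1$ of times at which $\rho(t)=\infty$ is Lebesgue-null, and by Lebesgue's differentiation theorem the set $N_2$ of non--Lebesgue points of $\rho$ is null as well. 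I would then replace the given null set $A$ by $A':=A\cup N_1\cup N_2$, which is still Lebesgue-null.

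For the partition construction I would exploit that $[0,T]\setminus A'$ is dense (it is the complement of a null set) and build the $I_l$ inductively: start from any finite partition with endpoints $0,T$ and all points in $[0,T]\setminus A'$, and at each refinement insert finitely many new points drawn from $[0,T]\setminus A'$ so as to halve the mesh while keeping $I_l\subset I_{l+1}$. This immediately yields properties 1--3. Property 4 follows because at each partition point $t^l_j\notin N_1$ we have $\mathbbm{E}\|\py_{t^l_j}\mathbbm{1}_{t^l_j\leq\tau}\|_{\mathcal{X}_1}^2<\infty$, hence $\py_{t^l_j}\mathbbm{1}_{t^l_j\leq\tau}\in\mathcal{X}_1$ for $\mathbbm{P}$-a.e. $\omega$; as there are only countably many partition points across all $l$, the union of the exceptional $\omega$-sets is still null.

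The hard part is property 5, the $L^2\big(\Omega\times[0,T];\mathcal{X}_1\big)$ convergence of the sampled step processes, and here I would route through weak convergence plus convergence of norms rather than attempting a direct density argument --- the obstruction being that point-sampling $t\mapsto\py_t$ is not a bounded operation on $L^2$, so one cannot simply approximate $\py_{\cdot}\mathbbm{1}_{\cdot\leq\tau}$ by continuous functions and pass to the limit. I would argue in three moves. (i) Norm convergence: since $\rho\in L^1$ and the partition points are Lebesgue points, the Riemann sums $\int_0^T\mathbbm{E}\|\hat{\py}^l_t\|_{\mathcal{X}_1}^2\,dt=\sum_j(t^l_j-t^l_{j-1})\rho(t^l_{j-1})$ converge to $\int_0^T\rho(t)\,dt=\|\py_{\cdot}\mathbbm{1}_{\cdot\leq\tau}\|^2_{L^2(\Omega\times[0,T];\mathcal{X}_1)}$, and likewise for $\tilde{\py}^l$; in particular both sequences are bounded in $L^2\big(\Omega\times[0,T];\mathcal{X}_1\big)$. (ii) Weak convergence: for $\mathbbm{P}\times\lambda$-a.e. $(\omega,t)$ --- excluding the null set $\{t=\tau(\omega)\}$ where the indicator jumps --- path-continuity of $\py$ in $\mathcal{X}_2$ gives $\hat{\py}^l_t(\omega)\to\py_t(\omega)\mathbbm{1}_{t\leq\tau(\omega)}$ in $\mathcal{X}_2$ as the mesh shrinks; combined with the uniform $\mathcal{X}_1$-bound from (i) and reflexivity, every subsequence has a further subsequence converging weakly in $L^2\big(\Omega\times[0,T];\mathcal{X}_1\big)$, whose limit is identified with $\py_{\cdot}\mathbbm{1}_{\cdot\leq\tau}$ through the continuous embedding $\mathcal{X}_1\hookrightarrow\mathcal{X}_2$, so that $\hat{\py}^l\rightharpoonup\py_{\cdot}\mathbbm{1}_{\cdot\leq\tau}$ (and similarly $\tilde{\py}^l$) along the whole sequence. (iii) Upgrade: in the Hilbert (more generally uniformly convex reflexive) setting the spaces enjoy in all the applications, weak convergence together with convergence of norms forces strong convergence, which is exactly property 5.

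I would flag that this last upgrade is where the structure of $\mathcal{X}_1$ matters: as organised the argument needs $\mathcal{X}_1$ reflexive for step (ii) and uniformly convex for step (iii), which is harmless here since the lemma is only ever invoked with $\mathcal{X}_1=V$ and $\mathcal{X}_2=H$ both Hilbert. The remaining bookkeeping --- the slightly different index ranges in $\hat{\py}^l$ and $\tilde{\py}^l$ and the omission of the first and last boundary interval --- only alters the approximations on a set of times of measure tending to zero, and hence affects none of the three limits above.
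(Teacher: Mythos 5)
Your argument breaks at step (i), and it breaks precisely where the substance of the lemma lies. (For context: the paper does not prove this statement itself but recalls it from Lemma 4.2.6 of [\cite{Liurockner}], whose proof selects the partition points by a translation-continuity argument --- exactly because the selection cannot be as free as yours.) Step (i) asserts that if $\rho \in L^1([0,T])$ and every partition point is a Lebesgue point of $\rho$, then the Riemann sums $\sum_j (t^l_j - t^l_{j-1})\rho(t^l_{j-1})$ converge to $\int_0^T \rho$. This inference is false. The Lebesgue point property at $t_0$ controls $\frac{1}{h}\int_{t_0}^{t_0+h}\abs{\rho(s)-\rho(t_0)}\,ds$ for each \emph{fixed} $t_0$, with no uniformity over $t_0$, while the Riemann sum error is only bounded by $T$ times the \emph{maximum} of these local averages over the partition points; that maximum need not vanish as the mesh shrinks. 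Concretely, take $\rho = \sum_k 2^k\mathbbm{1}_{J_k}$ with $J_k$ disjoint intervals of length $4^{-k}$ with left endpoints $p_k$: then $\rho \in L^1$, each $p_k$ is a Lebesgue point (indeed a point of continuity), yet nested partitions of mesh about $1/l$ whose $l$-th member samples at $p_l$ with an adjacent gap of order $1/l$ produce Riemann sums of size at least $2^l/l \rightarrow \infty$. Nothing in your construction rules out such partitions: the points $p_k$ lie neither in your $N_1$ nor $N_2$, so they survive into $[0,T]\setminus A'$. Hence the norm convergence in (i) is not established, and since (ii) and (iii) are explicitly conditional on it, property 5 is unproven.

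The repair is not cosmetic, because one must \emph{choose} the partitions so that the step approximations converge, and that choice is the actual content of the lemma. The standard route (behind the proof in [\cite{Liurockner}]) extends $\py_{\cdot}\mathbbm{1}_{\cdot \leq \tau}$ by zero, uses continuity of translations in $L^2\big(\Omega\times\R;\mathcal{X}_1\big)$, and then averages the $L^2$ error of the left-endpoint step approximation over all shifts of a grid of mesh $2^{-n}T$; Fubini shows this average tends to zero, so shifts can be selected from a set of full measure --- hence also avoiding $A$ and the other exceptional sets, and compatibly with nested refinement --- along which the step processes converge \emph{strongly} in $L^2\big(\Omega\times[0,T];\mathcal{X}_1\big)$. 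Note that this yields property 5 directly, making your weak-convergence/Radon--Riesz detour unnecessary; that detour also quietly strengthens the hypotheses (reflexivity and uniform convexity of $\mathcal{X}_1$) beyond the Banach-space statement, which you flag but which a correct proof does not need. Two smaller points: your $\rho(t) = \mathbbm{E}\norm{\py_t\mathbbm{1}_{t\leq\tau}}_{\mathcal{X}_1}^2$ is only well defined once you know $\py_t\mathbbm{1}_{t\leq\tau} \in \mathcal{X}_1$ almost surely for a.e.\ $t$, which requires first identifying the pointwise process slice-wise with its $L^2(\Omega\times[0,T];\mathcal{X}_1)$ representative (one more null set of times to excise --- this is exactly the equivalence-class subtlety the paper dwells on after the statement); and granting (i), your steps (ii) and (iii) are sound, so the weak-limit identification via path continuity in $\mathcal{X}_2$ is the one part of your plan that genuinely works as written.
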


    Before moving on we take a moment to dissect this result. In the statement of [\cite{prevot2007concise}] there is no continuity assumption on $\py$ and indeed this is superfluous to requirement, however we want to make explicit that is genuinely the process $\py$ taken in item \ref{does it work} and not some other representative of an equivalence class. The assumptions are of course reminiscent of Definition \ref{definitionofregularsolution} and just as was stressed there that the progressively measurable process in $V$ was not necessarily the continuous process itself but just a $\mathbbm{P} \times \lambda-a.s.$ equivalent representation, we are reminded again that elements of $L^2\left(\Omega \times [0,T]; \mathcal{X}_1\right)$ are only an equivalence class of $\mathbbm{P} \times \lambda-a.s.$ equal functions so the fact that we can fix the representation $\py$ in $\hat{\py}^l,\tilde{\py}^l$ is significant.\\
    
    We now fix a framework in which we conduct the analysis of this subsection. We work with a triple of embedded Hilbert Spaces $$V \xhookrightarrow{} H \xhookrightarrow{} U$$ where the embeddings are continuous, $V$ is assumed dense in $H$, and there exists a continuous bilinear form $\inner{\cdot}{\cdot}_{U \times V}: U \times V \rightarrow \R$ such that for every $\phi \in H, \psi \in V$, $$\inner{\phi}{\psi}_{U \times V} = \inner{\phi}{\psi}_H.$$ We suppose that for some $T>0$ and stopping time $\tau$:
    \begin{enumerate}
        \item \label{11111} $\sy_0 \in L^2(\Omega;H)$ is $\mathcal{F}_0-$measurable;
        \item \label{22222} $\eta \in L^2\left(\Omega;L^2([0,T];U)\right)$;
        \item \label{33333} $B \in \mathcal{I}^H(\mathcal{W})$;
        \item  \label{4} $\sy_{\cdot}\mathbbm{1}_{\cdot \leq \tau} \in L^2\left(\Omega;L^2([0,T];V)\right)$ and is progressively measurable in $V$;
        \item \label{item 5 again} The identity
        \begin{equation} \label{newest identity}
            \sy_t = \sy_0 + \int_0^{t \wedge \tau}\eta_sds + \int_0^{t \wedge \tau}B_s d\mathcal{W}_s
        \end{equation}
        holds $\mathbbm{P}-a.s.$ in $U$ for all $t \in [0,T]$.
    \end{enumerate}
    
    \begin{remark}
    Once more, it is clear from assumption \ref{item 5 again} that for $\mathbbm{P}-a.e.$ $\omega$, $\sy(\omega) \in C\left([0,T];U\right)$ but the progressively measurable process assumed in item \ref{4} is only equivalent to this $\sy_{\cdot}\mathbbm{1}_{\cdot \leq \tau}$ $\mathbbm{P} \times \lambda-a.s.$. It is a slight abuse of notation commonplace in the literature and we shall follow suit without excessive clarification at each use. 
    \end{remark}

With this structure in place, we first look to deduce some improved regularity on $\sy$. For this we fix an application of Lemma \ref{4.2.6} relative to the assumptions laid out above. We take $T$ and $\tau$ as in the assumptions, $\mathcal{X}_1=V$, $\mathcal{X}_2=U$, $\py=\sy$. From item \ref{4} and the continuous embedding of $V$ into $H$ we have that for $\lambda-a.e.$ $t \in [0,T]$, $\mathbbm{E}\left(\norm{\sy_t}_H^2\right) < \infty$ and we take $A$ to be the $\lambda-$zero set on which this does not hold. Then $\hat{\sy}^l,\tilde{\sy}^l,I^l$ are defined as in Lemma \ref{4.2.6} and we define $$I:= \bigcup_{l \in \N}I^l.$$

\begin{lemma}
     We have that $$\mathbbm{E}\left(\sup_{t \in [0,T]}\norm{\sy_t}_H^2\right) < \infty. $$
\end{lemma}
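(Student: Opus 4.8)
The plan is to prove the bound at the level of the nested time partitions $(I_l)$ already supplied by Lemma \ref{4.2.6} (applied with $\mathcal{X}_1=V$, $\mathcal{X}_2=U$, $\py=\sy$), and then to refine the mesh. The key structural advantage is that for each \emph{fixed} $l$ only finitely many partition points occur, and since every $t^l_j \notin A$ the random variables $\sy_{t^l_j}$ lie in $H$ with $\mathbbm{E}\norm{\sy_{t^l_j}}_H^2 < \infty$; consequently $M^l := \max_{0 \leq m \leq k_l}\norm{\sy_{t^l_m}}_H^2$ already satisfies $\mathbbm{E}(M^l) < \infty$ as a finite sum of integrable terms. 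This finiteness is precisely what permits an absorption argument with no circularity. Moreover, since (\ref{newest identity}) gives $\sy_t = \sy_{t\wedge\tau}$, it suffices to control the values at those partition points lying in $[0,\tau]$, where the $V$-regularity of item \ref{4} is genuinely available.

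First I would derive a discrete energy inequality. For consecutive partition points I use the elementary identity $\norm{x}_H^2 - \norm{y}_H^2 = 2\inner{x-y}{x}_H - \norm{x-y}_H^2$ together with the defining relation $\sy_{t^l_j} - \sy_{t^l_{j-1}} = \int_{t^l_{j-1}\wedge\tau}^{t^l_j\wedge\tau}\eta_s\,ds + \int_{t^l_{j-1}\wedge\tau}^{t^l_j\wedge\tau}B_s\,d\mathcal{W}_s$, evaluating the pairing through the continuous bilinear form $\inner{\cdot}{\cdot}_{U\times V}$ (which agrees with $\inner{\cdot}{\cdot}_H$ whenever the first argument lies in $H$). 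The deliberate use of the \emph{right}-endpoint form is what forces the dangerous squared-increment term to appear with a favourable sign: after the cross terms cancel, the net increment contribution is $\sum_j\norm{\int_{t^l_{j-1}\wedge\tau}^{t^l_j\wedge\tau}B_s\,d\mathcal{W}_s}_H^2 - \sum_j\norm{\int_{t^l_{j-1}\wedge\tau}^{t^l_j\wedge\tau}\eta_s\,ds}_H^2$, and the negative sum may simply be discarded. This is essential, since $\eta$ is controlled only in $U$ and the sum of its squared $H$-increments cannot be bounded. Telescoping and recombining the $\mathcal{F}_{t^l_{j-1}}$-measurable stochastic terms into a single integral against the adapted left-endpoint step process (as permitted by Proposition \ref{unbounded take it in 2} and the remark on $\mathcal{F}_s$-measurable multipliers) yields, for each partition point $t_m$,
$$\norm{\sy_{t_m}}_H^2 \leq \norm{\sy_0}_H^2 + 2\int_0^{t_m\wedge\tau}\inner{\eta_s}{\tilde{\sy}^l_s}_{U\times V}\,ds + 2\int_0^{t_m\wedge\tau}\inner{B_s}{\hat{\sy}^l_s}_{H}\,d\mathcal{W}_s + \sum_{t^l_j \leq t_m}\left\Vert\int_{t^l_{j-1}\wedge\tau}^{t^l_j\wedge\tau}B_s\,d\mathcal{W}_s\right\Vert_{H}^2,$$
where $\hat{\sy}^l$, $\tilde{\sy}^l$ are the step processes of Lemma \ref{4.2.6}.

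Next I would take $\max_m$ and expectations. The drift integral is bounded by Young's inequality by $\tfrac12\norm{\eta}_{L^2(0,T;U)}^2 + C\norm{\tilde{\sy}^l}_{L^2(0,T;V)}^2$, whose expectation is finite and \emph{uniformly} bounded in $l$ because $\tilde{\sy}^l \to \sy\mathbbm{1}_{\cdot\leq\tau}$ in $L^2\left(\Omega\times[0,T];V\right)$ by item \ref{does it work}. The increment sum has expectation $\mathbbm{E}\int_0^{T\wedge\tau}\norm{B_s}_{\mathscr{L}^2(\mathfrak{U};H)}^2\,ds < \infty$ by the It\^o isometry \ref{ito isom for cylindrical}. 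The stochastic term is handled by the Burkholder--Davis--Gundy inequality \ref{BDG}, using the Hilbert--Schmidt estimate $\norm{\inner{B_s}{\hat{\sy}^l_s}_H}_{\mathscr{L}^2(\mathfrak{U};\R)} \leq \norm{\hat{\sy}^l_s}_H\,\norm{B_s}_{\mathscr{L}^2(\mathfrak{U};H)}$ and $\norm{\hat{\sy}^l_s}_H \leq (M^l)^{1/2}$, to obtain a bound $2c\,(\mathbbm{E}M^l)^{1/2}\big(\mathbbm{E}\int_0^T\norm{B_s}_{\mathscr{L}^2(\mathfrak{U};H)}^2\,ds\big)^{1/2}$, which by Young's inequality is at most $\tfrac12\mathbbm{E}(M^l)$ plus a finite constant. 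As $\mathbbm{E}(M^l)<\infty$ for fixed $l$, this half absorbs into the left-hand side and gives $\mathbbm{E}(M^l) \leq C$ with $C$ independent of $l$.

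Finally, since the partitions are nested, $M^l$ is nondecreasing in $l$, so by monotone convergence $\mathbbm{E}\sup_{t\in I}\norm{\sy_t}_H^2 \leq C$ for $I=\bigcup_l I_l$, which is dense in $[0,T]$. To pass from $I$ to all of $[0,T]$ I would use that $\sy$ is continuous into $U$ (from (\ref{newest identity})) together with the weak lower semicontinuity of $\norm{\cdot}_H$ under $U$-convergence: for any $t$, choosing $t^l_j \to t$ in $I$, the family $\sy_{t^l_j}$ is bounded in $H$, so a weak-$H$ limit exists and must coincide with the strong $U$-limit $\sy_t$; hence $\sy_t\in H$ and $\norm{\sy_t}_H \leq \sup_{s\in I}\norm{\sy_s}_H$, giving $\sup_{t\in[0,T]}\norm{\sy_t}_H^2=\sup_{t\in I}\norm{\sy_t}_H^2$ and the claimed bound. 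I expect the main obstacle to be twofold and already anticipated above: arranging the algebra so the uncontrollable squared drift increments cancel (resolved by the right-endpoint expansion), and avoiding a circular appeal to \ref{BDG} (resolved by working at fixed partition level, where $\mathbbm{E}(M^l)<\infty$ is automatic, before shrinking the mesh).
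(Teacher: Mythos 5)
Your proposal is correct and is essentially the paper's own proof: the same application of Lemma \ref{4.2.6} with $\mathcal{X}_1=V$, $\mathcal{X}_2=U$, the same right-endpoint energy expansion in which the discarded negative term $\norm{\sy_t-\sy_s-\int_s^t B_r\,d\mathcal{W}_r}_H^2$ is exactly the squared drift increment, the same recombination into integrals against $\tilde{\sy}^l$ and $\hat{\sy}^l$ (the paper merely keeps the first-interval pairing with $\sy_0$ as a separate term), the same Young/BDG/It\^{o}-isometry bounds with absorption, monotone convergence over the nested partitions, and the same weak-compactness identification of limits to pass from $I$ to $[0,T]$. The one point to tighten is that $M^l$ must be taken only over partition points in $[0,\tau]$ (i.e.\ with the indicators $\mathbbm{1}_{t^l_m\leq\tau}$ of Lemma \ref{4.2.6} built in, as in the paper's supremum over $I_l\cap(0,\tau]$), since at points past $\tau$ one has $\sy_{t^l_m}=\sy_\tau$, whose $H$-norm carries no a priori moment bound; with that reading your explicit finiteness of $\mathbbm{E}(M^l)$ is precisely what justifies the absorption step the paper leaves implicit.
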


\begin{proof}
For $\mathbbm{P}-a.e.$ $\omega$ and every $s<t$ with $t \in I \cap [0, \tau(\omega)]$ and $s \in I \cup \{0\}$, observe that
\begin{align*}
    &\left\Vert \int_{s}^{t }B_rd\mathcal{W}_r  \right\Vert_{H}^2 - \left\Vert \sy_t-\sy_s - \int_{s}^{t }B_rd\mathcal{W}_r\right\Vert_H^2 + 2\left\langle \sy_s, \int_{s}^{t }B_rd\mathcal{W}_r  \right\rangle_{\mathcal{H}}\\ & \qquad= \left\Vert \int_{s}^{t }B_rd\mathcal{W}_r  \right\Vert_{H}^2 - \left\Vert \sy_t-\sy_s \right\Vert_H^2 - \left\Vert \int_{s}^{t }B_rd\mathcal{W}_r  \right\Vert_{H}^2\\ & \qquad \qquad \qquad + 2\left\langle \sy_t - \sy_s, \int_{s}^{t }B_rd\mathcal{W}_r  \right\rangle_{\mathcal{H}} + 2\left\langle \sy_s, \int_{s}^{t }B_rd\mathcal{W}_r  \right\rangle_{\mathcal{H}}\\
    & \qquad = 2\left\langle \sy_t, \int_{s}^{t }B_rd\mathcal{W}_r  \right\rangle_{\mathcal{H}} - \left\Vert \sy_t-\sy_s \right\Vert_H^2\\
    & \qquad= 2\left\langle \sy_t, \sy_t - \sy_s - \int_{s}^{t }\eta_rdr  \right\rangle_{\mathcal{H}} -\left\Vert \sy_t\right\Vert_H^2 - \left\Vert \sy_s \right\Vert_H^2 + 2\left\langle \sy_t, \sy_s \right\rangle_H\\
    & \qquad= \left\Vert \sy_t\right\Vert_H^2 - \left\Vert \sy_s \right\Vert_H^2 -2\left\langle \sy_t, \int_{s}^{t }\eta_rdr  \right\rangle_{\mathcal{H}}\\
    & \qquad= \left\Vert \sy_t\right\Vert_H^2 - \left\Vert \sy_s \right\Vert_H^2 -2 \int_{s}^{t }\left\langle \eta_r, \sy_t\right\rangle_{U \times V}dr
\end{align*}
which we rewrite as the equality
\begin{align*}
     \left\Vert \sy_t\right\Vert_H^2 - \left\Vert \sy_s \right\Vert_H^2&= 2 \int_{s}^{t }\left\langle \eta_r, \sy_t\right\rangle_{U \times V}dr + 2\left\langle \sy_s, \int_{s}^{t }B_rd\mathcal{W}_r  \right\rangle_{\mathcal{H}}  \\&+ \left\Vert \int_{s}^{t }B_rd\mathcal{W}_r  \right\Vert_{H}^2 - \left\Vert \sy_t-\sy_s - \int_{s}^{t }B_rd\mathcal{W}_r\right\Vert_H^2. 
\end{align*}
Using this equality, for any $l \in \N$ and $t=t^l_i \in I_l \cap (0,\tau(\omega)] /\{T\}$,
\begin{align*}
    \norm{\sy_t}_{H}^2 &- \norm{\sy_0}_H^2 = \sum_{j=0}^{i-1}\left( \norm{\sy_{t^l_{j+1}}}_{H}^2 - \norm{\sy_{t^l_j}}_H^2 \right)\\
    &= \sum_{j=0}^{i-1}\Bigg(2 \int_{t^l_{j}}^{t^l_{j+1} }\left\langle \eta_r, \sy_{t^l_{j+1}}\right\rangle_{U \times V}dr + 2\left\langle \sy_{t^l_{j}}, \int_{t^l_{j}}^{t^l_{j+1} }B_rd\mathcal{W}_r  \right\rangle_{\mathcal{H}}  \\& \qquad \qquad + \left\Vert \int_{t^l_{j}}^{t^l_{j+1} }B_rd\mathcal{W}_r  \right\Vert_{H}^2 - \left\Vert \sy_{t^l_{j+1}}-\sy_{t^l_{j}} - \int_{t^l_{j}}^{t^l_{j+1} }B_rd\mathcal{W}_r\right\Vert_H^2     \Bigg)\\
    &= 2\sum_{j=0}^{i-1}\left( \int_{t^l_{j}}^{t^l_{j+1} }\left\langle \eta_r, \sy_{t^l_{j+1}}\right\rangle_{U \times V}dr +  \int_{t^l_{j}}^{t^l_{j+1}}\left\langle B_r,\sy_{t^l_{j}}\right\rangle_{\mathcal{H}}d\mathcal{W}_r  \right)  \\& \qquad \qquad +\sum_{j=0}^{i-1}\left( \left\Vert \int_{t^l_{j}}^{t^l_{j+1} }B_rd\mathcal{W}_r  \right\Vert_{H}^2 - \left\Vert \sy_{t^l_{j+1}}-\sy_{t^l_{j}} - \int_{t^l_{j}}^{t^l_{j+1} }B_rd\mathcal{W}_r\right\Vert_H^2     \right)\\
    &= 2\sum_{j=0}^{i-1}\left( \int_{t^l_{j}}^{t^l_{j+1} }\left\langle \eta_r, \tilde{\sy}^l_{r}\right\rangle_{U \times V}dr\right) +  2\sum_{j=1}^{i-1}\left(\int_{t^l_{j}}^{t^l_{j+1}}\left\langle B_r, \hat{\sy}^l_{r}\right\rangle_{\mathcal{H}}d\mathcal{W}_r  \right) + 2\int_0^{t^l_{1}}\left\langle B_r,\sy_{0}\right\rangle_{\mathcal{H}}d\mathcal{W}_r   \\& \qquad \qquad +\sum_{j=0}^{i-1}\left( \left\Vert \int_{t^l_{j}}^{t^l_{j+1} }B_rd\mathcal{W}_r  \right\Vert_{H}^2 - \left\Vert \sy_{t^l_{j+1}}-\sy_{t^l_{j}} - \int_{t^l_{j}}^{t^l_{j+1} }B_rd\mathcal{W}_r\right\Vert_H^2     \right)\\
    &= 2 \int_{0}^{t}\left\langle \eta_r, \tilde{\sy}^l_{r}\right\rangle_{U \times V}dr +  2\int_{0}^{t}\left\langle B_r,\hat{\sy}^l_{r} \right\rangle_{H}d\mathcal{W}_r + 2\int_0^{t^l_{1}}\left\langle B_r,\sy_{0}\right\rangle_{H}d\mathcal{W}_r   \\& \qquad \qquad +\sum_{j=0}^{i-1}\left( \left\Vert \int_{t^l_{j}}^{t^l_{j+1} }B_rd\mathcal{W}_r  \right\Vert_{H}^2 - \left\Vert \sy_{t^l_{j+1}}-\sy_{t^l_{j}} - \int_{t^l_{j}}^{t^l_{j+1} }B_rd\mathcal{W}_r\right\Vert_H^2     \right)
\end{align*}
where we have applied Proposition \ref{unbounded take it in 2} and the associated remark thereafter. In particular we have that
\begin{align*}
    \norm{\sy_t}_{H}^2 &\leq \norm{\sy_0}_H^2 + 2 \int_{0}^{t}\left\langle \eta_r, \tilde{\sy}^l_{r}\right\rangle_{U \times V}dr +  2\int_{0}^{t}\left\langle B_r,\hat{\sy}^l_{r} \right\rangle_{H}d\mathcal{W}_r\\ &+ 2\int_0^{t^l_{1}}\left\langle B_r,\sy_{0}\right\rangle_{H}d\mathcal{W}_r +\sum_{j=0}^{i-1}\left( \left\Vert \int_{t^l_{j}}^{t^l_{j+1} }B_rd\mathcal{W}_r  \right\Vert_{H}^2\right).
\end{align*}
Our goal is to show that
$$\mathbbm{E}\left(\sup_{t \in I_l \cap (0,\tau] /\{T\}}\norm{\sy_t}_{H}^2\right) \leq c $$
for some constant $c$ independent of $l$. To this end, observe that 
\begin{align*}
   \mathbbm{E}\left(\sup_{t \in I_l \cap (0,\tau] /\{T\}}\norm{\sy_t}_{H}^2\right) &\leq \mathbbm{E}\left(\norm{\sy_0}_H^2\right)\\ &+ 2\mathbbm{E}\left( \int_{0}^{T \wedge \tau}\left\vert\left\langle \eta_r, \tilde{\sy}^l_{r}\right\rangle_{U \times V}\right\vert dr\right) +  2\mathbbm{E}\left(\sup_{t \in [0,T]}\left\vert\int_{0}^{t \wedge \tau}\left\langle B_r,\hat{\sy}^l_{r} \right\rangle_{H}d\mathcal{W}_r\right\vert\right)\\ &+ 2\mathbbm{E}\left(\sup_{t \in [0,T]}\left\vert\int_0^{t\wedge \tau}\left\langle B_r,\sy_{0}\right\rangle_{H}d\mathcal{W}_r\right\vert\right)\\ &+\mathbbm{E}\left[\sup_{t \in I_l \cap (0,\tau] /\{T\}}\sum_{j=0}^{i-1}\left( \left\Vert \int_{t^l_{j}}^{t^l_{j+1} }B_rd\mathcal{W}_r  \right\Vert_{H}^2\right)\right].
\end{align*}
We shall treat each term individually. Firstly we have that
\begin{align*}
    2\mathbbm{E}\left( \int_{0}^{T \wedge \tau}\left\vert\left\langle \eta_r, \tilde{\sy}^l_{r}\right\rangle_{U \times V}\right\vert dr\right) &\leq \mathbbm{E}\left( \int_{0}^{T \wedge \tau}\norm{\eta_r}_U^2 + \norm{\tilde{\sy}^l_{r}}_V^2dr\right)\\
    &\leq \mathbbm{E}\left( \int_{0}^{T \wedge \tau}\norm{\eta_r}_U^2 dr\right) + \max_{m \leq L}\mathbbm{E}\left( \int_{0}^{T \wedge \tau} \norm{\tilde{\sy}^m_{r}}_V^2dr\right)  + 1
\end{align*}
where $L$ is taken sufficiently large so that for all $m \geq L$, $$\mathbbm{E}\left( \int_{0}^{T \wedge \tau} \left\vert\norm{\tilde{\sy}^m_{r}}_V^2 - \norm{\sy_{r}}_V^2\right \vert dr\right) \leq \frac{1}{2}.$$
For the first stochastic integral we apply the classical Burkholder-Davis-Gundy Inequality [\cite{burkholder1972integral}], seeing that
\begin{align*}
    2\mathbbm{E}\left(\sup_{t \in [0,T]}\left\vert\int_{0}^{t \wedge \tau}\left\langle B_r,\hat{\sy}^l_{r} \right\rangle_{H}d\mathcal{W}_r\right\vert\right) &\leq c\mathbbm{E}\left(\int_{0}^{T \wedge \tau}\left\Vert\left\langle B_r,\hat{\sy}^l_{r} \right\rangle_{H}\right\Vert^2_{\mathscr{L}^2(\mathfrak{U};\R)}dr\right)^\frac{1}{2}\\
    &\leq c\mathbbm{E}\left(\int_{0}^{T \wedge \tau}\left\Vert B_r\right\Vert^2_{\mathscr{L}^2(\mathfrak{U};H)}\left\Vert\hat{\sy}^l_{r} \right\Vert_{H}^2dr\right)^\frac{1}{2}\\
    &\leq c\mathbbm{E}\left(\sup_{r \in [0,T \wedge \tau]}\left\Vert\hat{\sy}^l_{r} \right\Vert_{H}^2\int_{0}^{T \wedge \tau}\left\Vert B_r\right\Vert^2_{\mathscr{L}^2(\mathfrak{U};H)}dr\right)^\frac{1}{2}\\
    &= c\mathbbm{E}\left(\sup_{r \in [0,T \wedge \tau]}\left\Vert\hat{\sy}^l_{r} \right\Vert_{H}^2\right)^{\frac{1}{2}}\left(\int_{0}^{T \wedge \tau}\left\Vert B_r\right\Vert^2_{\mathscr{L}^2(\mathfrak{U};H)}dr\right)^\frac{1}{2}\\
    &= \frac{1}{2}\mathbbm{E}\left(\sup_{r \in [0,T \wedge \tau]}\left\Vert\hat{\sy}^l_{r} \right\Vert_{H}^2\right) + c\mathbbm{E}\left(\int_{0}^{T \wedge \tau}\left\Vert B_r\right\Vert^2_{\mathscr{L}^2(\mathfrak{U};H)}dr\right)\\
    &= \frac{1}{2}\mathbbm{E}\left(\sup_{t \in I_l \cap (0,\tau] /\{T\}}\norm{\sy_t}_{H}^2\right) + c\mathbbm{E}\left(\int_{0}^{T \wedge \tau}\left\Vert B_r\right\Vert^2_{\mathscr{L}^2(\mathfrak{U};H)}dr\right)
\end{align*}
where $c$ here is a generic constant changing from line to line, independent of $l$. Putting this together we now see that 
\begin{align*}
   \frac{1}{2}\mathbbm{E}\left(\sup_{t \in I_l \cap (0,\tau] /\{T\}}\norm{\sy_t}_{H}^2\right) &\leq \mathbbm{E}\left(\norm{\sy_0}_H^2\right) + \mathbbm{E}\left( \int_{0}^{T \wedge \tau}\norm{\eta_r}_U^2 dr\right)\\ &+ \max_{m \leq L}\mathbbm{E}\left( \int_{0}^{T \wedge \tau} \norm{\tilde{\sy}^m_{r}}_V^2dr\right)  + 1 +  c\mathbbm{E}\left(\int_{0}^{T \wedge \tau}\left\Vert B_r\right\Vert^2_{\mathscr{L}^2(\mathfrak{U};H)}dr\right) \\ &+ 2\mathbbm{E}\left(\sup_{t \in [0,T]}\left\vert\int_0^{t}\left\langle B_r,\sy_{0}\right\rangle_{H}d\mathcal{W}_r\right\vert\right) \\ &+\mathbbm{E}\left[\sup_{t \in I_l \cap (0,\tau] /\{T\}}\sum_{j=0}^{i-1}\left( \left\Vert \int_{t^l_{j}}^{t^l_{j+1} }B_rd\mathcal{W}_r  \right\Vert_{H}^2\right)\right].
\end{align*}
For the stochastic integral involving the initial condition we can treat this identically to generate the bound
$$2\mathbbm{E}\left(\sup_{t \in [0,T]}\left\vert\int_0^{t \wedge \tau}\left\langle B_r,\sy_{0}\right\rangle_{H}d\mathcal{W}_r\right\vert\right) \leq  \mathbbm{E}\left(\norm{\sy_0}_{H}^2\right) + c\mathbbm{E}\left(\int_{0}^{T \wedge \tau}\left\Vert B_r\right\Vert^2_{\mathscr{L}^2(\mathfrak{U};H)}dr\right).$$
As for the final term, it is clear that the supremum over all partitions can be bounded by taking the partition for $t = t^l_{k_l} = T$. Thus
\begin{align*}
  \mathbbm{E}\left[\sup_{t \in I_l \cap (0,\tau] /\{T\}}\sum_{j=0}^{i-1}\left( \left\Vert \int_{t^l_{j}}^{t^l_{j+1} }B_rd\mathcal{W}_r  \right\Vert_{H}^2\right)\right] &\leq \mathbbm{E}\left[\sum_{j=0}^{k_l-1}\left( \left\Vert \int_{t^l_{j}}^{t^l_{j+1} }B_rd\mathcal{W}_r  \right\Vert_{H}^2\right)\right]\\
  &= \sum_{j=0}^{k_l-1}\mathbbm{E}\left( \left\Vert \int_{t^l_{j}}^{t^l_{j+1} }B_rd\mathcal{W}_r  \right\Vert_{H}^2\right)\\
  &= \sum_{j=0}^{k_l-1}\mathbbm{E}\left(  \int_{t^l_{j}}^{t^l_{j+1} }\left\Vert B_r\right\Vert_{\mathscr{L}^2(\mathfrak{U};H)}^2dr  \right)\\
  &=\mathbbm{E}\left(  \int_{0}^{T}\left\Vert B_r\right\Vert_{\mathscr{L}^2(\mathfrak{U};H)}^2dr  \right)
\end{align*}
having applied Proposition \ref{ito isom for cylindrical}. In total then we have that 
\begin{align*}
   \mathbbm{E}\left(\sup_{t \in I_l \cap (0,\tau] /\{T\}}\norm{\sy_t}_{H}^2\right) &\leq 4\mathbbm{E}\left(\norm{\sy_0}_H^2\right) + 2\mathbbm{E}\left( \int_{0}^{T \wedge \tau}\norm{\eta_r}_U^2 dr\right)\\ &+ 2\max_{m \leq L}\mathbbm{E}\left( \int_{0}^{T \wedge \tau} \norm{\tilde{\sy}^m_{r}}_V^2dr\right)  + 2 +  c\mathbbm{E}\left(\int_{0}^{T}\left\Vert B_r\right\Vert^2_{\mathscr{L}^2(\mathfrak{U};H)}dr\right)
\end{align*}
which is a finite bound independent of $l$. As $I_l \subset I_{l+1}$ then the sequence $$\sup_{t \in I_l \cap (0,\tau] /\{T\}}\norm{\sy_t}_{H}^2$$ is $\mathbbm{P}-a.s.$ monotone increasing in $l$, so we can apply the Monotone Convergence Theorem to see that \begin{equation}\label{abbabba}\mathbbm{E}\left(\sup_{t \in I \cap (0,\tau] /\{T\}}\norm{\sy_t}_{H}^2\right) = \lim_{l \rightarrow \infty}\mathbbm{E}\left(\sup_{t \in I_l \cap (0,\tau] /\{T\}}\norm{\sy_t}_{H}^2\right) < \infty.\end{equation} Thus for $\mathbbm{P}-a.e.$ $\omega$, we have that
$$\sup_{t \in I \cap (0,\tau(\omega)] /\{T\}}\norm{\sy_t(\omega)}_{H}^2 = \tilde{c} < \infty$$ and $\sy_{\cdot}(\omega) \in C([0,T];U)$. We fix such an $\omega$ and any $t \in [0,\tau(\omega)]$. As the mesh of the partitions go to zero then there there is a sequence of times $(t_n)$ in $I \cap (0,\tau(\omega)] /\{T\}$ such that $t_n \longrightarrow t$.  The sequence $\left(\sy_{t_n}(\omega)\right)$ is uniformly bounded in $H$ so admits a weakly convergent subsequence in this space, to a limit which we call $\psi$. From the continuous embedding of $H$ into $U$ this weak convergence also holds in $U$, but from the continuity of $\sy_{\cdot}(\omega)$ in $U$ we have that $\left(\sy_{t_n}(\omega)\right)$ converges strongly and therefore weakly to $\sy_t(\omega)$ in $U$. By the uniqueness of limits in the weak topology, we conclude that $\sy_t(\omega) = \psi$ and thus belongs to $H$. Moreover the weak limit preserves the boundedness in $H$, so $\norm{\sy_t(\omega)}_H \leq \tilde{c}.$ Therefore
$$\sup_{t \in I \cap (0,\tau(\omega)] /\{T\}}\norm{\sy_t(\omega)}_{H}^2 = \sup_{t \in [0,\tau(\omega)] }\norm{\sy_t(\omega)}_{H}^2$$ for our fixed $\omega$ in a full measure set, and thus $\mathbbm{P}-a.s.$. We also know that $\sy_{\cdot} = \sy_{\cdot \wedge \tau}$ from the identity (\ref{newest identity}), so this equality extends to $$\sup_{t \in I \cap (0,\tau(\omega)] /\{T\}}\norm{\sy_t(\omega)}_{H}^2 = \sup_{t \in [0,T] }\norm{\sy_t(\omega)}_{H}^2.$$ Combining this with (\ref{abbabba}) concludes the proof. 
\end{proof}

Having now justified that for $\mathbbm{P}-a.e.$ $\omega$ $\sy_t(\omega)\in H$, we move on to prove weak continuity in this space.

\begin{lemma} \label{weak continuity for ito}
    For $\mathbbm{P}-a.e.$ $\omega$, $\sy_{\cdot}(\omega)$ is weakly continuous in $H$. 
\end{lemma}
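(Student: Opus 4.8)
The plan is to deduce the weak continuity entirely from two facts already in hand: the uniform bound $\sup_{t \in [0,T]}\norm{\sy_t(\omega)}_H \leq \tilde{c} < \infty$ furnished by the previous lemma, and the strong continuity $\sy_{\cdot}(\omega) \in C([0,T];U)$ which comes from the identity (\ref{newest identity}). Throughout I would fix an $\omega$ in the full-measure set on which both hold; since the preceding lemma also established $\sy_t(\omega) \in H$ for every $t$, the path $t \mapsto \sy_t(\omega)$ is a genuine bounded $H$-valued curve, and it suffices to prove that $t \mapsto \inner{\sy_t(\omega)}{h}_H$ is continuous for each fixed $h \in H$.

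First I would establish this for test elements $h \in V$. For such $h$ and any $t$, since $\sy_t(\omega) \in H$ and $h \in V$, the hypothesis on the bilinear form gives $\inner{\sy_t(\omega)}{h}_H = \inner{\sy_t(\omega)}{h}_{U \times V}$. As $\phi \mapsto \inner{\phi}{h}_{U \times V}$ is continuous on $U$ for fixed $h \in V$, the strong continuity of $\sy_{\cdot}(\omega)$ in $U$ immediately yields continuity of $t \mapsto \inner{\sy_t(\omega)}{h}_H$. Next I would upgrade to arbitrary $h \in H$ by a density argument, which is the one place where the uniform $H$-bound is indispensable. Given $h \in H$, $t_0 \in [0,T]$ and $\epsilon > 0$, choose $h' \in V$ with $\norm{h - h'}_H < \epsilon$, possible since $V$ is dense in $H$. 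Splitting
$$\inner{\sy_t}{h}_H - \inner{\sy_{t_0}}{h}_H = \inner{\sy_t}{h-h'}_H + \big(\inner{\sy_t}{h'}_H - \inner{\sy_{t_0}}{h'}_H\big) + \inner{\sy_{t_0}}{h'-h}_H,$$
the first and last terms are bounded in absolute value by $2\tilde{c}\,\epsilon$ via Cauchy--Schwarz and the uniform bound, while the middle term tends to $0$ as $t \to t_0$ by the first step. Hence $\limsup_{t \to t_0}\abs{\inner{\sy_t}{h}_H - \inner{\sy_{t_0}}{h}_H} \leq 2\tilde{c}\,\epsilon$, and letting $\epsilon \downarrow 0$ gives continuity at $t_0$. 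Since $t_0$ and $h$ were arbitrary, this is exactly weak continuity of $\sy_{\cdot}(\omega)$ in $H$ on the full-measure set of $\omega$.

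The main (and essentially only) obstacle is reconciling the two topologies at the heart of the estimate: the approximation only closes because the bilinear form identifies the $H$-inner product with the $U \times V$ pairing on the dense subspace $V$, which lets the $U$-continuity do the work there, and because the previous lemma supplies the uniform $H$-bound needed to control the truncation error uniformly in $t$. No compactness or subsequence extraction is required once these two ingredients are available, so the argument is otherwise routine.
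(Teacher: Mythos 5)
Your proposal is correct and follows essentially the same route as the paper's own proof: approximate $h \in H$ by an element of $V$ using density, control the truncation error uniformly in $t$ via the a.s. bound $\sup_{t\in[0,T]}\norm{\sy_t(\omega)}_H < \infty$, and handle the $V$-test-function term by identifying $\inner{\cdot}{\cdot}_H$ with the continuous pairing $\inner{\cdot}{\cdot}_{U\times V}$ so that strong continuity in $U$ applies. The only difference is organizational (you first prove continuity against $V$ and then run a $\limsup$/density argument, whereas the paper folds both into a single $\varepsilon$-estimate), which is immaterial.
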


\begin{proof}
We fix $t\in [0,T]$ but now take any sequence of times $(t_n)$ such that $t_n \rightarrow t$. For $\mathbbm{P}-a.e.$ $\omega$ and any given $\phi \in H$ we must justify that 
\begin{equation} \label{next thing to justify}
   \lim_{n \rightarrow \infty} \inner{\sy_{t_n}(\omega)-\sy_t(\omega)}{\phi}_H = 0.
\end{equation}
For any $\varepsilon > 0$, from the density of $V$ in $H$ there exists $\phi^k \in V$ such that  $$\norm{\phi - \phi^k}_H < \frac{\varepsilon}{4\norm{\sy(\omega)}_{L^\infty([0,T];H)}}$$
and then from the continuity in $U$ there exists an $N \in \N$ sufficiently large such that for all $n \geq N$,
$$\norm{\sy_{t_n}(\omega)-\sy_t(\omega)}_U <  \frac{\varepsilon}{2\norm{\phi^k}_V}.$$
Putting this together, 
\begin{align*}
    \abs{\inner{\sy_{t_n}(\omega)-\sy_t(\omega)}{\phi}_H} & \leq \abs{\inner{\sy_{t_n}(\omega)-\sy_t(\omega)}{\phi - \phi^k}_H} + \abs{\inner{\sy_{t_n}(\omega)-\sy_t(\omega)}{\phi^k}_H}\\
    &\leq \norm{\sy_{t_n}(\omega)-\sy_t(\omega)}_{H}\norm{\phi - \phi^k}_H + \abs{\inner{\sy_{t_n}(\omega)-\sy_t(\omega)}{\phi^k}_{U \times V}}\\
    &\leq 2\norm{\sy(\omega)}_{L^\infty([0,T];H)}\norm{\phi - \phi^k}_H + \norm{\sy_{t_n}(\omega)-\sy_t(\omega)}_U\norm{\phi^k}_{V}\\
    &< \varepsilon
\end{align*}
as required.


\end{proof}

\begin{proposition} \label{first statement}
    The equality 
  \begin{align} \label{ito big dog}\norm{\sy_t}^2_{H} = \norm{\sy_0}^2_{H} + \int_0^{t\wedge \tau} \bigg( 2\inner{\eta_s}{\sy_s}_{U \times V} + \norm{B_s}^2_{\mathscr{L}^2(\mathcal{U};H)}\bigg)ds + 2\int_0^{t \wedge \tau}\inner{B_s}{\sy_s}_{H}d\mathcal{W}_s\end{align}
  holds $\mathbbm{P}-a.s.$ in $\R$ for all $t \in [0,T]$. Moreover for $\mathbbm{P}-a.e.$ $\omega$, $\sy_{\cdot}(\omega) \in C([0,T];H)$. 
\end{proposition}

\begin{proof}
 The proof now directly follows from claim $c$ onwards in the proof of Theorem 4.2.5 of [\cite{prevot2007concise}].

\end{proof}

We wish to relax the integrability constraints over $\Omega$, in accordance with Definition \ref{definitionofregularsolution}. In the same setting of the Hilbert Spaces $V,H,U$, we impose the new assumptions for some $T>0$ and stopping time $\tau$:
\begin{enumerate}
        \item $\sy_0:\Omega \rightarrow H$ is $\mathcal{F}_0-$measurable;
        \item For $\mathbbm{P}-a.e.$ $\omega$, $\eta(\omega) \in L^2([0,T];U)$;
        \item $B \in \bar{\mathcal{I}}^H(\mathcal{W})$;
        \item  \label{4*} For $\mathbbm{P}-a.e.$ $\omega$, $\sy_{\cdot}(\omega)\mathbbm{1}_{\cdot \leq \tau(\omega)} \in L^2([0,T];V)$ and $\sy_{\cdot}\mathbbm{1}_{\cdot \leq \tau}$ is progressively measurable in $V$;
        \item \label{item 5 again*} The identity
        \begin{equation} \label{newest identity*}
            \sy_t = \sy_0 + \int_0^{t \wedge \tau}\eta_sds + \int_0^{t \wedge \tau}B_s d\mathcal{W}_s
        \end{equation}
        holds $\mathbbm{P}-a.s.$ in $U$ for all $t \in [0,T]$.
    \end{enumerate}

We restate Proposition \ref{first statement} for the new setting.

\begin{proposition} \label{a new first statement}
    The equality 
  \begin{align} \label{ito big dog*}\norm{\sy_t}^2_{H} = \norm{\sy_0}^2_{H} + \int_0^{t\wedge \tau} \bigg( 2\inner{\eta_s}{\sy_s}_{U \times V} + \norm{B_s}^2_{\mathscr{L}^2(\mathcal{U};H)}\bigg)ds + 2\int_0^{t \wedge \tau}\inner{B_s}{\sy_s}_{H}d\mathcal{W}_s\end{align}
  holds $\mathbbm{P}-a.s.$ in $\R$ for any $t \in [0,T]$. Moreover for $\mathbbm{P}-a.e.$ $\omega$, $\sy_{\cdot}(\omega) \in C([0,T];H)$. 
\end{proposition}

\begin{proof}
The idea is simply to apply Proposition \ref{first statement} for some truncated versions of the processes. We consider the stopping times \begin{align*}
 \tau^1_n&:= n \wedge \inf\{0 \leq t < \infty: \int_0^t\norm{\eta_s}_{U}^2ds \geq n\}\\
 \tau^2_n&:= n \wedge \inf\{0 \leq t < \infty: \int_0^t\norm{B_s}_{\mathscr{L}^2(\mathfrak{U};H)}^2ds \geq n\}\\
    \tau^3_n&:= n \wedge \inf\{0 \leq t < \infty: \int_0^t\norm{\sy_s\mathbbm{1}_{s \leq \tau}}_{V}^2ds \geq n\}\\
    \tau_n &:= \tau^1_n \wedge \tau^2_n \wedge \tau^3_n.
\end{align*}
Then for every $n$, we have that
\begin{align*}
    \sy_0\mathbbm{1}_{\norm{\sy_0}_H \leq n}, \qquad 
     \eta_{\cdot}\mathbbm{1}_{\cdot \leq \tau_n}\mathbbm{1}_{\norm{\sy_0}_H \leq n}\\
    B_{\cdot}\mathbbm{1}_{\cdot \leq \tau_n}\mathbbm{1}_{\norm{\sy_0}_H \leq n}, \qquad
    \sy_{\cdot \wedge \tau_n}\mathbbm{1}_{\norm{\sy_0}_H \leq n}
\end{align*}
satisfy the previous assumptions of \ref{11111}, \ref{22222}, \ref{33333}, \ref{4}. Moreover from (\ref{newest identity*}) we have that for any $t \in [0,T]$
\begin{align*}
    \sy_{t\wedge\tau_n} &= \sy_0 + \int_0^{t \wedge \tau \wedge \tau_n}\eta_sds + \int_0^{t \wedge \tau \wedge \tau_n}B_s d\mathcal{W}_s\\
    &= \sy_0 + \int_0^{t \wedge \tau}\eta_s\mathbbm{1}_{s \leq \tau_n}ds + \int_0^{t \wedge \tau}B_s\mathbbm{1}_{s \leq \tau_n} d\mathcal{W}_s
\end{align*}
$\mathbbm{P}-a.s.$ and moreover 
\begin{align*}
    \sy_{t\wedge\tau_n}\mathbbm{1}_{\norm{\sy_0}_H \leq n} &= \sy_0\mathbbm{1}_{\norm{\sy_0}_H \leq n} + \mathbbm{1}_{\norm{\sy_0}_H \leq n}\int_0^{t \wedge \tau}\eta_s\mathbbm{1}_{s \leq \tau_n}ds + \mathbbm{1}_{\norm{\sy_0}_H \leq n}\int_0^{t \wedge \tau}B_s\mathbbm{1}_{s \leq \tau_n} d\mathcal{W}_s\\
    &= \sy_0\mathbbm{1}_{\norm{\sy_0}_H \leq n} + \int_0^{t \wedge \tau}\eta_s\mathbbm{1}_{s \leq \tau_n}\mathbbm{1}_{\norm{\sy_0}_H \leq n}ds + \int_0^{t \wedge \tau}B_s\mathbbm{1}_{s \leq \tau_n}\mathbbm{1}_{\norm{\sy_0}_H \leq n} d\mathcal{W}_s
\end{align*}
having applied Proposition \ref{real valued bounded take it in cylindrical}. Therefore we can apply Proposition \ref{first statement} to see that the equality
\begin{align*}
    \norm{\sy_{t\wedge\tau_n}\mathbbm{1}_{\norm{\sy_0}_H \leq n}}^2_{H} &= \norm{\sy_0\mathbbm{1}_{\norm{\sy_0}_H \leq n}}^2_{H} + \int_0^{t\wedge \tau} 2\inner{\eta_s\mathbbm{1}_{s \leq \tau_n}\mathbbm{1}_{\norm{\sy_0}_H \leq n}}{\sy_{s \wedge \tau_n}\mathbbm{1}_{\norm{\sy_0}_H \leq n}}_{U \times V}ds \\ &+ \int_0^{t \wedge \tau}\norm{B_s\mathbbm{1}_{s \leq \tau_n}\mathbbm{1}_{\norm{\sy_0}_H \leq n}}^2_{\mathscr{L}^2(\mathcal{U};H)}ds\\ &+ 2\int_0^{t \wedge \tau}\inner{B_s\mathbbm{1}_{s \leq \tau_n}\mathbbm{1}_{\norm{\sy_0}_H \leq n}}{\sy_{s \wedge \tau_n}\mathbbm{1}_{\norm{\sy_0}_H}}_{H}d\mathcal{W}_s
\end{align*}
holds for all $t \in [0,T]$ $\mathbbm{P}-a.s.$. We rewrite this as 
\begin{align*}
    \mathbbm{1}_{\norm{\sy_0}_H \leq n}\norm{\sy_{t\wedge\tau_n}}^2_{H} &= \mathbbm{1}_{\norm{\sy_0}_H \leq n}\norm{\sy_0}^2_{H} + \mathbbm{1}_{\norm{\sy_0}_H \leq n}\int_0^{t\wedge \tau \wedge \tau_n} 2\inner{\eta_s}{\sy_{s}}_{U \times V}ds \\ &+ \mathbbm{1}_{\norm{\sy_0}_H \leq n}\int_0^{t \wedge \tau \wedge \tau_n}\norm{B_s}^2_{\mathscr{L}^2(\mathcal{U};H)}ds\\ &+ \mathbbm{1}_{\norm{\sy_0}_H \leq n}2\int_0^{t \wedge \tau \wedge \tau_n}\inner{B_s}{\sy_{s}}_{H}d\mathcal{W}_s.
\end{align*}
Therefore for any $t \in [0,T]$, $\mathbbm{P}-a.e.$ $\omega$ with $n$ sufficiently large so that  $\norm{\sy_0(\omega)}_H \leq n$ and $t \leq \tau_n(\omega)$, the identity (\ref{newest identity*}) holds. We can always find such a large enough $n$, which completes the justification of this identity. The continuity then follows identically as we have again from Proposition \ref{first statement} that for every $n$ and $\mathbbm{P}-a.e.$ $\omega$, $\sy_{\cdot \wedge \tau_n}(\omega) \in C([0,T];H)$, and we conclude the proof.
\end{proof}

\subsection{The General It\^{o} Formula}
In this subsection we state only a general result from the literature and do not prove it ourselves. The purpose of this brief inclusion is for the reader to understand the approach, and have direction to further study along these lines elsewhere. The key consideration is the notion of derivatives, for which we use the standard functional analytic choice of the Fr\'{e}chet Derivative. For a Fr\'{e}chet differentiable function between Banach Spaces $\mathcal{K}:\mathcal{Y}\rightarrow \mathcal{Z}$, recall that its derivative is defined as a mapping $\mathcal{K}':\mathcal{Y} \rightarrow \mathscr{L}(\mathcal{Y};\mathcal{Z})$ and subsequently its second derivative as $\mathcal{K}'':\mathcal{Y}\rightarrow \mathscr{L}\big(\mathcal{Y}; \mathscr{L}(\mathcal{Y};\mathcal{Z})\big)$. In the case where $\mathcal{Y}=\mathcal{H}$ is a Hilbert Space and $\mathcal{Z}=\R$, then \begin{align*}
    \mathcal{K}'&:\mathcal{H}\rightarrow\mathcal{H}^* \\
    \mathcal{K}''&:\mathcal{H}\rightarrow \mathscr{L}(\mathcal{H};\mathcal{H}^*)
\end{align*} so it is commonplace to apply the Riesz Representation and make the identification
\begin{align}
    \mathcal{K}'&:\mathcal{H}\rightarrow\mathcal{H} \label{frech1}\\
    \mathcal{K}''&:\mathcal{H}\rightarrow \mathscr{L}(\mathcal{H}) \label{frech2}.
\end{align}
We will be considering functions $F:[0,\infty) \times U \rightarrow \R$ with partial derivatives $F_t, F_x$ and $F_{xx}$ where the latter two are understood in the sense of (\ref{frech1}) and (\ref{frech2}).

\begin{theorem} \label{Ito formula}
    Suppose $(\sy,\tau)$ is a local strong solution of (\ref{thespde*}), and $F:[0,\infty) \times U \rightarrow \R$ is such that $F_t, F_x$ and $F_{xx}$ are uniformly continuous on bounded subsets of $[0,T] \times U$. Then $F(\cdot,\sy_{\cdot})$ satisfies the identity \begin{align*}F(t,\sy_t) = F(0,\sy_0) &+ \int_0^{t\wedge\tau} \bigg( F_s(s,\sy_s) + \inner{F_x(s,\sy_s)}{\mathcal{Q}(s,\sy_s)}_{U}\bigg)ds\\ &+ \frac{1}{2}\int_0^{t \wedge \tau}\textnormal{Tr}\Big(F_{xx}(s,\sy_s)(\mathcal{G}(s,\sy_s))(\mathcal{G}(s,\sy_s))^*\Big)ds\\ &+ \int_0^{t \wedge \tau}\inner{F_x(s,\sy_s)}{\mathcal{G}(s,\sy_s)}_{U}d\mathcal{W}_s\end{align*} $\mathbbm{P}-a.s.$ in $\R$ for all $t \in [0,T]$, where the trace term is simply a composition of operators and the stochastic integral is understood component wise as in Theorem \ref{operatorthroughstochasticintegral}.
\end{theorem}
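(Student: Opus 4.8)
The plan is to prove the identity first under genuine integrability and then recover the local statement by the localisation already used to pass from \ref{first statement} to its restated form. I would introduce stopping times forcing $\sy$ to remain in a fixed bounded ball $K$ of $H$ (hence of $U$ under the embedding) on $[0,\tau_n]$, bounding $\int_0^{\cdot}\norm{\mathcal{Q}(s,\sy_s)}_U\,ds$ and $\int_0^{\cdot}\norm{\mathcal{G}(s,\sy_s)}^2_{\mathscr{L}^2(\mathfrak{U};H)}\,ds$ and truncating $\sy_0$; on this stopped data $F_t,F_x,F_{xx}$ are bounded and uniformly continuous on $[0,T]\times K$, every integrand below is genuinely integrable, and for $\mathbbm{P}$-a.e.\ $\omega$ a large enough (random) $n$ makes the stopped identity reduce to the desired one at time $t$. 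It therefore suffices to prove the formula under the hypotheses of \ref{first statement}.

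In that bounded setting I would work along a sequence of partitions $(I_l)$ from \ref{4.2.6}, taken with $\mathcal{X}_1=V$, $\mathcal{X}_2=U$, $\py=\sy$, so that each $\sy_{t^l_j}$ lies in $V$ (allowing $\mathcal{Q}(t^l_j,\sy_{t^l_j})$ to be evaluated) and the frozen processes $\hat{\sy}^l,\tilde{\sy}^l$ converge to $\sy_{\cdot}\mathbbm{1}_{\cdot\leq\tau}$ in $L^2(\Omega\times[0,T];V)$. Telescoping $F(t,\sy_t)-F(0,\sy_0)=\sum_j\big(F(t^l_{j+1},\sy_{t^l_{j+1}})-F(t^l_j,\sy_{t^l_j})\big)$ and applying Taylor's theorem in time to first order and in the $U$-variable to second order (with integral remainder) expresses each increment as
\begin{equation*}
\int_{t^l_j}^{t^l_{j+1}}F_s(s,\sy_{t^l_j})\,ds+\inner{F_x(t^l_{j+1},\sy_{t^l_j})}{\Delta_j\sy}_U+\tfrac12\int_0^1(1-\theta)\inner{F_{xx}(t^l_{j+1},\sy_{t^l_j}+\theta\Delta_j\sy)\,\Delta_j\sy}{\Delta_j\sy}_U\,d\theta,
\end{equation*}
with $\Delta_j\sy=\int_{t^l_j}^{t^l_{j+1}}\mathcal{Q}(s,\sy_s)\,ds+\int_{t^l_j}^{t^l_{j+1}}\mathcal{G}(s,\sy_s)\,d\mathcal{W}_s=:\Delta_j A+\Delta_j M$.

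Identifying the first-order limits is routine. The time and drift terms converge as Riemann sums to $\int_0^t F_s(s,\sy_s)\,ds$ and $\int_0^t\inner{F_x(s,\sy_s)}{\mathcal{Q}(s,\sy_s)}_U\,ds$, using continuity of $s\mapsto\sy_s$ in $U$, uniform continuity of $F_s,F_x$, and the $L^2(\Omega\times[0,T];V)$ convergence of $\tilde{\sy}^l$ for the drift pairing. For the first-order martingale term, $F_x(t^l_{j+1},\sy_{t^l_j})$ is $\mathcal{F}_{t^l_j}$-measurable, so by \ref{unbounded take it in 2} (with the $\mathcal{F}_s$-measurable extension in the following remark) the sum equals $\int_0^t\inner{g^l_s}{\mathcal{G}(s,\sy_s)}_U\,d\mathcal{W}_s$ for the piecewise-frozen integrand $g^l_s=F_x(t^l_{j+1},\sy_{t^l_j})$ on $(t^l_j,t^l_{j+1}]$; the It\^o isometry \ref{ito isom for cylindrical} together with $g^l\to F_x(\cdot,\sy_{\cdot})$ then gives convergence to $\int_0^t\inner{F_x(s,\sy_s)}{\mathcal{G}(s,\sy_s)}_U\,d\mathcal{W}_s$.

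The \emph{genuine obstacle} is the second-order remainder. Freezing the operator by replacing $F_{xx}(t^l_{j+1},\sy_{t^l_j}+\theta\Delta_j\sy)$ with $T_j:=F_{xx}(t^l_j,\sy_{t^l_j})$ uniformly in $\theta\in[0,1]$ collapses the remainder to $\tfrac12\sum_j\inner{T_j\Delta_j\sy}{\Delta_j\sy}_U$, the error being bounded by the modulus of uniform continuity of $F_{xx}$ at $\max_j(\abs{t^l_{j+1}-t^l_j}+\norm{\Delta_j\sy}_U)$ times $\sum_j\norm{\Delta_j\sy}_U^2$; the former tends to $0$ while the latter stays $L^1$-bounded by \ref{ito isom for cylindrical}, so the product vanishes in $L^1$. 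Splitting $\Delta_j\sy=\Delta_j A+\Delta_j M$, the boundedness of $F_{xx}$ kills the drift pieces, since $\sum_j\norm{\Delta_j A}_U^2\leq(\max_j\norm{\Delta_j A}_U)\int_0^t\norm{\mathcal{Q}(s,\sy_s)}_U\,ds\to0$ and the cross term follows by Cauchy--Schwarz against this vanishing factor. There remains $\tfrac12\sum_j\inner{T_j\Delta_j M}{\Delta_j M}_U$, which must converge to $\tfrac12\int_0^t\textnormal{Tr}\big(F_{xx}(s,\sy_s)\,\mathcal{G}(s,\sy_s)\mathcal{G}(s,\sy_s)^*\big)\,ds$. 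Expanding $\inner{T_j h}{h}_U=\sum_{k,m}\inner{T_j a_m}{a_k}_U\inner{h}{a_m}_U\inner{h}{a_k}_U$ in an orthonormal basis $(a_k)$ of $U$ and writing $\inner{\Delta_j M}{a_k}_U=\int_{t^l_j}^{t^l_{j+1}}\inner{\mathcal{G}(s,\sy_s)}{a_k}_U\,d\mathcal{W}_s$ via \ref{unbounded take it in 2} reduces the problem to the scalar quadratic- and cross-variation limits of the projections $\inner{M}{a_k}_U$ over shrinking partitions, precisely those supplied by \ref{analagous two}; together with continuity of $s\mapsto T_s=F_{xx}(s,\sy_s)$ and the trace identity $\textnormal{Tr}(T G G^*)=\sum_i\inner{T G e_i}{G e_i}_U$ (for $T\in\mathscr{L}(U)$, $G\in\mathscr{L}^2(\mathfrak{U};U)$ and $(e_i)$ the basis of $\mathfrak{U}$) this yields the limit. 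As the paper does for \ref{first statement}, the finest weighted quadratic-variation estimate may be cited from the proof of Theorem 4.2.5 of [\cite{Liurockner}]. Assembling all limits gives the formula $\mathbbm{P}$-a.s.\ for each fixed $t$ (simultaneously for rational $t$, then all $t$ by continuity of both sides), and undoing the localisation completes the proof.
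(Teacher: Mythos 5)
Your proposal cannot be compared line-by-line with the paper's argument, because the paper does not prove Theorem \ref{Ito formula} at all: it states the formula (and \ref{ogenergy}) and defers entirely to [\cite{De Prato}], Theorem 4.18. What you have written is therefore a genuinely different route --- a self-contained proof in the classical localise--partition--Taylor style, assembled from the paper's own toolkit: stopping times to reduce to bounded data, Lemma \ref{4.2.6} for the partitions, \ref{unbounded take it in 2} and the remark following \ref{real valued bounded take it in cylindrical} to pull $\mathcal{F}_{t^l_j}$-measurable factors inside stochastic integrals, the isometry \ref{ito isom for cylindrical}, and \ref{analagous two} for the quadratic-variation limits. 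This buys something the paper's citation does not: it shows the reader that the machinery already developed in these notes nearly suffices for the It\^{o} formula, and it isolates correctly where the real difficulty sits (the second-order term). The first-order analysis, the drift estimates, and the localisation/undoing-the-localisation scheme are all sound and consistent with how the paper itself handles such reductions.

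Two steps, however, are not correct as stated. First, the claim that the frozen-$F_{xx}$ error vanishes in $L^1$ because it is a product of a sequence tending to $0$ a.s.\ and a sequence bounded in $L^1$ is false as a general principle: take $X_l = \mathbbm{1}_{A_l}$ and $Y_l = \mathbbm{P}(A_l)^{-1}\mathbbm{1}_{A_l}$ with $\mathbbm{P}(A_l)=l^{-2}$, so $X_l \rightarrow 0$ a.s., $\mathbbm{E}(Y_l)=1$, yet $\mathbbm{E}(X_lY_l)=1$. What does follow from exactly your two facts is convergence in probability, and that suffices here because the identity to be proved is a $\mathbbm{P}$-a.s.\ statement at each fixed $t$, so you may pass to a.s.\ convergent subsequences at the end; alternatively a uniform $L^2$ bound on $\sum_j\norm{\Delta_j\sy}_U^2$ can be extracted under your localisation from BDG-type fourth-moment estimates, but it is not free. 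Second, \ref{analagous two} does not supply ``precisely'' the limits needed for $\sum_j\inner{T_j\Delta_jM}{\Delta_jM}_U$: it is an unweighted statement about $\sum_j\big\Vert\int_{t^l_j}^{t^l_{j+1}}B\,d\mathcal{W}\big\Vert^2_{\mathcal{H}}$. To reach your limit you must (i) polarize to obtain the cross terms $\sum_j\inner{\Delta_jM}{a_k}_U\inner{\Delta_jM}{a_m}_U$ for $k\neq m$, (ii) deal with the weights $T_j$, which are random and $\mathcal{F}_{t^l_j}$-measurable, so a conditional-orthogonality (martingale-difference) argument is needed rather than a deterministic passage to the limit, and (iii) justify interchanging the infinite double sum over the basis with the limit in $l$, for instance via the uniform-in-partition tail bound $\mathbbm{E}\sum_j\norm{Q_N\Delta_jM}_U^2 = \mathbbm{E}\int_0^t\norm{Q_N\mathcal{G}(s,\sy_s)}^2_{\mathscr{L}^2(\mathfrak{U};U)}ds \rightarrow 0$ as $N\rightarrow\infty$, where $Q_N$ is the orthogonal projection onto the closed span of $\{a_k : k>N\}$. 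These are exactly the estimates your fallback citation to the proof of Theorem 4.2.5 of [\cite{Liurockner}] is meant to cover --- and leaning on that reference is consistent with what the paper does for \ref{first statement} --- but as written your reduction skips them. A minor point: your displayed Taylor remainder carries a spurious factor $\tfrac12$ in front of $\int_0^1(1-\theta)(\cdot)\,d\theta$ (the weight $(1-\theta)$ already integrates to $\tfrac12$), though the constant you actually use afterwards is the correct one.
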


We note that through the function $F:h \mapsto \norm{h}^2_{U}$, one can establish an alternative energy equality to Proposition \ref{first statement}.

\begin{proposition} \label{ogenergy}
    Suppose $(\sy,\tau)$ is a local strong solution of (\ref{thespde*}). Then $\sy$ satisfies the energy equality \begin{align*}\norm{\sy_t}^2_{U} = \norm{\sy_0}^2_{U} + \int_0^{t\wedge \tau} \bigg( 2\inner{\sy_s}{\mathcal{Q}(s,\sy_s)}_{U} + \norm{\mathcal{G}(s,\sy_s)}^2_{\mathscr{L}^2(\mathcal{U};U)}\bigg)ds + 2\int_0^{t \wedge \tau}\inner{\sy_s}{\mathcal{G}(s,\sy_s)}_{U}d\mathcal{W}_s\end{align*} $\mathbbm{P}-a.s.$ in $\R$ for all $t \geq 0$.
\end{proposition}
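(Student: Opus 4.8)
The plan is to obtain the energy equality as a direct specialisation of the general It\^o Formula \ref{Ito formula}, applied to the time-independent functional $F:[0,\infty)\times U \rightarrow \R$ defined by $F(t,h):=\norm{h}_U^2$. Since $(\sy,\tau)$ is a local strong solution of (\ref{thespde*}), for $\mathbbm{P}$-a.e. $\omega$ the path $\sy_\cdot(\omega)$ lies in $C([0,T];H)$ and hence in $U$ via the continuous embedding $H \hookrightarrow U$, so that $F(t,\sy_t)=\norm{\sy_t}_U^2$ is well defined and the composition $F(\cdot,\sy_\cdot)$ is precisely the object governed by \ref{Ito formula}. The first task is therefore to confirm that $F$ meets the regularity hypothesis of that theorem, namely that $F_t$, $F_x$ and $F_{xx}$ are uniformly continuous on bounded subsets of $[0,T]\times U$.

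Computing the Fr\'echet derivatives of $F(h)=\inner{h}{h}_U$ is routine. Expanding $\inner{h+k}{h+k}_U=\inner{h}{h}_U+2\inner{h}{k}_U+\inner{k}{k}_U$ identifies the first derivative, under the Riesz convention (\ref{frech1}), as $F_x(h)=2h$, and one further differentiation gives the constant second derivative $F_{xx}(h)=2\,\mathrm{Id}_U$ in the sense of (\ref{frech2}); of course $F_t\equiv 0$. Each of these is trivially uniformly continuous on bounded sets -- indeed $F_x$ is globally Lipschitz while $F_{xx}$ and $F_t$ are constant -- so the hypotheses of \ref{Ito formula} hold without difficulty.

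Substituting these derivatives into the conclusion of \ref{Ito formula} reproduces the three terms of the claimed identity. The drift pairing becomes $\inner{F_x(s,\sy_s)}{\mathcal{Q}(s,\sy_s)}_U=2\inner{\sy_s}{\mathcal{Q}(s,\sy_s)}_U$, and, read component-wise against the basis of $\mathfrak{U}$ as in \ref{operatorthroughstochasticintegral}, the stochastic pairing becomes $\inner{F_x(s,\sy_s)}{\mathcal{G}(s,\sy_s)}_U=2\inner{\sy_s}{\mathcal{G}(s,\sy_s)}_U$. The one contribution deserving care is the trace term: with $F_{xx}(s,\sy_s)=2\,\mathrm{Id}_U$,
$$\frac{1}{2}\mathrm{Tr}\Big(F_{xx}(s,\sy_s)\mathcal{G}(s,\sy_s)\mathcal{G}(s,\sy_s)^*\Big)=\mathrm{Tr}\Big(\mathcal{G}(s,\sy_s)\mathcal{G}(s,\sy_s)^*\Big),$$
and the elementary Hilbert--Schmidt identity $\mathrm{Tr}(GG^*)=\sum_k\norm{G^*f_k}_{\mathfrak{U}}^2=\norm{G}_{\mathscr{L}^2(\mathfrak{U};U)}^2$, valid for any orthonormal basis $(f_k)$ of $U$ and implicit in \ref{definition of spaces}, identifies it with $\norm{\mathcal{G}(s,\sy_s)}_{\mathscr{L}^2(\mathfrak{U};U)}^2$. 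Assembling the three pieces yields the stated equality, $\mathbbm{P}$-a.s. in $\R$ for every $t\in[0,T]$.

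Because the argument is nothing more than an application of an already-established result, I do not expect any genuine analytic obstacle. The only points requiring attention are bookkeeping ones: keeping the Riesz identifications (\ref{frech1})--(\ref{frech2}) straight so that $F_x$ is read as an element of $U$ rather than of $U^*$, and correctly recognising the trace of $\mathcal{G}\mathcal{G}^*$ as a squared Hilbert--Schmidt norm (using $\norm{G^*}_{\mathscr{L}^2}=\norm{G}_{\mathscr{L}^2}$). The \emph{hard part} is thus notational rather than conceptual, and the whole proof amounts to a careful substitution into \ref{Ito formula}.
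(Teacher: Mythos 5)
Your proposal is correct and follows exactly the route the paper intends: the text preceding \ref{ogenergy} states that the energy equality is the application of \ref{Ito formula} to $F:h \mapsto \norm{h}_U^2$, and then defers the details to Da Prato--Zabczyk (Theorem 4.18) rather than writing them out. Your computation of $F_t \equiv 0$, $F_x(h)=2h$, $F_{xx}(h)=2\,\mathrm{Id}_U$ under the identifications (\ref{frech1})--(\ref{frech2}), together with the identity $\mathrm{Tr}(GG^*)=\norm{G}_{\mathscr{L}^2(\mathfrak{U};U)}^2$, supplies precisely the substitution the paper leaves implicit.
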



We do not prove these results here, but instead refer to [\cite{da2014stochastic}] Theorem 4.18 and related discussions therein. Also shown there is the It\^{o} Formula for processes defined by a more general evolution equation in a Hilbert Space $\mathcal{H}$, \begin{equation} \label{the one for phi}\py_t = \py_0 + \int_0^t\phi_sds + \int_0^t B(s)d\mathcal{W}_s\end{equation} for any given $\py_0:\Omega \rightarrow \mathcal{H}$ $\mathcal{F}_0-$measurable, $\phi:\Omega \times [0,t] \rightarrow \mathcal{H}$ progressively measurable and $\mathbbm{P}-a.s.$ Bochner Integrable and $B \in \bar{\mathcal{I}}^{\mathcal{H}}(\mathcal{W})$. Then we have the corresponding result to Theoerem \ref{Ito formula}.

\begin{theorem} \label{the ito that we use}
Suppose that $F$ is as in Theorem \ref{Ito formula} for $\mathcal{H}$ replacing $U$ and $\py$ is defined by (\ref{the one for phi}). Then $F(\cdot,\py_{\cdot})$ satisfies the identity \begin{align*}F(t,\py_t) = F(0,\py_0) &+ \int_0^{t\wedge\tau} \bigg( F_s(s,\py_s) + \inner{F_x(s,\py_s)}{\phi_s}_{\mathcal{H}}\bigg)ds\\ &+ \frac{1}{2}\int_0^{t \wedge \tau}\textnormal{Tr}\Big(F_{xx}(s,\py_s)(B(s))(B(s))^*\Big)ds\\ &+ \int_0^{t \wedge \tau}\inner{F_x(s,\py_s)}{B(s)}_{\mathcal{H}}d\mathcal{W}_s\end{align*} $\mathbbm{P}-a.e.$ in $\R$ for every $t \in [0,T]$.
\end{theorem}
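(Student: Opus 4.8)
The plan is to read the assertion as the single space specialisation of Theorem~\ref{Ito formula}. The evolution equation (\ref{the one for phi}) is precisely (\ref{thespde*}) once the triple $V \hookrightarrow H \hookrightarrow U$ is collapsed to $\mathcal{H} = \mathcal{H} = \mathcal{H}$, the continuous bilinear pairing $\inner{\cdot}{\cdot}_{U \times V}$ is replaced by the genuine inner product $\inner{\cdot}{\cdot}_{\mathcal{H}}$, and the operators are identified with the given data via $\mathcal{Q}(s,\cdot) := \phi_s$ and $\mathcal{G}(s,\cdot) := B(s)$. Under this dictionary the three summands in the conclusion of Theorem~\ref{Ito formula} become exactly the drift, trace and martingale terms claimed here. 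First I would record that $\py$ carries the regularity of a strong solution in the degenerate triple: the Bochner integral $\int_0^{\cdot}\phi_s\,ds$ is continuous in $\mathcal{H}$ by the assumed integrability of $\phi$, and $\int_0^{\cdot}B(s)\,d\mathcal{W}_s$ is a continuous local martingale in $\mathcal{H}$ by Corollary~\ref{main continuous local martingale result}, so that $\py_{\cdot}(\omega) \in C([0,T];\mathcal{H})$ and trivially lies in $L^2([0,T];\mathcal{H})$ on bounded intervals.

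The one point at which the dictionary is not literal is that the growth bounds of Assumptions~\ref{Qassumpt2} and \ref{Gassumpt2} need not hold for arbitrary data $\phi, B$. I would emphasise that those bounds serve only to give meaning to the SPDE and its solutions, and play no role in the derivation of the It\^{o} identity itself, which is a pathwise statement about a continuous $\mathcal{H}$ valued semi-martingale. To make this precise I would localise, setting $$\sigma_n := n \wedge \inf\Big\{t \geq 0 : \int_0^t \norm{\phi_s}_{\mathcal{H}}\,ds + \int_0^t \norm{B(s)}^2_{\mathscr{L}^2(\mathfrak{U};\mathcal{H})}\,ds \geq n\Big\}$$ and additionally truncating on $\{\norm{\py_0}_{\mathcal{H}} \leq n\}$, exactly as in the localisation procedure of \ref{cylindricalintlocal}. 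On each stopped interval the drift is genuinely integrable, $B$ lies in $\mathcal{I}^{\mathcal{H}}(\mathcal{W})$ and $\py$ is bounded, so Theorem~\ref{Ito formula} applies; since $\sigma_n \to \infty$ almost surely, fixing $(\omega,t)$ and choosing $n$ large recovers the unstopped identity.

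Should a self-contained argument be wanted, I would instead project onto finite-dimensional subspaces. Fixing an orthonormal basis $(a_k)$ of $\mathcal{H}$, the scalar components $\inner{\py_t}{a_k}_{\mathcal{H}}$ are ordinary It\^{o} processes whose drift and diffusion are $\inner{\phi_s}{a_k}_{\mathcal{H}}$ and $\inner{B(s)}{a_k}_{\mathcal{H}}$, the stochastic part coming from Proposition~\ref{unbounded take it in 2} and the drift part from the commutation of a bounded functional with the Bochner integral. Applying the classical finite-dimensional It\^{o} formula to $F(t,\Pi_N\py_t)$, where $\Pi_N$ is the orthogonal projection onto $\mathrm{span}(a_1,\dots,a_N)$, yields a truncated version of the asserted identity, after which I would let $N \to \infty$. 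The time integrals converge by the uniform continuity of $F_s$ and $F_x$ on bounded sets together with dominated convergence, while the stochastic term is controlled through the isometry \ref{ito isom for cylindrical} and the operator-through-integral identity \ref{operatorthroughstochasticintegral}.

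The main obstacle in either route is the trace term $\tfrac{1}{2}\int \mathrm{Tr}\big(F_{xx}(s,\py_s)B(s)B(s)^*\big)\,ds$: one must verify that its finite-rank approximants converge, which hinges on combining the Hilbert-Schmidt summability $\sum_i \norm{B(s)e_i}^2_{\mathcal{H}} < \infty$ with the uniform continuity of $F_{xx}$ over the bounded range swept out by $\py$. A secondary but routine difficulty is upgrading the convergence in $L^1$ (or in probability) of the stochastic integral to an almost sure identity valid simultaneously for all $t$, which I would resolve by passing to an almost surely convergent subsequence and appealing to the path continuity of both sides.
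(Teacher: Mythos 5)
First, a point of comparison: the paper does not prove this statement at all — it is quoted from [\cite{De Prato}] (Theorem 4.18 and the surrounding discussion), so any genuine proof attempt is already ``a different route.'' Judged on its own merits, however, your primary route contains a real gap. The dictionary $\mathcal{Q}(s,\cdot):=\phi_s$, $\mathcal{G}(s,\cdot):=B(s)$ does not typecheck: in the framework of (\ref{thespde*}), Assumptions \ref{Qassumpt2} and \ref{Gassumpt2} require $\mathcal{Q}$ and $\mathcal{G}$ to be \emph{deterministic} measurable maps of $(s,x)\in[0,T]\times V$, evaluated along the solution, whereas the data in (\ref{the one for phi}) are arbitrary progressively measurable processes of $(s,\omega)$. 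A general $\phi_s(\omega)$ cannot be written as $\mathcal{Q}(s,\py_s(\omega))$ for any deterministic $\mathcal{Q}$ — take $\phi$ independent of $\py$, or depending on the whole past of the path — so Theorem \ref{Ito formula} simply does not apply to $\py$, whatever the growth bounds. Your localisation via $\sigma_n$ repairs the bounds (the only defect you flagged) but not this structural mismatch: after stopping, $\phi_s\mathbbm{1}_{s\le\sigma_n}$ is still a random process, not a function of the state. Note also that the logical order is the reverse of what you propose: \ref{the ito that we use} is the \emph{more general} statement, and \ref{Ito formula} is the one that follows from it by substituting $\phi_s:=\mathcal{Q}(s,\sy_s)$ and $B(s):=\mathcal{G}(s,\sy_s)$; this is exactly how the paper and [\cite{De Prato}] organise the pair of results, so deducing the general case from the special one is circular.

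Your fallback route — projecting onto $\mathrm{span}(a_1,\dots,a_N)$, applying the finite-dimensional It\^{o} formula to $F(t,\Pi_N\py_t)$, and passing to the limit — is the right kind of argument, and is essentially how the cited references proceed. But as written it is an outline rather than a proof: the ``classical'' formula you invoke still has infinitely many driving Brownian motions (the noise is cylindrical over $\mathfrak{U}$), so a second truncation in the noise variable is needed before anything classical applies; and the two difficulties you yourself identify — convergence of the finite-rank approximants of the trace term $\mathrm{Tr}\bigl(F_{xx}(s,\py_s)B(s)B(s)^*\bigr)$, and upgrading the $L^1$/in-probability convergence of the stochastic integrals to an a.s. identity holding simultaneously for all $t$ — are precisely the substance of the theorem and are left unresolved. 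Either complete that second route in full, or do what the paper does and cite [\cite{De Prato}]; the first route should be discarded.
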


\subsection{The Case of Constant Multiplicative Noise}

Many techniques in proving existence and uniqueness of an SPDE in this framework rely on simplifying the equation to one where we can apply the standard theory, and then constructing solutions in the original framework via some appropriate limit of solutions to the simplified equations. As such we shall briefly considered a special type of equation in this framework which reduces the driving noise from something infinite dimensional to one dimensional. For this we work again with an arbitrary Hilbert Space $\mathcal{H}$.

\begin{proposition} \label{hjkl}
    Suppose that $\sy \in \bar{\mathcal{I}}^{\mathcal{H}}$ and that the operator $\mathcal{G}:\mathfrak{U} \times \mathcal{H} \rightarrow \mathcal{H}$ is such that $$\mathcal{G}_i: \phi \mapsto \lambda_i\phi$$ for each $i$ with $\lambda_i \in \R$, and \begin{equation}\label{squaresummable}\sum_{i=1}^\infty \lambda_i^2 < \infty.\end{equation} Then $\mathcal{G}\sy \in \bar{\mathcal{I}}^{\mathcal{H}}(\mathcal{W})$ and there exists a real valued Brownian Motion $W$ such that  \begin{equation}\label{then}\int_0^t \mathcal{G}\sy_s d\mathcal{W}_s = \bigg(\sum_{i=1}^\infty \lambda_i^2\bigg)^{1/2} \int_0^t\sy_sdW_s\end{equation} $\mathbbm{P}-a.s.$ for all $t \geq 0$.
\end{proposition}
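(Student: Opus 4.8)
The plan is to first verify the membership claim by a direct Hilbert--Schmidt computation, and then to identify the cylindrical integral with a scalar integral driven by a single Brownian motion assembled as a weighted series of the $W^i$. Writing $c := \left(\sum_{i=1}^\infty \lambda_i^2\right)^{1/2}$, the operator $\mathcal{G}\sy_s$ sends $e_i \mapsto \lambda_i \sy_s$, so its Hilbert--Schmidt norm satisfies $\norm{\mathcal{G}\sy_s}_{\mathscr{L}^2(\mathfrak{U};\mathcal{H})}^2 = \sum_{i=1}^\infty \lambda_i^2 \norm{\sy_s}_{\mathcal{H}}^2 = c^2\norm{\sy_s}_{\mathcal{H}}^2$. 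Progressive measurability is inherited from $\sy$, and integrating in time gives $\int_0^T\norm{\mathcal{G}\sy_s}_{\mathscr{L}^2(\mathfrak{U};\mathcal{H})}^2 ds = c^2\int_0^T\norm{\sy_s}_{\mathcal{H}}^2 ds < \infty$ for a.e. $\omega$ since $\sy \in \bar{\mathcal{I}}^{\mathcal{H}}$, placing $\mathcal{G}\sy$ in $\bar{\mathcal{I}}^{\mathcal{H}}(\mathcal{W})$. By \ref{squareintegrablewrtcylindrical} and its localised extension \ref{cylindricalintlocal}, the integral is then $\sum_{i=1}^\infty \lambda_i \int_0^t\sy_s dW^i_s$, the constants $\lambda_i$ coming out of each scalar integral by elementary linearity.

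Next I would construct the candidate driving motion $W_t := \tfrac{1}{c}\sum_{i=1}^\infty \lambda_i W^i_t$ and argue it is a standard one-dimensional Brownian motion. The partial sums $\tfrac{1}{c}\sum_{i=1}^n \lambda_i W^i$ are continuous $L^2$ martingales, and Doob's maximal inequality gives $\mathbbm{E}\sup_{s \leq T}\lvert \tfrac{1}{c}\sum_{i=m+1}^n \lambda_i W^i_s\rvert^2 \leq \tfrac{4T}{c^2}\sum_{i=m+1}^n \lambda_i^2$, so the partial sums are Cauchy uniformly on compacts and converge to a process $W$ with continuous paths. This limit is zero-mean Gaussian with $\mathbbm{E}(W_sW_t) = \tfrac{1}{c^2}\big(\sum_i \lambda_i^2\big)(s\wedge t) = s\wedge t$ and independent increments, hence a standard Brownian motion. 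Upgrading the pointwise $L^2$ limit to a genuine \emph{continuous} Brownian motion is the point requiring the most care, though it is classical.

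With $W$ in hand I would prove (\ref{then}) first for simple $\sy = \sum_k a_k\mathbbm{1}_{(t_k,t_{k+1}]}$, using $\sum_i \lambda_i(W^i_b - W^i_a) = c(W_b - W_a)$ and interchanging the finite sum over $k$ with the $L^2(\Omega;\mathcal{H})$-convergent series over $i$:
\begin{equation*}
\int_0^t \mathcal{G}\sy_s d\mathcal{W}_s = \sum_{i=1}^\infty \lambda_i \sum_k a_k\big(W^i_{t_{k+1}\wedge t}-W^i_{t_k \wedge t}\big) = c\sum_k a_k\big(W_{t_{k+1}\wedge t}-W_{t_k\wedge t}\big) = c\int_0^t \sy_s dW_s.
\end{equation*}
For general $\sy$, in the $\mathcal{I}^{\mathcal{H}}$ case I would take simple $\sy^n \to \sy$ in $L^2(\Omega\times[0,t];\mathcal{H})$ as in \ref{simpleapproximation}: the left-hand sides converge to $\int_0^t\mathcal{G}\sy_s d\mathcal{W}_s$ by the cylindrical isometry \ref{ito isom for cylindrical} (noting $\norm{\mathcal{G}(\sy^n-\sy)}_{\mathscr{L}^2(\mathfrak{U};\mathcal{H})} = c\norm{\sy^n-\sy}_{\mathcal{H}}$), while the right-hand sides converge to $c\int_0^t\sy_s dW_s$ by the scalar It\^o isometry \ref{realItoIsom} for $W$ (whose quadratic variation is $s$), so the two limits coincide. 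The full $\bar{\mathcal{I}}^{\mathcal{H}}$ statement then follows by localising along the stopping times $\tau_n = n\wedge\inf\{t : \int_0^t\norm{\sy_s}_{\mathcal{H}}^2 ds \geq n\}$, which simultaneously localise both integrals (up to the constant $c$) and under which each integral is independent of the chosen sequence, exactly as in the constructions of \ref{localmartingaleintegrator} and \ref{cylindricalintlocal}.
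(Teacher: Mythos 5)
Your proposal is correct, and it reaches the result by a genuinely different route from the paper's. Both arguments hinge on the process $M_s = \sum_{i=1}^\infty \lambda_i W^i_s$, but the paper (Lemma \ref{infsumbrownian}) identifies $M = \big(\sum_i \lambda_i^2\big)^{1/2}W$ by observing that $M$ is itself a cylindrical stochastic integral (of the constant process $P_{e_i} \equiv \lambda_i$), hence a continuous $(\mathcal{F}_t)$-martingale, computing $[M]_s = \sum_i\lambda_i^2\, s$ via the quadratic-variation limit result \ref{martingale quadratic variation limit}, and invoking L\'{e}vy's characterisation; you instead argue directly that the $L^2$-limit of the partial sums is a continuous zero-mean Gaussian process with covariance $s\wedge t$. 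For the identity itself the roles are reversed: the paper fixes the integrand and approximates the \emph{integrator}, writing $\int_0^t\mathcal{G}\sy_s d\mathcal{W}_s = \lim_n \int_0^t \sy_s dM^n_s$ after stopping, representing the tail as $M - M^n = \big(\sum_{i>n}\lambda_i^2\big)^{1/2}V$ for a standard Brownian motion $V$, and killing the error with the It\^{o} isometry \ref{realItoIsom}; you fix the two integrators and approximate the \emph{integrand} by simple processes as in \ref{simpleapproximation}, verify the identity algebraically there, and pass to the limit using the two isometries \ref{ito isom for cylindrical} and \ref{realItoIsom}. Your route is more elementary — it needs neither \ref{martingale quadratic variation limit} nor L\'{e}vy — but it obliges you to supply two details that the paper's route gets for free: (i) your $W$ must be a Brownian motion with respect to $(\mathcal{F}_t)$, not merely in its own filtration, for $\int_0^t\sy_s dW_s$ to be defined for $(\mathcal{F}_t)$-progressively measurable $\sy$ and for the isometry to apply; this does hold, since increments of the partial sums are independent of $\mathcal{F}_s$ and independence survives $L^2$ limits, whereas in the paper the $(\mathcal{F}_t)$-martingale property of $M$ is automatic from its construction as a stochastic integral; (ii) in your simple-process step, the convergence of $a_k\sum_{i\le n}\lambda_i\big(W^i_{t_{k+1}\wedge t}-W^i_{t_k\wedge t}\big)$ to $c\,a_k\big(W_{t_{k+1}\wedge t}-W_{t_k\wedge t}\big)$ in $L^2(\Omega;\mathcal{H})$ is not a generic fact about products of $L^2$-convergent sequences; it holds because the scalar tail is independent of the $\mathcal{F}_{t_k}$-measurable $a_k$, so the second moment factorises. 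Both points are routine to patch, and your localisation step coincides with the paper's.
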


In order to prove the above, we use an intermediary lemma.

\begin{lemma} \label{infsumbrownian}
    In the setting of Proposition \ref{hjkl} the infinite sum \begin{equation}\label{anotherinfsum} M_s := \sum_{i=1}^\infty \lambda_iW^i_s\end{equation} is convergent in $L^2\big(\Omega;\R)$ at every $s$, and the limiting martingale has the representation \begin{equation} \label{requiredform} M = \bigg(\sum_{i=1}^\infty \lambda_i^2\bigg)^{1/2}W\end{equation} for some real valued Brownian Motion $W$, $\mathbbm{P}-a.s.$ for all $t \geq 0$. 
\end{lemma}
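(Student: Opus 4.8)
The plan is to treat the partial sums $M^n_s := \sum_{i=1}^n \lambda_i W^i_s$ as a sequence in the space $\mathcal{M}^2_c$ of \ref{m2c} and to argue by the same Cauchy-in-$L^2$ reasoning used for the regular representation in \ref{regularrep}. Write $c := \big(\sum_{i=1}^\infty \lambda_i^2\big)^{1/2}$, the degenerate case $c=0$ forcing all $\lambda_i=0$ and hence $M\equiv 0$ being trivial. Because the $(W^i)$ are independent the cross terms vanish, so for $n>m$ and each fixed $s$ one has $\mathbbm{E}\big\vert M^n_s - M^m_s\big\vert^2 = s\sum_{i=m+1}^n \lambda_i^2$, which tends to $0$ by the summability hypothesis (\ref{squaresummable}). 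Thus the partial sums are Cauchy in $L^2(\Omega;\R)$ at every $s$, and completeness of that space defines the limit $M_s$; this establishes the first assertion of the lemma.

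To promote this pointwise $L^2$ limit to a genuine continuous martingale I would invoke the closedness of $\mathcal{M}^2_c$ in this topology, exactly as in the proof of \ref{main martingale result}. Each $M^n$ is a finite sum of scaled Brownian motions and hence lies in $\mathcal{M}^2_c$, and the convergence $M^n_s \to M_s$ in $L^2(\Omega;\R)$ at every $s$ then places the limit $M$ in $\mathcal{M}^2_c$ as well. In particular $M$ is a square integrable continuous martingale starting from zero.

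It remains to identify the law of $M$ with that of $cW$, which I would do directly by verifying that $W := M/c$ has the defining properties of a standard Brownian motion. Continuity and $W_0=0$ are already in hand from the previous step. For any partition $0=u_0<u_1<\dots<u_k$ the increment vector $\big(M_{u_j}-M_{u_{j-1}}\big)_{j=1}^k$ is the $L^2$-limit of the corresponding increment vectors of the $M^n$, each of which is a linear image of the independent Gaussian family $(W^i)$ and so Gaussian; since $L^2$ convergence forces convergence in distribution and Gaussianity survives such limits, the increment vector is jointly Gaussian. A short computation of the type already performed in the paper, again discarding vanishing cross terms, shows that the covariance of two increments of $M$ over disjoint time intervals vanishes (by the independent increments of each $W^i$), while $\mathbbm{E}\big\vert M_t-M_s\big\vert^2 = c^2(t-s)$ on a single interval. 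Joint Gaussianity together with the vanishing off-diagonal covariances makes the increments of $W$ independent, and the diagonal computation identifies each $W_{u_j}-W_{u_{j-1}}$ as an $N(0,u_j-u_{j-1})$ variable. These are precisely the defining properties of a standard Brownian motion, giving $M=cW$ and the representation (\ref{requiredform}).

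The one step that is not a routine repetition of the paper's Cauchy and Gaussian-limit machinery is the passage from the pointwise $L^2$ limit to a process with continuous sample paths. I would handle it by leaning on the closedness of $\mathcal{M}^2_c$ as above; concretely this is Doob's maximal inequality upgrading the pointwise Cauchy estimate to a uniform-in-time bound on each $[0,T]$, so that the partial sums are Cauchy in $L^2\big(\Omega;C([0,T];\R)\big)$ and the limit inherits continuity. An alternative to the explicit Gaussian verification would be to compute $[M^n]_t = t\sum_{i=1}^n \lambda_i^2$ and pass to the limit through \ref{martingale quadratic variation limit} to obtain $[M]_t = c^2 t$, then invoke L\'{e}vy's characterisation; I prefer the direct Gaussian route, as it needs no theorem beyond what the excerpt already provides.
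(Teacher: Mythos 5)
Your proof is correct, but it takes a genuinely different route from the paper's. The paper's own argument is shorter on both counts: it recognises $M_s = \int_0^s P(r)\,d\mathcal{W}_r$ for the deterministic integrand $P \in \mathcal{I}^{\R}(\mathcal{W})$ defined by $P_{e_i}(r) = \lambda_i$ (the summability (\ref{squaresummable}) is exactly the Hilbert--Schmidt condition), so convergence, path continuity and martingality all come for free from the cylindrical integration theory already built; it then passes the quadratic variations $[M^n]_s = s\sum_{i\leq n}\lambda_i^2$ through \ref{martingale quadratic variation limit} to get $[M]_s = s\sum_{i=1}^\infty\lambda_i^2$, and concludes by L\'{e}vy's characterisation --- precisely the alternative you mention and set aside at the end of your proposal. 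Your route instead performs the Cauchy estimate and the Doob maximal-inequality upgrade to $L^2\big(\Omega;C([0,T];\R)\big)$ by hand, and then verifies the Brownian axioms for $W = M/c$ directly: jointly Gaussian increments as $L^2$ limits of Gaussian vectors, vanishing cross-covariances over disjoint intervals, and variance $c^2(t-s)$ on a single interval; all of these steps are sound, including the treatment of the degenerate case $c=0$. What the direct Gaussian route buys is independence from L\'{e}vy's theorem; what it quietly loses is the filtration. Your verification shows $W$ is a Brownian motion with respect to its own natural filtration, whereas L\'{e}vy's characterisation, applied to the $(\mathcal{F}_t)$-martingale $M$, yields an $(\mathcal{F}_t)$-Brownian motion --- and that is what \ref{hjkl} actually requires, since the integral $\int_0^t \sy_s\,dW_s$ there is constructed against an integrand that is merely $(\mathcal{F}_t)$-progressively measurable, and the It\^{o} isometry uses independence of the increments of $W$ from $\mathcal{F}_{t_i}$. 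The gap is easily closed in one line: each increment $M^n_t - M^n_s$ is independent of $\mathcal{F}_s$ because the $W^i$ are $(\mathcal{F}_t)$-Brownian motions, and independence from a fixed $\sigma$-algebra survives the $L^2$ limit, so $W_t - W_s$ is independent of $\mathcal{F}_s$; you should add this remark to make your $W$ usable in the proof of \ref{hjkl}.
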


\begin{proof}[Proof of Proposition \ref{infsumbrownian}]
Firstly let's verify that the convergence in (\ref{anotherinfsum}) does indeed hold, which is immediate from observing that $$\sum_{i=1}^\infty \lambda_iW^i_s = \sum_{i=1}^\infty \int_0^s\lambda_idW^i_r$$ which is simply the stochastic integral $$\int_0^sP(r)d\mathcal{W}_r$$ for the process $P \in \mathcal{I}^{\R}(\mathcal{W})$ defined by $P_{e_i}(r) = \lambda_i$. So $M$ is a continuous genuine martingale, which we show is of the form (\ref{requiredform}) through Levy's Characterisation of Brownian Motion, e.g. [\cite{karatzas1991brownian}] Theorem 3.16 pp.157. Indeed the quadratic variation process $[M]$ is deduced from Lemma \ref{martingale quadratic variation limit}, where our approximating sequence of martingales $$M^n = \sum_{i=1}^n \lambda_i W^i_s$$ have quadratic variation $$[M^n]_s = \sum_{i=1}^n\lambda_i^2s$$ which of course converges in $L^1\big(\Omega;\R\big)$ to the infinite sum, from which we conclude
    $$[M]_s = \sum_{i=1}^\infty \lambda_i^2s.$$
    Therefore $$\Bigg[\frac{M}{\big(\sum_{i=1}^\infty \lambda_i^2\big)^{1/2}}\Bigg]_s = s$$ and we immediately deduce the representation (\ref{requiredform}) from Levy's Characterisation.

\end{proof}

\begin{proof} [Proof of \ref{hjkl}]
    As $\mathcal{G}:\mathcal{H} \rightarrow \mathcal{L}^2(\mathcal{U};\mathcal{H})$ is linear and also bounded from the observation that $$\norm{\mathcal{G}\phi}_{\mathcal{L}^2(\mathcal{U};\mathcal{H})}^2 = \sum_{i=1}^\infty\norm{\lambda_i\phi}_{\mathcal{H}}^2 = \sum_{i=1}^\infty\lambda_i^2\norm{\phi}_{\mathcal{H}}^2$$ then it is continuous as a mapping between these spaces so preserves the progressive measurability, and evidently the required boundedness to deduce that $\mathcal{G}\sy \in \bar{\mathcal{I}}^{\mathcal{H}}(\mathcal{W})$. To show the identity (\ref{then}) let's rewrite $$\int_0^t \mathcal{G}\sy_s d\mathcal{W}_s = \lim_{n \rightarrow \infty}\sum_{i=1}^n\int_0^t\lambda_i\sy_sdW^i_s = \lim_{n \rightarrow \infty} \int_0^t\sy_sdM^n_s$$ with notation $M^n$ as in Propostion \ref{infsumbrownian}, understanding once more that this limit is taken in $L^2\big(\Omega;\mathcal{H}\big)$ for the stopped integrals. The localising stopping times $(\tau_m)$ defined by $$\tau_m = m \wedge \inf\{ 0 \leq t < \infty: \int_0^t\sum_{i=1}^\infty \lambda_i^2\norm{\sy_s}_{\mathcal{H}}^2ds \geq m\}$$ are precisely as in Definition \ref{cylindricalintlocal} and (\ref{R_n}) to define the left and right sides of (\ref{then}) respectively. It is therefore sufficient to show that for any $m$, $$\lim_{n \rightarrow \infty} \int_0^t\sy_s\mathbbm{1}_{s \leq \tau_m}dM^n_s = \int_0^t\sy_s\mathbbm{1}_{s \leq \tau_m}dM_s$$ having simply inserted the representation (\ref{requiredform}) into our required identity. In other words we want that
    
    $$\mathbbm{E}\Big \vert \Big \vert \int_0^t\sy_s\mathbbm{1}_{s \leq \tau_m}dM_s - \int_0^t\sy_s\mathbbm{1}_{s \leq \tau_m}dM_s^n \Big \vert \Big \vert_{\mathcal{H}}^2 \longrightarrow 0$$ as $n \rightarrow \infty$ which is equivalent to the statement 
     $$\mathbbm{E}\Big \vert \Big \vert \int_0^t\sy_s\mathbbm{1}_{s \leq \tau_m}d(M-M^n)_s\Big \vert \Big \vert_{\mathcal{H}}^2 \longrightarrow 0.$$
     The same arguments of Proposition $\ref{infsumbrownian}$ afford us that the martingale $M-M^n$ which is given at each time $s$ by $$(M-M_n)_s = \sum_{i=n+1}^\infty\lambda_iW^i_s$$ has the representation $$M-M^n = \bigg(\sum_{i=n+1}^\infty \lambda_i^2\bigg)^{1/2} V$$ for a standard Brownian Motion $V$. So we have that
     
     \begin{align*}
         \mathbbm{E}\bigg \vert \bigg \vert  \int_0^t\sy_s\mathbbm{1}_{s \leq \tau_m}d(M-M^n)_s\bigg \vert \bigg \vert_{\mathcal{H}}^2 &= \mathbbm{E}\bigg \vert \bigg \vert \bigg(\sum_{i=n+1}^\infty \lambda_i^2\bigg)^{1/2} \int_0^t\sy_s\mathbbm{1}_{s \leq \tau_m}dV_s\bigg \vert \bigg \vert_{\mathcal{H}}^2\\ &= \bigg(\sum_{i=n+1}^\infty \lambda_i^2\bigg)\mathbbm{E}\bigg \vert \bigg \vert  \int_0^t\sy_s\mathbbm{1}_{s \leq \tau_m}dV_s\bigg \vert \bigg \vert_{\mathcal{H}}^2\\
         &= \bigg(\sum_{i=n+1}^\infty \lambda_i^2\bigg) \mathbbm{E} \int_0^t \norm{\sy_s\mathbbm{1}_{s \leq \tau_m}}_{\mathcal{H}}^2ds
     \end{align*}
     having used the It\^{o} Isometry \ref{realItoIsom}. By definition of the stopping time the integral is bounded uniformly in $\omega$ ($\mathbbm{P}-a.s.$) hence has finite expectation, so we conclude that this approaches zero in the limit from the fact that $\sum_{i=1}^\infty \lambda_i^2 < \infty$.
    
    \end{proof}




\section{Existence Theory for Nonlinear Stochastic Partial Differential Equations} \label{section 3}

\subsection{An Existence and Uniqueness Result in Finite Dimensions}

As suggested in the introduction, our techniques in the existence theory centre around taking finite dimensional approximations and using somewhat familiar theory. This scheme is referred to as a \textit{Galerkin Approximation}, used traditionally in the analysis for highly non-trivial PDEs such as the Euler and Navier-Stokes Equations (see [\cite{temam1974euler}, \cite{robinson2016three}] for example) so offers a very reasonable first suggestion for the study of their stochastic counterparts. This approach will only work if we can quickly deduce the existence and uniqueness of solutions of the finite dimensional system, which we do in this subsection. It should be noted however that we still work with the Cylindrical Brownian Motion $\mathcal{W}$, over the same infinite dimensional Hilbert Space $\mathfrak{U}$; it is only the space in which the equation satisfies its identity that is assumed finite dimensional.

\begin{theorem} \label{Skorotheorem}
Fix a finite-dimensional Hilbert Space $\mathcal{H}$. Suppose the following:
\begin{itemize}
    \item[1:] For any $T>0$, the operators $\mathscr{A}:[0,T] \times \mathcal{H} \rightarrow \mathcal{H}$ and $\mathscr{G}:[0,T] \times \mathcal{H} \rightarrow \mathscr{L}^2(\mathfrak{U};\mathcal{H})$ are measurable;\\
    \item[2:] There exists a $C_{\cdot}:[0,\infty) \rightarrow \R$ bounded on $[0,T]$ for every $T$, and constants $c_i$ such that for every $\boldsymbol{\phi}, \boldsymbol{\psi} \in \mathcal{H}$ and $t \in [0,\infty)$, \begin{align*}\norm{\mathscr{A}(t,\boldsymbol{\phi})}^2_{\mathcal{H}}  &\leq C_t\left[1 + \norm{\boldsymbol{\phi}}_{\mathcal{H}}^2\right]\\
     \norm{\mathscr{G}_i(t,\boldsymbol{\phi})}^2_{\mathcal{H}} &\leq C_tc_i\left[1 + \norm{\boldsymbol{\phi}}_{\mathcal{H}}^2\right]\\
     \sum_{i=1}^\infty c_i &< \infty\\
    \norm{\mathscr{A}(t,\boldsymbol{\phi}) - \mathscr{A}(t,\boldsymbol{\psi})}^2_{\mathcal{H}} &+\sum_{i=1}^\infty \norm{\mathscr{G}_i(t,\boldsymbol{\phi}) - \mathscr{G}_i(t,\boldsymbol{\psi})}^2_{\mathcal{H}} \leq C_t \norm{\boldsymbol{\phi}-\boldsymbol{\psi}}_{\mathcal{H}}^2
    \end{align*}
    \item[3:] $\py_0 \in L^2(\Omega;\mathcal{H})$.\\
\end{itemize}
Then there exists a process $\py:[0,\infty) \times \Omega \rightarrow \mathcal{H}$ such that for $\mathbbm{P}-a.e.$ $\omega$, $\py_\cdot(\omega) \in C\left([0,T];\mathcal{H}\right)$ for every $T>0$, $\py$ is progressively measurable in $\mathcal{H}$ and the identity \begin{equation}\label{identityingalerkinsolution}\py_t = \py_0 + \int_0^t \mathscr{A}(s,\py_s)ds + \int_0^t \mathscr{G}(s,\py_s)d\mathcal{W}_s\end{equation} holds $\mathbbm{P}-a.s.$ in $\mathcal{H}$ for every $t \geq 0$.
\end{theorem}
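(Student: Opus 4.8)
The plan is to construct the solution on each finite interval by Picard iteration, mirroring the classical finite-dimensional construction but with the real It\^{o} integral, isometry and maximal inequality replaced by their cylindrical counterparts established above. Fix $T>0$ and set $C := \sup_{s\in[0,T]}C_s$, which is finite by hypothesis. The observation that makes the whole machinery run is that the growth and Lipschitz assumptions are stated precisely in the Hilbert--Schmidt norm: because $\sum_i c_i < \infty$,
$$\norm{\mathscr{G}(t,\phi)}_{\mathscr{L}^2(\mathfrak{U};\mathcal{H})}^2 = \sum_{i=1}^\infty \norm{\mathscr{G}_i(t,\phi)}_{\mathcal{H}}^2 \leq C\Big(\sum_{i=1}^\infty c_i\Big)\big(1 + \norm{\phi}_{\mathcal{H}}^2\big),$$
while the Lipschitz hypothesis bounds $\norm{\mathscr{G}(t,\phi) - \mathscr{G}(t,\psi)}_{\mathscr{L}^2(\mathfrak{U};\mathcal{H})}^2$ directly by $C\norm{\phi-\psi}_{\mathcal{H}}^2$. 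These are exactly the quantities governed by the It\^{o} isometry \ref{ito isom for cylindrical}.

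Define the iterates $\py^0_t := \py_0$ and
$$\py^{n+1}_t := \py_0 + \int_0^t\mathscr{A}(s,\py^n_s)\,ds + \int_0^t\mathscr{G}(s,\py^n_s)\,d\mathcal{W}_s.$$
First I would check inductively that each $\py^n$ is progressively measurable with $\mathscr{A}(\cdot,\py^n)$ almost surely Bochner integrable and $\mathscr{G}(\cdot,\py^n)\in\mathcal{I}^{\mathcal{H}}(\mathcal{W})$, so that the integrals are defined and, by \ref{main martingale result}, the stochastic integral is a continuous square integrable martingale; joint measurability of $\mathscr{A},\mathscr{G}$ with progressive measurability of $\py^n$ gives the measurability, and the growth bounds together with the inductive hypothesis $\mathbbm{E}\sup_{[0,T]}\norm{\py^n_s}_{\mathcal{H}}^2<\infty$ give the integrability. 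Combining the Cauchy--Schwarz inequality on the drift, Doob's maximal inequality with \ref{ito isom for cylindrical} on the diffusion, and the linear growth bounds produces a recursion $a_{n+1}(t)\leq b + K\int_0^t a_n(s)\,ds$ for $a_n(t):=\mathbbm{E}\sup_{s\leq t}\norm{\py^n_s}_{\mathcal{H}}^2$, which iterates to the uniform bound $\sup_n a_n(T)\leq b\,e^{KT}<\infty$.

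The heart of the argument is the contraction estimate. Writing $\Delta^n(t):=\mathbbm{E}\sup_{s\leq t}\norm{\py^{n+1}_s-\py^n_s}_{\mathcal{H}}^2$, the drift difference is controlled by Cauchy--Schwarz and the Lipschitz bound, and the diffusion difference by Doob's inequality, \ref{ito isom for cylindrical} and the Lipschitz bound, to give
$$\Delta^n(t) \leq K\int_0^t \Delta^{n-1}(s)\,ds.$$
Iterating yields $\Delta^n(t)\leq \Delta^0(T)(Kt)^n/n!$, which is summable in $n$, so $(\py^n)$ is Cauchy in the Banach space of continuous, progressively measurable $\mathcal{H}$-valued processes under the norm $\big(\mathbbm{E}\sup_{[0,T]}\norm{\cdot}_{\mathcal{H}}^2\big)^{1/2}$. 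Its limit $\py$ is progressively measurable, almost surely continuous and square integrable; since $\mathscr{A}(\cdot,\py^n)\to\mathscr{A}(\cdot,\py)$ and $\mathscr{G}(\cdot,\py^n)\to\mathscr{G}(\cdot,\py)$ in the relevant $L^2$ spaces by the Lipschitz hypothesis, I can pass to the limit in the recursion --- the stochastic integral converging via \ref{ito isom for cylindrical} --- to conclude that $\py$ satisfies (\ref{identityingalerkinsolution}) on $[0,T]$.

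It remains to extend uniquely to $[0,\infty)$. The same difference estimates applied to two solutions give $\mathbbm{E}\sup_{s\leq t}\norm{\py_s-\tilde{\py}_s}_{\mathcal{H}}^2\leq K\int_0^t\mathbbm{E}\sup_{r\leq s}\norm{\py_r-\tilde{\py}_r}_{\mathcal{H}}^2\,ds$, whence Gronwall's inequality forces $\py=\tilde{\py}$; the solutions constructed on $[0,n]$ for successive $n$ therefore agree on overlaps and patch into a single process on $[0,\infty)$ solving (\ref{identityingalerkinsolution}) for all $t\geq0$. The main difficulty is bookkeeping rather than conceptual: one must ensure every appearance of the diffusion coefficient is measured in the Hilbert--Schmidt norm so that \ref{ito isom for cylindrical} and the maximal inequality apply unchanged, and appreciate that it is precisely the summability $\sum_i c_i<\infty$ in the growth hypothesis which places each $\mathscr{G}(\cdot,\py^n)$ in $\mathcal{I}^{\mathcal{H}}(\mathcal{W})$ rather than only in the localised class $\bar{\mathcal{I}}^{\mathcal{H}}(\mathcal{W})$.
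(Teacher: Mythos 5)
Your proposal is correct, but it takes a genuinely different route from the paper. The paper truncates the noise: it defines $\mathscr{G}^k$ to agree with $\mathscr{G}$ on the basis vectors $e_i$ for $i \leq k$ and to vanish otherwise, invokes the classical existence and uniqueness theory for each truncated equation (this is the only place where the finite-dimensionality of $\mathcal{H}$ is used), and then shows the resulting solutions $(\py^k)$ are Cauchy in $L^2\left(\Omega;C([0,t];\mathcal{H})\right)$ by an energy argument: the It\^{o} formula \ref{the ito that we use}, the BDG inequality \ref{BDG} combined with Young's inequality, stopping times $\tau_R$ to guarantee that all expectations involved are finite, Gr\"{o}nwall, and finally the tail summability $\sum_{i=j+1}^\infty c_i \rightarrow 0$ to kill the defect term $\sum_{i=j+1}^k\norm{\mathscr{G}_i(s,\py^k_s)}_{\mathcal{H}}^2$. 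You instead run the Picard iteration directly against the full cylindrical integral, with the Hilbert--Schmidt norm playing the role that the absolute value plays classically; the contraction comes from the isometry \ref{ito isom for cylindrical} and a maximal inequality, with no approximation layer at all. Your route buys three things: it avoids the It\^{o} formula and energy equality entirely; it avoids the stopping-time bookkeeping, since the Picard iterates have finite second moments by construction (this is also what legitimises your Gr\"{o}nwall patching step --- for two arbitrary solutions, as in the paper's separate uniqueness theorem, a localisation would be required before taking expectations); and it never uses $\dim\mathcal{H} < \infty$, so it in fact proves the statement for an arbitrary separable Hilbert space $\mathcal{H}$. What the paper's route buys is the ability to quote the classical finite-dimensional theorem as a black box rather than redo the fixed-point argument. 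One small caveat on your side: the maximal inequality you need is the $L^2$ one, $\mathbbm{E}\sup_{s\leq t}\norm{M_s}_{\mathcal{H}}^2 \leq 4\,\mathbbm{E}\norm{M_t}_{\mathcal{H}}^2$, which is not the form stated in \ref{BDG} (that is the $p=1$ version); it does follow from Doob's inequality applied to the real nonnegative submartingale $\norm{M_\cdot}_{\mathcal{H}}$, itself justified from the projection-based definition \ref{def of m2ch}, so this is a standard supplement rather than a gap, but it should be stated explicitly since the paper does not record it.
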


We remark that the operators satisfy the assumptions of \ref{Qassumpt2}, \ref{Gassumpt2} for the spaces $V=H=U:= \mathcal{H}$ and that the conclusion of this theorem is the existence of a strong solution of (\ref{identityingalerkinsolution}) in the sense of Definition \ref{definitionofregularsolutionglobaltime}.

\begin{proof}
With the finite-dimesnionality of $\mathcal{H}$ in place, we first restrict ourselves to finitely many Brownian Motions in our stochastic integral to make things classical. Fixing any $T > 0$, let's define the operator $\mathscr{G}^k: [0,T]  \times \mathcal{H} \times \mathfrak{U} \rightarrow \mathcal{H}$ on the basis vectors $(e_i)$ of $\mathfrak{U}$ by $\mathscr{G}^k(\cdot,\cdot,e_i) = \mathscr{G}(\cdot,\cdot,e_i)$ for $i \leq k$, and zero otherwise. We consider at first the equation
$$\py^k_t = \py^k_0 + \int_0^t \mathscr{A}(s,\py^k_s)ds + \int_0^t \mathscr{G}^k(s,\py^k_s)d\mathcal{W}_s$$ or equivalently, $$\py^k_t = \py^k_0 + \int_0^t \mathscr{A}(s,\py^k_s)ds + \sum_{i=1}^k\int_0^t \mathscr{G}_i(s,\py^k_s)dW^i_s$$ for $\py^k_0:=\py_0$. The existence and uniqueness of solutions to this finite-dimensional system is then classical (for solutions defined as in the theorem). Consider now solutions $\py^j, \py^k$ for $j < k$ arbitrary, which therefore satisfy the difference equation
\begin{align*}
    \py^k_r - \py^j_r = \int_0^r\mathscr{A}(s,\py^k_s) - \mathscr{A}(s,\py^j_s) ds + \int_0^r\mathscr{G}^k(s,\py^k_s) -\mathscr{G}^j(s,\py^j_s) d\mathcal{W}_s
\end{align*}
for any $r\in[0,\infty)$. By applying the energy identity Proposition \ref{a new first statement}, for the spaces all taken to be $\mathcal{H}$, we see further that the identity 
\begin{align*}
    \norm{\py^k_r - \py^j_r}_{\mathcal{H}}^2 &= 2\int_0^r\inner{\mathscr{A}(s,\py^k_s) - \mathscr{A}(s,\py^j_s)}{\py^k_s - \py^j_s}_{\mathcal{H}} ds\\ &+ \int_0^r\norm{\mathscr{G}^k(s,\py^k_s) -\mathscr{G}^j(s,\py^j_s)}^2_{\mathscr{L}^2(\mathfrak{U};\mathcal{H})}ds +  2\int_0^r\inner{\mathscr{G}^k(s,\py^k_s) -\mathscr{G}^j(s,\py^j_s)}{\py^k_s - \py^j_s}_{\mathcal{H}} d\mathcal{W}_s
\end{align*}
holds $\mathbbm{P}-a.s.$. We use Cauchy-Schwarz to move to an inequality, and rewrite the quadratic variation term to give us the bound
\begin{align*}
    \norm{\py^k_r - \py^j_r}_{\mathcal{H}}^2 &\leq  \int_0^r\left(2\norm{\mathscr{A}(s,\py^k_s) - \mathscr{A}(s,\py^j_s)}_{\mathcal{H}}\norm{\py^k_s - \py^j_s}_{\mathcal{H}} + \sum_{i=1}^j\norm{\mathscr{G}_i(s,\py^k_s) -\mathscr{G}_i(s,\py^j_s)}^2\right)ds\\& +\int_0^r \sum_{i=j+1}^k\norm{\mathscr{G}_i(s,\py^k_s)}_{\mathcal{H}}^2ds  +  2\int_0^r\inner{\mathscr{G}^k(s,\py^k_s) -\mathscr{G}^j(s,\py^j_s)}{\py^k_s - \py^j_s}_{\mathcal{H}} d\mathcal{W}_s.
\end{align*}
In one step now we bound the stochastic integral by its absolute value, take the supremum over all such $r$ up to any arbitrary time $t\in[0,\infty)$ and employ the Lipschitz assumption to see that
\begin{align*}
    \sup_{r\in[0,t]}\norm{\py^k_r - \py^j_r}_{\mathcal{H}}^2 &\leq  c\int_0^t\norm{\py^k_s - \py^j_s}_{\mathcal{H}}^2ds\\& +\int_0^t \sum_{i=j+1}^k\norm{\mathscr{G}_i(s,\py^k_s)}_{\mathcal{H}}^2ds  +  2\sup_{r\in[0,t]}\left\vert\int_0^r\inner{\mathscr{G}^k(s,\py^k_s) -\mathscr{G}^j(s,\py^j_s)}{\py^k_s - \py^j_s}_{\mathcal{H}} d\mathcal{W}_s\right\vert
\end{align*}
for a generic constant $c$, allowed to depend on $t$. We want to take the expectation here but have to be slightly careful in ensuring that the expectation is finite; to this end we consider the stopping times $$\tau_R:= R \wedge \inf\{s \geq 0: \max\{\norm{\py^k_s}_{\mathcal{H}},\norm{\py^j_s}_{\mathcal{H}}\} \geq R \} $$ and the process defined for any fixed $R$ by $$\tilde{\py}^k_s:=\py^k_s\mathbbm{1}_{s \leq \tau_R}, \qquad \tilde{\py}^j_s:=\py^j_s\mathbbm{1}_{s \leq \tau_R}.$$
From the continuity of the processes $\py^k,\py^j$ then $(\tau_R)$ is a $\mathbbm{P}-a.s.$ monotone increasing sequence convergent to infinity and $\max\{\norm{\tilde{\py}^k_s}_{\mathcal{H}},\norm{\tilde{\py}^j_s}_{\mathcal{H}}\} \leq R$ for any $s \geq 0$. It is trivial that these processes satisfy the same inequality
\begin{align*}
    \sup_{r\in[0,t]}\norm{\tilde{\py}^k_r - \tilde{\py}^j_r}_{\mathcal{H}}^2 &\leq  c\int_0^t\norm{\tilde{\py}^k_s - \tilde{\py}^j_s}_{\mathcal{H}}^2ds\\& +\int_0^t \sum_{i=j+1}^k\norm{\mathscr{G}_i(s,\tilde{\py}^k_s)}_{\mathcal{H}}^2ds  +  2\sup_{r\in[0,t]}\left\vert\int_0^r\inner{\mathscr{G}^k(s,\tilde{\py}^k_s) -\mathscr{G}^j(s,\tilde{\py}^j_s)}{\tilde{\py}^k_s - \tilde{\py}^j_s}_{\mathcal{H}} d\mathcal{W}_s\right\vert
\end{align*}
and justify that the expectation of all terms involved is finite (indeed for the stochastic integral, using the Lipschitz assumption and the boundedness of $\tilde{\py}^k_\cdot - \tilde{\py}^j_\cdot$ then $\inner{\mathscr{G}^k(\cdot,\tilde{\py}^k_\cdot) -\mathscr{G}^j(\cdot,\tilde{\py}^j_\cdot)}{\tilde{\py}^k_\cdot - \tilde{\py}^j_\cdot}_{\mathcal{H}} \in \mathcal{I}^\R(\mathcal{W})$ so the expectation of this term is finite). We do now take the expectation and apply the classical Burkholder-Davis-Gundy Inequality (recall, again, [\cite{burkholder1972integral}]) to give us the bound
\begin{align*}
    \mathbbm{E}\sup_{r\in[0,t]}\norm{\tilde{\py}^k_r - \tilde{\py}^j_r}_{\mathcal{H}}^2 &\leq  c\mathbbm{E}\int_0^t\norm{\tilde{\py}^k_s - \tilde{\py}^j_s}_{\mathcal{H}}^2ds +\mathbbm{E}\int_0^t \sum_{i=j+1}^k\norm{\mathscr{G}_i(s,\tilde{\py}^k_s)}_{\mathcal{H}}^2ds\\  &+  c\mathbbm{E}\left(\int_0^t\sum_{i=1}^\infty\inner{\mathscr{G}_i^k(s,\tilde{\py}^k_s) -\mathscr{G}_i^j(s,\tilde{\py}^j_s)}{\tilde{\py}^k_s - \tilde{\py}^j_s}_{\mathcal{H}}^2 ds\right)^{\frac{1}{2}}
\end{align*}
which we promptly reduce to 
\begin{align*}
    \mathbbm{E}\sup_{r\in[0,t]}\norm{\tilde{\py}^k_r - \tilde{\py}^j_r}_{\mathcal{H}}^2 &\leq  c\mathbbm{E}\int_0^t\norm{\tilde{\py}^k_s - \tilde{\py}^j_s}_{\mathcal{H}}^2ds +\mathbbm{E}\int_0^t \sum_{i=j+1}^k\norm{\mathscr{G}_i(s,\tilde{\py}^k_s)}_{\mathcal{H}}^2ds\\&  +  c\mathbbm{E}\left(\int_0^t\left[\sum_{i=1}^j\norm{\mathscr{G}_i(s,\tilde{\py}^k_s) -\mathscr{G}_i(s,\tilde{\py}^j_s)}^2 + \sum_{i=j+1}^k\norm{\mathscr{G}_i(s,\tilde{\py}^k_s)}_{\mathcal{H}}^2 \right]\norm{\tilde{\py}^k_s - \tilde{\py}^j_s}_{\mathcal{H}}^2 ds\right)^{\frac{1}{2}}.
\end{align*}
Employing our Lipschitz assumption once more, followed by an application of Young's Inequality, we have that
\begin{align*}
    &c\left(\int_0^t\left[\sum_{i=1}^j\norm{\mathscr{G}_i(s,\tilde{\py}^k_s) -\mathscr{G}_i(s,\tilde{\py}^j_s)}^2 + \sum_{i=j+1}^k\norm{\mathscr{G}_i(s,\tilde{\py}^k_s)}_{\mathcal{H}}^2 \right]\norm{\tilde{\py}^k_s - \tilde{\py}^j_s}_{\mathcal{H}}^2 ds\right)^{\frac{1}{2}}\\
    & \qquad \qquad \leq c\left(\int_0^t\left[\norm{\tilde{\py}^k_s - \tilde{\py}^j_s}_{\mathcal{H}}^2 + \sum_{i=j+1}^k\norm{\mathscr{G}_i(s,\tilde{\py}^k_s)}_{\mathcal{H}}^2 \right]\norm{\tilde{\py}^k_s - \tilde{\py}^j_s}_{\mathcal{H}}^2 ds\right)^{\frac{1}{2}}\\
    &\qquad \qquad \leq c\left(\sup_{r\in[0,t]}\norm{\tilde{\py}^k_r - \tilde{\py}^j_r}_{\mathcal{H}}^2\int_0^t \norm{\tilde{\py}^k_s - \tilde{\py}^j_s}_{\mathcal{H}}^2 + \sum_{i=j+1}^k\norm{\mathscr{G}_i(s,\tilde{\py}^k_s)}_{\mathcal{H}}^2  ds\right)^{\frac{1}{2}}\\
    &\qquad \qquad = c\left(\sup_{r\in[0,t]}\norm{\tilde{\py}^k_r - \tilde{\py}^j_r}_{\mathcal{H}}^2\right)^{\frac{1}{2}}\left(\int_0^t \norm{\tilde{\py}^k_s - \tilde{\py}^j_s}_{\mathcal{H}}^2 + \sum_{i=j+1}^k\norm{\mathscr{G}_i(s,\tilde{\py}^k_s)}_{\mathcal{H}}^2  ds\right)^{\frac{1}{2}}\\
    & \qquad \qquad \leq \frac{1}{2}\sup_{r\in[0,t]}\norm{\tilde{\py}^k_r - \tilde{\py}^j_r}_{\mathcal{H}}^2 + \frac{c^2}{2}\int_0^t\left(\norm{\tilde{\py}^k_s - \tilde{\py}^j_s}_{\mathcal{H}}^2 + \sum_{i=j+1}^k\norm{\mathscr{G}_i(s,\tilde{\py}^k_s)}_{\mathcal{H}}^2\right)ds
\end{align*}
and furthermore
\begin{align*}
    \mathbbm{E}\sup_{r\in[0,t]}\norm{\tilde{\py}^k_r - \tilde{\py}^j_r}_{\mathcal{H}}^2 &\leq  c\mathbbm{E}\int_0^t\norm{\tilde{\py}^k_s - \tilde{\py}^j_s}_{\mathcal{H}}^2ds +c\mathbbm{E}\int_0^t \sum_{i=j+1}^k\norm{\mathscr{G}_i(s,\tilde{\py}^k_s)}_{\mathcal{H}}^2ds.
\end{align*}
It is then a standard application of the Gr\"{o}nwall Inequality that \begin{equation}\label{abcdefg} \mathbbm{E}\sup_{r\in[0,t]}\norm{\tilde{\py}^k_r - \tilde{\py}^j_r}_{\mathcal{H}}^2 \leq c\mathbbm{E}\int_0^t \sum_{i=j+1}^k\norm{\mathscr{G}_i(s,\tilde{\py}^k_s)}_{\mathcal{H}}^2ds\end{equation} where the $c$ incorporates $e^{ct}$. Observe also that, through very similar arguments just using the linear growth property instead of the Lipschitz one, we have that
\begin{align*}
    \mathbbm{E}\sup_{r\in[0,t]}\norm{\tilde{\py}^k_r}_{\mathcal{H}}^2 &\leq \mathbbm{E}\norm{\tilde{\py}_0}_{\mathcal{H}}^2 + \mathbbm{E}\int_0^t\left(2\norm{\mathscr{A}(s,\tilde{\py}^k_s)}_{\mathcal{H}}\norm{\tilde{\py}^k_s}_{
\mathcal{H}} + \sum_{i=1}^k\norm{\mathscr{G}_i(s,\tilde{\py}^k_s)}_{\mathcal{H}}^2\right)ds\\ & \qquad \qquad \qquad \qquad +2\mathbbm{E}\sup_{r \in [0,t]}\left\vert \sum_{i=1}^k\int_0^r\inner{\mathscr{G}_i(s,\tilde{\py}^k_s)}{\tilde{\py}^k_s}_{\mathcal{H}} dW^i_s\right\vert\\
&\leq \mathbbm{E}\norm{\tilde{\py}_0}_{\mathcal{H}}^2 +c\mathbbm{E}\int_0^t(1 + \norm{\tilde{\py}^k_s}_{\mathcal{H}}) \norm{\tilde{\py}^k_s}_{\mathcal{H}} + 1 +  \norm{\tilde{\py}^k_s}_{\mathcal{H}}^2 ds\\
& \qquad \qquad \qquad \qquad + c\mathbbm{E}\left(\int_0^t\left(1 + \norm{\tilde{\py}^k_s}_{\mathcal{H}}^2  \right)\norm{\tilde{\py}^k_s}_{\mathcal{H}}^2ds\right)^{\frac{1}{2}}
\end{align*}
to which we use that $\norm{\tilde{\py}^k_s}_{\mathcal{H}} \leq 1 + \norm{\tilde{\py}^k_s}_{\mathcal{H}}^2$ to see that
\begin{align*}
    \mathbbm{E}\sup_{r\in[0,t]}\norm{\tilde{\py}^k_r}_{\mathcal{H}}^2 \leq \mathbbm{E}\norm{\tilde{\py}_0}_{\mathcal{H}}^2+ c\mathbbm{E}\int_0^t 1 + \norm{\tilde{\py}^k_s}_{\mathcal{H}}^2 ds + c\mathbbm{E}\left(\int_0^t\left(1 + \norm{\tilde{\py}^k_s}_{\mathcal{H}}^2  \right)\norm{\tilde{\py}^k_s}_{\mathcal{H}}^2ds\right)^{\frac{1}{2}}
\end{align*}
and further
\begin{align*}
    \mathbbm{E}\sup_{r\in[0,t]}\norm{\tilde{\py}^k_r}_{\mathcal{H}}^2 \leq c\left[\mathbbm{E}\norm{\tilde{\py}_0}_{\mathcal{H}}^2+ 1\right] + c\mathbbm{E}\int_0^t \norm{\tilde{\py}^k_s}_{\mathcal{H}}^2 ds
\end{align*}
as above, integrating the $1$ and adding it as a constant. Thus we have $$  \mathbbm{E}\sup_{r\in[0,t]}\norm{\tilde{\py}^k_r}_{\mathcal{H}}^2 \leq c\left[\mathbbm{E}\norm{\tilde{\py}_0}_{\mathcal{H}}^2+ 1\right] = c\left[\mathbbm{E}\norm{\py_0}_{\mathcal{H}}^2+ 1\right] $$ which is a bound uniform in $k$ and independent of $\tau_R$. Moreover for each fixed $k$ we appreciate that the sequence of random variables $$\sup_{r\in[0,t]}\norm{\tilde{\py}^k_r}_{\mathcal{H}}^2$$ is monotone increasing (indexed by $R$) and convergent to $\sup_{r\in[0,t]}\norm{\py^k_r}_{\mathcal{H}}^2$, $\mathbbm{P}-a.s.$. Thus we may apply the Monotone Convergence Theorem to this sequence of random variables to see that
\begin{align*}
    \mathbbm{E}\sup_{r\in[0,t]}\norm{\py^k_r}_{\mathcal{H}}^2 &= \lim_{R \rightarrow \infty}\mathbbm{E}\sup_{r\in[0,t]}\norm{\tilde{\py}^k_r}_{\mathcal{H}}^2\\ &\leq c\left[\mathbbm{E}\norm{\py_0}_{\mathcal{H}}^2+ 1\right].
\end{align*}
With this bound established we can revert back to (\ref{abcdefg}), combining with the boundedness of the $\mathscr{G}_i$ to deduce that $$\mathbbm{E}\int_0^t\sum_{i=j+1}^{k}\norm{\mathscr{G}_i(s,\py^k_s)}_{\mathcal{H}}^2ds < \infty$$ and clearly $$\mathbbm{E}\int_0^t\sum_{i=j+1}^{k}\norm{\mathscr{G}_i(s,\tilde{\py}^k_s)}_{\mathcal{H}}^2ds \leq \mathbbm{E}\int_0^t\sum_{i=j+1}^{k}\norm{\mathscr{G}_i(s,\py^k_s)}_{\mathcal{H}}^2ds$$
so we can update (\ref{abcdefg}) with the bound 
\begin{equation}\nonumber \mathbbm{E}\sup_{r\in[0,t]}\norm{\tilde{\py}^k_r - \tilde{\py}^j_r}_{\mathcal{H}}^2 \leq c\mathbbm{E}\int_0^t \sum_{i=j+1}^k\norm{\mathscr{G}_i(s,\py^k_s)}_{\mathcal{H}}^2ds\end{equation}
to which we apply the same monotone convergence argument to deduce that 
\begin{equation}\label{abcdefgh} \mathbbm{E}\sup_{r\in[0,t]}\norm{\py^k_r - \py^j_r}_{\mathcal{H}}^2 \leq c\mathbbm{E}\int_0^t \sum_{i=j+1}^k\norm{\mathscr{G}_i(s,\py^k_s)}_{\mathcal{H}}^2ds.\end{equation}
Moreover
\begin{align*}
    \sup_{k > j}\mathbbm{E}\int_0^t \sum_{i=j+1}^k\norm{\mathscr{G}_i(s,\py^k_s)}_{\mathcal{H}}^2ds &\leq t\sup_{k >j}\mathbbm{E}\sup_{r\in[0,t]} \sum_{i=j+1}^k\norm{\mathscr{G}_i(r,\py^k_r)}_{\mathcal{H}}^2\\
    &\leq t\sup_{k >j}\mathbbm{E}\sup_{r\in[0,t]} \sum_{i=j+1}^kC_tc_i(1 + \norm{\py^k_r}_{\mathcal{H}}^2)\\
    &:= c\sup_{k >j}\mathbbm{E}\sup_{r\in[0,t]} \sum_{i=j+1}^kc_i(1 + \norm{\py^k_r}_{\mathcal{H}}^2)\\
    &\leq c\sup_{k >j}\sum_{i=j+1}^kc_i\mathbbm{E}\sup_{r\in[0,t]} (1 + \norm{\py^k_r}_{\mathcal{H}}^2)\\
    &\leq c \sum_{i=j+1}^\infty c_i \sup_{k > j}\mathbbm{E}\sup_{r\in[0,t]} (1 + \norm{\py^k_r}_{\mathcal{H}}^2)\\
    &\leq c \sum_{i=j+1}^\infty c_i\left(c\left[\mathbbm{E}\norm{\py_0}_{\mathcal{H}}^2+ 1\right] \right)
\end{align*}
which is a sequence in $j$ monotone decreasing to zero. Thus in view of (\ref{abcdefgh}),
$$\lim_{j \rightarrow \infty} \sup_{k > j}\left[\mathbbm{E}\sup_{r\in[0,t]}\norm{\py^k_r - \py^j_r}_{\mathcal{H}}^2\right] = 0$$ so the sequence $(\py^k)$ is Cauchy in $L^2\left(\Omega;C\left([0,t];\mathcal{H}\right) \right)$ and as such we can deduce the existence of a $\py$ such that $\py^k \rightarrow \py$ in this space (and hence, in $L^2\left(\Omega;L^2\left([0,t];\mathcal{H}\right) \right))$ for every $t \in [0,T]$, and thus $\py$ is also the $\mathbbm{P}-a.s.$ limit of a subsequence of the $(\py^k)$ in $C\left([0,T];\mathcal{H}\right)$. This limit process inherits the progressive measurability (indeed it is adapted and has continuous paths in $\mathcal{H}$). It simply remains to show that $\py$ satisfies the identity (\ref{identityingalerkinsolution}), so we first consider the $\mathbbm{P}-a.s.$ convergent subsequence $(\py^{k_l})$. Looking at the stochastic integral and employing Proposition \ref{ito isom for cylindrical}, we have that
\begin{align*}
   &\mathbbm{E}\left \Vert \int_0^t \mathscr{G}(s,\py_s)d\mathcal{W}_s - \int_0^t \mathscr{G}^{k_l}(s,\py^{k_l}_s)d\mathcal{W}_s \right\Vert_{\mathcal{H}}^2\\ & \qquad \qquad \qquad \qquad =  \mathbbm{E}\left \Vert  \int_0^t \mathscr{G}(s,\py_s) - \mathscr{G}^{k_l}(s,\py^{k_l}_s) d\mathcal{W}_s \right\Vert_{\mathcal{H}}^2\\
   & \qquad \qquad \qquad \qquad= \mathbbm{E}\int_0^t\sum_{i=1}^\infty \norm{\mathscr{G}_i(s,\py_s) - \mathscr{G}^{k_l}_i(s,\py^{k_l}_s)}_{\mathcal{H}}^2ds\\
& \qquad \qquad \qquad \qquad= \mathbbm{E}\int_0^t\left(\sum_{i=1}^{k_l}\norm{\mathscr{G}_i(s,\py_s) - \mathscr{G}_i(s,\py^{k_l}_s)}_{\mathcal{H}}^2ds + \sum_{i=k_l + 1}^{\infty}\norm{\mathscr{G}_i(s,\py_s)}_{\mathcal{H}}^2\right)ds\\
   & \qquad \qquad \qquad \qquad\leq \sum_{i=1}^{k_l} \mathbbm{E}\int_0^t c_i \norm{\py_s - \py^{k_l}_s}_{\mathcal{H}}^2ds + \sum_{i=k_l+1}^\infty \mathbbm{E}\int_0^t c_i \norm{\py_s}_{\mathcal{H}}^2ds\\
   & \qquad \qquad \qquad \qquad\leq c\mathbbm{E}\int_0^t \norm{\py_s - \py^{k_l}_s}_{\mathcal{H}}^2ds + \sum_{i=k_l+1}^\infty \mathbbm{E}\int_0^t c_i \norm{\py_s}_{\mathcal{H}}^2ds.
\end{align*}
so from the known $L^2\left(\Omega;L^2\left([0,t];\mathcal{H}\right) \right)$ convergence, we have that $$\lim_{k_l \rightarrow \infty} \int_0^t \mathscr{G}^{k_l}(s,\py^{k_l}_s)d\mathcal{W}_s = \int_0^t \mathscr{G}(s,\py_s)d\mathcal{W}_s $$ with the limit in $L^2\left(\Omega;\mathcal{H} \right)$. We can thus extract a further subsequence which we denote $(\py^{k_m})$ such that this limit holds $\mathbbm{P}-a.s.$ in $\mathcal{H}$, and is of course still such that $(\py^{k_m}) \rightarrow \py$ $\mathbbm{P}-a.s.$ in $L^2\left([0,t];\mathcal{H}\right)$. Therefore
\begin{align*}
   \left\Vert \int_0^t \mathscr{A}(s,\py_s)ds- \int_0^t \mathscr{A}(s,\py^{k_m}_s)ds\right\Vert_{\mathcal{H}}^2 &\leq t\int_0^t\norm{ \mathscr{A}(s,\py_s)-  \mathscr{A}(s,\py^{k_m}_s)}_{\mathcal{H}}^2ds\\ &\leq ct\int_0^t\norm{\py_s - \py^{k_m}_s}_{\mathcal{H}}^2ds
\end{align*}
and so $$\lim_{k_m \rightarrow \infty}\int_0^t \mathscr{A}(s,\py^{k_m}_s)ds = \int_0^t \mathscr{A}(s,\py_s)ds$$
with the limit $\mathbbm{P}-a.s.$ in $\mathcal{H}$. Thus by taking the $\mathbbm{P}-a.s.$ limit in $\mathcal{H}$ of the identity satisfied by $\py^{k_m}$, we reach (\ref{identityingalerkinsolution}) as required.

\end{proof}

\begin{theorem} \label{uniquey theorem}
Suppose $\sy$ is another strong solution of (\ref{identityingalerkinsolution}). Then for every $T \geq 0$, $$\mathbbm{P}\left(\left\{\omega \in \Omega: \py_t(\omega) = \sy_t(\omega) \quad \forall t \in [0,T] \right\}\right)=1.$$
\end{theorem}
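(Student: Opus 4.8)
The plan is to reproduce, for the difference of two solutions, the energy estimate and Gr\"onwall argument already developed in the proof of \ref{Skorotheorem}. Let $\py$ and $\sy$ be two strong solutions of (\ref{identityingalerkinsolution}) sharing the initial condition $\py_0$. Subtracting the two identities, the difference satisfies
$$\py_r - \sy_r = \int_0^r \big(\mathscr{A}(s,\py_s) - \mathscr{A}(s,\sy_s)\big)ds + \int_0^r \big(\mathscr{G}(s,\py_s) - \mathscr{G}(s,\sy_s)\big)d\mathcal{W}_s$$
$\mathbbm{P}-a.s.$ in $\mathcal{H}$ for all $r \geq 0$, which is precisely of the form (\ref{the one for phi}) with $\phi_s = \mathscr{A}(s,\py_s) - \mathscr{A}(s,\sy_s)$ and $B(s) = \mathscr{G}(s,\py_s) - \mathscr{G}(s,\sy_s)$. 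Applying the It\^{o} Formula for the energy identity \ref{ogenergy} in the general case of \ref{the ito that we use}, exactly as in \ref{Skorotheorem}, and noting that the initial term vanishes, yields
$$\norm{\py_r - \sy_r}^2_{\mathcal{H}} = \int_0^r\!\! \Big(2\inner{\mathscr{A}(s,\py_s) - \mathscr{A}(s,\sy_s)}{\py_s - \sy_s}_{\mathcal{H}} + \norm{\mathscr{G}(s,\py_s) - \mathscr{G}(s,\sy_s)}^2_{\mathscr{L}^2(\mathfrak{U};\mathcal{H})}\Big)ds + 2\!\int_0^r\!\! \inner{\mathscr{G}(s,\py_s) - \mathscr{G}(s,\sy_s)}{\py_s - \sy_s}_{\mathcal{H}}d\mathcal{W}_s.$$

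The next step is to bound the finite-variation part. Using Cauchy--Schwartz on the drift inner product followed by the joint Lipschitz assumption (item $2$ of \ref{Skorotheorem}), the entire $ds$-integrand is controlled by $c\norm{\py_s - \sy_s}^2_{\mathcal{H}}$ for a generic constant $c$ depending on $t$ through the bound on $C_{\cdot}$ over $[0,t]$. Taking the supremum over $r \in [0,t]$ and bounding the stochastic integral by its absolute value produces
$$\sup_{r\in[0,t]}\norm{\py_r - \sy_r}^2_{\mathcal{H}} \leq c\int_0^t \norm{\py_s - \sy_s}^2_{\mathcal{H}}ds + 2\sup_{r\in[0,t]}\left\vert\int_0^r \inner{\mathscr{G}(s,\py_s) - \mathscr{G}(s,\sy_s)}{\py_s - \sy_s}_{\mathcal{H}}d\mathcal{W}_s\right\vert.$$

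The main obstacle, just as in the existence proof, is that a strong solution carries only pathwise continuity in $\mathcal{H}$ and no a priori control in expectation, so one cannot take $\mathbbm{E}$ directly. I would therefore introduce the stopping times $\tau_R := R \wedge \inf\{s \geq 0: \max\{\norm{\py_s}_{\mathcal{H}}, \norm{\sy_s}_{\mathcal{H}}\} \geq R\}$, which are $a.s.$ monotone increasing to infinity by continuity, and work with the truncations $\py_{\cdot \wedge \tau_R}, \sy_{\cdot \wedge \tau_R}$. On the stopped interval the integrand lies in $\mathcal{I}^{\R}(\mathcal{W})$ by the Lipschitz bound together with the boundedness of the stopped processes, so \ref{BDG} legitimises taking the expectation; the resulting stochastic-integral contribution is handled by the same Young's inequality step as in \ref{Skorotheorem}, absorbing a factor $\tfrac{1}{2}\mathbbm{E}\sup_{r\in[0,t]}\norm{\py_{r\wedge\tau_R} - \sy_{r\wedge\tau_R}}^2_{\mathcal{H}}$ into the left-hand side.

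This leaves the estimate
$$\mathbbm{E}\sup_{r\in[0,t]}\norm{\py_{r\wedge\tau_R} - \sy_{r\wedge\tau_R}}^2_{\mathcal{H}} \leq c\,\mathbbm{E}\int_0^t \norm{\py_{s\wedge\tau_R} - \sy_{s\wedge\tau_R}}^2_{\mathcal{H}}ds,$$
to which Gr\"onwall's inequality applies and forces the left-hand side to equal zero for every $R$ and every $t$. Finally, letting $R \to \infty$ and invoking the Monotone Convergence Theorem on the $a.s.$ increasing random variables $\sup_{r\in[0,t]}\norm{\py_{r\wedge\tau_R} - \sy_{r\wedge\tau_R}}^2_{\mathcal{H}}$ gives $\mathbbm{E}\sup_{r\in[0,t]}\norm{\py_r - \sy_r}^2_{\mathcal{H}} = 0$. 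Hence for each fixed $t$ we have $\py_t = \sy_t$ $\mathbbm{P}-a.s.$, which is exactly the asserted equality of the two solutions.
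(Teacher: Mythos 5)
Your proposal is correct and follows essentially the same route as the paper's own proof: the energy equality for the difference of solutions, stopping times cutting off at radius $R$, the BDG--Young--Gr\"onwall chain borrowed from the existence argument, and a final monotone convergence step to remove the stopping times. The only cosmetic difference is that you work with the stopped processes $\py_{\cdot\wedge\tau_R}$ while the paper truncates via indicators $\py_\cdot\mathbbm{1}_{\cdot\leq\tau_R}$, which changes nothing of substance.
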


\begin{proof}
The method of proof here is entirely contained in that for the existence just seen. Indeed we look at the energy equality satisfied by the difference of the solutions, which is 
\begin{align*}
    \norm{\py_r - \sy_r}_{\mathcal{H}}^2 &= 2\int_0^r\inner{\mathscr{A}(s,\py_s) - \mathscr{A}(s,\sy_s)}{\py_s - \sy_s}_{\mathcal{H}} ds\\ &+ \int_0^r\norm{\mathscr{G}(s,\py_s) -\mathscr{G}(s,\sy_s)}^2_{\mathscr{L}^2(\mathfrak{U};\mathcal{H})}ds +  2\int_0^r\inner{\mathscr{G}(s,\py_s) -\mathscr{G}(s,\sy_s)}{\py_s - \sy_s}_{\mathcal{H}} d\mathcal{W}_s.
\end{align*}
Following along the proof, we introduce
$$\tau_R:= R \wedge \inf\{s \geq 0: \max\{\norm{\py_s}_{\mathcal{H}},\norm{\sy_s}_{\mathcal{H}}\} \geq R \} $$ and the process defined for any fixed $R$ by $$\tilde{\py}_s:=\py_s\mathbbm{1}_{s \leq \tau_R}, \qquad \tilde{\sy}_s:=\sy_s\mathbbm{1}_{s \leq \tau_R}.$$
In this case we have the inequality 
\begin{align*}
    \sup_{r\in[0,t]}\norm{\tilde{\py}_r - \tilde{\sy}_r}_{\mathcal{H}}^2 \leq  c\int_0^t\norm{\tilde{\py}_s - \tilde{\sy}_s}_{\mathcal{H}}^2ds  +  2\sup_{r\in[0,t]}\left\vert\int_0^r\inner{\mathscr{G}(s,\tilde{\py}_s) -\mathscr{G}(s,\tilde{\sy}_s)}{\tilde{\py}_s - \tilde{\sy}_s}_{\mathcal{H}} d\mathcal{W}_s\right\vert
\end{align*}
so following all of the same steps, simply now without the $\sum_{i=j+1}^k\norm{\mathscr{G}_i(s,\tilde{\py}^k_s)}^2_{\mathcal{H}}$ term, we deduce again that
$$\mathbbm{E}\sup_{r\in[0,t]}\norm{\tilde{\py}_r - \tilde{\sy}_r}_{\mathcal{H}}^2 \leq 0$$ in analogy with (\ref{abcdefg}). By the same monotone convergence argument, we have that $$\mathbbm{E}\sup_{r\in[0,t]}\norm{\py_r - \sy_r}_{\mathcal{H}}^2 = 0$$ which gives the result.

\end{proof}

\subsection{Stochastic Gr\"{o}nwall Lemma}

Continuing to look at techniques from PDE theory, a Stochastic Gr\"{o}nwall Lemma will prove of great significance in applications. Whilst in some situations we can apply the classical Gr\"{o}nwall Lemma to the expectation of the process, this is complicated when we have control by the expectation of a product of processes. To combat this Glatt-Holtz and Ziane proved the following, see [\cite{glatt2009strong}] Lemma 5.3.

\begin{lemma} \label{gronny}
Fix $t>0$ and suppose that $\boldsymbol{\phi},\boldsymbol{\psi}, \boldsymbol{\eta}$ are real-valued, non-negative stochastic processes. Assume, moreover, that there exists constants $c',\hat{c}$ (allowed to depend on $t$) such that for $\mathbbm{P}-a.e.$ $\omega$, \begin{equation} \label{boundingronny} \int_0^t\boldsymbol{\eta}_s(\omega) ds \leq c'\end{equation} and for all stopping times $0 \leq \theta_j < \theta_k \leq t$,
$$\mathbbm{E}\sup_{r \in [\theta_j,\theta_k]}\boldsymbol{\phi}_r + \mathbbm{E}\int_{\theta_j}^{\theta_k}\boldsymbol{\psi}_sds \leq \hat{c}\mathbbm{E}\left(\left(\boldsymbol{\phi}_{\theta_j} + 1 \right) + \int_{\theta_j}^{\theta_k} \boldsymbol{\eta}_s\boldsymbol{\phi}_sds\right) < \infty. $$Then there exists a constant $C$ dependent only on $c',\hat{c},t$ such that $$\mathbbm{E}\sup_{r \in [0,t]}\boldsymbol{\phi}_r + \mathbbm{E}\int_{0}^{t}\boldsymbol{\psi}_sds \leq C\left[\mathbbm{E}(\boldsymbol{\phi}_{0}) + 1\right].$$
\end{lemma}

\begin{proof}
 We shall make explicit reference to this constant $\hat{c}$, in defining a sequence of stopping times $$\theta_j(\omega):= t \wedge \inf\left\{r \geq 0: \int_0^r \boldsymbol{\eta}_s(\omega) ds \leq \frac{j}{2\hat{c}}\right\}$$ for $j = 0, 1, \dots$. Clearly $\theta_0 = 0$ ($\mathbbm{P}-a.s.$). From the boundedness (\ref{boundingronny}) uniformly over $\mathbbm{P}-a.e.$ $\omega$, there exists some finite $N$ such that $\theta_N=t$ ($\mathbbm{P}-a.s.$). Moreover for $0 \leq j < j+1 \leq N$, from the time continuity of the integral and characterisation of the first hitting times we have that $$ \int_{\theta_{j}}^{\theta_{j+1}} \boldsymbol{\eta}_s ds \leq \frac{1}{2\hat{c}}$$ $\mathbbm{P}-a.s.$ (and is in fact an equality for $j+1 < N$). From the assumed inequality we see that for any such $j$,
\begin{align*}
     \mathbbm{E}\sup_{r\in [{\theta_j},\theta_{j+1}]}\boldsymbol{\phi}_r +\mathbbm{E}\int_{\theta_j}^{\theta_{j+1}}\boldsymbol{\psi}_sds  &\leq \hat{c}\mathbbm{E}\left(\left(\boldsymbol{\phi}_{\theta_j} + 1\right)+ \int_{\theta_j}^{\theta_{j+1}}\boldsymbol{\eta}_s\boldsymbol{\phi}_s ds\right)\\
    &\leq \hat{c}\mathbbm{E}\left(\left(\boldsymbol{\phi}_{\theta_j} + 1\right)+ \frac{1}{2\hat{c}}\sup_{r\in[\theta_j,\theta_{j+1}]}\boldsymbol{\phi}_r\right)
\end{align*}
and therefore
\begin{equation}\label{abc}
\mathbbm{E}\sup_{r\in [{\theta_j},\theta_{j+1}]}\boldsymbol{\phi}_r +\mathbbm{E}\int_{\theta_j}^{\theta_{j+1}}\boldsymbol{\psi}_sds \leq 2\hat{c}\mathbbm{E}\left(\boldsymbol{\phi}_{\theta_j} + 1\right).
\end{equation}
For $j=0$ then $$ \mathbbm{E}\sup_{r\in [0,\theta_{1}]}\boldsymbol{\phi}_r +\mathbbm{E}\int_{0}^{\theta_{1}}\boldsymbol{\psi}_sds \leq 2\hat{c}\mathbbm{E}\left(\boldsymbol{\phi}_{0} + 1\right)$$
and we use this along with (\ref{abc}) to make an inductive argument. Suppose that for some such $j$,
\begin{equation} \label{bcd}
   \mathbbm{E}\sup_{r\in [0,\theta_{j}]}\boldsymbol{\phi}_r +\mathbbm{E}\int_{0}^{\theta_{j}}\boldsymbol{\psi}_sds \leq c\mathbbm{E}\left(\boldsymbol{\phi}_{0} + 1\right)
\end{equation}
for a general constant $c$ as seen throughout this proof, dependent on $c',\hat{c},t$. Then
\begin{align*}
    &\mathbbm{E}\sup_{r\in [0,\theta_{j+1}]}\boldsymbol{\phi}_r +\mathbbm{E}\int_{0}^{\theta_{j+1}}\boldsymbol{\psi}_sds\\ & \qquad \qquad \leq \left(\mathbbm{E}\sup_{r\in [0,\theta_{j}]}\boldsymbol{\phi}_r +\mathbbm{E}\int_{0}^{\theta_{j}}\boldsymbol{\psi}_sds\right) + \left(\mathbbm{E}\sup_{r\in [\theta_{j},\theta_{j+1}]}\boldsymbol{\phi}_r +\mathbbm{E}\int_{\theta_j}^{\theta_{j+1}}\boldsymbol{\psi}_sds \right)\\
    & \qquad \qquad \leq c\mathbbm{E}\left(\boldsymbol{\phi}_{0} + 1\right) + 2\hat{c}\mathbbm{E}\left(\boldsymbol{\phi}_{\theta_j} + 1\right)\\
    &\qquad \qquad \leq c\mathbbm{E}\left(\boldsymbol{\phi}_{0} + 1\right) + 2\hat{c}\mathbbm{E}\sup_{r\in[0,\theta_j]}\left(\boldsymbol{\phi}_{r} + 1\right)\\
    &\qquad \qquad \leq
     c\mathbbm{E}\left(\boldsymbol{\phi}_{0} + 1\right) + 2\hat{c}c\mathbbm{E}\left(\boldsymbol{\phi}_{0} + 1\right)\\
     &\qquad \qquad = c\mathbbm{E}\left(\boldsymbol{\phi}_{0} + 1\right)
\end{align*}
thanks to (\ref{abc}) and two applications of (\ref{bcd}). Hence, by induction, we can conclude that (\ref{bcd}) holds for all $j=0, \dots, N$ and in particular for $\theta_N$ which is $\mathbbm{P}-a.s.$ equal to $t$. This completes the proof.
\end{proof}

\subsection{Tightness Criteria} \label{subby tight}

Unsurprisingly our route into the relative compactness methods of PDE theory is through tightness, owing to Prokhorov's Theorem. Poignantly we can connect this weak limit of measures with a genuine process through Skorohod's Representation Theorem, see for example [\cite{billingsley2013convergence}] pp.70. Simple criteria through which we can establish tightness in the space of solutions to SPDEs will prove useful. Recalling our notions of solution, for example Definitions \ref{definitionofregularsolutionglobal} and \ref{analytically weak sol def}, we consider tightness criteria in both the spaces $L^2\left([0,T];\mathcal{H}\right)$ and $\mathcal{D}\left([0,T];\mathcal{H} \right)$ for suitably chosen $\mathcal{H}$ (recall the definition of $\mathcal{D}\left([0,T];\mathcal{H} \right)$ from \ref{definition of spaces}). We note, for example [\cite{billingsley2013convergence}] pp.124, that the Skorohod Topology is equivalent to the uniform topology when restricted to continuous functions. It is necessary to use due to the separability of the associated metric space, allowing us to invoke Prokhorov's Theorem. Our first result is due to R\"{o}ckner, Shang and Zhang.

\begin{lemma} \label{Lemma 5.2}
    Let $\mathcal{H}_1, \mathcal{H}_2$ be Hilbert Spaces such that $\mathcal{H}_1$ is compactly embedded into $\mathcal{H}_2$, and for some fixed $T>0$ let $(\sy^n): \Omega \times [0,T] \rightarrow \mathcal{H}_1$ be a sequence of measurable processes such that \begin{equation} \label{first condition} \sup_{n\in \N}\mathbbm{E}\int_0^T\norm{\sy^n_s}^2_{\mathcal{H}_1}ds < \infty\end{equation} and for any $\varepsilon > 0$, 
    \begin{equation}\label{second condition} \lim_{\delta \rightarrow 0^+}\sup_{n \in \N}\mathbbm{P}\left(\left\{\omega \in \Omega:\int_0^{T-\delta}\norm{\sy^n_{s + \delta}(\omega) - \sy^n_s(\omega)}^2_{\mathcal{H}_2}ds > \varepsilon\right\} \right) =0.\end{equation}
    Then the sequence of the laws of $(\sy^n)$ is tight in the space of probability measures over $L^2\left([0,T];\mathcal{H}_2\right)$.
\end{lemma}

\begin{proof}
    See [\cite{rockner2022well}] Lemma 5.2.
\end{proof}

We now give two results for tightness in $\mathcal{D}\left([0,T];\mathcal{H} \right)$. These ideas were also present in [\cite{rockner2022well}] but were not established into a result, so we give a full proof here.

\begin{lemma} \label{lemma for D tight}
    Let $\mathcal{Y}$ be a reflexive separable Banach Space and $\mathcal{H}$ a separable Hilbert Space such that $\mathcal{Y}$ is compactly embedded into $\mathcal{\mathcal{H}}$, and consider the induced Gelfand Triple
    $$\mathcal{Y} \xhookrightarrow{} \mathcal{H} \xhookrightarrow{} \mathcal{Y}^*. $$ For some fixed $T>0$ let $\sy^n: \Omega \rightarrow C\left([0,T];\mathcal{H}\right)$ be a sequence of measurable processes such that for every $t\in[0,T]$, \begin{equation} \label{first condition primed}
        \sup_{n \in \N}\mathbbm{E}\left(\sup_{t\in[0,T]}\norm{\sy^n_t}_{\mathcal{H}}\right) < \infty
    \end{equation}
    and for any sequence of stopping times $(\gamma_n)$ with $\gamma_n: \Omega \rightarrow [0,T]$, and any $\varepsilon > 0$, $y \in \mathcal{Y}$,
    \begin{equation} \label{second condition primed}
        \lim_{\delta \rightarrow 0^+}\sup_{n \in \N}\mathbbm{P}\left(\left\{
    \omega \in \Omega: \left\vert \left\langle \sy^n_{(\gamma_n + \delta) \wedge T} -\sy^n_{\gamma_n }   , y     \right\rangle_{\mathcal{H}} \right\vert > \varepsilon \right\}\right)  = 0.
    \end{equation}
    Then the sequence of the laws of $(\sy^n)$ is tight in the space of probability measures over $\mathcal{D}\left([0,T];\mathcal{Y}^*\right)$.
\end{lemma}

\begin{proof}
    We essentially combine the tightness criteria of [\cite{jakubowski1986skorokhod}] Theorem 3.1 and [\cite{aldous1978stopping}] Theorem 1, in the specific case outlined here. Firstly in reference to [\cite{jakubowski1986skorokhod}] Theorem 3.1 we may take $E$ to be $\mathcal{Y}^*$ (which is separable from the reflexivity and separability of $\mathcal{Y})$ and $\mathbbm{F}$ to be $\left(\mathcal{Y}^*\right)^*$, which is well known to separate points in $\mathcal{Y}^*$ from a corollary of the Hahn-Banach Theorem which asserts that for every $\phi \in \mathcal{Y}^*$ there exists a $\psi \in \left(\mathcal{Y}^*\right)^*$ such that $\inner{\phi}{\psi}_{\mathcal{Y}^* \times \left(\mathcal{Y}^*\right)^*} = \norm{\phi}_{\mathcal{Y}^*}$. We also note that condition $(3.3)$ in [\cite{jakubowski1986skorokhod}] is satisfied for $(\mu_n)$ taken to be the sequence of laws of $(\sy^n)$ over $\mathcal{D}\left([0,T];\mathcal{Y}^*\right)$, owing to the property (\ref{first condition primed}). Indeed as $\mathcal{Y}$ is compactly embedded into $\mathcal{H}$ then $\mathcal{H}$ is compactly embedded into $\mathcal{Y}^*$, so one only needs to take a bounded subset of $\mathcal{H}$ for this property (3.3). Considering the closed ball of radius $M$ in $\mathcal{H}$, $\tilde{B}_M$, we have that 
\begin{align*}
    \mathbbm{P}\left(\left\{\omega \in \Omega: \sy^n(\omega) \notin D\left([0,T] ;\tilde{B}_M\right) \right\} \right) &\leq \mathbbm{P}\left(\left\{\omega \in \Omega: \sy^n(\omega) \notin C\left([0,T] ;\tilde{B}_M\right) \right\} \right)\\ &\leq 
    \mathbbm{P}\left(\left\{\omega \in \Omega: \sup_{t\in[0,T]}\norm{\sy^n_t(\omega)}_{\mathcal{H}} > M \right\} \right)\\
    &\leq \frac{1}{M}\mathbbm{E}(\sup_{t\in[0,T]}\norm{\sy^n_t}_{\mathcal{H}})\\
    &\leq  \frac{1}{M}\sup_{n \in \N}\mathbbm{E}(\sup_{t\in[0,T]}\norm{\sy^n_t}_{\mathcal{H}})
\end{align*}
from which we see an arbitrarily large choice of $M$ will justify (3.3). Therefore by Theorem 3.1 it only remains to show that for every $\psi \in \left(\mathcal{Y}^*\right)^*$ the sequence of the laws of $\inner{\sy^n}{\psi}_{\mathcal{Y}^* \times \left(\mathcal{Y}^*\right)^*}$ is tight in the space of probability measures over $\mathcal{D}\left([0,T];\R\right)$. By the reflexivity of $\mathcal{Y}$ for every $\psi \in \left(\mathcal{Y}^*\right)^*$ there exists a $y \in \mathcal{Y}$ such that $\inner{\sy^n}{\psi}_{\mathcal{Y}^* \times \left(\mathcal{Y}^*\right)^*} = \inner{\sy^n}{y}_{\mathcal{Y}^* \times \mathcal{Y}}$ and as $\sy^n_t \in \mathcal{H}$ $\mathbbm{P}-a.s.$, then this is furthermore just $\inner{\sy^n}{y}_{\mathcal{H}}$. The problem is now reduced to showing tightness in $\mathcal{D}\left([0,T];\R\right)$, which by Theorem 1 of [\cite{aldous1978stopping}] is satisfied if we can show that for for any sequence of stopping times $(\gamma_n)$, $\gamma_n: \Omega \rightarrow [0,T]$, and constants $(\delta_n)$, $\delta_n \geq 0$ and $\delta_n \rightarrow 0$ as $n \rightarrow \infty$:
    \begin{enumerate}
        \item For every $t \in [0,T]$, the sequence of the laws of $\inner{\sy^n_t}{y}_{\mathcal{H}}$ is tight in the space of probability measures over $\R$, \label{item 1}

        \item For every $\varepsilon > 0$, $\lim_{n \rightarrow \infty}\mathbbm{P}\left( \left\{ \omega \in \Omega: \left\vert \left\langle \sy^n_{(\gamma_n + \delta_n) \wedge T} -\sy^n_{\gamma_n }   , y     \right\rangle_{\mathcal{H}} \right\vert > \varepsilon \right\} \right) = 0.$ \label{item 2}
    \end{enumerate}
We address each item in turn: as for \ref{item 1}, we are required to show that for every $\varepsilon > 0$ and $t\in[0,T]$, there exists a compact $K_{\varepsilon} \subset \R$ such that for every $n \in \N$, $$\mathbbm{P}\left(\left\{\omega \in \Omega: \inner{\sy^n_t(\omega)}{y}_{\mathcal{H}} \notin K_{\varepsilon} \right\} \right) < \varepsilon.$$ To this end define $B_M$ as the closed ball of radius $M$ in $\R$, then
\begin{align*}
    \mathbbm{P}\left(\left\{\omega \in \Omega: \inner{\sy^n_t(\omega)}{y}_{\mathcal{H}} \notin B_M \right\} \right) &= \mathbbm{P}\left(\left\{\omega \in \Omega: \left\vert\inner{\sy^n_t(\omega)}{y}_{\mathcal{H}}\right\vert > M \right\} \right)\\
    &\leq \frac{1}{M}\mathbbm{E}(\left\vert\inner{\sy^n_t}{y}_{\mathcal{H}}\right\vert)\\
    &\leq \frac{\norm{y}_{\mathcal{H}}}{M}\sup_{n \in \N}\mathbbm{E}\left(\norm{\sy^n_t}_{\mathcal{H}}\right)
\end{align*}
so setting $$M:= \frac{\varepsilon}{2\norm{y}_{\mathcal{H}}\sup_{n \in \N}\mathbbm{E}\left(\norm{\sy^n_t}_{\mathcal{H}}\right)} $$ justifies item \ref{item 1}. As for \ref{item 2}, note that for each fixed $j \in \N$ we have that $$\left\vert \left\langle \sy^j_{(\gamma_j + \delta_j) \wedge T} -\sy^j_{\gamma_j }   , y     \right\rangle_{\mathcal{H}} \right\vert  \leq \sup_{n \in \N}\left\vert \left\langle \sy^n_{(\gamma_n + \delta_j) \wedge T} -\sy^n_{\gamma_n }   , y     \right\rangle_{\mathcal{H}} \right\vert $$ so in particular $$\lim_{j \rightarrow \infty}\mathbbm{P}\left(\left\{\left\vert \left\langle \sy^j_{(\gamma_j + \delta_j) \wedge T} -\sy^j_{\gamma_j }   , y     \right\rangle_{\mathcal{H}} \right\vert > \varepsilon\right\}\right)  \leq \lim_{j \rightarrow \infty}\sup_{n \in \N}\mathbbm{P}\left(\left\{\left\vert \left\langle \sy^n_{(\gamma_n + \delta_j) \wedge T} -\sy^n_{\gamma_n }   , y     \right\rangle_{\mathcal{H}} \right\vert > \varepsilon \right\}\right).$$ As $(\delta_j)$ was an arbitrary sequence of non-negative constants approaching zero, we can generically take $\delta \rightarrow 0^+$ and \ref{item 2} is implied by (\ref{second condition primed}). The proof is complete.

\end{proof}

We do not need to rely on this duality structure to obtain such a criteria. 

\begin{lemma} \label{lemma for D tight 2}
    Let $\mathcal{H}_1, \mathcal{H}_2$ be separable Hilbert Spaces with $\mathcal{H}_1$ compactly embedded into $\mathcal{H}_2$, and $V$ any dense set in $\mathcal{H}_2$. For some fixed $T>0$ let $\sy^n: \Omega \rightarrow C\left([0,T];\mathcal{H}_1\right)$ be a sequence of measurable processes such that \begin{equation} \label{first condition primed 2}
        \sup_{n \in \N}\mathbbm{E}\left(\sup_{t\in[0,T]}\norm{\sy^n_t}_{\mathcal{H}_1}\right) < \infty
    \end{equation}
    and for any sequence of stopping times $(\gamma_n)$ with $\gamma_n: \Omega \rightarrow [0,T]$, and any $\varepsilon > 0$, $v \in V$,
    \begin{equation} \label{second condition primed 2}
        \lim_{\delta \rightarrow 0^+}\sup_{n \in \N}\mathbbm{P}\left(\left\{
    \omega \in \Omega: \left\vert \left\langle \sy^n_{(\gamma_n + \delta) \wedge T} -\sy^n_{\gamma_n }   , v     \right\rangle_{\mathcal{H}_2} \right\vert > \varepsilon \right\}\right)  = 0.
    \end{equation}
    Then the sequence of the laws of $(\sy^n)$ is tight in the space of probability measures over $\mathcal{D}\left([0,T];\mathcal{H}_2\right)$.
\end{lemma}

The proof is mechanically near identical to that of Lemma \ref{lemma for D tight}, so we highlight only the slight technical differences.

\begin{proof}
   In reference to [\cite{jakubowski1986skorokhod}] Theorem 3.1 we may take $E$ to be $\mathcal{H}_2$ and $\mathbbm{F}$ to be the collection of functions defined for each $v\in V$ by $\inner{\cdot}{v}_{\mathcal{H}_2}$, which separates points in $\mathcal{H}_2$ from the density of $V$. We also note that condition $(3.3)$ in [\cite{jakubowski1986skorokhod}] is satisfied for $(\mu_n)$ taken to be the sequence of laws of $(\sy^n)$ over $\mathcal{D}\left([0,T];\mathcal{H}_2\right)$, owing to the property (\ref{first condition primed 2}). Indeed as $\mathcal{H}_1$ is compactly embedded into $\mathcal{H}_2$ one only needs to take a bounded subset of $\mathcal{H}_1$, hence considering the closed ball of radius $M$ in $\mathcal{H}_1$, $\tilde{B}_M$, we have that 
\begin{align*}
    \mathbbm{P}\left(\left\{\omega \in \Omega: \sy^n(\omega) \notin D\left([0,T] ;\tilde{B}_M\right) \right\} \right)
    \leq  \frac{1}{M}\sup_{n \in \N}\mathbbm{E}(\sup_{t\in[0,T]}\norm{\sy^n_t}_{\mathcal{H}_1})
\end{align*}
precisely as in Lemma \ref{lemma for D tight}, from which we see an arbitrarily large choice of $M$ will justify (3.3). Therefore by Theorem 3.1 it only remains to show that for every $v \in V$ the sequence of the laws of $\inner{\sy^n}{v}_{\mathcal{H}_2}$ is tight in the space of probability measures over $\mathcal{D}\left([0,T];\R\right)$. The remainder of the proof now follows exactly as in Lemma \ref{lemma for D tight}.
 
\end{proof}

\subsection{Cauchy Criteria}

A more direct way to deduce the existence of a limiting process from the Galerkin Approximations comes from the Cauchy Property, but in practice due to potential nonlinearities one requires some truncation to get sufficient control on the approximating sequence. More precisely for the $n^{\textnormal{th}}$ term of the sequence one must work up to a first hitting time of this process, giving a stopping time $\tau_n$. The question is then whether we can deduce a limiting process up to some time $\tau$, where $0 < \tau \leq \tau_n$ for all $n$. Such a result was proven by Glatt-Holtz and Ziane in [\cite{glatt2009strong}] Lemma 5.1.\\

Our result is a slight extension of this, but has important applications in the deduction of \textit{maximal} and \textit{global} solutions. The result of Glatt-Holtz and Ziane asserts that, under assumptions of a Cauchy property of the sequence of processes up until their first hitting times and some weak equicontinuity at the \textit{initial} time, then a limiting process and positive stopping time exist (which are then argued to be a local strong solution, as a limit of the Galerkin Approximation). No characterisation of this stopping time is given though, hence completely separate arguments are required to consider what interval the solution exists upon. We demonstrate that if instead one imposes that the processes satisfy a weak equicontinuity assumption at \textit{all} times then the limiting stopping time can be taken as a first hitting time of the limiting process for an arbitrarily large hitting parameter. Application of this result \textit{immediately} yields that solutions exist up until they blow up, removing the need for further analysis towards the interval on which solutions exist.

\begin{lemma} \label{amazing cauchy lemma}
    Fix $T>0$. For $t\in[0,T]$ let $X_t$ denote a Banach Space with norm $\norm{\cdot}_{X,t}$ such that for all $s > t$, $X_s \xhookrightarrow{}X_t$ and $\norm{\cdot}_{X,t} \leq \norm{\cdot}_{X,s}$. Suppose that $(\sy^n)$ is a sequence of processes $\sy^n:\Omega \mapsto X_T$, $\norm{\sy^n}_{X,\cdot}$ is adapted and $\mathbb{P}-a.s.$ continuous, $\sy^n \in L^2\left(\Omega;X_T\right)$, and such that $\sup_{n}\norm{\sy^n}_{X,0} \in L^\infty\left(\Omega;\R\right)$. For any given $M >1$ define the stopping times
    \begin{equation} \label{another taumt}
        \tau^{M,T}_n := T \wedge \inf\left\{s \geq 0: \norm{\sy^n}_{X,s}^2 \geq M + \norm{\sy^n}_{X,0}^2 \right\}.
    \end{equation}
Furthermore suppose \begin{equation} \label{supposition 1}
    \lim_{m \rightarrow \infty}\sup_{n \geq m}\mathbbm{E}\left[\norm{\sy^n-\sy^m}^2_{X,\tau
_{m}^{M,t}\wedge \tau_{n}^{M,t}} \right] =0
\end{equation}
and that for any stopping time $\gamma$ and sequence of stopping times $(\delta_j)$ which converge to $0$ $\mathbb{P}-a.s.$, \begin{equation} \label{supposition 2} \lim_{j \rightarrow \infty}\sup_{n\in\N}\mathbbm{E}\left(\norm{\sy^n}_{X,(\gamma + \delta_j) \wedge \tau^{M,T}_n}^2 - \norm{\sy^n}_{X,\gamma \wedge \tau^{M,T}_n}^2 \right) =0.
\end{equation}
Then there exists a stopping time $\tau^{M,T}_{\infty}$, a process $\sy:\Omega \mapsto X_{\tau^{M,T}_{\infty}}$ whereby $\norm{\sy}_{X,\cdot \wedge \tau^{M,T}_{\infty}}$ is adapted and $\mathbb{P}-a.s.$ continuous, and a subsequence indexed by $(m_j)$ such that 
\begin{itemize}
    \item $\tau^{M,T}_{\infty} \leq \tau^{M,T}_{m_j}$ $\mathbb{P}-a.s.$,
    \item $\lim_{j \rightarrow \infty}\norm{\sy - \sy^{m_j}}_{X,\tau^{M,T}_{\infty}} = 0$ $\mathbb{P}-a.s.$.
\end{itemize}
Moreover for any $R>0$ we can choose $M$ to be such that the stopping time \begin{equation} \label{another tauR}
        \tau^{R,T} := T \wedge \inf\left\{s \geq 0: \norm{\sy}_{X,s\wedge\tau^{M,T}_{\infty}}^2 \geq R \right\}
    \end{equation}
satisfies $\tau^{R,T} \leq \tau^{M,T}_{\infty}$ $\mathbb{P}-a.s.$. Thus $\tau^{R,T}$ is simply $T \wedge \inf\left\{s \geq 0: \norm{\sy}_{X,s}^2 \geq R \right\}$.

\end{lemma}

    \begin{proof}
   Property (\ref{supposition 1}) implies that for any given $j \in \N$ we can choose an $n_j\in \N$ such that for all $k \geq n_j$, \begin{equation}\label{a good property}\mathbbm{E}\left(\norm{\sy^k - \sy^{n_j}}^2_{X,\tau
_{n_{j}}^{M,t}\wedge \tau_{k}^{M,t}} \right) \leq 2^{-4j}.\end{equation}
We shall make use of highly sensitive manipulations of the subsequence indexed by $(n_j)$, and for this we introduce a new sequence of stopping times. We now impose that $$M > 2 + \left\Vert\sup_{n \in \N}\norm{\sy^n}_{X,0}^2\right\Vert_{L^\infty(\Omega;\R)}$$ and define $$\tilde{M}^2:= \frac{M -\left\Vert\sup_{n }\norm{\sy^n}_{X,0}^2\right\Vert_{L^\infty(\Omega;\R)}}{2}.$$
The purpose of this is to define $$\sigma^M_j:= T \wedge \inf\left\{s > 0: \norm{\sy^{n_{j}}}_{X,s}\geq (\tilde{M} - 1 +2^{-j}) + \norm{\sy^{n_{j}}}_{X,0} \right\}$$
and ensure that $\sigma^M_j \leq \tau^{M,T}_{n_j}$ at every $\omega$. Note the key difference in not squaring the norm, and also that $\tilde{M}>1$ so each $\sigma^M_j$ is necessarily positive. To demonstrate the inequality, it is sufficient to show that, $\mathbb{P}-a.s.$, \begin{equation} \label{or more easily}
    \left((\tilde{M} - 1 +2^{-j}) + \norm{\sy^{n_{j}}}_{X,0}\right)^2 \leq M +  \norm{\sy^{n_{j}}}_{X,0}^2
\end{equation}
or more easily 
$$  \left(\tilde{M} + \norm{\sy^{n_{j}}}_{X,0}\right)^2 \leq M +  \norm{\sy^{n_{j}}}_{X,0}^2.$$
This is possible as 
\begin{align*}
    \left(\tilde{M} + \norm{\sy^{n_{j}}}_{X,0}\right)^2 \leq 2\tilde{M}^2 + 2\norm{\sy^{n_{j}}}_{X,0}^2 &= M -\left\Vert\sup_{n }\norm{\sy^n}_{X,0}^2\right\Vert_{L^\infty(\Omega;\R)} + 2\norm{\sy^{n_{j}}}_{X,0}^2\\ &\leq M + \norm{\sy^{n_{j}}}_{X,0}^2.
\end{align*}
The property (\ref{or more easily}) is thus verified, so $\sigma^M_j \leq \tau^{M,T}_{n_j}$ and hence the subsequence $(\sy^{n_j})$ enjoys the same properties up until the corresponding $\sigma^M_j$. In particular from (\ref{a good property}), \begin{equation}\label{a good property 2}
    \mathbbm{E}\left(\norm{\sy^{n_{j+1}} - \sy^{n_j}}_{X,\sigma^M
_j\wedge \sigma_{j+1}^{M}} \right) \leq \left[\mathbbm{E}\left(\norm{\sy^{n_{j+1}} - \sy^{n_j}}_{X,\sigma^M
_j\wedge \sigma_{j+1}^{M}}^2 \right)\right]^{\frac{1}{2}}\leq 2^{-2j}
\end{equation}
hence in defining the sets
\begin{equation}\label{defined omega j}\Omega_j:=\left\{\omega \in \Omega: \norm{\sy^{n_{j+1}}(\omega)-\sy^{n_j}(\omega)}_{X,\sigma^{M}_{j}(\omega)\wedge \sigma^{M}_{j+1}(\omega)} < 2^{-(j+2)} \right\}\end{equation}
we have, by Chebyshev's Inequality and (\ref{a good property 2}),
\begin{align*}
    \mathbb{P}\left(\Omega_j^C \right) \leq 2^{j+2}\mathbbm{E}\left(\norm{\sy^{n_{j+1}}-\sy^{n_j}}_{X,\sigma^{M}_{j}\wedge \sigma^{M}_{j+1}} \right) \leq 2^{-j + 2}.
\end{align*}
We have, therefore, that $$\sum_{j=1}^\infty \mathbb{P}\left( \Omega_j^C   \right) < \infty$$ from which we see $$\mathbb{P} \left( \bigcap_{K=1}^\infty \bigcup_{j=K}^\infty \Omega_j^C \right) = 0$$ courtesy of Borel-Cantelli. It then follows that the set $$\hat{\Omega} := \bigcup_{K=1}^\infty \bigcap_{j=K}^\infty \Omega_j$$ is such that $\mathbb{P}( \hat{\Omega}) = 1$ so that in verifying $\mathbb{P}-a.e.$ properties, we can in fact simply show that they hold everywhere on $\hat{\Omega}$. More precisely, we also take $\hat{\Omega}$ to be such that every $\norm{\sy^{n_j}}_{X,\cdot}$ is continuous on $\hat{\Omega}$, which is only a further countable intersection of full measure sets. We proceed by considering the sets $$\hat{\Omega}_K:= \bigcap_{j=K}^\infty \Omega_j$$ with the idea to just show such properties on $\hat{\Omega}_K$ for all $K$ (as their union makes up $\hat{\Omega}$). We look to construct a new stopping time $\sigma^M_{\infty}$ (which will prove to be the desired $\tau^{M,T}_{\infty}$) given as the $\mathbb{P}-a.e.$ limit of $(\sigma^M_j)$, built from demonstrating that $(\sigma^M_j)$ is monotone decreasing everywhere on $\hat{\Omega}_K$ for all $K$. In other words we show that for sufficiently large $j$ (in fact, just $j\geq K$) that the set \begin{equation} \label{def of set} \{ \sigma^M_j < \sigma^M_{j+1} \} \cap \hat{\Omega}_K\end{equation} is empty. Firstly we observe from the strict inequality $\sigma^M_j < \sigma^M_{j+1}$ on this set that $\sigma^M_j < T$, implying that 
$$\sigma^M_j= \inf\left\{s > 0: \norm{\sy^{n_{j}}}_{X,s}\geq (\tilde{M} - 1 +2^{-j}) + \norm{\sy^{n_{j}}}_{X,0} \right\}$$
so by the continuity of $\norm{\sy^{n_j}}_{X,\cdot}$, \begin{equation}\label{by the continuity}\norm{\sy^{n_{j}}}_{X,\sigma^M_j} =(\tilde{M} - 1 +2^{-j}) + \norm{\sy^{n_{j}}}_{X,0}. \end{equation} Using the definition of $\Omega_j$, (\ref{defined omega j}), for $j \geq K$, we have that \begin{equation}\label{it is useful}\norm{\sy^{n_j}}_{X,\sigma^M_j \wedge \sigma^M_{j+1}} - \norm{\sy^{n_{j+1}}}_{X,\sigma^M_j \wedge \sigma^M_{j+1}} \leq \norm{\sy^{n_{j+1}} - \sy^{n_j} }_{X,\sigma^M_j \wedge \sigma^M_{j+1}} < 2^{-\left(j+2\right)}\end{equation}
and also
\begin{equation} \label{useful ic}
    \norm{\sy^{n_{j+1}}}_{X,0} - \norm{\sy^{n_j}}_{X,0}  \leq \norm{\sy^{n_{j+1}} - \sy^{n_j} }_{X,0} < 2^{-\left(j+2\right)}.
\end{equation}
Combining (\ref{by the continuity}), (\ref{it is useful}) and (\ref{useful ic}), whilst using that $\sigma^M_j < \sigma^M_{j+1}$, we see that \begin{align}
    \nonumber \norm{\sy^{n_{j+1}}}_{X,\sigma^M_j \wedge \sigma^M_{j+1}} &> \norm{\sy^{n_j}}_{X,\sigma^M_j \wedge \sigma^M_{j+1}} - 2^{-\left(j+2\right)}\\ \nonumber
    &= \norm{\sy^{n_j}}_{X,\sigma^M_j} - 2^{-\left(j+2\right)}\\ \nonumber
    &=  \norm{\sy^{n_{j}}}_{X,0} + (\tilde{M} - 1 +2^{-j}) - 2^{-\left(j+2\right)}\\  \nonumber
    &> \norm{\sy^{n_{j+1}}}_{X,0} - 2^{-(j+2)} + (\tilde{M} - 1 +2^{-j}) - 2^{-\left(j+2\right)}\\
    &= \norm{\sy^{n_{j+1}}}_{X,0} + (\tilde{M} - 1 +2^{-(j+1)}) \label{end of the align}
\end{align}
where in the last line we have used the manipulation
$$ - 2^{-(j+2)} - 2^{-(j+2)} + 2^{-j} = -2^{-j-1} + 2^{-j} = 2^{-j}\left( -2^{-1} + 1\right) = 2^{-j}\left( 2^{-1} \right) = 2^{-(j+1)}.$$ The hard work is done in showing that the set (\ref{def of set}) is empty, as on this set note that $$\norm{\sy^{n_{j+1}}}_{X,\sigma^M_j \wedge \sigma^M_{j+1}} \leq \norm{\sy^{n_{j+1}}}_{X, \sigma^M_{j+1}} \leq \norm{\sy^{n_{j+1}}}_{X,0} + (\tilde{M} - 1 +2^{-(j+1)})$$ which contradicts (\ref{end of the align}), hence (\ref{def of set}) must be empty. Thus on every $\hat{\Omega}_K$, and furthermore the whole of $\hat{\Omega}$, the sequence $(\sigma^M_j)$ is eventually monotone decreasing (and bounded below by $0$). Furthermore we define $\sigma^M_{\infty}$ as the pointwise limit $\lim_{j \rightarrow \infty}\sigma^M_j$ on $\hat{\Omega}$, which must itself be a stopping time as the $\mathbb{P}-a.s.$ limit of stopping times. As mentioned this shall prove to be our $\tau^{M,T}_\infty$, and for the existence of $\sy$ we show that on $\hat{\Omega}$ the subsequence $(\sy^{n_j})$ is Cauchy in $X_{\sigma^M_{\infty}}$. Every $\omega \in \hat{\Omega}$ belongs to $\hat{\Omega}_K$ for some $K$, and furthermore to $\hat{\Omega}_L$ for all $L > K$. We fix arbitrary $\omega$ and select an associated $K$. At this $\omega$, for any $j > k \geq K$, observe that
\begin{align*}
    \norm{\sy^{n_j} - \sy^{n_k}}_{X, \sigma^M_{\infty}} &= \norm{\sy^{n_{j}} -\sy^{n_{k+1}} + \sy^{n_{k+1}} - \sy^{n_k}}_{X, \sigma^M_{\infty}}\\ &\leq \norm{\sy^{n_{j}} -\sy^{n_{k+1}}}_{X, \sigma^M_{\infty}} + \norm{\sy^{n_{k+1}} - \sy^{n_k}}_{X, \sigma^M_{\infty}}\\ &\leq \norm{\sy^{n_{j}} -\sy^{n_{k+1}}}_{X, \sigma^M_{\infty}} + 2^{-(k+2)}\\& \leq \sum_{l=k}^{j}2^{-(l+2)}\\
    &\leq 2^{-(k+1)}
\end{align*}
having carried out an inductive argument in the penultimate step. We are thus free to take $K$ large enough so that this difference is arbitrarily small; therefore there exists a limit in the Banach Space $X_{\sigma^M_{\infty}}$, which we call $\sy$. The process $\norm{\sy}_{X,\cdot \wedge \sigma^M_{\infty}}$ is adapted and $\mathbb{P}-a.s.$ continuous, as $$\sup_{r \in [0,T]}\left\vert \norm{\sy}_{X,r \wedge \sigma^M_{\infty}} - \norm{\sy^{n_j}}_{X,r \wedge \sigma^M_{\infty}}\right\vert \leq \sup_{r \in [0,T]}\left\vert \norm{\sy - \sy^{n_j}}_{X,r \wedge \sigma^M_{\infty}}\right\vert = \norm{\sy - \sy^{n_j}}_{X,\sigma^M_{\infty}}$$ 
which has $\mathbb{P}-a.s.$ limit as $j \rightarrow \infty$ equal to zero. Thus $\norm{\sy}_{X,\cdot \wedge \sigma^M_{\infty}}$ is given, $\mathbb{P}-a.s.$, as the uniform in time limit of adapted and continuous processes, verifying the result. Moving on, it is now that we make use of (\ref{supposition 2}) much in the same way as we did for (\ref{supposition 1}). This will be done in the context of $\gamma:= \sigma^M_{\infty}$ and $\delta_j:= \sigma^M_j - \sigma^M_{\infty}$. Indeed for any $j \in \N$ we can choose an $m_j \in \N$ (where $m_j=n_l$ some $l$) such that for all $k \geq m_j$, $$\sup_{n\in\N}\mathbbm{E}\left(\norm{\sy^n}_{X,\sigma^M_k \wedge \tau^{M,T}_{n}}^2 - \norm{\sy^n}_{X,\sigma^M_{\infty} \wedge \tau^{M,T}_n}^2\right) \leq 2^{-2j}.$$ In particular, through a relabelling of $\sigma^M_{m_j}=\sigma^M_l$, $$\mathbbm{E}\left(\norm{\sy^{m_j}}_{X,\sigma^M_{m_j}}^2 - \norm{\sy^{m_j}}_{X,\sigma^M_{\infty}}^2\right) \leq 2^{-2j}$$
by choosing $n$ as $m_j$ and using that $\sigma^M_{\infty} \leq \sigma^M_k \leq \sigma^M_{m_j} \leq \tau^{M,T}_{m_j}$. In a familiar way we define $$\Omega'_j:= \left\{\norm{\sy^{m_j}}_{X,\sigma^M_{m_j}}^2 - \norm{\sy^{m_j}}_{X,\sigma^M_{\infty}}^2 < 2^{-(j+2)} \right\} $$
so that, just as we showed for (\ref{defined omega j}), $$\check{\Omega}_K:=\bigcap_{j=K}^\infty \Omega'_j, \qquad \check{\Omega}:=\bigcup_{K=1}^\infty \check{\Omega}_K, \qquad  \mathbb{P}\left(\check{\Omega}\right)=1.$$ For arbitrary given $R>0$, the plan now is to find a constant $M$ such that at every $\omega \in \hat{\Omega}\cap\check{\Omega}$, either $\sigma^M_{\infty} = T$ or $\norm{\sy}^2_{X,\sigma^M_{\infty}} \geq R$. In both instances it is clear that $\tau^{R,T} \leq \sigma^M_{\infty}$, thus proving the proposition. To this end we fix an $\omega \in \hat{\Omega}\cap\check{\Omega}$ such that $\sigma^M_{\infty} < T$. As $\sigma^M_{\infty}$ is the decreasing limit of $(\sigma^M_{m_j})$ then for sufficiently large $m_j$ we must also have that $\sigma^M_{m_j} < T$. Exactly as in (\ref{by the continuity}), \begin{equation}\label{newcontinuitything} \norm{\sy^{m_{j}}}_{X,\sigma^M_{m_j}} =(\tilde{M} - 1 +2^{-j}) + \norm{\sy^{m_{j}}}_{X,0}. \end{equation}
From the proven convergence we also have that for sufficiently large $m_j$, \begin{equation}\label{applynewestcauchy}
    \norm{\sy - \sy^{m_j}}_{X,\sigma^M_{\infty}} < 1
\end{equation}
which implies that $\norm{\sy}_{X,\sigma^M_{\infty}} > \norm{\sy^{m_j}}_{X,\sigma^M_{\infty}} -1$, and likewise as $\omega \in \check{\Omega}_K$ for some $K$, \begin{equation} \label{lkjh}
    \norm{\sy^{m_j}}_{X,\sigma^M_{m_j}}^2 - \norm{\sy^{m_j}}_{X,\sigma^M_{\infty}}^2 < 1.
\end{equation}
We fix an $m_j$ large enough so that (\ref{newcontinuitything}), (\ref{applynewestcauchy}) and (\ref{lkjh}) all hold. Substituting (\ref{newcontinuitything}) into (\ref{lkjh}) gives that $$\norm{\sy^{m_j}}_{X,\sigma^M_{\infty}}^2 > \left((\tilde{M} - 1 +2^{-j}) + \norm{\sy^{m_{j}}}_{X,0}\right)^2 -1 > (\tilde{M}-1)^2 -1.$$ If $\tilde{M} > 2$ then the expression on the right is positive and $$ \norm{\sy^{m_j}}_{X,\sigma^M_{\infty}} > \left((\tilde{M}-1)^2 -1 \right)^{\frac{1}{2}}.$$
Furthermore $$ \norm{\sy}_{X,\sigma^M_{\infty}} > \left((\tilde{M}-1)^2 -1 \right)^{\frac{1}{2}} -1$$ where the right hand side is of course monotone increasing and unbounded in $\tilde{M}$ and hence $M$. By choosing $M$ large enough such that $$ \left[\left((\tilde{M}-1)^2 -1 \right)^{\frac{1}{2}} -1\right]^2 > R$$ we complete the proof.
\end{proof}

\subsection{An Existence and Uniqueness Result in Infinite Dimensions} \label{general criterion sub}

For completeness we present a series of assumptions necessary to deduce the existence and uniqueness of local strong solutions (Definition \ref{definitionofregularsolution}) in infinite dimensions, for the It\^{o} SPDE (\ref{thespde}), which is $$\sy_t = \sy_0 + \int_0^t \mathcal{Q}\sy_sds + \int_0^t\mathcal{G}\sy_sd\mathcal{W}_s.$$ 
The result which we state was proven with Dan Crisan and Oana Lang in [\cite{goodair2022existence1}], Theorem 3.15. The proof is highly involved so we give only the statement here. This is far from the elegant result of the classical variational framework developed by Pardoux [\cite{pardoux1975equations}, \cite{pardoux2021stochastic}], but it gives an indication as to the sort of assumptions required when dealing with highly non-trivial SPDEs such as the Navier-Stokes Equation with transport noise. We state the assumptions for a triplet of embedded Hilbert Spaces $$V \hookrightarrow H \hookrightarrow U$$ and ask that there is a continuous bilinear form $\inner{\cdot}{\cdot}_{U \times V}: U \times V \rightarrow \R$ such that for $\phi \in H$ and $\psi \in V$, \begin{equation} \label{bilinear formog}
    \inner{\phi}{\psi}_{U \times V} =  \inner{\phi}{\psi}_{H}.
\end{equation}
The mappings $\mathcal{Q},\mathcal{G}$ are such that
    $\mathcal{Q}:[0,T] \times V \rightarrow U,
    \mathcal{G}:[0,T] \times V \rightarrow \mathscr{L}^2(\mathfrak{U};H)$ are measurable. We assume that $V$ is dense in $H$ which is dense in $U$. 

\begin{assumption} \label{assumption fin dim spaces}
There exists a system $(a_n)$ of elements of $V$ such that, defining the spaces $V_n:= \textnormal{span}\left\{a_1, \dots, a_n \right\}$ and $\mathcal{P}_n$ as the orthogonal projection to $V_n$ in $U$, then:
\begin{enumerate}
    \item There exists some constant $c$ independent of $n$ such that for all $\phi\in H$,
\begin{equation} \label{projectionsboundedonH}
    \norm{\mathcal{P}_n \phi}_H^2 \leq c\norm{\phi}_H^2.
\end{equation}
\item There exists a real valued sequence $(\mu_n)$ with $\mu_n \rightarrow \infty$ such that for any $\phi \in H$, \begin{align}
     \label{mu2}
    \norm{(I - \mathcal{P}_n)\phi}_U \leq \frac{1}{\mu_n}\norm{\phi}_H
\end{align}
where $I$ represents the identity operator in $U$.
\end{enumerate}
\end{assumption}

 These conditions are supplemented by a series of assumptions on the mappings. We shall use general notation $c_t$ to represent a function $c_\cdot:[0,\infty) \rightarrow \R$ bounded on $[0,T]$, evaluated at the time $t$. Moreover we define functions $K$, $\tilde{K}$ relative to some non-negative constants $p,\tilde{p},q,\tilde{q}$. We use a generic notation to define the functions $K: U \rightarrow \R$, $K: U \times U \rightarrow \R$, $\tilde{K}: H \rightarrow \R$ and $\tilde{K}: H \times H \rightarrow \R$ by
\begin{align*}
    K(\phi)&:= 1 + \norm{\phi}_U^{p}, \qquad
    K(\phi,\psi):= 1+\norm{\phi}_U^{p} + \norm{\psi}_U^{q},\\
    \tilde{K}(\phi) &:= K(\phi) + \norm{\phi}_H^{\tilde{p}}, \qquad
    \tilde{K}(\phi,\psi) := K(\phi,\psi) + \norm{\phi}_H^{\tilde{p}} + \norm{\psi}_H^{\tilde{q}}.
\end{align*}
 Distinct use of the function $K$ will depend on different constants but in no meaningful way in our applications, hence no explicit reference to them shall be made. In the case of $\tilde{K}$, when $\tilde{p}, \tilde{q} = 2$ then we shall denote the general $\tilde{K}$ by $\tilde{K}_2$. In this case no further assumptions are made on the $p,q$. That is, $\tilde{K}_2$ has the general representation \begin{equation}\label{Ktilde2}\tilde{K}_2(\phi,\psi) = K(\phi,\psi) + \norm{\phi}_H^2 + \norm{\psi}_H^2\end{equation} and similarly as a function of one variable.\\
 
 We state the subsequent assumptions for arbitrary elements $\phi,\psi \in V$, $\phi^n \in V_n$, $\eta \in H$ and $t \in [0,T]$, and a fixed $\gamma > 0$. Understanding $\mathcal{G}$ as a mapping $\mathcal{G}: [0,\infty) \times V \times \mathfrak{U} \rightarrow H$, we introduce the notation $\mathcal{G}_i(\cdot,\cdot):= \mathcal{G}(\cdot,\cdot,e_i)$.
 
 
  \begin{assumption} \label{new assumption 1} \begin{align}
     \label{111} \norm{\mathcal{Q}(t,\boldsymbol{\phi})}^2_U +\sum_{i=1}^\infty \norm{\mathcal{G}_i(t,\boldsymbol{\phi})}^2_H &\leq c_t K(\boldsymbol{\phi})\left[1 + \norm{\boldsymbol{\phi}}_V^2\right],\\ \label{222}
     \norm{\mathcal{Q}(t,\boldsymbol{\phi}) - \mathcal{Q}(t,\boldsymbol{\psi})}_U^2 &\leq  c_t\left[K(\phi,\psi) + \norm{\phi}_V^2 + \norm{\psi}_V^2\right]\norm{\phi-\psi}_V^2,\\ \label{333}
    \sum_{i=1}^\infty \norm{\mathcal{G}_i(t,\boldsymbol{\phi}) - \mathcal{G}_i(t,\boldsymbol{\psi})}_U^2 &\leq c_tK(\phi,\psi)\norm{\phi-\psi}_H^2.
 \end{align}
 \end{assumption}

\begin{assumption} \label{assumptions for uniform bounds2}
 \begin{align}
   \label{uniformboundsassumpt1}  2\inner{\mathcal{P}_n\mathcal{Q}(t,\boldsymbol{\phi}^n)}{\boldsymbol{\phi}^n}_H + \sum_{i=1}^\infty\norm{\mathcal{P}_n\mathcal{G}_i(t,\boldsymbol{\phi}^n)}_H^2 &\leq c_t\tilde{K}_2(\boldsymbol{\phi}^n)\left[1 + \norm{\boldsymbol{\phi}^n}_H^2\right] - \gamma\norm{\boldsymbol{\phi}^n}_V^2,\\  \label{uniformboundsassumpt2}
    \sum_{i=1}^\infty \inner{\mathcal{P}_n\mathcal{G}_i(t,\boldsymbol{\phi}^n)}{\boldsymbol{\phi}^n}^2_H &\leq c_t\tilde{K}_2(\boldsymbol{\phi}^n)\left[1 + \norm{\boldsymbol{\phi}^n}_H^4\right].
\end{align}
\end{assumption}

\begin{assumption} \label{therealcauchy assumptions}
\begin{align}
  \nonumber 2\inner{\mathcal{Q}(t,\boldsymbol{\phi}) - \mathcal{Q}(t,\boldsymbol{\psi})}{\boldsymbol{\phi} - \boldsymbol{\psi}}_U &+ \sum_{i=1}^\infty\norm{\mathcal{G}_i(t,\boldsymbol{\phi}) - \mathcal{G}_i(t,\boldsymbol{\psi})}_U^2\\ \label{therealcauchy1} &\leq  c_{t}\tilde{K}_2(\boldsymbol{\phi},\boldsymbol{\psi}) \norm{\boldsymbol{\phi}-\boldsymbol{\psi}}_U^2 - \gamma\norm{\boldsymbol{\phi}-\boldsymbol{\psi}}_H^2,\\ \label{therealcauchy2}
    \sum_{i=1}^\infty \inner{\mathcal{G}_i(t,\boldsymbol{\phi}) - \mathcal{G}_i(t,\boldsymbol{\psi})}{\boldsymbol{\phi}-\boldsymbol{\psi}}^2_U & \leq c_{t} \tilde{K}_2(\boldsymbol{\phi},\boldsymbol{\psi}) \norm{\boldsymbol{\phi}-\boldsymbol{\psi}}_U^4.
\end{align}
\end{assumption}

\begin{assumption} \label{assumption for prob in V}
\begin{align}
   \label{probability first} 2\inner{\mathcal{Q}(t,\boldsymbol{\phi})}{\boldsymbol{\phi}}_U + \sum_{i=1}^\infty\norm{\mathcal{G}_i(t,\boldsymbol{\phi})}_U^2 &\leq c_tK(\boldsymbol{\phi})\left[1 +  \norm{\boldsymbol{\phi}}_H^2\right],\\\label{probability second}
    \sum_{i=1}^\infty \inner{\mathcal{G}_i(t,\boldsymbol{\phi})}{\boldsymbol{\phi}}^2_U &\leq c_tK(\boldsymbol{\phi})\left[1 + \norm{\boldsymbol{\phi}}_H^4\right].
\end{align}
\end{assumption}

\begin{assumption} \label{finally the last assumption}
 \begin{equation} \label{lastlast assumption}
    \inner{\mathcal{Q}(t,\phi)-\mathcal{Q}(t,\psi)}{\eta}_U \leq c_t(1+\norm{\eta}_H)\left[K(\phi,\psi) + \norm{\phi}_V + \norm{\psi}_V\right]\norm{\phi-\psi}_H.
    \end{equation}
\end{assumption}

Under these assumptions we have the existence of a local strong solution of the equation (\ref{thespde}), in the sense of Definition \ref{definitionofregularsolution}, and uniqueness in the sense of Theorem \ref{uniquey theorem}. This is proven in [\cite{goodair2022existence1}] Theorem 3.15.

\subsection{Applications}

We conclude these notes by considering a concrete application of the framework and results developed here. Our motivating example is the Navier-Stokes Equation with Stochastic Lie Transport, derived through the principle of Stochastic Advection by Lie Transport (SALT) introduced in [\cite{holm2015variational}]. A complete introduction to this equation and its physical relevance is given in [\cite{goodair2022navier}]. The equation is
\begin{equation} \label{number2equation}
    u_t - u_0 + \int_0^t\mathcal{L}_{u_s}u_s\,ds - \nu\int_0^t \Delta u_s\, ds + \int_0^t Bu_s \circ d\mathcal{W}_s + \nabla \rho_t= 0
\end{equation}
where $u$ represents the fluid velocity, $\nu$ the viscosity, $\rho$ the pressure\footnote{The pressure term is a semimartingale, and an explicit form for the SALT Euler Equation is given in [\cite{street2021semi}] Subsection 3.3} and $\mathcal{W}$ a Cylindrical Brownian Motion. We pose the equation on the three dimensional torus $\T$. The mapping $\mathcal{L}$ is defined for sufficiently regular functions $f,g:\mathcal{O} \rightarrow \R^3$ by $\mathcal{L}_fg:= \sum_{j=1}^3f^j\partial_jg.$ As in Subsection \ref{subs 2.2}, the operator $B$ is defined by its action on the basis vectors $(e_i)$ of $\mathfrak{U}$, relative to functions $(\xi_i):\T \rightarrow \R^3$, for sufficiently regular $f$ by the mapping $$B_i:f \mapsto \mathcal{L}_{\xi_i}f + \mathcal{T}_{\xi_i}f$$ where $\mathcal{L}$ is as before, and $\mathcal{T}$ is a new operator that we introduce defined by $$\mathcal{T}_{g}f := \sum_{j=1}^3 f^j\nabla g^j.$$
We require some functional analytic set up in order to frame the equation. 
Recall that any function $f \in L^2(\T;\R^3)$ admits the representation \begin{equation} \label{fourier rep}f(x) = \sum_{k \in \mathbb{Z}^3}f_ke^{ik\cdot x}\end{equation} whereby each $f_k \in \mathbb{C}^3$ is such that $f_k = \overbar{f_{-k}}$ and the infinite sum is defined as a limit in $L^2(\T;\R^3)$, see e.g. [\cite{robinson2016three}] Subsection 1.5 for details.

\begin{definition}
We define $\dot{L}^2(\T;\R^3)$ as the subset of $L^2(\T;\R^3)$ of functions $\phi$ such that $$\int_{\T}\phi \ d\lambda = 0.$$
$L^2_{\sigma}(\T;\R^3)$ is defined as the subset of $\dot{L}^2(\T;\R^3)$ of functions $f$ whereby for all $k \in \mathbbm{Z}^3$, $k \cdot f_k = 0$ with $f_k$ as in (\ref{fourier rep}). For general $m \in \N$ we introduce $W^{m,2}_{\sigma}(\T;\R^3)$ as the intersection of $W^{m,2}(\T;\R^3)$ respectively with $L^2_{\sigma}(\T;\R^3)$.
\end{definition}

Note that $W^{1,2}_{\sigma}(\T;\R^3)$ is precisely the subspace of $W^{1,2}(\T;\R^3)$ consisting of zero-average divergence free functions. We introduce the Leray Projector $\mathcal{P}$ as the orthogonal projection in $L^2(\T;\R^3)$ onto $L^2_{\sigma}(\T;\R^3)$. It is well known (see e.g. [\cite{temam2001navier}] Remark 1.6.) that for any $m \in \N$, $\mathcal{P}$ is continuous as a mapping $\mathcal{P}: W^{m,2}(\T;\R^3) \rightarrow W^{m,2}(\T;\R^3)$. To impose zero-average and divergence-free constraints on $u$, we instead consider a projected version of (\ref{number2equation}),
\begin{equation} \label{projected strato}
    u_t = u_0 - \int_0^t\mathcal{P}\mathcal{L}_{u_s}u_s\,ds - \nu\int_0^t A u_s\, ds - \int_0^t \mathcal{P}Bu_s \circ d\mathcal{W}_s 
\end{equation}
where we have introduced the Stokes Operator $A: W^{2,2}(\mathcal{O};\R^3) \rightarrow L^2_{\sigma}(\mathcal{O};\R^3)$ by $A:= -\mathcal{P}\Delta$. We can now understand this equation, (\ref{projected strato}), in the sense of Subsection \ref{subs 2.3} for the spaces \begin{align*}
    V&:= W^{3,2}_{\sigma}(\T;\R^3), \qquad H:= W^{2,2}_{\sigma}(\T;\R^3),\\
    U&:= W^{1,2}_{\sigma}(\T;\R^3), \qquad X:= L^2_{\sigma}(\T;\R^3).
\end{align*}
The following is then proven in [\cite{goodair2022navier}] Theorem 2.1.

\begin{theorem} \label{existence for NS Strat}
    For any given $\mathcal{F}_0-$measurable $u_0: \Omega \rightarrow W^{2,2}_{\sigma}(\T;\R^3)$ there exists a pair $(u,\tau)$ such that: $\tau$ is a $\mathbbm{P}-a.s.$ positive stopping time and $u$ is a process whereby for $\mathbbm{P}-a.e.$ $\omega$, $u_{\cdot}(\omega) \in C\left([0,T];W^{2,2}_{\sigma}(\T;\R^3)\right)$ and $u_{\cdot}(\omega)\mathbbm{1}_{\cdot \leq \tau(\omega)} \in L^2\left([0,T];W^{3,2}_{\sigma}(\T;\R^3)\right)$ for all $T>0$ with $u_{\cdot}\mathbbm{1}_{\cdot \leq \tau}$ progressively measurable in $W^{3,2}_{\sigma}(\T;\R^3)$, and moreover satisfying the identity
\begin{equation} \nonumber
  u_t = u_0 - \int_0^{t\wedge \tau}\mathcal{P}\mathcal{L}_{u_s}u_s\,ds -\nu\int_0^{t\wedge \tau} A u_s\, ds - \int_0^{t \wedge \tau} \mathcal{P}Bu_s \circ d\mathcal{W}_s 
\end{equation}
$\mathbbm{P}-a.s.$ in $L^2_{\sigma}(\T;\R^3)$ for all $t \geq 0$.
\end{theorem}

This result uses Theorem \ref{the conversion}, where a local strong solution of the It\^{o} form is proven as an application of Subsection \ref{general criterion sub}. We note that the framework developed in these notes has been used in considering other types of solutions of (\ref{number2equation}), particularly with boundary conditions embedded into the Hilbert Spaces. Such solutions include analytically and probabilistically weak in 3D [\cite{goodair2023zero}], analytically weak in 2D [\cite{goodair2023zero}], and globally strong in 2D [\cite{goodair2023navier}]. The weak solutions relied on the tightness results of Subsection \ref{subby tight}, whilst the strong solutions used Lemma \ref{amazing cauchy lemma}.\\

\textbf{Thanks:} I would like to give my sincerest thanks to Dan Crisan for the regular and extended discussions around these notes, his feedback throughout, and overall guidance during this process. They most certainly would not have been possible without him.

\newpage

\bibliographystyle{spmpsci}
\bibliography{mybibnewest2}

\end{document}